\documentclass[a4paper,10pt,reqno]{amsart}
          \usepackage{amssymb}
	  \usepackage{amsmath}
          \usepackage{amsfonts}
          \usepackage[english]{babel}
          \usepackage[utf8]{inputenc}

\usepackage[margin=2.5cm]{geometry}

 \usepackage[unicode,colorlinks,plainpages=false,hyperindex=true,bookmarksnumbered=true,bookmarksopen=false,pdfpagelabels]{hyperref}
 \hypersetup{urlcolor=cyan,linkcolor=blue,citecolor=red,colorlinks=true} 
\hfuzz1pc 

\usepackage{tikz} 
\usetikzlibrary{arrows,shapes}  
\usepackage{mathdots}
  
\makeatletter
\renewcommand*{\p@section}{\,}
\renewcommand*{\p@subsection}{\S\,}
\renewcommand*{\p@subsubsection}{\S\,}
\makeatother


\newcounter{thmcounter}
\numberwithin{thmcounter}{section}
\newtheorem{thm}[thmcounter]{Theorem}
\newtheorem{cor}[thmcounter]{Corollary}
\newtheorem{lem}[thmcounter]{Lemma}
\newtheorem{prop}[thmcounter]{Proposition}
\theoremstyle{definition}
\newtheorem{exmp}[thmcounter]{Example}
\newtheorem{rem}[thmcounter]{Remark}

\numberwithin{equation}{section}

\newcommand{\CC}{\ensuremath{\mathbb{C}}}
\newcommand{\N}{\ensuremath{\mathbb{N}}}

\newcommand{\Z}{\ensuremath{\mathbb{Z}}}
\newcommand{\kk}{\ensuremath{\Bbbk}}
 
\newcommand{\tr}{\operatorname{tr}}
\newcommand{\diag}{\operatorname{diag}}
\newcommand{\Der}{\operatorname{Der}}
\newcommand{\Mat}{\operatorname{Mat}}
\newcommand{\Id}{\operatorname{Id}}
\newcommand{\id}{\operatorname{id}}
\newcommand{\Gl}{\operatorname{GL}}
\newcommand{\Sl}{\operatorname{SL}}
\newcommand{\Rep}{\operatorname{Rep}}

\newcommand{\OO}{\ensuremath{\mathcal{O}}}
\newcommand{\X}{\ensuremath{\mathcal{X}}}
\newcommand{\del}{\ensuremath{\partial}}
\newcommand{\Aut}{\operatorname{Aut}}
\newcommand{\HAut}{\operatorname{HAut}}
\newcommand{\qHAut}{\operatorname{qHAut}}
\newcommand\dgal[1]{  \left\{\!\!\left\{#1\right\}\!\!\right\} }
\newcommand\dgalU[1]{  \left\{\!\!\left\{#1\right\}\!\!\right\}_{1} }
\newcommand\dgalD[1]{  \left\{\!\!\left\{#1\right\}\!\!\right\}_{2} }
\newcommand\dgalfusU[1]{  \left\{\!\!\left\{#1\right\}\!\!\right\}_{fus}^{2\to 1} }
\newcommand\dgalfusD[1]{  \left\{\!\!\left\{#1\right\}\!\!\right\}_{fus}^{1\to 2} }
\newcommand\dgalindU[1]{  \left\{\!\!\left\{#1\right\}\!\!\right\}_{ind}^{2\to 1} }
\newcommand\dgalindD[1]{  \left\{\!\!\left\{#1\right\}\!\!\right\}_{ind}^{1\to 2} }

\newcommand\br[1]{\{ #1 \}} 
 
\newcommand\dhh[1]{\langle #1 \rangle}
\newcommand\dhu[1]{\langle #1 \rangle_1}
\newcommand\dhd[1]{\langle #1 \rangle_2}
\newcommand{\he}{\hat{e}}
\newcommand{\cF}{\ensuremath{\mathcal{F}}}
\newcommand{\Cn}{\ensuremath{\mathcal{C}_n}}
\newcommand{\tCn}{\ensuremath{\tilde{\mathcal{C}}_n}}
\newcommand{\hCn}{\ensuremath{\widehat{\mathcal{C}}_n}}
\newcommand{\cCn}{\ensuremath{\check{\mathcal{C}}_n}}
\newcommand{\CnTwo}{\ensuremath{\mathcal{C}_{n,2}}}

\newcommand{\Fd}{\operatorname{F}_{2}}
\newcommand{\Ft}{\operatorname{F}_{2,S}}
\newcommand{\Lt}{\operatorname{L}_2}
\newcommand{\Cas}{\operatorname{Cas}}
\newcommand{\Aalg}{\ensuremath{\mathtt{A}_1}}
\newcommand{\Balg}{\ensuremath{\mathtt{B}_1}}
\newcommand{\Calg}{\ensuremath{\mathtt{C}_1}}
\newcommand{\ZSn}{\ensuremath{\Z_m\!\wr\! S_n}}
\newcommand{\rc}{\ensuremath{\mathrm{c}}}
\newcommand{\pP}{\ensuremath{(\text{\textbf{P}})}}

\begin{document}

\title[Morphisms of double (quasi-)Poisson algebras and action-angle duality]{Morphisms of double (quasi-)Poisson algebras and\\action-angle duality of integrable systems}   

\author{Maxime Fairon}
 \address[Maxime Fairon]{School of Mathematics and Statistics\\ University of Glasgow, University Place\\ Glasgow G12 8QQ, UK}
 \email{Maxime.Fairon@glasgow.ac.uk}

   \begin{abstract}
Double (quasi-)Poisson algebras were introduced by Van den Bergh as non-commutative analogues of algebras endowed with a (quasi-)Poisson bracket. In this work, we provide a study of morphisms of double (quasi-)Poisson algebras, which we relate to the $H_0$-Poisson structures of Crawley-Boevey. 
We prove in particular that the double (quasi-)Poisson algebra structure defined by Van den Bergh for an arbitrary quiver only depends upon the quiver seen as an undirected graph, up to isomorphism. 
We derive from our results a representation theoretic description of action-angle duality for several classical integrable systems. 
   \end{abstract}

\maketitle

 \setcounter{tocdepth}{2}

\section{Introduction}

In the seminal paper \cite{VdB1}, Van den Bergh laid the foundation of a noncommutative theory of Poisson geometry based on double brackets. Starting with an algebra $A$ over a field of characteristic zero $\kk$, he introduced the concept of a double bracket as an operation $A\times A\to A\otimes A$ satisfying noncommutative rules of derivation and antisymmetry; this is done in such a way that the corresponding affine scheme of representations $\Rep(A,n)$ carries an antisymmetric biderivation for each $n\geq 1$. 
An interesting class of those structures are double Poisson brackets, which satisfy a version of Jacobi identity valued in $A^{\otimes 3}$ so that $\Rep(A,n)$ is now endowed with a Poisson bracket, in agreement with the `non-commutative principle' of Kontsevich and Rosenberg \cite{Ko,KR}. In fact, pushing this principle even further, Van den Bergh pointed out that the existence of a distinguished element
on $A$ permits to understand the process of Hamiltonian reduction of $\Rep(A,n)$ with respect to the natural action of $\Gl_n(\kk)$ directly at the level of the algebra $A$. 
Due to this representation theoretic perspective, it seems interesting to examine the algebras endowed with a double Poisson bracket, which are called \emph{double Poisson algebras}. They have been the object of several studies \cite{B,ORS,ORS2,PV,P16,VdW}, and our aim is to explore morphisms between double Poisson algebras. Noting that Van den Bergh also introduced the analogous notion of double quasi-Poisson brackets \cite{VdB1,VdB2}, it is natural to extend our investigation to morphisms between the corresponding algebras, called \emph{double quasi-Poisson algebras}, which currently attract attention \cite{Arthesis,CF,CF2,Fai,F19,Fthesis,FH,MT14}.

A rich family of double (quasi-)Poisson algebras is associated with quivers \cite{VdB1} and encodes the Poisson geometry of quiver varieties \cite{Nak} as well as their multiplicative analogues \cite{CBShaw}. These varieties possess ramifications in numerous branches of mathematics (see e.g. \cite{Gi12,Ki,Schi} for quiver varieties), in particular in the field of integrable systems. Indeed, in the complex setting it is known since 1998 and the work of Wilson \cite{W} that the phase space of the Calogero-Moser system can be realised as a quiver variety. Moreover, similar constructions of phase spaces as quiver varieties have been obtained for other related systems \cite{BP,CS,T15}, and they have been extended to Ruijsenaars-Schneider systems using multiplicative quiver varieties \cite{CF,CF2,Fai,Fthesis}. 
Some of these papers used the formalism of double brackets to grasp features of integrability directly on the path algebra of the relevant quivers. Thus, it raises the question of determining if additional properties of these integrable systems can be realised at the level of the quivers. This issue constitutes the main motivation behind our work, as we want to present a novel interpretation of action-angle duality, which we explain now.

Let $M$ and $M'$ be two manifolds of dimension $2n$ endowed with non-degenerate Poisson brackets such that the $n$ functions $H,H'$ on $M,M'$ define (Liouville) integrable systems, i.e. $H$ and $H'$ form sets of $n$ functionally independent Poisson-commuting elements. Let us furthermore assume that on a dense open subset of each manifold, there exist canonical Darboux coordinates $(q,p)$ or $(q',p')$, and that there exists a Poisson diffeomorphism $\Psi:M\to M'$ such that (after restriction to dense subspaces) $H'\circ \Psi$ only depends on the coordinates $q$, while $H\circ \Psi^{-1}$ only depends on the coordinates $q'$.  
Due to the assumptions, we get action-angle variables\footnote{By action-angle variables, we mean the existence of a set of $2n$ Darboux coordinates on a dense subspace of the manifold such that the integrable system only depends on the first $n$ coordinates, the \emph{action variables}. This ensures that the Hamiltonian flows are linearised with respect to the last $n$ coordinates, the \emph{angle variables}.}  as follows. The coordinates $q'$ become the action coordinates of $H$ while the coordinates $p'$ become the angle coordinates of $H$, and the same is true for $q,p$ and $H'$. We thus say that the pairs $(M,H)$ and $(M',H')$ are \emph{action-angle duals}. This construction has been largely investigated for systems of Calogero-Moser and Ruijsenaars-Schneider type following the pioneering work of Ruijsenaars \cite{R88}.  
A widespread method used to unearth action-angle duality is the existence of two different slices in a suitable orbit space defined by Hamiltonian reduction, such that each slice provides one of the two sets of Darboux coordinates. This approach is inspired by the work of Kazhdan, Kostant and Sternberg \cite{KKS78}, and it has been considered both in the complex and the real settings, see 
\cite{FM2,FGNR,Pu12,Res2} and references therein. 

One of the aims of this work is to obtain a different point of view on duality where representation theoretic considerations yield the action-angle map. Indeed, in view of the previous paragraphs, many classical integrable systems for which action-angle duality is known are defined on (multiplicative) quiver varieties. This leads us to the following  natural question\footnote{This formulation is close to the original question posed to the author by V. Rubtsov at the conference \textit{Geometric aspects of momentum maps and integrability} in Ascona, April 2018.} :
\begin{center}
 \textit{Is it possible to understand action-angle duality in terms of relevant quivers?}
\end{center}
We end this work by answering this question positively in Section \ref{S:dual}, as we note that action-angle duality can be realised at the level of quivers simply as a map ``reversing arrows''. 
This is, for example, the case for the well-known duality between the hyperbolic Calogero-Moser system and the rational Ruijsenaars-Schneider system (see \ref{ss:DualTCM}), or the self-duality of the hyperbolic Ruijsenaars-Schneider system (see \ref{ss:DualRS}).  
Note that this simple point of view also allows to derive new instances of action-angle duality for a generalisation of the rational Calogero-Moser system due to Chalykh and Silantyev \cite{CS} (see \ref{ss:DualRCM}), or a modification of the hyperbolic Ruijsenaars-Schneider system obtained jointly with Chalykh in \cite{CF} (see \ref{ss:DualmRS}). 

The main tool needed to provide a precise construction of the action-angle duality maps is a study of morphisms of double (quasi-)Poisson algebras, which forms the core of this text. We will deduce from this formalism the following important result. 
\begin{thm}[See Theorems \ref{Thm:HQuivers} and \ref{Thm:qHQuivers}] \label{Thm:MAIN}
Up to isomorphism, the double (quasi-)Poisson algebra associated with a quiver by Van den Bergh \cite{VdB1} only depends on the underlying quiver seen as an undirected graph.
\end{thm}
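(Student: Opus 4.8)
The plan is to reduce the statement to the effect of reversing a single arrow and then to exhibit an explicit isomorphism realising this reversal. Two quivers share the same underlying graph precisely when one is obtained from the other by reversing the orientation of some of its arrows, and any such reorientation is a composition of single-arrow reversals; hence it suffices to treat the case where $Q'$ differs from $Q$ by reversing one arrow $a\colon i\to j$ into $a'\colon j\to i$. The key observation is that the doubled quivers $\overline Q$ and $\overline{Q'}$ coincide: identifying $a'$ with the reverse arrow $a^*$ and $(a')^*$ with $a$, the two constructions start from literally the same path algebra $A=\kk\overline Q$ (in the quasi-Poisson case, the same localisation thereof). What differs is only Van den Bergh's bracket, which treats an arrow and its reverse asymmetrically; so I must produce an algebra automorphism $\phi$ of $A$ intertwining $\dgal{-,-}_{Q}$ with $\dgal{-,-}_{Q'}$.

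For the double Poisson case I would take $\phi$ to be the identity on every generator except $a^*$, on which I set $\phi(a^*)=-a^*$. This is manifestly an algebra automorphism, and checking that it is a morphism of double Poisson algebras reduces to the single pair $(a,a^*)$: using the convention $\dgal{a,a^*}_Q=e_{h(a)}\otimes e_{t(a)}$ together with the reversed convention $\dgal{a^*,a}_{Q'}=e_{t(a)}\otimes e_{h(a)}$ and the cyclic antisymmetry of double brackets, the sign introduced by $\phi$ exactly compensates the sign discrepancy between the two orientations. Since $\phi$ fixes all idempotents and all other arrows, the remaining brackets are preserved trivially. This is a short, purely computational verification.

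The main difficulty is the double quasi-Poisson case, where the bracket is quadratic in the arrows and lives on a localisation in which elements such as $1+aa^*$ are inverted. Here two points require care. First, one must check that the localisations attached to the two orientations agree under the identification above (or enlarge them compatibly), so that $\phi$ is defined on the correct algebra. Second, the candidate map may no longer be a mere sign flip: to match the quadratic terms of the local one-arrow double quasi-Poisson model for $a'=a^*$ against that for $a$, I expect $\phi$ to combine the sign change with an inversion, of the schematic form $a^*\mapsto -(a^*)^{\pm1}$ on the reversed arrow. The verification that such $\phi$ transports $\dgal{-,-}_{Q}$ to $\dgal{-,-}_{Q'}$ is the crux, since one must preserve not only the bracket values on generators but the full double quasi-Poisson identity involving the triple bracket and the distinguished element.

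To organise this last step efficiently I would lean on the morphism formalism developed earlier in the paper, together with the fact that Van den Bergh's bracket on $\overline Q$ is assembled by \emph{fusion} from one-arrow local models. Because fusion is compatible with isomorphisms, it is enough to establish the single-arrow statement for the elementary double (quasi-)Poisson algebra of one arrow; the isomorphism then propagates to the whole quiver under fusion, and composing over the arrows that must be reversed yields the general result. The expected obstacle remains the explicit quasi-Poisson computation for one arrow, but isolating it in the one-arrow model keeps it tractable.
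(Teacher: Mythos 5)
Your strategy is essentially the paper's: reduce to the reversal of a single arrow, prove the statement for the elementary one-arrow model, and propagate along fusion, using that Van den Bergh's structures are assembled by fusing the one-arrow models of a separated quiver (this is exactly how Theorems \ref{Thm:HQuivers} and \ref{Thm:qHQuivers} are proved, via Examples \ref{Exmp:Q1} and \ref{Exmp:Q1qHam} together with Lemmas \ref{L:IsoPhiHam} and \ref{L:IsoPhiqHam}). Your Hamiltonian computation is correct: under the identification $a'=a^\ast$, $(a')^\ast=a$, the sign flip $a^\ast\mapsto -a^\ast$ is precisely the isomorphism of Example \ref{Exmp:Q1}, and it matches the moment maps.

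Two points in the quasi-Poisson case need repair, however. First, your schematic guess $a^\ast\mapsto -(a^\ast)^{\pm1}$ cannot be right as written: $a^\ast$ is not invertible in $A_Q$ (only the elements $1+aa^\ast$, $a\in\bar{Q}$, are), and a plain sign flip fails to intertwine the quadratic terms of \eqref{Eq:qHamQ1} --- one finds $(\phi\otimes\phi)\dgal{a,a^\ast}_1$ and $\dgal{a,-a^\ast}_2$ differ by the sign of $\frac12(a^\ast a\otimes e_1+e_2\otimes aa^\ast)$, and $\phi(\Phi)=(1-aa^\ast)(1-a^\ast a)^{-1}\neq\Phi'$. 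The correct map is the Crawley-Boevey--Shaw twist \cite{CBShaw}, which after your identification reads $a\mapsto a$, $a^\ast\mapsto -(e+a^\ast a)^{-1}a^\ast$; verifying that it preserves the double bracket and maps $\Phi$ to $\Phi'$ is the genuinely nontrivial computation \eqref{Eq:qHamQ1dbr} carried out in Example \ref{Exmp:Q1qHam}. (Once the brackets match on generators, the quasi-Poisson identity and the multiplicative property \eqref{Phim} transport automatically by Lemma \ref{L:surjqHam}, so you need not check the triple bracket separately.) Second, ``fusion is compatible with isomorphisms'' conceals real content here: in the quasi-Poisson setting the fused bracket is not the induced one but acquires the corrective term $\dgal{-,-}_{fus}$ (Proposition \ref{Pr:IsoFusqHam}), the outcome depends a priori on the order of fusion and on the orderings $<_s$ at the vertices, and one needs both the identification of the fused structure with the explicit bracket \eqref{loopG}--\eqref{a<bG} (from \cite{F19}) and the order-independence of Theorem \ref{Thm:IsoFusqHam}, whose hypothesis on invertible elements satisfying \eqref{Phim} is fulfilled thanks to the one-arrow models. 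With the explicit one-arrow isomorphism and these fusion ingredients supplied, your argument closes and coincides with the paper's proof.
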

There exists a different notion of non-commutative Poisson structures due to Crawley-Boevey \cite{CB11}, called $H_0$-Poisson structure, and we will show that morphisms of double (quasi-)Poisson algebras induce morphisms of $H_0$-Poisson structures. 
If we work over an algebraically closed field of characteristic zero, 
we will note as part of Proposition \ref{Pr:H0Rep} that a morphism of $H_0$-Poisson structures induces a Poisson morphism between the rings of invariant functions on the corresponding moduli spaces of representations. 
Using the latter result for the $H_0$-Poisson structures induced by the double Poisson algebra structures considered in Theorem \ref{Thm:MAIN}, we obtain the well-known statement that a quiver variety only depends on the quiver seen as an undirected graph (for \emph{fixed} dimension vector and parameter of the moment map). Using the double quasi-Poisson algebra structures of Van den Bergh instead, we get the same statement for multiplicative quiver varieties, which can be attributed to Yamakawa \cite{Y}.  
Therefore, such Poisson isomorphisms of (multiplicative) quiver varieties have a non-commutative origin given by Theorem \ref{Thm:MAIN}. This will be the key observation that we shall need to construct explicitly the Poisson isomorphisms that yield the action-angle duality maps that were mentioned earlier. 

Let us finish by remarking a side result of the present work, which relates to the study of noncommutative algebras in their own right. We will establish  that all the automorphisms of the first Weyl algebra $\Aalg$ and the quantum torus $\Calg^{\rc}$ (see \ref{s:H0app} for definitions) are induced by isomorphisms of double (quasi-)Hamiltonian algebras. 

\medskip

\textbf{Outline of the paper.} 
In Section \ref{S:Base}, we introduce double brackets \cite{VdB1} and their morphisms. We also review the notion of fusion, which allows to identify idempotents in an algebra and which preserves double brackets. 
In Section \ref{S:Ham}, we study morphisms of double Poisson algebras, and their relation to fusion. This investigation ends up with Theorem \ref{Thm:HQuivers} where we obtain that Van den Bergh's double Poisson algebras associated with quivers are independent of the orientation of the underlying quivers. 
The latter section sets the stage for Section \ref{S:qHam} where we provide a non-trivial adaptation of the previous results to the case of double quasi-Poisson algebras, and to a subclass of these algebras associated with quivers, see Theorem \ref{Thm:qHQuivers}.  
In Section \ref{S:H0}, we introduce $H_0$-Poisson structures following Crawley-Boevey \cite{CB11}, and we explain how morphisms of double (quasi-)Poisson algebras yield morphisms of $H_0$-Poisson structures. We describe how they induce Poisson morphisms on moduli spaces of representations. 
Finally, we deal with the action-angle duality of various integrable systems in Section \ref{S:dual} using several results that are derived throughout the paper. 
There are three appendices containing ancillary statements and proofs.   

\medskip

{\bf Acknowledgement.} I thank V. Rubtsov for raising the question that motivated this work, and L. Feh\'{e}r for useful comments. 
I am grateful to the reviewers for their numerous suggestions and questions which helped to improve the presentation of this paper. 
This research was supported by a Rankin-Sneddon Research Fellowship of the University of Glasgow.

\section{Basic definitions}  \label{S:Base} 

Throughout the paper,  $\kk$ is a field of characteristic $0$, and we write $\otimes=\otimes_\kk$. A  $\kk$-algebra is always assumed to be associative,  unital and finitely generated. 
We use $d=d'\otimes d''\in A \otimes A$  as a shorthand way for Sweedler's notation $d=\sum_i d'_i \otimes d''_i$. 
If $A,B$ are $\kk$-algebras, we say that $A$ is a $B$-algebra if there is a morphism of $\kk$-algebras $B\to A$. 
We only consider the following special case throughout the text: if the unit in $A$ admits a decomposition in terms of a finite set of orthogonal idempotents $(e_s)_{s\in I}$, i.e. 
\begin{equation} \label{Eq:Bidem}
 1=\sum_{s\in I} e_s\,, \quad e_s e_t = \delta_{st} e_s\,, \quad |I|\in \N^\times\,,
\end{equation}
we view $A$ as a $B$-algebra for 
\begin{equation} \label{Eq:B}
B=\oplus_{s\in I} \kk e_s\,.
\end{equation}

\subsection{Double brackets}

We closely follow the exposition \cite{F19} of the work of Van den Bergh \cite{VdB1}.

Let $A$ be a $\kk$-algebra. A \emph{double bracket} on $A$ is a $\kk$-bilinear map  $\dgal{-,-}:A\times A \to A \otimes A$  satisfying 
\begin{equation} \label{Eq:cycanti}
 \dgal{a,b}=-\dgal{b,a}^\circ \quad \text{ for all } a,b\in A\,, \qquad \text{(cyclic antisymmetry)}
\end{equation}
where $(-)^\circ$ denotes the permutation of factors in $A \otimes A$, together with 
\begin{equation}\label{Eq:outder}
 \dgal{a,bc}=\dgal{a,b}c+b\dgal{a,c} \quad \text{ for all } a,b,c\in A\,. \qquad \text{(right derivation rule)}
\end{equation}
Here, the multiplication refers to the outer $A$-bimodule structure on $A\otimes A$, that is if $d\in A^{\otimes 2}$,  then we have $a \,d\, b=(a d') \otimes (d'' b)$.  Assuming that \eqref{Eq:cycanti} holds, one can easily check that \eqref{Eq:outder} is equivalent to 
\begin{equation}\label{Eq:inder}
 \dgal{bc,a}=\dgal{b,a}\ast c+b\ast\dgal{c,a} \quad \text{ for all } a,b,c\in A\,, \qquad \text{(left derivation rule)}, 
\end{equation} 
where we use the inner $A$-bimodule structure on $A\otimes A$, i.e. $a\ast d \ast b = (d'b) \otimes (a d'')$. 
As a consequence of the derivation rules,  it suffices to define double brackets on generators of $A$. 

Given a double bracket, we can define an operation $A^{\times 3}\to A^{\otimes 3}$ by setting
\begin{equation}
\label{Eq:TripBr}
\begin{aligned}
  \dgal{a,b,c}=&\dgal{a,\dgal{b,c}'}\otimes \dgal{b,c}'' \\
&+\tau_{(123)}\dgal{b,\dgal{c,a}'}\otimes \dgal{c,a}'' \\
&+\tau_{(123)}^2\dgal{c,\dgal{a,b}'}\otimes \dgal{a,b}'' \quad \text{ for all } a,b,c\in A \,.
\end{aligned}
\end{equation}
(Here, we define $\tau_{(123)}:\,A^{\otimes 3}\to A^{\otimes 3}$  by 
$\tau_{(123)}(a_1\otimes a_2\otimes a_3)=a_{3}\otimes a_1 \otimes a_{2}$.)
The map \eqref{Eq:TripBr} is an instance of \emph{triple bracket}, that is a  $\kk$-trilinear map,  which satisfies a generalisation of the cyclic antisymmetry \eqref{Eq:cycanti} 
\begin{equation} \label{Eq:TriAnti}
\tau_{(123)}\circ \dgal{-,-,-}\circ \tau_{(123)}^{-1}
=\dgal{-,-,-}\,,
\end{equation}
and which is a derivation in its last argument for the outer $A$-bimodule structure of $A^{\otimes 3}$.

When we see $A$ as a $B$-algebra for $B$ given as in \eqref{Eq:B}, it is convenient to work in the relative setting. In such a case, we assume that the double bracket $\dgal{-,-}$ vanishes whenever one of its entries is taken in $B$. We then say that the double bracket is $B$-linear. 
Equivalently, $B$-linearity can be stated as having a morphism of $B$-bimodules $\dgal{a,-}:A\to A\otimes A$ for any $a\in A$, where we use the outer $B$-bimodule structure on $A\otimes A$. 
Note that if $a=e_{r_a}ae_{s_a}\in A$ and $b=e_{r_b}be_{s_b}\in A$ for $r_a,r_b,s_a,s_b\in I$, the derivation rules and $B$-linearity yield  
\begin{equation}
 \dgal{a,b}\in e_{r_b}A e_{s_a}\otimes e_{r_a} A e_{s_b}\,.
\end{equation}

\subsubsection{Morphisms of double brackets}
Let $A_1,A_2$ be  $B$-algebras endowed with $B$-linear double brackets $\dgalU{-,-},\dgalD{-,-}$. We say that $\phi:A_1 \to A_2$ is a \emph{morphism of double brackets} if it is a $B$-algebra homomorphism such that for any $a,b \in A_1$
\begin{equation} \label{Eq:Morph}
 \dgalD{\phi(a),\phi(b)}=(\phi\otimes \phi) \dgalU{a,b}\,.
\end{equation}
We say that $\phi$ is an \emph{isomorphism (of double brackets)} if it is also an isomorphism of $B$-algebras. In that case, the inverse $\phi^{-1}: A_2\to A_1$ is a morphism of double brackets.
\begin{exmp}
 The algebra $A=\kk\langle x,y\rangle$ can be endowed with two $\kk$-linear double brackets, denoted $\dgalU{-,-},\dgalD{-,-}$, which are defined on generators by 
 \begin{equation*}
  \dgalU{x,x}=0=\dgalD{x,x}\,, \quad \dgalU{y,y}=0=\dgalD{y,y}\,, \quad 
  \dgalU{x,y}=1 \otimes 1,\,\, \dgalD{x,y}=-1 \otimes 1\,.
 \end{equation*}
The automorphism $x \mapsto y, y \mapsto x$ defines an isomorphism of double brackets $(A,\dgalU{-,-})\to(A,\dgalD{-,-})$.  
More generally, assume that $A_1$ is equipped with a $B$-linear double bracket $\dgalU{-,-}$, and fix an isomorphism of $B$-algebras $\phi:A_1\to A_2$. Then we get a $B$-linear double bracket on $A_2$ defined by  
$$\dgalD{-,-}=(\phi\otimes \phi)\circ \dgalU{-,-}\circ (\phi^{-1} \times \phi^{-1})\,,$$
and $\phi$ naturally becomes an isomorphism of double brackets. 
\end{exmp}

\subsection{Morphisms and fusion}

\subsubsection{Fusion} \label{ssFus}

Following Van den Bergh \cite[\S 2.5]{VdB1}, we assume that $A$ is a $B$-algebra such that there exist orthogonal idempotents $e_1,e_2\in B$, and we construct the \emph{fusion algebra} $A^f_{e_2 \to e_1}$ obtained by fusing $e_2$ onto $e_1$ as follows. (When the choice of idempotents is clear from the context, we simply write $A^f_{e_2 \to e_1}$  as $A^f$.)

First, we extend the algebra $A$ along the pair $(e_1,e_2)$ by adjoining generators $e_{12},e_{21}$ satisfying the usual matrix relations. Namely, 
\begin{equation}
  \bar{A}=A \ast_{\kk e_1 \oplus \kk e_2 \oplus \kk \he} (\Mat_2(\kk)\oplus \kk \he)\,,
\end{equation}
 where $\he=1-e_1-e_2$, and $\Mat_2(\kk)$ is seen as the $\kk$-algebra generated by $e_1=e_{11},e_{12},e_{21},e_2=e_{22}$ with $e_{st}e_{uv}=\delta_{tu}e_{sv}$. 
 Applying the same construction to $B$, we can introduce $\bar{B}$ which is such that $\bar{A}=A \ast_B \bar{B}$. 

Second, we consider the algebra $A^f$  constructed from $\bar{A}$ by dismissing elements of $e_2 \bar{A} + \bar{A} e_2$. 
We call $A^f$ the \emph{fusion algebra of $A$} obtained by fusing $e_2$ onto $e_1$. It is explicitly given by  
\begin{equation}
  A^f=\, \epsilon \bar{A} \epsilon\,, \qquad \text{for } \epsilon=1-e_2\,.
\end{equation}
By construction, $A^f$ is a $B^f$-algebra for  $B^f=\epsilon \bar{B} \epsilon$. 
Using the map 
\begin{equation} \label{Eq:prfus}
 A \to A^f\, : \, a \mapsto a^f:=\epsilon a \epsilon + e_{12}ae_{21} + e_{12}a \epsilon + \epsilon a e_{21}\,,
\end{equation}
we can get a convenient set of generators in $A^f$, as observed in \cite[\S 5.3]{VdB1}. 
\begin{lem} \label{AfGenerat} 
  Generators of $A^f$ can be chosen to be of the following four types:  
\begin{subequations}
  \begin{align}
   (\text{first type})&\qquad\qquad t^f=t\,, \qquad t \in \epsilon A \epsilon\,, \label{type1}\\
   (\text{second type})&\qquad\qquad u^f=e_{12}u\,, \qquad u \in e_2 A \epsilon\,,\label{type2} \\
   (\text{third type})&\qquad\qquad v^f=v e_{21}\,, \qquad v \in \epsilon A e_2\,, \label{type3} \\
   (\text{fourth type})&\qquad\qquad w^f=e_{12} w e_{21}\,, \qquad w \in e_2 A e_2\,. \label{type4}
  \end{align}
\end{subequations}
\end{lem}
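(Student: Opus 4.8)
The plan is to realise $A^f=\epsilon\bar A\epsilon$ as a corner algebra of $\bar A$ for the two orthogonal idempotents $\epsilon=1-e_2$ and $e_2$, and to prove that the four listed families generate it by a ``walk and insertion'' argument on the states $\{\epsilon,e_2\}$. First I would fix a convenient generating set of $\bar A$. Since $A$ is finitely generated over $B$ and $I$ is finite, I may replace any generating set of $A$ by the finite set of its Peirce components for the coarse decomposition $1=\epsilon+e_2$; thus I assume $A$ is generated by homogeneous elements $g$, each lying in one of $\epsilon A\epsilon$, $\epsilon A e_2$, $e_2 A\epsilon$, $e_2 A e_2$. By construction $\bar A$ is then generated by these homogeneous elements together with the matrix units $e_{12},e_{21}$, so $A^f=\epsilon\bar A\epsilon$ is spanned by \emph{corner words} $\epsilon g_1 g_2\cdots g_n\epsilon$. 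I would first record that the four listed elements do lie in $A^f$: using $\epsilon e_{12}=e_{12}=e_{12}e_2$ and $e_2 e_{21}=e_{21}=e_{21}\epsilon$ one checks $\epsilon t^f\epsilon=t^f$, $\epsilon u^f\epsilon=u^f$, and so on. Moreover, evaluating \eqref{Eq:prfus} on a homogeneous generator shows that $g^f$ is exactly $t^f,v^f,u^f$ or $w^f$ according to the Peirce type of $g$, which is the origin of the four types.

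The core of the argument is the decomposition of a corner word. In $\epsilon g_1\cdots g_n\epsilon$ I insert the resolution $1=\epsilon+e_2$ in every gap and expand, obtaining a sum of products of \emph{segments} $f g_i f'$ with $f,f'\in\{\epsilon,e_2\}$; since $f=f^2$, consecutive segments may be grouped so that each shares its connecting idempotent with the next. Homogeneity forces every nonzero segment to be one of: a loop $t\in\epsilon A\epsilon$ at $\epsilon$; an $e_2$-loop $w\in e_2 A e_2$; an edge $\epsilon\to e_2$ equal to $v\in\epsilon A e_2$ or to $e_{12}$; or an edge $e_2\to\epsilon$ equal to $u\in e_2 A\epsilon$ or to $e_{21}$. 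Each term of the expansion is then a walk on $\{\epsilon,e_2\}$ starting and ending at $\epsilon$, and I would cut it at its visits to $\epsilon$, decomposing it into loops $t$ (already first-type generators) and \emph{excursions} $\epsilon\to e_2\to\cdots\to e_2\to\epsilon$.

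The decisive step turns each excursion into a product of the four types by inserting $e_{21}e_{12}=e_2$ at every internal $e_2$-state and regrouping. An excursion $X\,w_1\cdots w_m\,Y$, with $X\in\{v,e_{12}\}$, each $w_j\in e_2 A e_2$, and $Y\in\{u,e_{21}\}$, becomes
\begin{equation*}
 \big(X e_{21}\big)\big(e_{12} w_1 e_{21}\big)\cdots\big(e_{12} w_m e_{21}\big)\big(e_{12} Y\big),
\end{equation*}
where each regrouping is legitimate because multiplication by $e_2=e_{21}e_{12}$ is the identity on the $e_2$-corner joining two factors. The interior factors $e_{12}w_j e_{21}=w_j^f$ are of the fourth type, while the boundary factors satisfy $X e_{21}\in\{v^f,\,e_{12}e_{21}\}=\{v^f,\,e_1\}$ and $e_{12}Y\in\{u^f,\,e_{12}e_{21}\}=\{u^f,\,e_1\}$, so each is a third-/second-type generator or the idempotent $e_1\in\epsilon A\epsilon$, which is itself a first-type element. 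Hence every excursion, and therefore every corner word, is a product of elements of the four types, so they generate $A^f$. I expect the main obstacle to be precisely this bookkeeping: checking that the $e_{21}e_{12}$-insertions are valid identities on the relevant corner and that the excursion endpoints, where a boundary segment may be a matrix unit rather than a generator, are correctly absorbed into first-type elements. The preceding reductions (homogeneity of the generators and the spanning of $A^f$ by corner words) are routine.
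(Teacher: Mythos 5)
Your proof is correct. Note that the paper does not prove this lemma at all: it quotes it from Van den Bergh \cite[\S 5.3]{VdB1}, where the statement is recorded with essentially the argument you give. Your walk-and-insertion scheme is exactly the standard mechanism implicit there: reduce to Peirce-homogeneous generators of $A$, note $\bar A$ is generated by these together with $e_{12},e_{21}$, expand corner words via $1=\epsilon+e_2$, and regroup each excursion through $e_2$ using $e_{21}e_{12}=e_2$ and $e_{12}e_{21}=e_1$, with the boundary cases ($X=e_{12}$ or $Y=e_{21}$) correctly absorbed into the first-type element $e_1\in\epsilon A\epsilon$; all the matrix-unit identities you invoke ($\epsilon e_{12}=e_{12}=e_{12}e_2$, $e_2e_{21}=e_{21}=e_{21}\epsilon$) hold in $\bar A$, and your observation that \eqref{Eq:prfus} sends a homogeneous element to exactly one of the four types correctly explains the notation $t^f,u^f,v^f,w^f$.
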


\subsubsection{Fusion and quivers} \label{sss:Quiver}

We  fix some terminology related to quivers, i.e. directed graphs, which will be used throughout the text. 
A quiver $Q$ consists of a vertex set $I$ and an arrow set $\{a \in Q\}$. We define the head and tail maps $h,t:Q \to I$ which assign to any arrow $a$ its head (ending vertex) $h(a)$ and its tail (starting vertex) $t(a)$. In other words, $a:t(a)\to h(a)$. The double $\bar{Q}$ of $Q$ is obtained by adding to $Q$ the arrows $a^\ast:h(a)\to t(a)$ for each $a \in Q$. We then extend $t,h$ to $\bar{Q}$ and, if we extend the map $a\to a^\ast$ defined on $Q\subset \bar{Q}$ to $\bar{Q}$ by setting $(a^\ast)^\ast=a$ for $a^\ast \in \Bar Q \setminus Q$, we see that $h(a)=t(a^\ast)$ for each $a \in \bar{Q}$. 
The path algebra $\kk Q$ of a quiver $Q$ is generated by symbols $e_s$ for $s \in I$, and  $a\in Q$, subject to the relations 
$e_s e_t=\delta_{st}e_s$, $a=e_{t(a)}ae_{h(a)}$ (this implies that we read paths from left to right), where the multiplication is given by concatenation of paths.

Let us now see what is the meaning of fusion on a quiver. 
Fix a quiver $Q$ with vertex set $I=\{1,\ldots,k\}$ for some $k \geq 2$.  
The path algebra $\kk Q$ of $Q$ is an algebra over $B=\oplus_{s\in I}\kk e_s$, where $e_s$ denotes the idempotent attached to the $s$-th vertex. 
We can form the algebra $(\kk Q)^f$ as in \ref{ssFus} obtained by fusing $e_2$ onto $e_1$. 
We can see that $(\kk Q)^f$ is an algebra over $B^f=\oplus_{s\in I\setminus \{2\}}\kk e_s$ generated by the following elements 
\begin{equation*}
  \begin{aligned}
&a\,\,\,\text{with }a\in Q \text{ such that } t(a),h(a)\neq 2\,, \\
&e_{12}a\,\,\,\text{with }a\in Q \text{ such that } t(a)=2,\,h(a)\neq 2\,,\\
&a e_{21}\,\,\,\text{with }a\in Q \text{ such that } t(a)\neq 2,\,h(a)= 2\,, \\ 
&e_{12}ae_{21}\,\,\,\text{with }a\in Q \text{ such that } t(a),h(a)= 2\,, \quad 
  \end{aligned}
\end{equation*}
which are images of the arrows in $Q$, of the four types given in Lemma \ref{AfGenerat}. 
At the same time, we can form the quiver $Q^f$ with vertex set $I^f=I\setminus \{2\}$ by fusing together the vertices $1$ and $2$ in $Q$. 
The corresponding path algebra $\kk (Q^f)$ is easily identified with $(\kk Q)^f$, so that the fusion operation is the analogue at the level of the path algebras of identifying two vertices in a quiver.

\begin{rem}
By repeatedly using the construction from \ref{ssFus}, we can  obtain an algebra from $A$ by identification of several idempotents.  
Note that there is a direct way to define this algebra using quivers (with relations), see \cite[\S2.1]{Br01}. 
We will not follow this more general approach due to technicalities arising when performing fusion in double quasi-Poisson algebras, see Proposition \ref{Pr:IsoFusqHam}.  
\end{rem}

\subsubsection{Fusion and double brackets}\label{ssFusDB}

We now assume that $A$ is endowed with a $B$-linear double bracket. 
As noted in \cite[\S 2.5]{VdB1}, the double bracket uniquely extends from $A$ to $\bar A$ by requiring it to be $\bar{B}$-linear, and it can then be restricted to $A^f$. 
If $a^f,b^f\in A^f$ are two generators as in Lemma \ref{AfGenerat}, there exists $a,b\in A$ and $e_a,e_b \in \{\epsilon,e_{12}\}$, $f_a,f_b \in \{\epsilon,e_{21}\}$, such that $a^f=e_a a f_a$, $b^f=e_b b f_b$. We can then define the double bracket induced by $A$ onto $A^f$ using the following identity
\begin{equation} \label{Eq:BrFus}
 \dgal{a^f, b^f}=e_b \dgal{a,b}' f_a \otimes e_a \dgal{a,b}'' f_b\,.
\end{equation}
In \eqref{Eq:BrFus}, the double bracket on the left-hand side is the one induced in $A^f$, while the double bracket on the right-hand side is the original one defined in $A$. 

The next result shows that, up to isomorphism, the fusion algebra and its induced double bracket only depend on the unordered choice of idempotents $\{e_1,e_2\}$.  
\begin{lem} \label{L:IsoId}
 Let $A_1=A^f_{e_2 \to e_1}$ be the fusion algebra obtained by fusing $e_2$ onto $e_1$, and let $A_2=A^f_{e_1 \to e_2}$ be the fusion algebra obtained by fusing $e_1$ onto $e_2$. Then, the identity map on $A$ induces an isomorphism of double brackets $A_1 \to A_2$. 
\end{lem}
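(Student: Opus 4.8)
The plan is to realise both fusion algebras as corners of one and the same enlarged algebra and then to interchange the corners by an inner automorphism. The starting point is that the construction of $\bar A$ in \ref{ssFus} is symmetric in $e_1$ and $e_2$, since one simply adjoins $\Mat_2(\kk)$ to $A$; hence $A_1=\epsilon\bar A\epsilon$ with $\epsilon=1-e_2$ and $A_2=\epsilon'\bar A\epsilon'$ with $\epsilon'=1-e_1$ are two corners of the \emph{same} algebra $\bar A$, and by \eqref{Eq:BrFus} the double bracket on each is the restriction to that corner of the (unique $\bar B$-linear) extension of $\dgal{-,-}$ from $A$ to $\bar A$. It therefore suffices to exhibit an automorphism of $\bar A$, coming from an element of $\bar B$, that carries $\epsilon\bar A\epsilon$ onto $\epsilon'\bar A\epsilon'$, matches the two embeddings of $A$, and preserves the bracket.

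First I would introduce $\sigma=e_{12}+e_{21}+\he\in\bar B$ and verify that it is an involution ($\sigma^2=1$) implementing the transposition of the two idempotents: $\sigma e_1\sigma=e_2$, $\sigma e_2\sigma=e_1$ and $\sigma e_s\sigma=e_s$ for $s\neq 1,2$; in particular $\sigma\epsilon\sigma=\epsilon'$. Because $\sigma$ is a unit of $\bar A$, conjugation $\phi:=\operatorname{Ad}_\sigma$ is an algebra automorphism of $\bar A$, and the identity $\sigma\epsilon\sigma=\epsilon'$ shows that $\phi$ restricts to an algebra isomorphism $A_1=\epsilon\bar A\epsilon\to\epsilon'\bar A\epsilon'=A_2$ whose inverse is again $\operatorname{Ad}_\sigma$. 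On base rings $\phi$ sends the retained idempotent $e_1$ of $A_1$ to the retained idempotent $e_2$ of $A_2$ and fixes the remaining $e_s$, which is the identification of the respective base rings under which the statement is to be read.

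To see that $\phi$ is induced by $\operatorname{id}_A$, I would test it on the generators of \eqref{Eq:prfus}. Writing the image of $a\in A$ in $A_1$ as $a^{f}=(\epsilon+e_{12})\,a\,(\epsilon+e_{21})$, a short computation with the matrix relations gives $\sigma(\epsilon+e_{12})=\epsilon'+e_{21}$ and $(\epsilon+e_{21})\sigma=\epsilon'+e_{12}$, whence $\phi(a^{f})=(\epsilon'+e_{21})\,a\,(\epsilon'+e_{12})$, which is precisely the image of $a$ under fusion of $e_1$ onto $e_2$. Thus $\phi$ agrees with the identity on the two embedded copies of $A$ and, since these generate, it is the map induced by $\operatorname{id}_A$.

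It remains to check the bracket condition \eqref{Eq:Morph}, and here the only genuine input is the twisting rule for a $\bar B$-linear double bracket, namely $\dgal{p_1 a q_1,p_2 b q_2}=(p_2\dgal{a,b}'q_1)\otimes(p_1\dgal{a,b}''q_2)$ for $p_i,q_i\in\bar B$, which follows from the derivation rules \eqref{Eq:outder}--\eqref{Eq:inder} together with $\bar B$-linearity (and of which \eqref{Eq:BrFus} is the special case used to define the induced brackets). Applying it with all four elements equal to $\sigma$ gives $\dgal{\sigma x\sigma,\sigma y\sigma}=(\sigma\otimes\sigma)\dgal{x,y}(\sigma\otimes\sigma)=(\phi\otimes\phi)\dgal{x,y}$ in $\bar A$; since the brackets on $A_1$ and $A_2$ are restrictions of the one on $\bar A$, this is exactly \eqref{Eq:Morph}, and $\phi$ is an isomorphism of double brackets. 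The main obstacle is conceptual rather than computational: one must first recognise that both induced brackets live inside the common bracket on $\bar A$ and that $\sigma\in\bar B$, for then $\bar B$-linearity makes conjugation by $\sigma$ automatically bracket-preserving; the remaining verifications are elementary manipulations of matrix units.
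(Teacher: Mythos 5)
Your proof is correct, and it takes a genuinely different route from the paper's. The paper defines the map $\phi:A_1\to A_2$ by hand on each of the four generator types of Lemma \ref{AfGenerat}, observes $\phi\circ\pi_1=\pi_2$, and verifies \eqref{Eq:Morph} case by case via the induced-bracket formula \eqref{Eq:BrFus}. You instead note that $A_1=\epsilon\bar A\epsilon$ and $A_2=\epsilon'\bar A\epsilon'$ are corners of the \emph{same} algebra $\bar A$, both induced brackets are restrictions of the single $\bar B$-linear extension, and conjugation by the involution $\sigma=e_{12}+e_{21}+\he\in\bar B$ swaps the corners; one checks (as you do, and as I verified on all four generator types) that $\operatorname{Ad}_\sigma$ coincides with the paper's $\phi$, and your identity $\phi(a^f)=(\epsilon'+e_{21})a(\epsilon'+e_{12})$ is exactly the paper's condition $\phi\circ\pi_1=\pi_2\circ\id_A$. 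The twisting rule $\dgal{p_1aq_1,p_2bq_2}=(p_2\dgal{a,b}'q_1)\otimes(p_1\dgal{a,b}''q_2)$ for $p_i,q_i\in\bar B$ does follow from \eqref{Eq:outder}--\eqref{Eq:inder} and $\bar B$-linearity, so with $p_i=q_i=\sigma$ bracket preservation is automatic rather than checked case by case. What each approach buys: yours is shorter and conceptual — the symmetry in $e_1,e_2$ is manifest, and any base automorphism implemented by a unit of $\bar B$ preserves a $\bar B$-linear double bracket for free. The paper's explicit generator-by-generator formulas, though more laborious, are reused downstream: they track moment maps in Lemma \ref{L:IsoFusHam0}, and they serve as the template for the quasi-Poisson analogue (Lemma \ref{L:IsoFusqHam0}), where your conjugation trick would \emph{not} suffice — there the fusion correction $\dgal{-,-}_{fus}$ breaks the symmetry and the isomorphism must be twisted by an element $\Phi_2$ satisfying \eqref{Phim}, so the explicit approach is the one that generalises. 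For the present lemma, however, your argument is complete.
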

\begin{proof}
 We let $\he=1-e_1-e_2$ and note that $A_1=(1-e_2)\bar A (1-e_2)$ and $A_2=(1-e_1)\bar A (1-e_1)$. We can define a map $\phi:A_1 \to A_2$  which is given on generators of first type \eqref{type1} as 
 \begin{equation*}
  \phi(t)=t,\,\,\text{if } t \in \he A \he; \quad 
  \phi(t)=e_{21}t,\,\,\text{if } t \in e_1 A \he; \quad 
  \phi(t)=te_{12},\,\,\text{if } t \in \he A e_1; \quad 
  \phi(t)=e_{21}te_{12},\,\,\text{if } t \in e_1 A e_1; 
 \end{equation*}
on generators of second type  \eqref{type2} as 
 \begin{equation*}
  \phi(e_{12}u)=u,\,\,\text{if } u \in e_2 A \he; \quad 
  \phi(e_{12}u)=ue_{12},\,\,\text{if } u \in e_2 A e_1; 
 \end{equation*}
on generators of third type  \eqref{type3} as  
  \begin{equation*}
  \phi(ve_{21})=v,\,\,\text{if } v \in \he A e_2; \quad 
  \phi(ve_{21})=e_{21}v,\,\,\text{if } v \in e_1 A e_2; 
 \end{equation*}
 on generators of fourth type \eqref{type4} as
   \begin{equation*}
  \phi(e_{12}we_{21})=w,\,\,\text{if } w \in e_2 A e_2.
 \end{equation*}
 The map $\phi$ is easily seen to be the image of the identity under the two projections $\pi_1 : A \to A_1$, $\pi_2 : A \to A_2$ given by \eqref{Eq:prfus}, i.e. $\phi \circ \pi_1 = \pi_2 \circ \id_A$. Moreover, swapping the labels $1,2$ provides the  inverse  of $\phi$. Thus, we only need to check that it is a morphism of double brackets. 
 
We can decompose $A$ as $\bigoplus_{i,j=0,1,2} e'_{i}Ae'_j$ if we set  $e'_0=\he$ and $e'_{i}=e_{i}$ for $i=1,2$. 
Without loss of generality, take  $a,b \in A$ belonging to one of these subsets. Then, there exists $e_a^1,e_b^1 \in \{e_{12},e_1,\he\}$, $f_a^1,f_b^1 \in \{e_{21},e_1,\he\}$, such that  under the projection $\pi_1$ the elements $e_a^1 a f_a^1$ and $e_b^1 b f_b^1$ are generators of $A_1$, see Lemma \ref{AfGenerat}. 

In the same way, there exists $e_a^2,e_b^2 \in \{e_{21},e_2,\he\}$, $f_a^2,f_b^2 \in \{e_{12},e_2,\he\}$, such that   the elements $e_a^2 a f_a^2$ and $e_b^2 b f_b^2$ obtained from the projection $\pi_2$ are generators of $A_2$. In particular, $\phi \circ \pi_1 = \pi_2$ implies that 
\begin{equation*}
 \phi(e_a^1)=e_a^2,\,\, \phi(e_b^1)=e_b^2,\,\, \phi(f_a^1)=f_a^2,\,\, \phi(f_b^1)=f_b^2\,.
\end{equation*}

Using the identity \eqref{Eq:BrFus} for the double bracket $\dgal{-,-}_k$ induced by $\dgal{-,-}$ in $A_k$   for $k=1,2$, we get 
\begin{equation*}
 \dgal{e_a^k a f_a^k, e_b^k b f_b^k}_k=e_b^k \dgal{a,b}' f_a^k \otimes e_a^k \dgal{a,b}'' f_b^k\,,
\end{equation*}
where the double bracket on the right-hand side is taken in $A$. 
We can then directly see that \eqref{Eq:Morph} is satisfied.  
\end{proof}

Fusion also preserves morphisms of double brackets.
\begin{lem} \label{L:IsoFus}
 Let $\phi:A_1 \to A_2$ be a morphism of double brackets over $B$. Let $A^f_1=(A_1)^f_{e_2 \to e_1}$, $A^f_2=(A_2)^f_{e_2 \to e_1}$, be the fusion algebras with double brackets  obtained by fusing $e_2$ onto $e_1$. Then $\phi$ induces a morphism of double brackets $\phi^f : A_1^f \to A_2^f$. 
\end{lem}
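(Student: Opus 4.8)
The plan is to extend $\phi$ to a morphism of double brackets $\bar\phi:\bar{A}_1\to\bar{A}_2$ between the extended algebras, fixing the adjoined matrix units, and then to restrict $\bar\phi$ to the corner subalgebras $A_i^f=\epsilon\bar{A}_i\epsilon$. First I would construct $\bar\phi$ from the amalgamated free product presentation $\bar{A}_i=A_i\ast_B\bar{B}$ recalled in \ref{ssFus}: since $\phi$ is a $B$-algebra homomorphism and the identity of $\bar{B}$ agrees with $\phi$ on the common subalgebra $B$, the universal property of the amalgamated product yields a unique $\bar{B}$-algebra homomorphism $\bar\phi:\bar{A}_1\to\bar{A}_2$ restricting to $\phi$ on $A_1$ and to the identity on $\bar{B}$. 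In particular $\bar\phi$ fixes $e_1,e_2,e_{12},e_{21}$ and $\he=1-e_1-e_2$, and hence $\bar\phi(\epsilon)=\epsilon$ for $\epsilon=1-e_2$.

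Second, I would show that $\bar\phi$ is a morphism of double brackets for the $\bar{B}$-linear extensions of $\dgalU{-,-}$ and $\dgalD{-,-}$ described in \ref{ssFusDB}. Both sides of \eqref{Eq:Morph} are $\kk$-bilinear, and — because $\bar\phi$ is an algebra homomorphism so that $(\bar\phi\otimes\bar\phi)$ intertwines the outer and inner $A$-bimodule multiplications — they behave as derivations in each argument in the sense of the rules \eqref{Eq:outder} and \eqref{Eq:inder}. It therefore suffices to verify \eqref{Eq:Morph} on a generating set of $\bar{A}_1$: on the generators coming from $A_1$ the identity holds because $\phi$ is already a morphism of double brackets, while on the matrix units $e_{12},e_{21}$ and the idempotents, which lie in $\bar{B}$, both sides vanish by $\bar{B}$-linearity. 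Propagating through products and linear combinations then gives \eqref{Eq:Morph} on all of $\bar{A}_1$.

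Third, since $\bar\phi(\epsilon)=\epsilon$, the map $\bar\phi$ restricts to a $B^f$-algebra homomorphism $\phi^f:=\bar\phi|_{A_1^f}:A_1^f\to A_2^f$. To see that $\phi^f$ is a morphism of double brackets for the induced brackets \eqref{Eq:BrFus}, I would take generators $a^f=e_aaf_a$ and $b^f=e_bbf_b$ of $A_1^f$ as in Lemma \ref{AfGenerat}, with $e_a,e_b\in\{\epsilon,e_{12}\}$ and $f_a,f_b\in\{\epsilon,e_{21}\}$. As $\phi$ fixes the idempotents, $\phi^f(a^f)=e_a\phi(a)f_a$ is a generator of $A_2^f$ of the same type, so \eqref{Eq:BrFus} applied in $A_2^f$ gives $\dgalD{\phi^f(a^f),\phi^f(b^f)}=e_b\dgalD{\phi(a),\phi(b)}'f_a\otimes e_a\dgalD{\phi(a),\phi(b)}''f_b$. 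Substituting the morphism identity $\dgalD{\phi(a),\phi(b)}=(\phi\otimes\phi)\dgalU{a,b}$ from the previous step, and then recognising the outcome through \eqref{Eq:BrFus} in $A_1^f$ together with the fact that $\phi^f$ fixes the factors $e_a,e_b,f_a,f_b$, identifies it with $(\phi^f\otimes\phi^f)\dgalU{a^f,b^f}$, which is exactly \eqref{Eq:Morph} for $\phi^f$.

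The only genuinely delicate point is the second step: the morphism property is not automatic from the mere construction of $\bar\phi$ and must be checked on a generating set and then propagated by the derivation rules. Everything afterwards is a formal consequence of \eqref{Eq:BrFus} and of the fact that $\bar\phi$ fixes the matrix units.
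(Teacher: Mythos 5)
Your proof is correct, and its decisive computation coincides with the paper's: both verify \eqref{Eq:Morph} on the generators of Lemma \ref{AfGenerat} by unwinding \eqref{Eq:BrFus} in $A_2^f$, inserting the hypothesis $\dgalD{\phi(a),\phi(b)}=(\phi\otimes\phi)\dgalU{a,b}$, and reassembling via \eqref{Eq:BrFus} in $A_1^f$ — your third step is essentially the paper's proof verbatim, and your map $\phi^f(e_aaf_a)=e_a\phi(a)f_a$ is exactly \eqref{Eq:IsoFus}. Where you genuinely differ is in the packaging. The paper simply \emph{sets} $\phi^f$ on generators and never addresses why this is well defined; your detour through $\bar\phi:\bar{A}_1\to\bar{A}_2$, obtained from the universal property of $\bar{A}_i=A_i\ast_B\bar{B}$, settles that point cleanly, since $\phi^f$ is then a restriction of a genuine algebra homomorphism to the corner $\epsilon\bar{A}_1\epsilon$. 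On the other hand, your architecture carries a redundancy: once your second step establishes \eqref{Eq:Morph} for $\bar\phi$ on all of $\bar{A}_1$, the conclusion for $\phi^f$ is immediate — for $x=\epsilon x\epsilon$, $y=\epsilon y\epsilon$ the $\bar{B}$-linearity forces $\dgal{x,y}\in\epsilon\bar{A}\epsilon\otimes\epsilon\bar{A}\epsilon$, and the induced bracket on $A_i^f$ is precisely the restriction of the extended bracket, so no second generator computation is needed; conversely, if you keep the generator computation of your third step, then your second step is superfluous, because the brackets on the right-hand side of \eqref{Eq:BrFus} are the \emph{original} brackets of $A_1,A_2$, and the hypothesis on $\phi$ applies directly without any statement about $\bar\phi$. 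The paper implicitly takes this second, leaner route; either way the argument is sound, and your version buys explicit well-definedness at the cost of one extra (standard) propagation argument through the derivation rules.
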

\begin{proof}
Recall that $A_1^f$ is generated by elements $a^f=e_a a f_a$ for $a\in A_1$ and $e_a \in \{\epsilon, e_{12}\}$, $f_a\in \{\epsilon,e_{21}\}$. We can then set 
\begin{equation} \label{Eq:IsoFus}
 \phi^f: A_1^f \to A_2^f : a^f=e_a a f_a \mapsto \phi^f(a^f)=e_a \phi(a)f_a\,. 
\end{equation}
For $k=1,2$, let $\dgal{-,-}_k$ and $\dgal{-,-}_k^f$ be respectively the double bracket on $A_k$ and its induced double bracket on $A_k^f$. 
We have for any two generators $a^f=e_a a f_a,b^f=e_b b f_b \in A_1^f$ as described above that 
\begin{equation*}
\begin{aligned}
  \dgalD{\phi^f(a^f),\phi^f(b^f)}^f=&e_b\dgalD{\phi(a),\phi(b)}' f_a \otimes e_a \dgalD{\phi(a),\phi(b)}'' f_b \\
  =&e_b\phi(\dgalU{a,b}') f_a \otimes e_a \phi(\dgalU{a,b}'') f_b\\
  =&(\phi^f \otimes \phi^f) e_b \dgalU{a,b}' f_a \otimes e_a \dgalU{a,b}'' f_b \\
 =& (\phi^f \otimes \phi^f) \dgalU{a^f,b^f}\,.
\end{aligned}
\end{equation*}
Here, we used formula \eqref{Eq:BrFus} in $A_2^f$/$A_1^f$ for the first/last equality, the morphism property \eqref{Eq:Morph} in the second equality, and the definition of $\phi^f$ for the remaining equality. 
\end{proof}

In Lemma \ref{L:IsoFus}, it is clear that if $\phi$ is an isomorphism, then so is $\phi^f$. For the remainder of this text, we omit to mention such an extension to the case of an isomorphism in analogous results.

\subsubsection{Identification of idempotents in different algebras} \label{ssOplus}
Let $A_1$ and $A_2$ be algebras endowed with double brackets respectively over $B_1$ and $B_2$, and consider $A_1 \oplus A_2$ as a $(B_1 \oplus B_2)$-algebra. 
It is easy to see that there exists a unique $(B_1 \oplus B_2)$-linear double bracket $\dgal{-,-}^\oplus$ on  $A_1 \oplus A_2$ extending 
$\dgal{-,-}_1$ and $\dgal{-,-}_2$, while it is such that  $\dgal{c_1,c_2}^\oplus=0$ whenever $c_1=(a_1,0)$, $c_2=(0,a_2)$, with $a_1\in A_1$, $a_2\in A_2$. 

For any idempotents $e_1\in B_1$ and $e_2 \in B_2$, we can form the fusion algebra $(A_1\oplus A_2)^f_{e_2 \to e_1}$ by fusing $e_2$ onto $e_1$. This algebra inherits a $(B_1\oplus B_2)^f_{e_2 \to e_1}$-linear double bracket as noted in \ref{ssFusDB}.

\begin{exmp}[Extension by a central element]
Let $A$ be a $\kk$-algebra equipped with a double bracket $\dgal{-,-}$. Let $\kk\langle y\rangle$ be equipped with the trivial double bracket $\dgal{y,y}=0$. 
If we identify the units of these two algebras embedded as $(1,0),(0,1)$ inside $A\oplus \kk\langle y\rangle$, the fusion algebra $A_+:=A \ast_\kk  \kk\langle y\rangle$ hence obtained is endowed with the double bracket $\dgal{-,-}_+$ given for any $a,b \in A$ by 
$\dgal{a,b}_+=\dgal{a,b}$, $\dgal{a,y}_+=0$ and $\dgal{y,y}_+=0$. 
(Note that if $\dgal{-,-}$ is Poisson in the sense of Section \ref{S:Ham}, then  $(A_+,\dgal{-,-}_+)$ is a double Poisson algebra.)
\end{exmp}

\section{Morphisms of double Poisson algebras} \label{S:Ham}

As in Section \ref{S:Base}, all algebras are over $B=\bigoplus_{s\in I}\kk e_s$.  We will frequently identify $I$ with $\{1,\ldots,|I|\}\subset \N$.

\subsection{Basic definitions and properties}

\subsubsection{Definitions} Following Van den Bergh \cite{VdB1}, a  \emph{double Poisson bracket} $\dgal{-,-}$ on $A$ is a double bracket for which  the associated triple bracket $\dgal{-,-,-}$ defined by \eqref{Eq:TripBr} identically vanishes. 
In such a case, we say that $(A,\dgal{-,-})$ (or simply $A$) is a \emph{double Poisson algebra}. 
An element $\mu_s\in e_s A e_s$ is said to satisfy the \emph{additive property for} $e_s$ if, for any $a\in A$, we have  
\begin{equation}  \label{mum}
 \dgal{\mu_s,a}=a e_s \otimes e_s - e_s \otimes e_s a\,.
\end{equation}
Then, we define a \emph{moment map} as an element $\mu=\sum_{s\in I}\mu_s$ with $\mu_s\in e_sAe_s$ such that for all $s\in I$ we have that $\mu_s$ satisfies the additive property for $e_s$ \eqref{mum}. 
Note in particular that $e_s\mu e_t=\delta_{st}\mu_s$ for each $s,t\in I$. 
We call the triple $(A,\dgal{-,-},\mu)$ (or simply $A$ if no confusion can arise) a \emph{Hamiltonian algebra}.

Let $A_1,A_2$ be  $B$-algebras with $B$-linear double brackets $\dgalU{-,-},\dgalD{-,-}$.
If $\phi:A_1 \to A_2$ is a morphism of double brackets and $A_1,A_2$ are double Poisson algebras, we say that $\phi$ is a \emph{morphism of double Poisson algebras}. 
If furthermore $A_1,A_2$ are Hamiltonian algebras with respective moment maps $\mu_1,\mu_2$ satisfying $\phi(\mu_1)=\mu_2$, we say that $\phi$ is a \emph{morphism of Hamiltonian algebras}.

\begin{exmp} \label{Exmp:Q1}
Consider the quiver $Q$ with vertices $\{1,2\}$ and one arrow $a: 1 \to 2$. Let $\bar{Q}$ be its double obtained by adding the arrow $a^\ast:2\to 1$. Put $B=\kk e_1 \oplus \kk e_2$. 
Following Van den Bergh \cite[\S 6.3]{VdB1}, the path algebra $\kk \bar{Q}$ is  a Hamiltonian algebra for the $B$-linear double bracket given on generators by 
\begin{equation} \label{Eq:HamQ1}
 \dgal{a,a}_1=0\,, \,\, \dgal{a^\ast,a^\ast}_1=0\,, \quad \dgal{a,a^\ast}_1=e_2 \otimes e_1\,, \,\, \dgal{a^\ast,a}_1=-e_1 \otimes e_2\,,
\end{equation}
and the moment map $\mu=[a,a^\ast]$. The moment map can be decomposed as $\mu_1=a a^\ast$, $\mu_2=-a^\ast a$. (See \ref{sss:Quiver} for the conventions that we follow with respect to quivers.)

Similarly, consider the quiver $Q^{op}$ with vertices $\{1,2\}$ and one arrow $b:2 \to 1$, and let $\bar{Q}^{op}$ be its double with new arrow $b^\ast:1 \to 2$. We can also consider Van den Bergh's Hamiltonian structure on $\kk \bar{Q}^{op}$ which is given by the $B$-linear double bracket 
\begin{equation} \label{Eq:HamQ1op}
 \dgal{b,b}_2=0\,, \,\, \dgal{b^\ast,b^\ast}_2=0\,, \quad \dgal{b,b^\ast}_2=e_1 \otimes e_2\,, \,\, \dgal{b^\ast,b}_2=-e_2 \otimes e_1\,,
\end{equation}
with moment map $\mu'=[b,b^\ast]$. We can see that $\phi: \kk \bar{Q}\to \kk \bar{Q}^{op}$ given by $\phi(a)=b^\ast$, $\phi(a^\ast)=-b$, 
is an isomorphism of $B$-algebras. One readily checks that 
\begin{equation*}
 \phi^{\otimes 2} \dgal{a,a}_1=0=\dgal{b^\ast,b^\ast}_2\,, \quad 
  \phi^{\otimes 2} \dgal{a^\ast,a^\ast}_1=0=\dgal{b,b}_2\,, \quad 
   \phi^{\otimes 2} \dgal{a,a^\ast}_1=e_2 \otimes e_1=\dgal{b^\ast,-b}_2\,,
\end{equation*}
and $\phi(\mu)=\mu'$. Hence, $\phi$ is an isomorphism of Hamiltonian algebras.
\end{exmp}

\subsubsection{First properties}

The next result shows how injectivity or surjectivity of a morphism of double brackets can guarantee that it is a morphism of double Poisson (resp. Hamiltonian) algebras. 
\begin{lem} \label{L:surjHam}
Let  $\phi:A_1 \to A_2$ be a morphism of double brackets.

\noindent 1. Assume that $\phi$ is surjective as a $B$-algebra homomorphism. 

 1.1. If $A_1$ is a double Poisson algebra, then $A_2$ is a double Poisson algebra. 
 
 1.2. If $A_1$ is a Hamiltonian algebra, then $A_2$ admits a structure of Hamiltonian algebra such that $\phi$ is a morphism of Hamiltonian algebras. 
 
\noindent 2. Assume that $\phi$ is injective as a $B$-algebra homomorphism. 

 2.1. If $A_2$ is a double Poisson algebra, then $A_1$ is a double Poisson algebra. 
 
 2.2. If $A_2$ is a Hamiltonian algebra with moment map $\mu'$ such that $\mu'\in \operatorname{Im}(\phi)$, then $A_1$ admits a structure of Hamiltonian algebra such that $\phi$ is a morphism of Hamiltonian algebras. 
 
\noindent 3. All the structures obtained in 1.--2. are unique.  
\end{lem}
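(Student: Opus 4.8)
The whole statement rests on one observation that I would isolate first: \emph{a morphism of double brackets also intertwines the associated triple brackets}. Concretely, for $\phi:A_1\to A_2$ a morphism of double brackets I claim
\[
\dgalD{\phi(a),\phi(b),\phi(c)}=(\phi\otimes\phi\otimes\phi)\,\dgalU{a,b,c}\qquad\text{for all }a,b,c\in A_1.
\]
To prove this I would expand both sides via \eqref{Eq:TripBr} and treat each of the three summands in turn. For the first summand I use \eqref{Eq:Morph} to write $\dgalD{\phi(b),\phi(c)}=\phi(\dgalU{b,c}')\otimes\phi(\dgalU{b,c}'')$ in Sweedler notation, so that $\dgalD{\phi(a),\dgalD{\phi(b),\phi(c)}'}=\dgalD{\phi(a),\phi(\dgalU{b,c}')}=(\phi\otimes\phi)\dgalU{a,\dgalU{b,c}'}$, and hence the summand equals $\phi^{\otimes 3}(\dgalU{a,\dgalU{b,c}'}\otimes\dgalU{b,c}'')$. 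The other two summands are handled identically, using in addition that $\tau_{(123)}$ commutes with $\phi^{\otimes 3}$ because the latter acts factorwise. This bookkeeping in Sweedler notation is the only genuinely delicate point; everything else reduces to it.

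Granting the intertwining identity, parts 1.1 and 2.1 are immediate. For 2.1, since $A_2$ is double Poisson we get $(\phi^{\otimes 3})\dgalU{a,b,c}=\dgalD{\phi(a),\phi(b),\phi(c)}=0$; as $\phi$ is an injective $\kk$-linear map and $\kk$ is a field, $\phi^{\otimes 3}$ is injective, so $\dgalU{a,b,c}=0$ and $A_1$ is double Poisson. For 1.1, every triple $x,y,z\in A_2$ is of the form $\phi(a),\phi(b),\phi(c)$ by surjectivity, whence $\dgalD{x,y,z}=(\phi^{\otimes 3})\dgalU{a,b,c}=0$.

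For the moment-map statements I would argue along the same lines, now exploiting \eqref{mum} together with $\phi(e_s)=e_s$, which holds since $\phi$ is a $B$-algebra homomorphism. In 1.2 I set $\mu_2:=\phi(\mu_1)=\sum_s\phi((\mu_1)_s)$, observe that $\phi((\mu_1)_s)\in e_sA_2e_s$, and verify \eqref{mum} on an arbitrary $x=\phi(a)\in A_2$ by applying $\phi\otimes\phi$ to the corresponding identity in $A_1$ and using $\phi(e_s)=e_s$. In 2.2 I write $\mu'=\phi(\mu)$ with $\mu$ the (unique, by injectivity) preimage; applying $\phi$ to $e_s\mu e_t$ and using $e_s\mu'e_t=\delta_{st}\mu'_s$ together with injectivity gives $e_s\mu e_t=0$ for $s\ne t$, so $\mu=\sum_s\mu_s$ with $\mu_s:=e_s\mu e_s\in e_sA_1e_s$; the additive property for each $\mu_s$ is then obtained by applying $\phi\otimes\phi$ to the target identity, matching it against \eqref{mum} in $A_2$, and cancelling the injective map $\phi\otimes\phi$. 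In both cases $A_1$ (resp. $A_2$) is already double Poisson by 2.1 (resp. 1.1), since a Hamiltonian algebra is in particular a double Poisson algebra.

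Finally, uniqueness (part 3) costs nothing extra: the double brackets are not newly constructed but are the prescribed ones, so the only data to pin down are the moment maps, and the very requirement that $\phi$ be a morphism of Hamiltonian algebras forces $\mu_2=\phi(\mu_1)$ in case 1.2 and $\phi(\mu)=\mu'$ in case 2.2, the latter determining $\mu$ uniquely by injectivity. I expect the intertwining identity for triple brackets to be the main technical obstacle, the remaining steps being formal consequences of the injectivity or surjectivity of $\phi$ and of its tensor powers.
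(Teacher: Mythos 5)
Your proposal is correct and follows essentially the same route as the paper's proof: you first establish the intertwining identity $(\phi\otimes\phi\otimes\phi)\circ\dgalU{-,-,-}=\dgalD{-,-,-}\circ(\phi\times\phi\times\phi)$ by expanding \eqref{Eq:TripBr} with \eqref{Eq:Morph}, then deduce 1.1 from surjectivity, 1.2 by transporting the moment map via \eqref{mum}, and the injective cases from the injectivity of $\phi^{\otimes 2}$ and $\phi^{\otimes 3}$ (which the paper likewise invokes), with uniqueness forced by the constructions. If anything, your treatment of 2.2 (recovering the decomposition $\mu=\sum_s\mu_s$ from $\phi(e_s\mu e_t)=\delta_{st}\mu'_s$ and cancelling $\phi\otimes\phi$) spells out details the paper compresses into ``this is similar''.
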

\begin{proof}
First, let us consider for $1 \leq i \leq 3$ the element $a_{1,i}\in A_1$,  with image  $a_{2,i}:=\phi(a_{1,i})\in A_2$. 
If we denote by $\sigma$ the permutation $(123)$, we get by \eqref{Eq:TripBr} and \eqref{Eq:Morph}
\begin{equation*}
\begin{aligned}
   \dgalD{a_{2,1},a_{2,2},a_{2,3}}
 =&\sum_{i \in \Z_3}\tau_{\sigma}^i\dgalD{\phi(a_{1,\sigma^i(1)}),\dgalD{\phi(a_{1,\sigma^i(2)}),\phi(a_{1,\sigma^i(3)})}'} \otimes 
 \dgalD{\phi(a_{1,\sigma^i(2)}),\phi(a_{1,\sigma^i(3)})}'' \\
 =&\sum_{i \in \Z_3}\tau_{\sigma}^i(\phi\otimes \phi \otimes \phi)\dgalU{a_{1,\sigma^i(1)},\dgalU{a_{1,\sigma^i(2)},a_{1,\sigma^i(3)}}'} \otimes 
 \dgalU{a_{1,\sigma^i(2)},a_{1,\sigma^i(3)}}'' \\
 =&(\phi \otimes \phi \otimes \phi) \dgalU{a_{1,1},a_{1,2},a_{1,3}}\,.
\end{aligned}
\end{equation*}
This yields in particular the identity of associated triple brackets 
\begin{equation} \label{Eq:RelTripBr}
 (\phi\otimes \phi\otimes \phi) \circ \dgalU{-,-,-}=\dgalD{-,-,-}\circ (\phi\times \phi\times \phi)\,. 
\end{equation}

1.1. If $\dgalU{-,-}$ is Poisson, $\dgalU{-,-,-}$ vanishes and \eqref{Eq:RelTripBr} yields that $\dgalD{-,-}$ is Poisson.   

1.2. Since $A_1$ has a moment map $\mu=\sum_s\mu_s$, the element $\phi(\mu)=\sum_s  \phi(\mu_s)$ is a moment map. Indeed, for any $s  \in I$
\begin{equation*}
 \dgalD{\phi(\mu_s),a_{2,1}}=(\phi \otimes \phi) \dgalU{\mu_s,a_{1,1}}=
 (\phi \otimes \phi) \left( a_{1,1}e_s \otimes e_s - e_s \otimes e_s a_{1,1} \right)
 =  a_{2,1}e_s \otimes e_s - e_s \otimes e_s a_{2,1} \,,
\end{equation*}
where we have used that $\mu_s$ satisfies the additive property for $e_s$ with respect to $\dgalU{-,-}$. 

2.1.-2.2. This is similar, and we need the fact that $\phi^{\otimes 2},\phi^{\otimes 3}$ are injective. 

3. The uniqueness of the structures is a consequence of the constructions. 
\end{proof}

Given a double Poisson algebra $(A,\dgal{-,-})$, we introduce  the set of \emph{Casimir elements} of $A$ by 
\begin{equation}
\Cas(A)=\{a\in A\mid \dgal{a,b}=0\,\, \text{ for all }b\in A\,\} \,.
\end{equation}
 In the case of a $B$-linear double bracket, we have that $B\subset \Cas(A)$. 

\begin{lem} \label{Lem:UniqAd}
 Let $\phi:A_1 \to A_2$ be an isomorphism of double Poisson algebras. If $A_1,A_2$ are Hamiltonian algebras with moment maps $\mu_1,\mu_2$, 
 then $\mu_2-\phi(\mu_1)\in B$ and $\Cas(A_1)=\Cas(A_2)=B$. 
In particular, a moment map is unique up to the addition of an element of $B$.
\end{lem}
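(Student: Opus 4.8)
The plan is to reduce both assertions to the single statement that $\Cas(A)=B$ for every Hamiltonian algebra $A$. Granting this, note that an isomorphism is in particular surjective, so Lemma~\ref{L:surjHam} (surjective case) tells us that $\phi(\mu_1)$ is a moment map on $A_2$; since $\phi$ is a $B$-algebra map it fixes each $e_s$, so $\phi(\mu_1)=\sum_{s}\phi(\mu_{1,s})$ with $\phi(\mu_{1,s})\in e_sA_2e_s$ satisfying the additive property~\eqref{mum}. As $\mu_2=\sum_s\mu_{2,s}$ satisfies the very same property componentwise, the differences $\mu_{2,s}-\phi(\mu_{1,s})$ have vanishing double bracket against every element of $A_2$; hence $\mu_2-\phi(\mu_1)=\sum_s(\mu_{2,s}-\phi(\mu_{1,s}))\in\Cas(A_2)=B$. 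The final sentence is then the special case $A_1=A_2=A$, $\phi=\id_A$: two moment maps on the same algebra differ by an element of $B$.

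It remains to prove $\Cas(A)\subseteq B$ (the reverse inclusion $B\subseteq\Cas(A)$ being valid for any $B$-linear double bracket). Let $c\in\Cas(A)$. For each $s\in I$ I would evaluate the additive property~\eqref{mum} at $a=c$ and simultaneously invoke cyclic antisymmetry~\eqref{Eq:cycanti}: the former gives $\dgal{\mu_s,c}=ce_s\otimes e_s-e_s\otimes e_sc$, while the latter gives $\dgal{\mu_s,c}=-\dgal{c,\mu_s}^\circ=0$ because $c$ is Casimir. Therefore
\begin{equation*}
 ce_s\otimes e_s=e_s\otimes e_sc\quad\text{in }A\otimes A,\qquad\text{for every }s\in I.
\end{equation*}

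To extract information, decompose $c=\sum_{r,t\in I}e_rc\,e_t$ and compare the two sides in the direct sum decomposition of $A\otimes A$ into pieces $e_pAe_q\otimes e_{p'}Ae_{q'}$. On the left the second tensor factor always lies in $e_sAe_s$, whereas on the right the first factor always lies in $e_sAe_s$; since the $e_s$ are nonzero idempotents and $\otimes=\otimes_\kk$, matching components forces $e_rc\,e_s=0$ for $r\neq s$ and $e_sc\,e_t=0$ for $t\neq s$, so $c=\sum_se_sc\,e_s$ is diagonal. The surviving diagonal identity $e_sc\,e_s\otimes e_s=e_s\otimes e_sc\,e_s$ in $e_sAe_s\otimes e_sAe_s$ then forces $e_sc\,e_s\in\kk e_s$: expanding $e_sc\,e_s$ in a $\kk$-basis of $e_sAe_s$ that contains the unit $e_s$, the coefficients of the non-unit basis vectors must vanish. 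Summing over $s$ yields $c\in\bigoplus_s\kk e_s=B$.

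The only genuinely delicate step is this last extraction. The whole argument rests on the elementary but essential fact that, over a field, an identity of the form $x\otimes e_s=e_s\otimes x$ in $e_sAe_s\otimes_\kk e_sAe_s$ can hold only when $x\in\kk e_s$, together with the accompanying component-matching that eliminates the off-diagonal part of $c$. Once $\Cas(A)=B$ is established, the moment-map statements follow immediately as above.
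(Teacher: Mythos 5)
Your proof is correct and takes essentially the same route as the paper's: the componentwise differences $\mu_{2,s}-\phi(\mu_{1,s})$ are Casimirs because both terms satisfy the additive property \eqref{mum}, and $\Cas(A)\subseteq B$ is obtained by pairing a Casimir $c$ against the moment map (via cyclic antisymmetry) to get $ce_s\otimes e_s=e_s\otimes e_sc$, which forces $c\in B$. Your only deviations are cosmetic: you route the first step through Lemma \ref{L:surjHam} (whose proof is exactly the direct computation the paper performs here), and you spell out the tensor-component matching and basis argument that the paper leaves implicit in the line ``$ce_s=\lambda_se_s=e_sc$''.
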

\begin{proof}
Let us decompose $\mu_{1}\in A_1$ with the idempotents as $\sum_{s\in I} \mu_{1,s}$ and do the same for $\mu_2\in A_2$. 
We first note that for any $s\in I$, due to the additive property for $e_s$ \eqref{mum}, we have for any $a \in A_2$ that  
 \begin{equation*}
\dgalD{\mu_{2,s}-\phi(\mu_{1,s}),a}  =\dgalD{\mu_{2,s},a}- \phi^{\otimes 2}\dgalU{\mu_{1,s},\phi^{-1}(a)}=0\,.
 \end{equation*}
Thus, $\mu_{2,s}-\phi(\mu_{1,s})\in \Cas(A_2)$ by definition. 

Let us now show that $\Cas(A_1)\subset B$, since we already know the opposite inclusion (the case of $A_2$ is obtained in the same way). 
If $c\in \Cas(A_1)$, we have for any $s\in I$ that 
\begin{equation*}
 0=\dgal{\mu_s,c}=c e_s\otimes e_s - e_s \otimes c e_s \,,
\end{equation*}
where we used in the first equality that $\dgal{-,c}=0$ because $c$ is a Casimir element, while in the second we use the property \eqref{mum} of the moment map. This equality implies that $c e_s = \lambda_s e_s = e_s c$ for some $\lambda_s\in \kk$, thus $c=\sum_{s\in I} \lambda_s e_s \in B$. 

For the second part of the statement, it suffices to consider $\phi=\id_{A_1}:A_1\to A_1$. 
\end{proof}

Note that, in general $B \subsetneq \Cas(A)$. For example, any $B$-algebra $A$ can be endowed with the zero double bracket, which is Poisson and such that $\Cas(A)=A$. This does not contradict Lemma \ref{Lem:UniqAd} since the zero double Poisson bracket does not admit a moment map (except if $A=B$).

\subsubsection{Sum of double Poisson algebras}
Recall that we considered the sum of two double brackets in \ref{ssOplus}. 
The following result holds in the presence of double Poisson brackets.
\begin{lem}[\cite{VdB1}] \label{Lem:SumHam}
 If $A_1$ and $A_2$ are endowed with double Poisson brackets, then the double bracket $\dgal{-,-}^\oplus$ on $A_1\oplus A_2$ defined in \ref{ssOplus}  is
 Poisson. If $\mu_1$ and $\mu_2$ are moment maps in $A_1$ and $A_2$, then $A_1\oplus A_2$ is Hamiltonian with $(\mu_1,\mu_2)$ as moment map.
\end{lem}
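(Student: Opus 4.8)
The construction in \ref{ssOplus} already guarantees that $\dgal{-,-}^\oplus$ is a well-defined $(B_1 \oplus B_2)$-linear double bracket, so the plan is only to verify two things: that its associated triple bracket vanishes, and that $(\mu_1,\mu_2)$ satisfies the additive property \eqref{mum}. For the first point I would reduce everything to generators. The triple bracket \eqref{Eq:TripBr} is a derivation in its last entry and is cyclically antisymmetric by \eqref{Eq:TriAnti}, hence it is determined by its values on a generating set; and $A_1 \oplus A_2$ is generated over $B_1 \oplus B_2$ by the images of the generators of $A_1$ and of $A_2$, each of which lies in a single summand. It therefore suffices to show that $\dgal{x,y,z}^\oplus$ vanishes when $x,y,z$ are such generators.

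The main step is then a short case analysis driven by the fact that there are only two summands, so among any three generators at least two lie in the same one. If all three lie in $A_1$ (resp.\ $A_2$), then $\dgal{-,-}^\oplus$ restricts to $\dgal{-,-}_1$ (resp.\ $\dgal{-,-}_2$) and the triple bracket coincides with the one of $A_1$ (resp.\ $A_2$), which vanishes by hypothesis. In the mixed case exactly one generator lies in the opposite summand, and using the cyclic antisymmetry \eqref{Eq:TriAnti} I may assume it is the last argument, say $x,y \in A_1$ and $z \in A_2$. Inspecting the three terms of \eqref{Eq:TripBr}: the first two involve the inner double brackets $\dgal{y,z}^\oplus$ and $\dgal{z,x}^\oplus$, which vanish as they pair an element of $A_1$ with one of $A_2$; in the third term $\dgal{x,y}^\oplus = \dgal{x,y}_1$ has both Sweedler legs in $A_1$, so the outer double bracket again pairs $z \in A_2$ with an element of $A_1$ and vanishes. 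Thus every term is a cross-term and $\dgal{x,y,z}^\oplus = 0$.

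For the moment map I would let $e$ be an idempotent of $B_1 \oplus B_2$; by symmetry assume it comes from $B_1$, say $e=(e_s,0)$, with associated component $\mu_{1,s}=e_s\mu_1 e_s$. Writing $a=(a_1,a_2)$, the cross-term $\dgal{\mu_{1,s},a_2}^\oplus$ vanishes, so $\dgal{\mu_{1,s},a}^\oplus=\dgal{\mu_{1,s},a_1}_1$, which equals $a_1 e_s \otimes e_s - e_s \otimes e_s a_1$ by the additive property of $\mu_1$ in $A_1$. Since $e$ kills the second summand, this is exactly $a e \otimes e - e \otimes e a$ under the embedding $A_1 \hookrightarrow A_1\oplus A_2$, which is \eqref{mum}; the idempotents coming from $B_2$ are handled identically, giving that $(\mu_1,\mu_2)$ is a moment map.

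I do not expect a genuine obstacle here: the whole argument rests on the single mechanism that every term of the triple-bracket formula, and every off-diagonal contribution to a moment-map bracket, is a cross-term forced to vanish by the defining property of $\dgal{-,-}^\oplus$. The only points requiring attention are the bookkeeping of the idempotent embeddings in the last step and the justification that checking the triple bracket on generators suffices.
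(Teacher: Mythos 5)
Your proof is correct, and since the paper states Lemma \ref{Lem:SumHam} without proof (it is quoted from Van den Bergh \cite{VdB1}), your direct verification is exactly the expected argument: the reduction to generators is legitimate because the triple bracket \eqref{Eq:TripBr} is trilinear, a derivation in its last argument, and cyclically invariant by \eqref{Eq:TriAnti} (and it vanishes when an argument lies in $B_1\oplus B_2$ by $B$-linearity), so the exhaustive case analysis — with the odd generator moved to the last slot by cyclicity and every term killed by the cross-term vanishing $\dgal{(a_1,0),(0,a_2)}^\oplus=0$, which holds in both orders thanks to \eqref{Eq:cycanti} — settles the Poisson property. The moment-map check is likewise complete, the only bookkeeping being the one you carry out, namely that $e=(e_s,0)$ annihilates the second summand so the additive property \eqref{mum} for $\mu_{1,s}$ in $A_1$ transports verbatim to $A_1\oplus A_2$.
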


\subsection{Double Poisson brackets and fusion} \label{ss:HamFus} 

We assume that the index set $I$ of $B$ is such that $|I|>1$. 

\subsubsection{Fusion in an algebra} Recall the fusion algebra $A^f=A^f_{e_2 \to e_1}$ defined in \ref{ssFus}. We noted in \ref{ssFusDB} that if $A$ is endowed with a double bracket, then $A^f$ has an induced double bracket. 
\begin{prop} \emph{(\cite[Corollary 2.5.6, Proposition 2.6.6]{VdB1})} \label{Pr:IsoFusHam}
If $A$ is a double Poisson algebra over $B$, then $A^f$ equipped with the induced double bracket is a double Poisson algebra over $B^f=\bigoplus_{s\in I \setminus \{2\}}e_s$. Furthermore, if $\mu$ is a moment map for $A$, then its projection $\mu^f$ under the map \eqref{Eq:prfus} is a moment map for $A^f$.  
\end{prop}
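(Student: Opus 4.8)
The plan is to treat fusion as the two-step procedure of \ref{ssFus}: first extend the double bracket $\bar B$-linearly from $A$ to $\bar A = A \ast_B \bar B$, and then restrict it to the corner $A^f = \epsilon \bar A \epsilon$ with $\epsilon = 1-e_2$. The first thing I would record is that the induced bracket \eqref{Eq:BrFus} on generators $a^f = e_a a f_a$, $b^f = e_b b f_b$ is nothing but the $\bar A$-bracket of these same elements viewed inside $\bar A$; this follows by expanding $\dgal{e_a a f_a, e_b b f_b}$ with the derivation rules \eqref{Eq:outder}--\eqref{Eq:inder} and $\bar B$-linearity, since the factors $e_a, f_a, e_b, f_b \in \bar B$ are simply pulled out of the bracket. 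As $e_a, e_b \in \{\epsilon, e_{12}\}$ and $f_a, f_b \in \{\epsilon, e_{21}\}$ all satisfy $\epsilon e_a = e_a$ and $f_a \epsilon = f_a$, the result lands in $\epsilon \bar A \epsilon \otimes \epsilon \bar A \epsilon = A^f \otimes A^f$; thus $\dgal{-,-}^f$ is genuinely the restriction of $\dgal{-,-}$ to the corner $A^f$.

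For the Poisson property I would first show that $\bar A$ is itself a double Poisson algebra. The triple bracket \eqref{Eq:TripBr} of $\bar A$ is a triderivation, so it suffices to check that it vanishes on the generating set $A \cup \{e_{12}, e_{21}\}$. Whenever one entry is a matrix unit, hence lies in $\bar B$, all three summands of \eqref{Eq:TripBr} vanish by $\bar B$-linearity; and on entries taken from $A$ the triple bracket of $\bar A$ coincides with that of $A$, which vanishes because $A$ is double Poisson. Hence $\bar A$ is double Poisson. Because the induced double bracket preserves $A^f \otimes A^f$ by the previous paragraph, every nested bracket occurring in the triple bracket of $A^f$ can be computed inside $\bar A$, so $\dgal{x,y,z}^f = \dgal{x,y,z}$ for all $x,y,z \in A^f$, which vanishes. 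Therefore $A^f$ is double Poisson over $B^f$.

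For the moment map I would first compute the image $\mu^f$ of $\mu = \sum_{s} \mu_s$ under \eqref{Eq:prfus}. Using the orthogonality relations one finds $\mu_s^f = \mu_s$ for $s \neq 1,2$, that $\mu_1^f = \mu_1$, and that $\mu_2^f = e_{12} \mu_2 e_{21}$, so the $e_1$-component of $\mu^f$ is $\mu_1 + e_{12}\mu_2 e_{21}$ while the other diagonal components are unchanged. For $s \neq 1,2$ the additive property \eqref{mum} of $\mu_s$ extends verbatim to $\bar A$ (both sides vanish on the matrix units since $e_s$ is orthogonal to them), so by restriction $\mu_s$ satisfies \eqref{mum} for $e_s$ in $A^f$. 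The remaining, and main, point is the merged idempotent $e_1$: here I would evaluate $\dgal{\mu_1, b^f}^f$ and $\dgal{e_{12}\mu_2 e_{21}, b^f}^f$ on a generator $b^f = e_b b f_b$ using \eqref{Eq:BrFus} together with the additive properties of $\mu_1$ for $e_1$ and of $\mu_2$ for $e_2$ in $A$, and then add the two contributions. The matrix relations $e_1 \epsilon = e_1$, $e_{12}\epsilon = 0$, $\epsilon e_{21} = 0$, $e_{12}e_{21}=e_1$ produce, for each type of generator, exactly the cancellations
\begin{equation*}
 e_b b e_1 \otimes e_1 f_b + e_b b e_{21} \otimes e_{12} f_b = b^f e_1 \otimes e_1, \qquad
 e_b e_1 \otimes e_1 b f_b + e_b e_{21} \otimes e_{12} b f_b = e_1 \otimes e_1 b^f,
\end{equation*}
so that $\dgal{\mu_1 + e_{12}\mu_2 e_{21}, b^f}^f = b^f e_1 \otimes e_1 - e_1 \otimes e_1 b^f$, which is \eqref{mum} for $e_1$ in $A^f$.

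The main obstacle is precisely this last computation. The temptation is to argue, as for the Poisson property, that the additive property simply restricts from $\bar A$; but it does \emph{not}, because neither $\mu_1$ nor $\mu_2$ satisfies \eqref{mum} in $\bar A$ -- both fail on the adjoined matrix units $e_{12}, e_{21}$. What survives fusion is only the combined element $\mu_1 + e_{12}\mu_2 e_{21}$ at the merged vertex, and establishing its additive property requires the explicit bookkeeping of matrix units above rather than a formal restriction argument. Everything else is routine once the identification of $\dgal{-,-}^f$ with the corner restriction is in place.
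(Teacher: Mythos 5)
Your proof is correct and follows essentially the same route as the cited source: the paper states Proposition \ref{Pr:IsoFusHam} without proof, deferring to Van den Bergh \cite{VdB1}, whose argument likewise extends the bracket $\bar{B}$-linearly to $\bar{A}$, checks vanishing of the triple bracket on the generating set $A\cup\{e_{12},e_{21}\}$, restricts to the corner $\epsilon\bar{A}\epsilon$ using that \eqref{Eq:BrFus} is exactly this restriction, and verifies the additive property \eqref{mum} at the merged idempotent by direct computation with matrix units. Your observation that neither $\mu_1$ nor $\mu_2$ satisfies \eqref{mum} individually in $\bar{A}$, so that the $e_1$-component $\mu_1+e_{12}\mu_2 e_{21}$ of $\mu^f$ under \eqref{Eq:prfus} must be checked by hand on each type of generator (with the routine remark that both sides of \eqref{mum} are outer derivations, so generator-checking suffices), correctly isolates the one non-formal step.
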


\begin{exmp}
Consider the path algebra $A=\kk \bar{Q}$ analysed in Example \ref{Exmp:Q1} with its Hamiltonian algebra structure. Fusing $e_2$ onto $e_1$, we get a Hamiltonian algebra structure on $A^f$ which can be identified with the free algebra $\kk \langle a,a^\ast\rangle$. The induced double bracket is determined by the first two identities of \eqref{Eq:HamQ1} and  $\dgal{a,a^\ast}=1 \otimes 1$. The moment map is $\mu=[a,a^\ast]$. The algebra is the path algebra of the double of the one-loop quiver, and the Hamiltonian structure obtained by fusion is just the one defined by Van den Bergh \cite[\S 6.3]{VdB1}. 
\end{exmp}

\begin{lem} \label{L:IsoFusHam0}
Let $A$ be a double Poisson algebra over $B$. Consider the algebra $A_1=A^f_{e_2 \to e_1}$ obtained by fusing $e_2$ onto $e_1$, and the algebra  $A_2=A^f_{e_1 \to e_2}$ obtained by fusing $e_1$ onto $e_2$. Then, the identity map on $A$ induces an isomorphism of double Poisson algebras $\phi:A_1 \to A_2$. 
If $A$ is a Hamiltonian algebra, then $\phi$ is an isomorphism of Hamiltonian algebras.
\end{lem}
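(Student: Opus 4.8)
The plan is to recognise that almost all of the work has already been carried out in the preceding results, so the proof amounts to assembling them. First I would invoke Lemma \ref{L:IsoId}, which tells us that the identity map on $A$ induces an isomorphism of double brackets $\phi: A_1 \to A_2$; its explicit description on the generators of each type is recorded there, together with the crucial compatibility $\phi \circ \pi_1 = \pi_2$ with the two projections $\pi_1: A \to A_1$ and $\pi_2: A \to A_2$ given by \eqref{Eq:prfus}.

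Next, I would apply Proposition \ref{Pr:IsoFusHam} twice---once for the fusion of $e_2$ onto $e_1$, once for the fusion of $e_1$ onto $e_2$---to conclude that both $A_1$ and $A_2$, equipped with their induced double brackets, are double Poisson algebras. Since a morphism of double Poisson algebras is by definition nothing more than a morphism of double brackets whose source and target happen to be double Poisson algebras, the isomorphism of double brackets $\phi$ of the first step is at once an isomorphism of double Poisson algebras. In particular, no separate verification that the triple bracket is respected is needed: the relation \eqref{Eq:RelTripBr} together with the vanishing of both triple brackets makes this automatic.

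For the Hamiltonian statement, suppose $A$ carries a moment map $\mu$. By the second assertion of Proposition \ref{Pr:IsoFusHam}, the moment maps of $A_1$ and $A_2$ are precisely the projections $\mu_1 = \pi_1(\mu)$ and $\mu_2 = \pi_2(\mu)$. Combining this with the relation $\phi \circ \pi_1 = \pi_2$ recalled above then yields $\phi(\mu_1) = \phi(\pi_1(\mu)) = \pi_2(\mu) = \mu_2$, so that $\phi$ respects the moment maps and is therefore an isomorphism of Hamiltonian algebras.

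The only step requiring any care is this identification $\phi(\mu_1) = \mu_2$; everything else is formal. In principle one could instead verify it directly, writing $\mu = \sum_s \mu_s$ and checking on each idempotent component how $\phi$ acts on the four types of generators from Lemma \ref{AfGenerat}, but routing the argument through the projection identity $\phi \circ \pi_1 = \pi_2$ established in Lemma \ref{L:IsoId} sidesteps that computation entirely and is the cleanest route to the result.
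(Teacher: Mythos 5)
Your proposal is correct and follows essentially the same route as the paper: the first part is exactly the combination of Lemma \ref{L:IsoId} and Proposition \ref{Pr:IsoFusHam}, and the Hamiltonian part rests on the same facts. The only cosmetic difference is that the paper writes out the moment maps $\mu^{(1)}=\mu_1+e_{12}\mu_2e_{21}+\sum_{s\neq 1,2}\mu_s$ and $\mu^{(2)}=e_{21}\mu_1e_{12}+\mu_2+\sum_{s\neq 1,2}\mu_s$ explicitly and verifies $\phi(\mu^{(1)})=\mu^{(2)}$ on these terms, whereas you derive the same identity more compactly from $\phi\circ\pi_1=\pi_2$ applied to $\mu$ --- a valid shortcut, since that relation was already established in the proof of Lemma \ref{L:IsoId} and Proposition \ref{Pr:IsoFusHam} identifies the moment maps as $\pi_1(\mu)$ and $\pi_2(\mu)$.
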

\begin{proof}
 The first part directly follows from Lemma \ref{L:IsoId} and Proposition \ref{Pr:IsoFusHam}. In the Hamiltonian case, the morphism $\phi$ constructed in the proof of Lemma \ref{L:IsoId} satisfies
 \begin{equation*}
  \phi(\mu^{(1)})=\mu^{(2)}\,, \quad \text{ for }
\mu^{(1)}=\mu_1+e_{12}\mu_2e_{21}+\sum_{s\neq 1,2}\mu_s,\,\, \mu^{(2)}=e_{21}\mu_1e_{12} + \mu_2 +\sum_{s\neq 1,2}\mu_s\,.
 \end{equation*}
But $\mu^{(1)},\mu^{(2)}$ are the moment maps of $A_1$ and $A_2$ by Proposition \ref{Pr:IsoFusHam}.
\end{proof}

In Lemma \ref{L:IsoFusHam0}, the map $\phi:A_1 \to A_2$ hence obtained is a morphism of $B'$-algebras, where $B'=\kk \tilde e \oplus\bigoplus_{s\in I \setminus \{1,2\}}\kk e_s$. Here, the base maps satisfy  $\tilde e \mapsto e_1\in A_1$, $\tilde e \mapsto e_2 \in A_2$ and are given in an obvious way on the other idempotents $e_s$, $s\in I\setminus \{1,2\}$.

\subsubsection{Fusion of several idempotents}

\begin{lem}   \label{L:IsoFusHam1}
Let $A$ be a double Poisson algebra over $B$, and let $e_1,e_2,e_3\in B$ be orthogonal idempotents. 
Let $A_1:= (A^f_{e_3\to e_2})_{e_2 \to e_1}^f$ (resp. $A_2:=(A^f_{e_2 \to e_1})_{e_3 \to e_1}^f$) be the algebra obtained by fusing $e_3$ onto $e_1$, then $e_2$ onto $e_1$ 
(resp. $e_2$ onto $e_1$, then $e_3$ onto $e_1$). Then the identity map on $A$ induces an isomorphism of double Poisson algebras $\psi:A_1\to A_2$ over $B'=\bigoplus_{s\in I \setminus \{2,3\}}\kk e_s$. 
Furthermore, if $A$ is a Hamiltonian algebra, then $\psi$ is an isomorphism of Hamiltonian algebras. 
\end{lem}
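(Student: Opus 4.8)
The plan is to realise both $A_1$ and $A_2$ as a single corner of one common extension of $A$ by a full set of matrix units, so that the identity on $A$ visibly induces the desired isomorphism. First I would record that both $A_1$ and $A_2$ carry double Poisson structures over $B'$ by applying Proposition \ref{Pr:IsoFusHam} twice, and that in the Hamiltonian case each inherits a moment map obtained by projecting $\mu$ under \eqref{Eq:prfus} at each step. Since Proposition \ref{Pr:IsoFusHam} already supplies these structures, the only thing left to prove is that the identity map on $A$ intertwines them; this generalises Lemma \ref{L:IsoFusHam0}, which is the two-idempotent case.

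To construct $\psi$, I would form the algebra $\hat A$ obtained from $A$ by adjoining a full set of matrix units $e_{ij}$, $i,j\in\{1,2,3\}$, turning $\kk e_1\oplus\kk e_2\oplus\kk e_3$ into a copy of $\Mat_3(\kk)$, and extend the double bracket $\hat B$-linearly as in \ref{ssFusDB}. Writing $\epsilon=1-e_2-e_3$, I claim that both $A_1$ and $A_2$ are identified with the corner $\epsilon\hat A\epsilon$. Indeed, the two fusions producing $A_1$ adjoin $e_{23},e_{32}$ and then $e_{12},e_{21}$, while those producing $A_2$ adjoin $e_{12},e_{21}$ and then $e_{13},e_{31}$, and in $\hat A$ one has $e_{13}=e_{12}e_{23}$ and $e_{31}=e_{32}e_{21}$; hence the generator of Lemma \ref{AfGenerat} coming from an element $a\in e'_iAe'_j$ (with $e'_0=\he$, $e'_i=e_i$ for $i=1,2,3$) is wrapped by the \emph{same} matrix units $e_{1i}$ on the left and $e_{j1}$ on the right in either construction, e.g. $w\in e_3Ae_3$ becomes $e_{13}we_{31}$ in both cases. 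The map $\psi$ is then this identification, and by construction $\psi\circ\pi_1=\pi_2$, where $\pi_1,\pi_2$ are the composite projections \eqref{Eq:prfus}; swapping the two orders produces its inverse.

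It remains to check that $\psi$ respects the double brackets, and here the key point is that iterating \eqref{Eq:BrFus} reproduces the restriction to $\epsilon\hat A\epsilon$ of the single $\hat B$-linear bracket on $\hat A$. Concretely, for generators $a^f=e_a a f_a$ and $b^f=e_b b f_b$ with $e_a,e_b\in\{\he,e_1,e_{12},e_{13}\}$ and $f_a,f_b\in\{\he,e_1,e_{21},e_{31}\}$, a double application of \eqref{Eq:BrFus} yields $e_b\dgal{a,b}'f_a\otimes e_a\dgal{a,b}''f_b$ regardless of whether the intermediate fused vertex is $2$ (as for $A_1$) or $1$ (as for $A_2$), because the products of the $e_{ij}$ occurring are associative. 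Comparing the two expressions then gives \eqref{Eq:Morph} immediately, since $\psi$ is the identity on the common corner. For the Hamiltonian statement, the same bookkeeping via \eqref{Eq:prfus} shows that the projected moment maps are both equal to $\mu_1+e_{12}\mu_2e_{21}+e_{13}\mu_3e_{31}+\sum_{s\neq 1,2,3}\mu_s$, whence $\psi(\mu^{(1)})=\mu^{(2)}$.

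The main obstacle will be the careful verification that the two-step fusion bracket agrees with the one-step restriction from $\hat A$ — that is, the associativity of Van den Bergh's fusion for double brackets — since this is precisely where the matrix-unit indices arising from the two intermediate fusions must be reconciled. Once this associativity is in place (and, after unwinding \eqref{Eq:BrFus}, it reduces to the associativity of the products of the $e_{ij}$), both the double Poisson and the Hamiltonian parts follow with no further computation.
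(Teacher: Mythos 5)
Your proposal is correct, and its computational core is the same as the paper's: the paper also applies Lemma \ref{AfGenerat} twice to write generators of $A_1$ as $e_a a f_a$ with $e_a\in\{1-e_2-e_3,e_{12},e_{12}e_{23}\}$, $f_a\in\{1-e_2-e_3,e_{21},e_{32}e_{21}\}$ (and of $A_2$ with $e_{13},e_{31}$ in place of the products), defines $\psi$ by exactly your substitutions $e_{12}e_{23}\mapsto e_{13}$, $e_{32}e_{21}\mapsto e_{31}$, verifies \eqref{Eq:Morph} by the same telescoping double application of \eqref{Eq:BrFus}, and matches the projected moment maps $\mu^{(1)}=e_{12}\mu_2e_{21}+e_{12}e_{23}\mu_3e_{32}e_{21}+\sum_{s\neq 2,3}\mu_s$ and $\mu^{(2)}=e_{12}\mu_2e_{21}+e_{13}\mu_3e_{31}+\sum_{s\neq 2,3}\mu_s$. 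Where you genuinely differ is in packaging: you realise both $A_1$ and $A_2$ as the single corner $\epsilon\hat A\epsilon$ of one extension $\hat A$ of $A$ by a full $3\times 3$ matrix-unit system, so that $\psi$ becomes the identity on a common model and its bijectivity is automatic. This buys conceptual transparency, but be aware that it shifts rather than removes the work: the identification of the two-step construction (corner, extend, corner) with the one-step one (extend by $\Mat_3(\kk)$, take the corner) — both as algebras and for the induced double brackets — is precisely the generator-level bookkeeping the paper carries out directly, and your reduction of the bracket comparison to associativity of products of the $e_{ij}$ after unwinding \eqref{Eq:BrFus} twice is the right and complete way to discharge it. One further point worth noting: the paper explicitly declines the ``one big extension'' viewpoint (see the remark following \ref{sss:Quiver} citing quivers with relations) because it does not survive the quasi-Poisson setting, where each individual fusion adds a correction term $\dgal{-,-}_{fus}$ (Proposition \ref{Pr:IsoFusqHam}); your ambient-corner shortcut is sound for Lemma \ref{L:IsoFusHam1} but would not transfer to its quasi-Hamiltonian analogue Lemma \ref{L:IsoFusqHam1}, whereas the paper's step-by-step formulation is set up to be adapted there.
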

\begin{proof}
 Using Lemma \ref{AfGenerat} twice, we can write generators of $A_1$ and $A_2$ as 
\begin{equation*}
\begin{aligned}
  &e_a a f_a\,, \quad \text{ for }  a \in A,\,\, e_a\in \{1-e_3-e_2,e_{12},e_{12}e_{23}\},\,\, f_a\in \br{1-e_3-e_2,e_{21},e_{32}e_{21}}, \quad (\text{in }A_1)\,; \\
  &e_a' a f_a'\,, \quad  \text{ for }  a \in A,\,\, e_a'\in \{1-e_3-e_2,e_{12},e_{13}\},\,\, f_a'\in \br{1-e_3-e_2,e_{21},e_{31}}, \quad (\text{in }A_2)\,.
\end{aligned}
\end{equation*}
We can then define $\phi:A_1\to A_2$ on generators as $\phi(e_a a f_a)=e_a' a f_a'$ with $e_a'=e_a$ if $e_a\neq e_{12}e_{23}$ or $e_a'=e_{13}$ if $e_a=e_{12}e_{23}$, while 
$f_a'=f_a$ if $f_a\neq e_{32}e_{21}$ or $f_a'=e_{31}$ if $f_a=e_{32}e_{21}$. This is easily seen to be an isomorphism and, using \eqref{Eq:BrFus} twice to induce the double bracket from $A$ to $A_1$ or $A_2$, we remark that it is also a morphism of double brackets. Hence, we have an isomorphism of double Poisson algebras. 

In the Hamiltonian case, Proposition \ref{Pr:IsoFusHam} yields that the moment maps are given by 
\begin{equation*}
 \mu^{(1)}=e_{12}\mu_2e_{21}+e_{12}e_{23}\mu_3e_{32}e_{21}+\sum_{s \neq 2,3}\mu_s \, \in A_1\,, \quad 
 \mu^{(2)}=e_{12}\mu_2e_{21}+e_{13}\mu_3e_{31}+\sum_{s \neq 2,3}\mu_s \, \in A_2\,,
\end{equation*}
so that $\phi(\mu^{(1)})=\mu^{(2)}$. 
\end{proof}

Let us note the following interesting result. 
By gathering Lemmas \ref{L:IsoFusHam0} and \ref{L:IsoFusHam1}, the double Poisson algebra structure obtained by fusion of three orthogonal idempotents does not depend on the order with respect to which we perform fusion, up to isomorphism. For example, 
 \begin{equation*}
(A^f_{e_2\to e_1})_{e_3 \to e_1}^f  \overset{\sim}{\rightarrow} (A^f_{e_3\to e_2})_{e_2 \to e_1}^f 
\overset{\sim}{\rightarrow} (A^f_{e_3\to e_2})_{e_1 \to e_2}^f 
\overset{\sim}{\rightarrow} (A^f_{e_3\to e_1})_{e_1 \to e_2}^f 
\overset{\sim}{\rightarrow} (A^f_{e_3\to e_1})_{e_2 \to e_1}^f \,.
 \end{equation*}

Next, we establish that the order in which we perform fusion is not important if the idempotents are not related. 

\begin{lem}   \label{L:IsoFusHam2}
Let $A$ be a double Poisson algebra over $B$, and let $e_1,e_2,e_3,e_4\in B$ be orthogonal idempotents. 
Let $A_1:= (A^f_{e_4\to e_3})_{e_2 \to e_1}^f$ (resp. $A_2:=(A^f_{e_2 \to e_1})_{e_4 \to e_3}^f$) be the algebra obtained by fusing $e_4$ onto $e_3$, then $e_2$ onto $e_1$ 
(resp. $e_2$ onto $e_1$, then $e_4$ onto $e_3$). Then the identity map on $A$ induces an isomorphism of double Poisson algebras $\psi:A_1\to A_2$ over $B'=\bigoplus_{s\in I \setminus \{2,4\}}\kk e_s$. 
Furthermore, if $A$ is a Hamiltonian algebra, then $\psi$ is an isomorphism of Hamiltonian algebras. 
\end{lem}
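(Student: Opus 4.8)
The plan is to proceed in exact parallel with the proof of Lemma \ref{L:IsoFusHam1}, the only conceptual difference being that here the two fusions involve four \emph{distinct} idempotents $e_1,e_2,e_3,e_4$, so the two fusion operations are performed on disjoint pairs of vertices and hence ``commute'' in an even more transparent way than in the previous lemma. First I would apply Lemma \ref{AfGenerat} twice to each of $A_1$ and $A_2$ in order to write down explicit generating sets. Because the pair $\{e_1,e_2\}$ and the pair $\{e_3,e_4\}$ are disjoint, fusing $e_4$ onto $e_3$ introduces matrix units $e_{34},e_{43}$ while fusing $e_2$ onto $e_1$ introduces $e_{12},e_{21}$, and these two families do not interact. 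Thus a typical generator of $A_1$ (and likewise of $A_2$) has the form $e_a\,a\,f_a$ for $a\in A$, where $e_a\in\{1-e_2-e_4,\,e_{12},\,e_{34}\}$ and $f_a\in\{1-e_2-e_4,\,e_{21},\,e_{43}\}$; crucially the same list of prefixes and suffixes describes both algebras, since neither fusion creates composite matrix units of the form $e_{12}e_{23}$ that appeared in Lemma \ref{L:IsoFusHam1}.

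Next I would define $\psi:A_1\to A_2$ on generators simply by sending $e_a\,a\,f_a\mapsto e_a\,a\,f_a$, that is, by the identity on the matching prefixes and suffixes. This is manifestly a bijection on generating sets and extends to an isomorphism of $B'$-algebras over $B'=\bigoplus_{s\in I\setminus\{2,4\}}\kk e_s$, exactly as in Lemma \ref{L:IsoFusHam1} but now without any relabelling of prefixes being required. To verify that $\psi$ is a morphism of double brackets, I would induce the double bracket from $A$ to each of $A_1,A_2$ by applying formula \eqref{Eq:BrFus} twice, and observe that the resulting formula
\begin{equation*}
 \dgal{e_a\,a\,f_a,\;e_b\,b\,f_b}_k = e_b\,\dgal{a,b}'\,f_a \otimes e_a\,\dgal{a,b}''\,f_b
\end{equation*}
is literally the same expression for $k=1$ and $k=2$, the right-hand side being computed entirely inside $A$. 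Since $\psi$ acts as the identity on all the relevant prefixes and suffixes, the morphism property \eqref{Eq:Morph} follows at once, giving an isomorphism of double Poisson algebras by Proposition \ref{Pr:IsoFusHam}.

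For the Hamiltonian case, I would invoke Proposition \ref{Pr:IsoFusHam} to record that the moment maps of $A_1$ and $A_2$ are
\begin{equation*}
 \mu^{(1)}=\mu^{(2)}=e_{12}\mu_2 e_{21}+e_{34}\mu_4 e_{43}+\sum_{s\neq 2,4}\mu_s\,,
\end{equation*}
which are identical as elements once we identify generators via $\psi$, so that $\psi(\mu^{(1)})=\mu^{(2)}$ holds trivially. The one point that genuinely requires care — and which I expect to be the main (if modest) obstacle — is the bookkeeping in the first step: one must check that performing the two fusions in either order really does produce the \emph{same} set of matrix units and therefore the same generator descriptions, with no cross-terms such as $e_{12}e_{34}$ arising. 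This is exactly where the disjointness of $\{1,2\}$ and $\{3,4\}$ is used, and it is what makes the present lemma strictly simpler than Lemma \ref{L:IsoFusHam1}, where the shared idempotent $e_2$ forced the nontrivial relabelling $e_{12}e_{23}\mapsto e_{13}$. Once this combinatorial point is settled, the verification is routine.
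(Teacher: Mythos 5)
Your proposal is correct and takes essentially the same approach as the paper: the paper's proof likewise applies Lemma \ref{AfGenerat} twice to write the generators of both $A_1$ and $A_2$ in the common form $e_a\,a\,f_a$ with $e_a\in\{1-e_4-e_2,\,e_{12},\,e_{34}\}$, $f_a\in\{1-e_4-e_2,\,e_{21},\,e_{43}\}$, and concludes that the identity map in these generators is the desired isomorphism. Your explicit verification of the induced double brackets via \eqref{Eq:BrFus} and of the equality of moment maps merely spells out what the paper leaves implicit.
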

\begin{proof}
 Using Lemma \ref{AfGenerat} twice, we can write both generators of $A_1$ and $A_2$ as 
\begin{equation*}
e_a a f_a\,, \quad \text{ for }  a \in A,\,\, e_a\in \{1-e_4-e_2,e_{12},e_{34}\},\,\, f_a\in \br{1-e_4-e_2,e_{21},e_{43}}\,.
\end{equation*}
In terms of these generators, the identity map provides the desired morphism. 
\end{proof}
By gathering these results, we obtain that the double Poisson (or Hamiltonian) algebra structure of an algebra obtained by successive fusions is independent of the precise order in which we identify idempotents. To state the result, fix a double Poisson algebra $A$ over $B=\bigoplus_{s\in I}\kk e_s$, and consider a partition $I=\sqcup_{j\in J} I_j$. We say that $A_1$ is a \emph{fusion algebra respecting the partition} $\sqcup_{j\in J} I_j$ if $A_1$ is obtained by a finite number of fusions starting from $A$, so that all the idempotents $(e_s)_{s\in I_j}$ end up being identified together for each $j\in J$. Equivalently, there exists $s_j\in I_j$ for each $j\in J$ and a map   $\pi_1:A \to A_1$ obtained by composing the morphisms \eqref{Eq:prfus} induced by a finite number of fusions  such that
\begin{equation}
 \pi_1(e_s)=e_{s_j}\quad \text{ for each }s\in I_j \text{ and }\,\,j\in J\,.
\end{equation} 
Note that if $A$ is a double Poisson algebra, then so too is $A_1$ by repeated use of Proposition \ref{Pr:IsoFusHam}.

\begin{thm} \label{Thm:IsoFusHam}
 Let $A$ be a double Poisson algebra over $B=\bigoplus_{s\in I}\kk e_s$. 
Assume that $A_1,A_2$ are fusion algebras respecting a partition $I=\sqcup_{j\in J} I_j$.  
Then, there is a double Poisson algebra isomorphism $\phi:A_1 \to A_2$. If $A$ is Hamiltonian, $\phi$ is an isomorphism of Hamiltonian algebras. 
\end{thm}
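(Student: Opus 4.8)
The plan is to show that both $A_1$ and $A_2$ are isomorphic, via maps induced by the identity on $A$, to one fixed fusion algebra built in a prescribed canonical order; since identity-induced isomorphisms compose, this yields the desired $\phi:A_1\to A_2$. The three preceding lemmas supply the elementary moves: Lemma \ref{L:IsoFusHam0} shows that a single fusion is insensitive to which of the two idempotents retains its label, Lemma \ref{L:IsoFusHam1} reorders two fusions that build up a common block, and Lemma \ref{L:IsoFusHam2} commutes two fusions whose supports are disjoint. Crucially, each of these lemmas relies only on the mechanics of fusion (formula \eqref{Eq:BrFus} and Lemma \ref{AfGenerat}) together with orthogonality of the idempotents involved; they therefore remain valid when applied at an intermediate stage of a fusion process, with $e_1,e_2,e_3,e_4$ replaced by any orthogonal idempotents surviving at that stage, each representing a group of already-identified vertices.

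Next I would record a fusion algebra respecting the partition as a sequence of elementary fusions, each merging two currently-distinct surviving idempotents, subject to the constraint that the resulting groups are exactly the blocks $(I_j)_{j\in J}$. Equivalently, such data amount to a forest of binary merge trees---one tree with leaf set $I_j$ for each $j\in J$---together with a linear order in which the internal (merge) nodes are processed, and a choice at each node of which label survives. Two such data give isomorphic double Poisson algebras whenever they differ by one of the elementary moves above, the isomorphism being induced by $\id_A$.

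The reduction to canonical form then proceeds in two stages. First, any two consecutive fusions coming from different blocks act on four distinct surviving idempotents, so Lemma \ref{L:IsoFusHam2} lets me transpose them; a bubble sort using these transpositions rearranges the linear order so that all fusions of block $I_1$ are performed first, then those of $I_2$, and so on. Second, within a single block the merges build a binary tree on the leaf set $I_j$, and I would use Lemma \ref{L:IsoFusHam1} as a tree-rotation move (together with Lemma \ref{L:IsoFusHam0} to normalise the surviving label at each node) to convert this tree into the canonical left comb that fuses every other vertex of $I_j$ onto a fixed representative $e_{s_j}$. Composing all the identity-induced isomorphisms produced along the way gives an isomorphism of double Poisson algebras from any chosen fusion algebra to the canonical one, and hence $\phi:A_1\to A_2$.

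I expect the combinatorial core---showing that Lemmas \ref{L:IsoFusHam0} and \ref{L:IsoFusHam1} suffice to connect every binary merge tree on $I_j$ to the canonical left comb---to be the main obstacle, as it amounts to the connectivity of the tree-rotation graph (the $1$-skeleton of the associahedron) with the extra label bookkeeping at each node; this is where care is needed to match the precise configuration of Lemma \ref{L:IsoFusHam1} to a single rotation applied to the merged idempotents of two sibling subtrees and an adjacent one. For the Hamiltonian statement no further work is required: each elementary move is, by the Hamiltonian parts of Lemmas \ref{L:IsoFusHam0}--\ref{L:IsoFusHam2}, an isomorphism of Hamiltonian algebras, and by Proposition \ref{Pr:IsoFusHam} the moment map at every stage is the projection of $\mu$ under \eqref{Eq:prfus}; hence the composite $\phi$ intertwines the moment maps of $A_1$ and $A_2$.
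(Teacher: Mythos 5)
Your proposal is correct, and it rests on exactly the same three elementary moves as the paper (Lemmas \ref{L:IsoFusHam0}, \ref{L:IsoFusHam1} and \ref{L:IsoFusHam2}), but it packages them differently. The paper argues by induction on the number of blocks $|J|$: it uses Lemma \ref{L:IsoFusHam2} repeatedly to push all fusions belonging to the last block $I_{|J|}$ to the front of each chain, invokes the single-block case (essentially the chain of isomorphisms recorded right after Lemma \ref{L:IsoFusHam1}) to make the two initial segments literally coincide, and then applies the inductive hypothesis to the remaining blocks. You instead reduce both $A_1$ and $A_2$ in one pass to a global canonical chain: block-sorted via a bubble sort of disjoint fusions (Lemma \ref{L:IsoFusHam2}), then a left comb onto a fixed representative within each block via rotations (Lemma \ref{L:IsoFusHam1}) and label swaps (Lemma \ref{L:IsoFusHam0}). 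What your route buys is that it makes explicit the combinatorial content the paper compresses into its base case: for a block with more than three elements one really does need the connectivity of the rotation graph on binary merge trees, a point the paper leaves implicit when it cites only the three-idempotent observation, and you correctly identify the matching of Lemma \ref{L:IsoFusHam1} to a local rotation (after commuting the relevant merges into adjacent positions) as the delicate step. Two small references should be added to make your argument airtight: applying the lemmas at an intermediate stage is legitimate because each intermediate algebra is again a double Poisson algebra over its intermediate base by Proposition \ref{Pr:IsoFusHam}, and propagating an isomorphism established mid-chain through the remaining (identical) fusions is precisely Lemma \ref{L:IsoPhiHam} — your phrase ``identity-induced isomorphisms compose'' glosses over this, though the paper's own proof is equally tacit there. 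With those citations, your proof, including the Hamiltonian statement, is complete.
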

\begin{proof}
We prove the result by induction on $|J|$. The base case corresponds to the partition 
\begin{equation}
 |J|=1\,, \quad I_1=I \,,  \label{partition1}
\end{equation}
which amounts to define $A_1$ and $A_2$ by fusing all the idempotents of $B$ together. In that case, the result follows from Lemmas \ref{L:IsoFusHam0}, \ref{L:IsoFusHam1} and \ref{L:IsoFusHam2} (see also the observation made right after Lemma \ref{L:IsoFusHam1}).  

Next, we prove that there exists an isomorphism between $A_1$ and a fusion algebra $\hat{A}_1$ respecting the partition,  such that $\hat{A}_1$ is defined by first performing fusion of all the elements in $I_{|J|}$. If $|I_{|J|}|=1$ there is nothing to prove. Assuming $|I_{|J|}|>1$, we have that 
$A_1$ is obtained from $A$ by a chain of fusions 
\begin{equation} \label{Eq:ChF1}
 e_{r_1} \to e_{t_1},\, \text{ then }
  e_{r_2} \to e_{t_2},\, \text{ then } \,\ldots \, ,\,\,
 e_{r_{\kappa}}\to e_{t_{\kappa}}\,, \quad \kappa:=|I|-|J|\,.
\end{equation}
(Here $e_{r_k}\to e_{t_k}$ means that we fuse $e_{r_k}$ onto $e_{t_k}$.)  
Note that $r_1,\ldots,r_{\kappa}$ are distinct, and there exists $k$ such that $r_k\in I_{|J|}$ (or equivalently $t_k\in I_{|J|}$) by assumption. Let $k_1\in \{1,\ldots,\kappa\}$ be the smallest such integer for which $r_{k_1}\in I_{|J|}$. This means that all the fusions that are performed before are done by fusing idempotents corresponding to subsets distinct from $I_{|J|}$. we can thus use Lemma \ref{L:IsoFusHam2} to get that $A_1$ is isomorphic as a double Poisson algebra to the algebra which is obtained by the chain of fusions 
\begin{equation*}
 e_{r_{k_1}} \to e_{t_{k_1}},\,\, \text{ then }
 e_{r_1} \to e_{t_1}, \, \ldots \,,
  e_{r_{{k_1}-1}} \to e_{t_{{k_1}-1}},\,\, \text{ then } 
    e_{r_{{k_1}+1}} \to e_{t_{{k_1}+1}},\,\ldots \, ,
 e_{r_{\kappa}}\to e_{t_{\kappa}}. 
\end{equation*}
We then run this argument again on the last $\kappa-1$ fusions of this chain. 
By a repeated use of this argument involving Lemma \ref{L:IsoFusHam2} only, we have that  $A_1$ is isomorphic as a double Poisson algebra to $\hat{A}_1$ obtained by a chain of fusion as \eqref{Eq:ChF1}, where this time  
$r_1,\ldots,r_{|I_{|J|}|} \in I_{|J|}$.  In particular, $r_k \notin I_{|J|}$ for $k> |I_{|J|}|$. 

We now do the same with $A_2$ to get an isomorphism of double Poisson algebras with some $\hat{A}_2$, where the latter is obtained by a chain of fusion as \eqref{Eq:ChF1}, where $r_1,\ldots,r_{|I_{|J|}|} \in I_{|J|}$. In particular, up to using the argument for $|J|=1$, we can assume that the first $|I_{|J|}|$ fusions performed in $\hat{A}_1$ and $\hat{A}_2$ are precisely the same. Since  $\hat{A}_1$, $\hat{A}_2$ only differ by fusions of the idempotents corresponding to the partition $I\setminus I_{|J|}=I_1 \sqcup \ldots \sqcup I_{|J|-1}$, we have by induction that $\hat{A}_1$ and  $\hat{A}_2$ are isomorphic. Thus $A_1$ and $A_2$ are also isomorphic as double Poisson algebras. 

We can conclude since the isomorphisms involved are isomorphisms of Hamiltonian algebras if $A$ admits a moment map. 
\end{proof}

\subsubsection{Fusion of morphisms}

\begin{lem}   \label{L:IsoPhiHam}
 Let $\phi:A_1 \to A_2$ be a morphism of double Poisson algebras over $B$. Let $A^f_1=(A_1)^f_{e_2 \to e_1}$, $A^f_2=(A_2)^f_{e_2 \to e_1}$, be the fusion algebras with double brackets  obtained by fusion of $e_2$ onto $e_1$. Then $\phi$ induces a morphism of double Poisson algebras $\phi^f : A_1^f \to A_2^f$.  
 Furthermore if $\phi$ is a morphism of Hamiltonian algebras, then so is $\phi^f$. 
\end{lem}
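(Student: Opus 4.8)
The plan is to derive this statement by combining two results already at our disposal, namely Lemma~\ref{L:IsoFus} at the level of double brackets and Proposition~\ref{Pr:IsoFusHam} at the level of the Poisson and Hamiltonian structures. For the first assertion, Lemma~\ref{L:IsoFus} already produces the induced map $\phi^f:A_1^f\to A_2^f$ defined on generators by \eqref{Eq:IsoFus} and establishes that it is a morphism of double brackets. Since $A_1$ and $A_2$ are double Poisson algebras, Proposition~\ref{Pr:IsoFusHam} guarantees that the fusion algebras $A_1^f$ and $A_2^f$, equipped with their induced double brackets, are again double Poisson algebras. By definition a morphism of double brackets between two double Poisson algebras is a morphism of double Poisson algebras, so $\phi^f$ is automatically of the desired type and the first assertion requires no further argument.

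For the Hamiltonian case, I would assume that $A_1$ and $A_2$ carry moment maps $\mu_1=\sum_{s\in I}\mu_{1,s}$ and $\mu_2=\sum_{s\in I}\mu_{2,s}$ with $\phi(\mu_1)=\mu_2$. By Proposition~\ref{Pr:IsoFusHam}, $A_1^f$ and $A_2^f$ are Hamiltonian with moment maps $\mu_1^f$ and $\mu_2^f$ obtained as the images of $\mu_1$ and $\mu_2$ under the projection \eqref{Eq:prfus}, so it only remains to check that $\phi^f(\mu_1^f)=\mu_2^f$. First I would compute $\mu_1^f$ explicitly: using $e_s\mu_1 e_t=\delta_{st}\mu_{1,s}$ together with the matrix relations $e_{12}=e_1e_{12}=e_{12}e_2$ and $e_{21}=e_2e_{21}=e_{21}e_1$, the four summands in \eqref{Eq:prfus} collapse to $\mu_1^f=\sum_{s\neq 2}\mu_{1,s}+e_{12}\mu_{1,2}e_{21}$. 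Here each term with $s\neq 2$ is a generator of the first type and the last term is of the fourth type in the sense of Lemma~\ref{AfGenerat}. Applying $\phi^f$ through \eqref{Eq:IsoFus} to each generator, and using that $\phi$ is a $B$-algebra homomorphism so that $\phi(\mu_{1,s})=\phi(e_s\mu_1 e_s)=e_s\mu_2 e_s=\mu_{2,s}$, I would obtain $\phi^f(\mu_1^f)=\sum_{s\neq 2}\mu_{2,s}+e_{12}\mu_{2,2}e_{21}=\mu_2^f$, which is exactly the condition for $\phi^f$ to be a morphism of Hamiltonian algebras.

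I expect no substantial obstacle in this lemma: all the analytic content has been absorbed into Lemma~\ref{L:IsoFus} and Proposition~\ref{Pr:IsoFusHam}, so the proof is essentially an assembly of existing statements. The only point demanding care is the bookkeeping in the Hamiltonian case, where one must track correctly how the idempotent decomposition of the moment map interacts with the four types of generators of Lemma~\ref{AfGenerat}, both under the projection \eqref{Eq:prfus} and under the formula \eqref{Eq:IsoFus} defining $\phi^f$. Once this is organised componentwise as above, the equality $\phi^f(\mu_1^f)=\mu_2^f$ follows immediately from $\phi(\mu_1)=\mu_2$.
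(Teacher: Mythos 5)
Your proposal is correct and follows essentially the same route as the paper: Lemma \ref{L:IsoFus} supplies $\phi^f$ as a morphism of double brackets, the fusion algebras are double Poisson (and Hamiltonian) by Proposition \ref{Pr:IsoFusHam} so that $\phi^f$ is a morphism of double Poisson algebras by definition, and the Hamiltonian case reduces to checking $\phi^f(\mu_1^f)=\mu_2^f$ from \eqref{Eq:IsoFus}. Your explicit computation $\mu_1^f=\sum_{s\neq 2}\mu_{1,s}+e_{12}\mu_{1,2}e_{21}$, with the first-type and fourth-type terms mapped via $\phi(\mu_{1,s})=\mu_{2,s}$, merely spells out the verification that the paper leaves as ``we can check from \eqref{Eq:IsoFus}''.
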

\begin{proof}
 The map $\phi^f$ given by \eqref{Eq:IsoFus}  is a morphism of double Poisson algebras by Lemma \ref{L:IsoFus} and Lemma \ref{L:surjHam}. 
 In the Hamiltonian case, we can check from \eqref{Eq:IsoFus} that $\phi^f$ maps the moment map of $A_1^f$ to that of $A_2^f$. Hence, it is a morphism of Hamiltonian algebras.  
\end{proof}

\subsection{Application: Van den Bergh's Hamiltonian structure for quivers} \label{ssHamQuiver}

Following Van den Bergh \cite[\S 6.3]{VdB1}, we can endow the path algebras of quivers with a Hamiltonian algebra structure.  More precisely, given a quiver $Q$, we define on $\kk \bar{Q}$ the double Poisson bracket given by 
\begin{equation} \label{Eq:VdBHam1}
 \dgal{a,a^\ast}=e_{h(a)}\otimes e_{t(a)}\,\,\,\, \text{for } a \in Q\,, \quad 
  \dgal{a,a^\ast}=-e_{h(a)}\otimes e_{t(a)}\,\,\,\, \text{for } a \in \bar{Q}\setminus Q\,,
\end{equation}
and such that $\dgal{a,b}=0$ if $a\in \bar{Q}$ and $b\in\bar{Q}\setminus\{a,a^\ast\}$. The algebra admits the moment map 
\begin{equation} \label{Eq:VdBHam2}
 \mu=\sum_{a \in Q} (a a^\ast-a^\ast a)\quad \text{or}\quad 
 \mu=\sum_{s\in I}\mu_s,\quad \text{for } \mu_s=\sum_{\substack{a\in Q\\t(a)=s}}a a^\ast - \sum_{\substack{a\in Q\\h(a)=s}} a^\ast a\,.
\end{equation}
This was considered in a simple case in Example \ref{Exmp:Q1}. 

\begin{thm} \label{Thm:HQuivers}
 Up to isomorphism, the Hamiltonian algebra $(\kk \bar{Q},\dgal{-,-},\mu)$ only depends on $Q$ seen as an undirected graph. 
\end{thm}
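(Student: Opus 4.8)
The plan is to reduce to the case of reversing a single arrow and then to recognise both path algebras as fusions of a common, fully separated building block.

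\emph{Reduction to one reversal.} Two quivers share the same underlying undirected graph if and only if they differ by the orientation of a subset of their arrows, and any such pair is joined by a finite chain of single-arrow reversals. Hence it suffices to prove that if $Q'$ is obtained from $Q$ by reversing one arrow $a_0\colon i\to j$ into an arrow $a_0'\colon j\to i$, then $(\kk\bar Q,\dgal{-,-},\mu)$ and $(\kk\bar{Q'},\dgal{-,-},\mu')$ are isomorphic as Hamiltonian algebras.

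\emph{Separation and fusion.} First I would introduce the \emph{separated} quiver obtained from $Q$ by giving every arrow a private pair of vertices, so that its double splits as a disjoint union: one copy of the two-vertex, one-arrow quiver of Example \ref{Exmp:Q1} for each arrow of $Q$, and one one-loop quiver for each loop. Using Lemma \ref{Lem:SumHam} I equip the direct sum of these path algebras with the sum Hamiltonian structure built from the local models \eqref{Eq:HamQ1}. Fusing the private vertices back together according to the incidence of $Q$ then recovers $(\kk\bar Q,\dgal{-,-},\mu)$: the induced double bracket is computed by \eqref{Eq:BrFus} and the projected moment map by Proposition \ref{Pr:IsoFusHam}, and one checks that these reproduce exactly Van den Bergh's formulas \eqref{Eq:VdBHam1}--\eqref{Eq:VdBHam2}. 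The crucial point here is that, by the cross-term vanishing of $\dgal{-,-}^\oplus$ recalled in \ref{ssOplus}, arrows belonging to distinct edges Poisson-commute, matching the vanishing required in \eqref{Eq:VdBHam1}.

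\emph{Transporting the single reversal.} The separated algebra attached to $Q'$ differs from that of $Q$ only in the single summand indexed by $a_0$, where the two-vertex piece is replaced by its opposite. By Example \ref{Exmp:Q1} these two summands are isomorphic as Hamiltonian algebras through a map fixing the two private vertices (explicitly $a_0\mapsto (a_0')^\ast$ and $a_0^\ast\mapsto -a_0'$). Extending by the identity on the remaining summands yields an isomorphism of Hamiltonian algebras between the two separated models that fixes every vertex. Since $Q$ and $Q'$ have the same incidence, the same sequence of vertex identifications applies on both sides, so Lemma \ref{L:IsoPhiHam} (fusion of a morphism of Hamiltonian algebras) pushes this isomorphism down to the fused algebras, giving $\kk\bar Q\cong\kk\bar{Q'}$ as Hamiltonian algebras. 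Theorem \ref{Thm:IsoFusHam} guarantees that the fused structure does not depend on the order in which the identifications are carried out, so the construction is unambiguous; it also allows me to compose such isomorphisms freely along the chain of reversals from the reduction step.

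\emph{Main obstacle.} The delicate step is the verification in the separation-and-fusion paragraph that fusing the local models genuinely returns Van den Bergh's bracket and moment map on $\kk\bar Q$. This requires tracking the idempotents through the projection \eqref{Eq:prfus} and the bracket formula \eqref{Eq:BrFus}, and confirming that the component $\mu_s$ in \eqref{Eq:VdBHam2} arises as the projected sum of the local moment map contributions $a a^\ast$ and $-a^\ast a$. Once this bookkeeping is in place, everything else is a formal consequence of the fusion lemmas already established. I note that in the single-reversal case one can alternatively bypass the fusion machinery and check directly that the map fixing all arrows except $a_0\mapsto (a_0')^\ast$, $a_0^\ast\mapsto -a_0'$ preserves \eqref{Eq:VdBHam1} and \eqref{Eq:VdBHam2}, exactly as in Example \ref{Exmp:Q1}.
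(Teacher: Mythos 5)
Your proposal is correct and follows essentially the same route as the paper: the same separation of $Q$ into one-arrow pieces, the sum structure via Lemma \ref{Lem:SumHam}, the verification that fusion recovers \eqref{Eq:VdBHam1}--\eqref{Eq:VdBHam2}, and the transport of the Example \ref{Exmp:Q1} isomorphism through fusion via Lemma \ref{L:IsoPhiHam}, with Theorem \ref{Thm:IsoFusHam} giving order-independence. The only cosmetic deviations are your explicit chain-of-single-reversals reduction (the paper reverses any arrow directly in the separated model, which subsumes this) and your treatment of loops as one-loop summands, whereas the paper's $Q^{sep}$ assigns every arrow, loops included, a private pair of distinct vertices --- both choices work since reversing a loop is vacuous.
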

\begin{proof}
 As in the proof of \cite[Theorem 6.7.1]{VdB1}, we begin with the `separated' quiver $Q^{sep}$ which has vertex and arrow sets given by 
\begin{equation}
  I^{sep}=\{v_b,\,v_{b^\ast}\mid b \in Q\}\,, \quad 
Q^{sep}=\{b:v_b \to v_{b^\ast} \mid b \in Q \}\,.
\end{equation}
We form the double $\bar{Q}^{sep}$ of $Q^{sep}$, which amounts to add the arrows $\{b^\ast: v_{b^\ast} \to v_b \mid b \in Q \}$. We define on it the involution $\ast$ given by $b \mapsto b^\ast$ and $b^\ast \mapsto b$.   By combining Example \ref{Exmp:Q1} (with $a=b$ for each $b\in Q^{sep}$) and Lemma \ref{Lem:SumHam}, $A^{sep}=\kk \bar{Q}^{sep}$ is Hamiltonian for the double Poisson bracket given by 
\begin{equation} \label{Eq:BasicBr}
  \dgal{b,b^\ast}=e_{v_{b^\ast}}\otimes e_{v_b}\,,
\end{equation}
for all $b\in Q^{sep}$ and which is zero on each other pair of generators, while  the moment map is  defined as 
\begin{equation}
  \mu=\sum_{b \in \bar{Q}^{sep}} \mu_{v_b}\,,
\quad \mu_{v_b}=bb^\ast \text{ if }b \in Q^{sep},\,\,\text{or }\mu_{v_b}=-bb^\ast \text{ if }b \in \bar{Q}^{sep} \setminus Q^{sep} \,. 
\end{equation}
To get a Hamiltonian structure on $\kk \bar{Q}$, it remains to fuse all these disjoint quivers of $\bar{Q}^{sep}$ to form $\bar{Q}$. We can easily see that the Hamiltonian algebra structure induced by fusion coincides with \eqref{Eq:VdBHam1}--\eqref{Eq:VdBHam2}. The independence of order in which we identify the vertices is obvious from the notations, and in fact it follows from Theorem \ref{Thm:IsoFusHam}. 

It remains to see that this structure is independent of the directions of the arrows in $Q$, up to isomorphism. To do so, we first use the isomorphism from Example \ref{Exmp:Q1} to  get that we can reverse the direction of any arrow $b\in Q^{sep}$ and obtain a Hamiltonian algebra isomorphic to $A^{sep}$. But Lemma \ref{L:IsoPhiHam}  guarantees that fusion preserves isomorphisms of Hamiltonian algebras, so we are done.  
\end{proof}

\section{Morphisms of double quasi-Poisson algebras}  \label{S:qHam}

As in Section \ref{S:Base}, all algebras are over $B=\bigoplus_{s\in I}\kk e_s$. With the exception of Remark  \ref{Rem:qHamMod}, all the statements from this section have a counterpart in Section \ref{S:Ham}.

\subsection{Basic definitions and properties}

\subsubsection{Definitions} Following Van den Bergh \cite{VdB1}, we say that a  \emph{double quasi-Poisson bracket} $\dgal{-,-}$ on $A$ is a double bracket for which  the associated triple bracket $\dgal{-,-,-}$ defined by \eqref{Eq:TripBr}  satisfies 
\begin{equation}
   \begin{aligned} \label{qPabc}
    \dgal{a,b,c}=&\frac14 \sum_{s\in I} \Big(
c e_s a \otimes e_s b \otimes e_s  - c e_s a \otimes e_s \otimes b e_s - c e_s \otimes a e_s b \otimes e_s 
+ c e_s \otimes a e_s \otimes b e_s \\
&\qquad \quad - e_s a \otimes e_s b \otimes e_s c + e_s a \otimes e_s \otimes b e_s c + e_s \otimes a e_s b \otimes e_s c - e_s \otimes a e_s \otimes b e_s c \Big)\,,
  \end{aligned}
\end{equation}
on any $a,b,c\in A$. In such a case, we say that $(A,\dgal{-,-})$ (or simply $A$) is a \emph{double quasi-Poisson algebra}. 
An element $\Phi_s\in e_s A e_s$ is said to satisfy the \emph{multiplicative property for} $e_s$ if, for any $a\in A$, we have  
\begin{equation} \label{Phim}
 \dgal{\Phi_s,a}=\frac12 (ae_s\otimes \Phi_s-e_s \otimes \Phi_s a +  a \Phi_s \otimes e_s-\Phi_s \otimes e_s a)\,.
\end{equation}
Then, we define a \emph{multiplicative moment map} as an invertible element $\Phi=\sum_{s\in I}\Phi_s$ for $\Phi_s\in e_sAe_s$ which is such that for all $s\in I$, $\Phi_s$ satisfies the multiplicative property for $e_s$ \eqref{Phim}. 
The decomposition of $\Phi$ yields $e_s\Phi e_t=\delta_{st}\Phi_s$ for each $s,t\in I$,  
hence the invertibility condition implies that $\Phi_s^{-1}:=e_s \Phi^{-1}e_s$ is an inverse for $\Phi_s$ in $e_sA e_s$.
We call the triple $(A,\dgal{-,-},\Phi)$ (or simply $A$ if no confusion can arise) a \emph{quasi-Hamiltonian algebra}.

Let $A_1,A_2$ be  $B$-algebras with $B$-linear double brackets $\dgalU{-,-},\dgalD{-,-}$.
If $\psi:A_1 \to A_2$ is a morphism of double brackets and $A_1,A_2$ are double quasi-Poisson algebras, we say that $\psi$ is a \emph{morphism of double quasi-Poisson algebras}. 
If furthermore $A_1,A_2$ are quasi-Hamiltonian algebras with respective multiplicative moment maps $\Phi_1,\Phi_2$ satisfying $\psi(\Phi_1)=\Phi_2$, we say that $\psi$ is a \emph{morphism of quasi-Hamiltonian algebras}.

\begin{exmp} \label{Exmp:Q1qHam}
Consider the quiver $Q$ with vertices $\{1,2\}$ and one arrow $a: 1 \to 2$. Let $\bar{Q}$ be its double obtained by adding the arrow $a^\ast:2\to 1$. Put $B=\kk e_1 \oplus \kk e_2$.
Following Van den Bergh \cite[\S 6.5]{VdB1}, the path algebra $\kk \bar{Q}$ is a double quasi-Poisson algebra for the $B$-linear double bracket  given on generators by 
\begin{equation} \label{Eq:qHamQ1}
\begin{aligned}
 &\dgal{a,a}_1=0\,, \quad \dgal{a^\ast,a^\ast}_1=0\,, \quad  \dgal{a,a^\ast}_1=e_2 \otimes e_1+\frac12 (a^\ast a \otimes e_1 + e_2 \otimes a a^\ast)\,.
\end{aligned}
\end{equation}
(We get $\dgal{a^\ast,a}_1$ by cyclic antisymmetry \eqref{Eq:cycanti}.) 
To have a quasi-Hamiltonian algebra structure, we work in the localised algebra $A_1=(\kk \bar{Q})_{S_1}$ for $S_1=\{1+aa^\ast,1+a^\ast a\}$, where we introduce the multiplicative moment map $\Phi=(1+a a^\ast)(1+a^\ast a)^{-1}$. It can be decomposed as $\Phi=\Phi_1+\Phi_2$ for $\Phi_1=e_1+aa^\ast$ and  $\Phi_2=(e_2+a^\ast a)^{-1}:=e_2(1+a^\ast a)^{-1}e_2$. 

Similarly, consider the quiver $Q^{op}$ with vertices $\{1,2\}$ and one arrow $b:2 \to 1$, and let $\bar{Q}^{op}$ be its double with new arrow $b^\ast:1 \to 2$. We can also use Van den Bergh's quasi-Hamiltonian structure on $A_2=(\kk \bar{Q}^{op})_{S_2}$, $S_2=\{1+bb^\ast,1+b^\ast b\}$, which is given by the $B$-linear double bracket 
\begin{equation} \label{Eq:qHamQ1op}
\begin{aligned}
 &\dgal{b,b}_2=0\,, \quad \dgal{b^\ast,b^\ast}_2=0\,, \quad \dgal{b,b^\ast}_2=e_1 \otimes e_2+\frac12(b^\ast b \otimes e_2 + e_1 \otimes b b^\ast)\,.  
\end{aligned}
\end{equation}
The multiplicative moment map $\Phi'=(1+bb^\ast)(1+b^\ast b)^{-1}$ can be decomposed as $\Phi'=(e_1+b^\ast b)^{-1}+(e_2+bb^\ast)$ as above.

Following Crawley-Boevey and Shaw \cite[Section 2]{CBShaw},  we introduce the isomorphism of $B$-algebras $\psi:A_1 \to A_2$ given by 
\begin{equation} \label{Eq:psiOp}
 \psi(a)=b^\ast\,, \quad \psi(a^\ast)=-(1+bb^\ast)^{-1}b\,.
\end{equation}
It is indeed a morphism since 
\begin{equation} \label{Eq:qHamQ1mom}
 \psi(1+a^\ast a)=1-(1+bb^\ast)^{-1}bb^\ast=(1+bb^\ast)^{-1}\,, \quad 
 \psi(1+a a^\ast)=1-b^\ast(1+bb^\ast)^{-1}b=(1+b^\ast b)^{-1}\,,
\end{equation}
are both invertible.  It is an isomorphism since the map 
$\theta:A_2 \to A_1$ given by 
\begin{equation*}
 \theta(b)=-a^\ast (1+a a^\ast)^{-1}\,, \quad \theta(b^\ast)=a\,,
\end{equation*}
is its inverse. We have, in fact, that $\psi: A_1 \to A_2$ is an isomorphism of quasi-Hamiltonian algebras. Using \eqref{Eq:qHamQ1mom}, we easily see that $\psi(\Phi_1)=\Phi'_1$ and $\psi(\Phi_2)=\Phi'_2$, so that we only need to show that $\psi$ is a morphism of double brackets. By \eqref{Eq:Morph}, this would follow from the identities 
\begin{equation} \label{Eq:qHamQ1dbr}
\begin{aligned}
 &\psi^{\otimes 2} \dgal{a,a}_1=\dgal{b^\ast,b^\ast}_2\,, \quad 
  \psi^{\otimes 2} \dgal{a^\ast,a^\ast}_1=\dgal{(e_2+bb^\ast)^{-1}b,(e_2+bb^\ast)^{-1}b}_2\,, \\
 &\psi^{\otimes 2} \dgal{a,a^\ast}_1=-\dgal{b^\ast,(e_2+bb^\ast)^{-1}b}_2\,.
\end{aligned}
\end{equation}
The first equality in \eqref{Eq:qHamQ1dbr} is trivial as both sides vanish. We leave the proof of the second identity in \eqref{Eq:qHamQ1dbr} to the reader since it is similar to the third one which we check now. 
Combining \eqref{Eq:qHamQ1} and the definition of $\psi$ \eqref{Eq:psiOp}, we have that 
\begin{equation*}
\begin{aligned}
  \psi^{\otimes 2} \dgal{a,a^\ast}_1=& e_2 \otimes e_1 - \frac12 (e_2+bb^\ast)^{-1}bb^\ast \otimes e_1 - \frac12 e_2 \otimes b^\ast (1+bb^\ast)^{-1}b \\
  =&(e_2+bb^\ast)^{-1}\otimes e_1+\frac12 (e_2+bb^\ast)^{-1}bb^\ast \otimes e_1 - \frac12 e_2 \otimes b^\ast (e_2+bb^\ast)^{-1}b\,.
\end{aligned}
\end{equation*}
Since $\Phi'$ is a multiplicative moment map, we have from the multiplicative property \eqref{Phim} for $e_2$  that 
\begin{equation*}
 \begin{aligned}
  \dgal{b^\ast,(e_2+bb^\ast)^{-1}}_2=&-(\Phi'_2)^{-1}\dgal{b^\ast,\Phi'_2} (\Phi'_2)^{-1}=+(\Phi'_2)^{-1}( \dgal{\Phi'_2,b^\ast})^\circ (\Phi'_2)^{-1} \\
  =&\frac12(\Phi'_2)^{-1}(\Phi'_2 \otimes b^\ast e_2-\Phi'_2 b^\ast \otimes e_2 + e_2 \otimes b^\ast \Phi'_2- e_2 b^\ast \otimes \Phi'_2) (\Phi'_2)^{-1}\\
  =&\frac12(e_2 \otimes b^\ast (\Phi'_2)^{-1} +(\Phi'_2)^{-1} \otimes b^\ast)  \,,
 \end{aligned}
\end{equation*}
where $(\Phi_2')^{-1}:=e_2 (\Phi')^{-1}e_2=(e_2+bb^\ast)^{-1}$ and we used that $b^\ast \in e_1 A_2 e_2$. We thus get 
\begin{equation*}
 \begin{aligned}
-\dgal{b^\ast,(e_2+bb^\ast)^{-1}b}_2=&-(\Phi_2')^{-1}\dgal{b^\ast,b}'-\dgal{b^\ast,(\Phi_2')^{-1}}'b \\
=&(\Phi_2')^{-1}\otimes e_1+\frac12 (\Phi_2')^{-1}bb^\ast \otimes e_1 - \frac12 e_2 \otimes b^\ast (\Phi_2')^{-1} b\,,
 \end{aligned}
\end{equation*}
which coincides with $\psi^{\otimes 2} \dgal{a,a^\ast}_1$. 
\end{exmp}

\subsubsection{First properties} 
The following result can be proved in the same way as Lemma \ref{L:surjHam}.
\begin{lem} \label{L:surjqHam}
Let  $\psi:A_1 \to A_2$ be a morphism of double brackets.

\noindent 1. Assume that $\psi$ is surjective as a $B$-algebra homomorphism. 

 1.1. If $A_1$ is a double quasi-Poisson algebra, then $A_2$ is a double quasi-Poisson algebra. 
 
 1.2. If $A_1$ is a quasi-Hamiltonian algebra, then $A_2$ admits a structure of quasi-Hamiltonian algebra such that $\psi$ is a morphism of quasi-Hamiltonian algebras. 
 
\noindent 2. Assume that $\psi$ is injective as a $B$-algebra homomorphism. 

 2.1. If $A_2$ is a double quasi-Poisson algebra, then $A_1$ is a double quasi-Poisson algebra. 
 
 2.2. If $A_2$ is a quasi-Hamiltonian algebra with multiplicative moment map $\Phi'$ such that $\Phi',(\Phi')^{-1}\in \operatorname{Im}(\psi)$, then $A_1$ admits a structure of quasi-Hamiltonian algebra such that $\psi$ is a morphism of quasi-Hamiltonian algebras. 
 
\noindent 3. All the structures obtained in 1.--2. are unique.  
\end{lem}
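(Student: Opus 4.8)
The plan is to follow the proof of Lemma \ref{L:surjHam} almost verbatim, replacing the vanishing of the triple bracket by the quasi-Poisson identity \eqref{qPabc} and the additive property \eqref{mum} by the multiplicative property \eqref{Phim}. The starting point is the identity of associated triple brackets
\[
 (\psi\otimes \psi\otimes \psi) \circ \dgalU{-,-,-}=\dgalD{-,-,-}\circ (\psi\times \psi\times \psi)\,,
\]
which is precisely \eqref{Eq:RelTripBr}; it was derived there for \emph{any} morphism of double brackets and is therefore available here without modification.

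The one genuinely new point to check—and the only place requiring care beyond Lemma \ref{L:surjHam}—is that $\psi^{\otimes 3}$ intertwines the right-hand side of \eqref{qPabc}. Since $\psi$ is a $B$-algebra homomorphism it fixes each idempotent, $\psi(e_s)=e_s$, and it is multiplicative; hence applying $\psi^{\otimes 3}$ to the expression \eqref{qPabc} written for $a,b,c\in A_1$ yields exactly the same expression written for $\psi(a),\psi(b),\psi(c)\in A_2$. This is the quasi-Poisson substitute for the trivial transfer "$0\mapsto 0$" of the Poisson case, and it is what makes the transfer work in both directions.

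With this in hand, Part 1 (surjectivity) goes as follows. For 1.1, writing arbitrary elements of $A_2$ as $\psi(a),\psi(b),\psi(c)$ and combining the displayed identity with the intertwining of \eqref{qPabc} shows that $\dgalD{-,-,-}$ satisfies \eqref{qPabc}; surjectivity ensures every triple arises this way, so $A_2$ is double quasi-Poisson. For 1.2, I set $\Phi':=\psi(\Phi)$; it is invertible because $\psi(\Phi)\psi(\Phi^{-1})=\psi(1)=1$, and the multiplicative property \eqref{Phim} for each $\psi(\Phi_s)$ follows by applying $\psi^{\otimes 2}$ to \eqref{Phim} in $A_1$, using \eqref{Eq:Morph} and $\psi(e_s)=e_s$. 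This makes $\psi$ a morphism of quasi-Hamiltonian algebras. Part 2 (injectivity) is dual: I use that $\psi^{\otimes 2}$ and $\psi^{\otimes 3}$ are injective to \emph{pull back} the identities instead of pushing them forward, which gives 2.1 at once; for 2.2 the hypothesis $\Phi',(\Phi')^{-1}\in\operatorname{Im}(\psi)$ lets me define $\Phi:=\psi^{-1}(\Phi')$ with inverse $\psi^{-1}((\Phi')^{-1})$, and injectivity of $\psi^{\otimes 2}$ transfers \eqref{Phim} back to $A_1$. Uniqueness (Part 3) is then immediate from these constructions.

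I expect no serious obstacle here: the structural skeleton is identical to Lemma \ref{L:surjHam}, and the only additional ingredient—checking that $\psi^{\otimes 3}$ respects the combinatorial right-hand side of \eqref{qPabc}—reduces to the elementary facts that $\psi$ is multiplicative and fixes the idempotents.
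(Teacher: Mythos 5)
Your proposal is correct and matches the paper exactly: the paper gives no separate argument for Lemma \ref{L:surjqHam}, stating only that it ``can be proved in the same way as Lemma \ref{L:surjHam}'', and your adaptation supplies precisely the details this entails — reusing \eqref{Eq:RelTripBr} (valid for any morphism of double brackets), noting that $\psi^{\otimes 3}$ intertwines the right-hand side of \eqref{qPabc} since $\psi$ is multiplicative and fixes the idempotents, and replacing \eqref{mum} by \eqref{Phim} with the invertibility of $\psi(\Phi)$ (resp.\ of $\psi^{-1}(\Phi')$, using the hypothesis $\Phi',(\Phi')^{-1}\in\operatorname{Im}(\psi)$ and injectivity of $\psi^{\otimes 2}$, $\psi^{\otimes 3}$) handled correctly in both directions.
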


In analogy with the Hamiltonian case, a morphism of double brackets between two quasi-Hamiltonian algebras may fail to be a morphism of quasi-Hamiltonian algebras: we can rescale multiplicative moment maps. In fact, such a rescaling always exists if the morphism of double brackets is an isomorphism.
\begin{lem}
 Let $\psi:A_1 \to A_2$ be an isomorphism of double quasi-Poisson algebras. If $A_1,A_2$ are quasi-Hamiltonian algebras with multiplicative moment maps $\Phi_1,\Phi_2$, then $\Phi_2=\nu\, \psi(\Phi_1)$ for some $\nu \in B^\times$. 
In particular, a multiplicative moment map is unique up to multiplication by an element of $B^\times$.
\end{lem}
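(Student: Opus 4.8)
The plan is to follow the template of Lemma~\ref{Lem:UniqAd}, with the difference of two moment maps replaced by their quotient. First I would observe that, since $\psi$ is an isomorphism of double quasi-Poisson algebras (in particular a surjective morphism), Lemma~\ref{L:surjqHam} applied to $\psi$ shows that $\psi(\Phi_1)$ is a multiplicative moment map for the given double quasi-Poisson bracket on $A_2$. It therefore suffices to prove the intrinsic statement: if $\Phi=\sum_s\Phi_s$ and $\Phi'=\sum_s\Phi'_s$ are two multiplicative moment maps on a single double quasi-Poisson algebra $A$, then for each $s\in I$ there is $\nu_s\in\kk^\times$ with $\Phi'_s=\nu_s\Phi_s$. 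Indeed, $\nu:=\sum_s\nu_s e_s$ then lies in $B^\times$ and satisfies $\nu\Phi=\sum_s\nu_s\Phi_s=\Phi'$, so that applying this to $\Phi=\psi(\Phi_1)$ and $\Phi'=\Phi_2$ gives $\Phi_2=\nu\,\psi(\Phi_1)$. Since $e_s\Phi e_t=\delta_{st}\Phi_s$, I can argue one idempotent component at a time.

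Fix $s$ and set $g_s:=\Phi'_s\Phi_s^{-1}\in(e_sAe_s)^\times$. The central step is to show $g_s\in\Cas(A)$. To compute $\dgal{g_s,a}$, I would first differentiate the relation $e_s=\Phi_s\Phi_s^{-1}$: the left derivation rule~\eqref{Eq:inder} together with $B$-linearity gives $0=\dgal{\Phi_s,a}\ast\Phi_s^{-1}+\Phi_s\ast\dgal{\Phi_s^{-1},a}$, whence $\dgal{\Phi_s^{-1},a}=-\Phi_s^{-1}\ast\dgal{\Phi_s,a}\ast\Phi_s^{-1}$ for the inner bimodule action $\ast$. Expanding $\dgal{\Phi'_s\Phi_s^{-1},a}=\dgal{\Phi'_s,a}\ast\Phi_s^{-1}+\Phi'_s\ast\dgal{\Phi_s^{-1},a}$ and substituting the multiplicative property~\eqref{Phim} for both $\Phi'_s$ and $\Phi_s$, I expect every term to cancel, yielding $\dgal{g_s,a}=0$ for all $a\in A$.

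Once $g_s\in\Cas(A)\cap e_sAe_s$, cyclic antisymmetry~\eqref{Eq:cycanti} gives $\dgal{\Phi_s,g_s}=-\dgal{g_s,\Phi_s}^\circ=0$, and expanding the left-hand side by~\eqref{Phim} (using $g_se_s=e_sg_s=g_s$) produces the identity
\[
g_s\otimes\Phi_s-\Phi_s\otimes g_s+g_s\Phi_s\otimes e_s-e_s\otimes\Phi_s g_s=0
\]
in $e_sAe_s\otimes e_sAe_s$. A short linear-algebra argument then concludes: if $e_s,\Phi_s,g_s$ were linearly independent, one could complete them to a basis and observe that the pure tensor $g_s\otimes\Phi_s$ (first slot $g_s$, second slot $\Phi_s$) cannot be cancelled by any of the remaining terms, a contradiction; hence $e_s,\Phi_s,g_s$ are linearly dependent and $g_s=\alpha e_s+\beta\Phi_s$ for some $\alpha,\beta\in\kk$. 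Since $g_s$ is Casimir while $\Phi_s$ is not (the right-hand side of~\eqref{Phim} does not vanish identically), applying $\dgal{-,a}$ forces $\beta=0$; invertibility of $g_s$ then gives $\alpha\in\kk^\times$, and I set $\nu_s=\alpha$, so $\Phi'_s=\nu_s\Phi_s$.

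I expect the main obstacle to be the Casimir computation of the second paragraph. Unlike the additive setting of Lemma~\ref{Lem:UniqAd}, the defining relation~\eqref{Phim} is \emph{not} linear in the moment map, so the naive difference $\Phi'_s-\Phi_s$ is not the right invariant; one is forced to work with the quotient $g_s=\Phi'_s\Phi_s^{-1}$ and therefore to differentiate an inverse, which requires careful bookkeeping of the interplay between the inner ($\ast$) and outer bimodule actions when~\eqref{Phim} is substituted. The concluding step is elementary and rests only on $\Phi_s$ being genuinely non-central, which is ensured by the moment map axiom.
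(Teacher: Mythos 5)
Your second paragraph contains the fatal step. If you actually carry out the computation you sketch there, with $\dgal{\Phi_s^{-1},a}=-\Phi_s^{-1}\ast\dgal{\Phi_s,a}\ast\Phi_s^{-1}$ and \eqref{Phim} substituted for both $\Phi'_s$ and $\Phi_s$, the terms of the form $a\Phi_s^{-1}\otimes\Phi'_s$ and $\Phi_s^{-1}\otimes\Phi'_s a$ do cancel between the two pieces of the left derivation rule, but four terms survive:
\begin{equation*}
\dgal{g_s,a}=\frac12\left(a g_s\otimes e_s - g_s\otimes e_s a - a e_s\otimes g_s + e_s\otimes g_s a\right),
\end{equation*}
which is \emph{not} zero for all $a$ unless $g_s\in\kk e_s$ already. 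In other words your intermediate claim $g_s\in\Cas(A)$ is equivalent to the conclusion you are trying to prove, not something obtainable by term-by-term cancellation, and the whole third paragraph (which presupposes it) collapses. A quick sanity check exposes the problem: setting $a=g_s$ in the displayed formula gives $\dgal{g_s,g_s}=\frac12(g_s^2\otimes e_s+e_s\otimes g_s^2)-g_s\otimes g_s$, which is symmetric under the flip $(-)^\circ$; were it zero, your own linear-independence argument would force $g_s\in\kk e_s$ on the spot. This observation also shows how to repair your route: since $\dgal{g_s,g_s}$ is symmetric by the formula and antisymmetric by \eqref{Eq:cycanti}, it vanishes (characteristic zero), forcing $\frac12(g_s^2\otimes e_s+e_s\otimes g_s^2)=g_s\otimes g_s$ and hence $g_s=\nu_s e_s$, with $\nu_s\in\kk^\times$ by invertibility of $g_s$ — making your third paragraph unnecessary. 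But as written, the proof has a genuine gap.

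The misdiagnosis in your final paragraph is what led you astray: you assert that \eqref{Phim} is not linear in the moment map, whereas its right-hand side is visibly \emph{linear} in $\Phi_s$ (only the invertibility requirement is nonlinear), so the "naive difference" is exactly the right invariant. This is the paper's proof: it observes that $\Phi_{2,s}-\lambda_s\psi(\Phi_{1,s})$ again satisfies the multiplicative property for $e_s$, then computes $\dgalD{\Phi^{(\lambda)},\Phi^{(\kappa)}}$ for $\Phi^{(\lambda)}=\Phi_2-\lambda\,\psi(\Phi_1)$ in two ways — using \eqref{Phim} once in the first argument and once in the second — and comparing the two expressions in \eqref{Eq:Philk} yields $\Phi^{(\kappa)}_s\otimes\Phi^{(\lambda)}_s=\Phi^{(\lambda)}_s\otimes\Phi^{(\kappa)}_s$, i.e. $\psi(\Phi_{1,s})\otimes\Phi_{2,s}=\Phi_{2,s}\otimes\psi(\Phi_{1,s})$, whence proportionality after multiplying by $\Phi_{2,s}^{-1}$. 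No inverse needs to be differentiated and no Casimir detour is needed; the underlying mechanism (two expressions for one bracket, reconciled via antisymmetry) is the same one that rescues your computation above. Your opening reduction via Lemma \ref{L:surjqHam}, by contrast, is correct and matches the paper's implicit use of \eqref{Eq:Morph}.
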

\begin{proof}
Let us decompose $\Phi_{1}\in A_1$ with the idempotents as $\sum_{s\in I} \Phi_{1,s}$ and do the same for $\Phi_2\in A_2$. 
 We first note that, for any $\lambda_s\in \kk$, the element $\Phi_{2,s}-\lambda_s\psi(\Phi_{1,s})$ satisfies the multiplicative property for $e_s$ \eqref{Phim} because for any $a \in A_2$, 
 \begin{equation*}
\dgalD{\Phi_{2,s}-\lambda_s\psi(\Phi_{1,s}),a}  =\dgalD{\Phi_{2,s},a}-\lambda_s \psi^{\otimes 2}\dgalU{\Phi_{1,s},\psi^{-1}(a)}\,,
 \end{equation*}
and we can use \eqref{Phim} for the two double brackets on the right-hand side since $\Phi_1,\Phi_2$ are multiplicative moment maps. 
For any $\lambda=\sum_s \lambda_s e_s\in B$, introduce the element $\Phi^{(\lambda)}:=\Phi_2-\lambda \Phi_1$. 
Thus, for any $\lambda,\kappa \in B$, we get
\begin{equation}
\begin{aligned}  \label{Eq:Philk}
  \dgalD{\Phi^{(\lambda)},\Phi^{(\kappa)}}
 =&\frac12 \sum_{s\in I}(\Phi^{(\kappa)}_s\otimes \Phi^{(\lambda)}_s-e_s \otimes \Phi^{(\lambda)}_s \Phi^{(\kappa)}_s +  \Phi^{(\kappa)}_s \Phi^{(\lambda)}_s \otimes e_s-\Phi^{(\lambda)}_s \otimes \Phi^{(\kappa)}_s)  \\
 =&\frac12 \sum_{s\in I}(-\Phi^{(\kappa)}_s\otimes \Phi^{(\lambda)}_s+\Phi^{(\kappa)}_s \Phi^{(\lambda)}_s \otimes e_s -e_s \otimes  \Phi^{(\lambda)}_s \Phi^{(\kappa)}_s +\Phi^{(\lambda)}_s \otimes \Phi^{(\kappa)}_s)  \,,
\end{aligned}
\end{equation}
where we used for the first (resp. second) equality that $\Phi^{(\lambda)}_s$ (resp. $\Phi^{(\kappa)}_s$) satisfies the multiplicative property for $e_s$ \eqref{Phim}.  By decomposing \eqref{Eq:Philk} in terms of idempotents, we get that 
$\Phi^{(\kappa)}_s\otimes \Phi^{(\lambda)}_s=\Phi^{(\lambda)}_s \otimes \Phi^{(\kappa)}_s$ for all $s\in I$, which by definition of these elements is equivalent to 
\begin{equation*}
 (\lambda_s-\kappa_s)\left( \psi(\Phi_{1,s}) \otimes \Phi_{2,s} - \Phi_{2,s}\otimes \psi(\Phi_{1,s})\right)=0\,.
\end{equation*}
Multiplying on both side with $\Phi_{2,s}^{-1}:=e_s \Phi_2^{-1} e_s$, this is equivalent when $\lambda-\kappa \in B^\times$ to 
\begin{equation*}
\Phi_{2,s}^{-1}\psi(\Phi_{1,s}) \otimes e_s =e_s\otimes \psi(\Phi_{1,s})\Phi_{2,s}^{-1}\in e_s A_2e_s \otimes e_s A_2 e_s\,. 
\end{equation*}
Thus, $\Phi_{2,s}=\nu_s\psi(\Phi_{1,s})$ for some $\nu_s \in \kk^\times$. 

For the second part of the statement, it suffices to consider $\psi=\id_{A_1}:A_1\to A_1$. 
\end{proof}

\subsubsection{Sum of double quasi-Poisson algebras}

\begin{lem}[\cite{VdB1}] \label{Lem:SumqHam}
 If $A_1$ and $A_2$ are endowed with double quasi-Poisson brackets, then the double bracket $\dgal{-,-}^\oplus$ on $A_1\oplus A_2$ defined in \ref{ssOplus}  is quasi-Poisson. If  $\Phi_1,\Phi_2$ are multiplicative moment maps in $A_1$ and $A_2$, then $A_1\oplus A_2$ is quasi-Hamiltonian with $(\Phi_1,\Phi_2)$ as multiplicative moment map.
\end{lem}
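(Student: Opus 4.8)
The plan is to exploit the block structure of $A_1\oplus A_2$ afforded by the two central idempotents $1_1:=(1_{A_1},0)$ and $1_2:=(0,1_{A_2})$, which lie in $B_1\oplus B_2$ and satisfy $1_1+1_2=1$, $1_1 1_2=0$. Writing $a^{(k)}:=1_k a$ for the $A_k$-component of an element $a\in A_1\oplus A_2$, I would first record that the double bracket $\dgal{-,-}^\oplus$ defined in \ref{ssOplus} is block-diagonal. Indeed, combining $(B_1\oplus B_2)$-linearity with the inclusion $\dgal{x,y}^\oplus\in e_{r_y}(A_1\oplus A_2)e_{s_x}\otimes e_{r_x}(A_1\oplus A_2)e_{s_y}$ recalled in Section \ref{S:Base}, every cross bracket between a pure $A_1$-element and a pure $A_2$-element vanishes, since $1_1(A_1\oplus A_2)1_2=0$. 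Hence $\dgal{a,b}^\oplus=\dgal{a^{(1)},b^{(1)}}_1+\dgal{a^{(2)},b^{(2)}}_2$, with the two summands lying respectively in $(A_1,0)^{\otimes 2}$ and $(0,A_2)^{\otimes 2}$.

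Next I would propagate this block-diagonality to the associated triple bracket. As the image of $\dgal{b,c}^\oplus$ sits in $(A_1,0)^{\otimes 2}\oplus(0,A_2)^{\otimes 2}$, each iterated term in the definition \eqref{Eq:TripBr} keeps both of its surviving tensor legs in a single block, and reinserting such a leg into $\dgal{a,-}^\oplus$ only retains the matching component of $a$. Running this through the three cyclic terms gives $\dgal{a,b,c}^\oplus=\dgal{a^{(1)},b^{(1)},c^{(1)}}_1+\dgal{a^{(2)},b^{(2)},c^{(2)}}_2$, where each triple bracket on the right is the one associated with $\dgal{-,-}_k$ via \eqref{Eq:TripBr}.

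The step I expect to require the most care is matching this with the right-hand side of \eqref{qPabc} for the total index set $I=I_1\sqcup I_2$. I would split the defining sum over $s$ into $s\in I_1$ and $s\in I_2$. For $s\in I_1$ one has $e_s\in(A_1,0)$, so every expression $ce_sa$, $e_sb$, $e_s$, and so on appearing in \eqref{qPabc} annihilates the $A_2$-components and reduces to the corresponding expression in $a^{(1)},b^{(1)},c^{(1)}$; since $A_1$ is double quasi-Poisson over $B_1$, the partial sum over $s\in I_1$ therefore reproduces $\dgal{a^{(1)},b^{(1)},c^{(1)}}_1$, and symmetrically the partial sum over $s\in I_2$ reproduces $\dgal{a^{(2)},b^{(2)},c^{(2)}}_2$. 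Adding the two partial sums gives exactly the expression for $\dgal{a,b,c}^\oplus$ computed above, which shows that $\dgal{-,-}^\oplus$ is quasi-Poisson.

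For the quasi-Hamiltonian assertion, set $\Phi:=(\Phi_1,\Phi_2)$, which is invertible with inverse $(\Phi_1^{-1},\Phi_2^{-1})$ and has $e_s$-component $\Phi_s=(\Phi_{1,s},0)$ for $s\in I_1$ and $\Phi_s=(0,\Phi_{2,s})$ for $s\in I_2$. For $s\in I_1$, block-diagonality gives $\dgal{\Phi_s,a}^\oplus=\dgal{\Phi_{1,s},a^{(1)}}_1$, while on the right-hand side of \eqref{Phim} every factor adjacent to $e_s$ or $\Phi_s$ collapses onto $a^{(1)}$; the multiplicative property of $\Phi_1$ in $A_1$ then yields \eqref{Phim} verbatim, and the case $s\in I_2$ is identical. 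Hence each $\Phi_s$ satisfies the multiplicative property, so $(\Phi_1,\Phi_2)$ is a multiplicative moment map and $A_1\oplus A_2$ is quasi-Hamiltonian. (Alternatively, the statement is the quasi-Poisson counterpart of Lemma \ref{Lem:SumHam} and may be cited from \cite{VdB1}.)
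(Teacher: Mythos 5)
Your proof is correct. The paper itself gives no argument for this lemma (it is quoted from Van den Bergh, just as Lemma \ref{Lem:SumHam} is), and your block-diagonal verification is precisely the expected one: the cross brackets vanish (this is in fact part of the definition in \ref{ssOplus}, though your derivation from $B$-linearity and $1_2(A_1\oplus A_2)1_1=0$ nicely shows the vanishing is forced), the triple bracket and the right-hand side of \eqref{qPabc} then both split over $I_1\sqcup I_2$, and the multiplicative property \eqref{Phim} for each $\Phi_s$ reduces verbatim to the corresponding property in $A_1$ or $A_2$.
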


\subsection{Double quasi-Poisson brackets and fusion} 

We assume that the index set $I$ of $B$ is such that $|I|>1$. 
Our aim is to adapt the results from \ref{ss:HamFus} to the quasi-Hamiltonian setting. The main difficulty that we will encounter is that the double bracket given by \eqref{Eq:BrFus} obtained by fusion from a double quasi-Poisson algebra \emph{is not} a double quasi-Poisson bracket in general, because the associated triple bracket does not satisfy \eqref{qPabc}. To overcome this issue, one needs to add a corrective term to the double bracket after fusion, see Proposition \ref{Pr:IsoFusqHam}.

\subsubsection{Fusion in an algebra}
Recall the fusion algebra $A^f=A^f_{e_2 \to e_1}$ defined in \ref{ssFus}. The following Proposition  was first observed by Van den Bergh under mild assumptions \cite[Theorems 5.3.1, 5.3.2]{VdB1} and is proved in full generalities in \cite[Theorems 2.14, 2.15]{F19}.

\begin{prop} \label{Pr:IsoFusqHam}
If $A$ is a double quasi-Poisson algebra over $B$, then $A^f$ is a double quasi-Poisson algebra over $B^f=\bigoplus_{s\in I \setminus \{2\}}e_s$. The double quasi-Poisson bracket in $A^f$ is  given by 
\begin{equation} \label{dgalf}
  \dgal{-,-}^f:= \dgal{-,-} _{ind} + \dgal{-,-}_{fus}\,, 
\end{equation}
where  the first double bracket on the right-hand side is induced in $A^f$ by the one of $A$ (see \eqref{Eq:BrFus}), and the second double bracket $\dgal{-,-}_{fus}$ is defined in Appendix \ref{App:Dbr}. 
Furthermore, if $\Phi$ is a multiplicative moment map for $A$ and  $\Phi^f_s$ denotes the projection of the element $\Phi_s=e_s \Phi e_s$ under the map \eqref{Eq:prfus} for each $s\in I$, then $\Phi^{f\!f}=\Phi^f_1 \Phi_2^f+\sum_{s\neq 1,2}\Phi_s^f$ is a multiplicative moment map for $A^f$.  
\end{prop}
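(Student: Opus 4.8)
The plan is to establish the three assertions in turn: that $\dgal{-,-}^f$ is a $B^f$-linear double bracket, that its associated triple bracket obeys the quasi-Poisson identity \eqref{qPabc}, and that $\Phi^{ff}$ is a multiplicative moment map. The first assertion is the routine one. The induced piece $\dgal{-,-}_{ind}$ is automatically cyclically antisymmetric and satisfies the derivation rules, since by \eqref{Eq:BrFus} it is obtained from the double bracket of $A$ by inserting the matrix units and restricting to $A^f=\epsilon\bar A\epsilon$, operations which preserve \eqref{Eq:cycanti} and \eqref{Eq:outder}. It then suffices to confirm that the corrective term $\dgal{-,-}_{fus}$ of Appendix \ref{App:Dbr} is itself a $B^f$-linear double bracket; this is a direct verification on the generators of Lemma \ref{AfGenerat}, using that $\dgal{-,-}_{fus}$ only involves the components of its arguments supported at the fused vertex $e_1$.

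The heart of the proof is the quasi-Poisson identity. I would compute the triple bracket of $\dgal{-,-}^f$ from \eqref{Eq:TripBr} and, using bilinearity, organise it as
\[\dgal{a,b,c}^f=\dgal{a,b,c}_{ind}+(\text{cross terms})+\dgal{a,b,c}_{fus},\]
where the cross terms are the summands of \eqref{Eq:TripBr} mixing $\dgal{-,-}_{ind}$ and $\dgal{-,-}_{fus}$. Because $A$ is quasi-Poisson, its triple bracket satisfies \eqref{qPabc} with the sum running over all $e_s$, $s\in I$, in particular over $e_1$ and $e_2$ \emph{separately}. Transporting this identity to $A^f$ through \eqref{Eq:BrFus}—which inserts $e_{12},e_{21}$ in prescribed positions and collapses $e_2$ onto $e_1$ via \eqref{Eq:prfus}—yields $\dgal{a,b,c}_{ind}$, but \emph{not} in the form demanded by \eqref{qPabc} for $A^f$: at the fused vertex the two summands of \eqref{qPabc} coming from $e_1$ and $e_2$ in $A$ fail to combine into the single $e_1$-summand required in $A^f$, while every summand indexed by $s\neq 1,2$ is transported unchanged. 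The whole computation therefore reduces to the tensor identity
\[\Big(\text{fused }e_1\text{-summand of \eqref{qPabc} in }A^f\Big)-\Big(\text{transported }e_1\text{- and }e_2\text{-summands from }A\Big)=(\text{cross terms})+\dgal{a,b,c}_{fus}.\]
This is the main obstacle: a finite but lengthy bookkeeping computation, best carried out on the four types of generators of Lemma \ref{AfGenerat}. The workload can be reduced by invoking the cyclic antisymmetry \eqref{Eq:TriAnti} of triple brackets to cut down the number of generator triples that must be treated, and by using $B^f$-linearity together with the derivation rule to reduce to generators in the first place.

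Finally I would treat the multiplicative moment map. For $s\neq 1,2$ the component $\Phi_s^f$ coincides with $\Phi_s$, the induced bracket is unchanged near $e_s$, and $\dgal{-,-}_{fus}$ does not contribute to $\dgal{\Phi_s^f,-}$ since $\Phi_s^f$ has no component at the fused vertex; hence the multiplicative property \eqref{Phim} for $e_s$ transfers verbatim. For the fused vertex the claim is that $\Phi_1^f\Phi_2^f$ satisfies \eqref{Phim} for $e_1$ in $A^f$. I would expand $\dgal{\Phi_1^f\Phi_2^f,a}^f$ by the derivation rule \eqref{Eq:outder}, substitute the multiplicative properties of $\Phi_1$ and $\Phi_2$ transported through \eqref{Eq:BrFus}, and add the contribution of $\dgal{-,-}_{fus}$. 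The decisive point—characteristic of the multiplicative theory, in contrast with the additive case of Proposition \ref{Pr:IsoFusHam} where one \emph{adds} moment maps—is that it is the \emph{product} $\Phi_1^f\Phi_2^f$ whose Leibniz expansion, after the corrective cross terms cancel, reproduces exactly the right-hand side of \eqref{Phim} for the fused idempotent.
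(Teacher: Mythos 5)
The first thing to note is that the paper contains no proof of Proposition \ref{Pr:IsoFusqHam}: the statement is imported from the literature, attributed to Van den Bergh (\cite[Theorems 5.3.1, 5.3.2]{VdB1}, proved there under the assumption that the double bracket is \emph{differential}, i.e.\ associated with a bivector $P$) and, in full generality, to \cite[Theorems 2.14, 2.15]{F19}. So the honest comparison is with those two proofs. Your outline follows, in spirit, the direct-verification route of \cite{F19}: split the triple bracket of $\dgal{-,-}^f$ into the induced part, cross terms, and the triple bracket of $\dgal{-,-}_{fus}$; note that the induced part is the transported quasi-Poisson identity of $A$, whose $e_1$- and $e_2$-summands fail to reassemble into the single fused summand of \eqref{qPabc} in $A^f$; and charge the discrepancy to the corrective terms, checking everything on the four generator types of Lemma \ref{AfGenerat}. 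Your moment-map paragraph (Leibniz expansion of $\dgal{\Phi_1^f\Phi_2^f,-}^f$, with the \emph{product} of the components at the fused vertex replacing the sum of Proposition \ref{Pr:IsoFusHam}) is exactly the content of \cite[Theorem 5.3.2]{VdB1} and \cite[Theorem 2.15]{F19}. By contrast, Van den Bergh's original argument runs in the Schouten--Nijenhuis calculus: quasi-Poisson means $\{P,P\}\equiv\frac16\sum_s E_s^3$ modulo commutators for the gauge double derivations $E_s$, the correction $\dgal{-,-}_{fus}$ is the double bracket of the bivector $\frac12 E_1E_2$, and the theorem reduces to the cubic identity $(E_1+E_2)^3\equiv E_1^3+E_2^3+3E_1^2E_2+3E_1E_2^2$ modulo commutators. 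Your route buys full generality (no bivector is assumed to exist), at the price of the long generator-by-generator computation --- which is precisely the trade-off that motivated \cite{F19}.

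Two caveats on the plan as written. First, your displayed reduction lumps the cross terms together with $\dgal{a,b,c}_{fus}$, but these have different natures: the defect on the left-hand side and $\dgal{a,b,c}_{fus}$ are expressions in $a,b,c$ and idempotents only, whereas both families of cross terms (the $\dgal{-,-}_{fus}$-inside-$\dgal{-,-}_{ind}$ terms and the $\dgal{-,-}_{ind}$-inside-$\dgal{-,-}_{fus}$ terms) are, after applying the derivation rules, linear in the original double bracket of $A$. The verification must therefore in particular show that the cross terms cancel among themselves --- the generator-level shadow of the vanishing of the mixed Schouten term $\{P,E_1E_2\}$ modulo commutators --- and the bookkeeping becomes far more tractable if this cancellation is isolated as a separate lemma before matching $\dgal{-,-,-}_{fus}$ against the defect; as stated, your equation hides this structure. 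Second, and minor: ``transporting'' \eqref{qPabc} through \eqref{Eq:BrFus} presupposes that the $\bar{B}$-linear extension of the bracket to $\bar{A}$ still satisfies the quasi-Poisson identity there, which requires a short check you pass over. Finally, since every substantive identity in your plan is deferred to ``a finite but lengthy bookkeeping computation'', what you have is a correct and well-aimed skeleton rather than a proof; carrying it out amounts to reproving \cite[Theorem 2.14]{F19}.
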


For the remainder of this section, if $A$ is a double quasi-Poisson algebra and $\tilde A$ is an algebra obtained from $A$ by performing a finite number of fusions, we will see $\tilde A$ as a double quasi-Poisson algebra  using this last result. More precisely, if we define $\tilde A$ using the following chain of algebras obtained by fusion 
\begin{equation} \label{Eq:qChain}
  \begin{aligned}
 &A^{(0)} \longrightarrow A^{(1)} \longrightarrow \ldots \longrightarrow
 A^{(k)} \longrightarrow \ldots\longrightarrow A^{(n)}\,, \quad \text{ where } \\
&A^{(0)}=A,\quad A^{(n)}=\tilde A, \quad  A^{(k)}= (A^{(k-1)})^f_{e_{j_k}\to e_{i_k}} \,\, \text{ for }k=1,\ldots,n\,,
  \end{aligned}
\end{equation}
each algebra $A^{(k)}$ in the chain \eqref{Eq:qChain} is a double quasi-Poisson algebra whose double quasi-Poisson bracket is obtained from $A^{(k-1)}$ by using Proposition \ref{Pr:IsoFusqHam}. 
If $A$ is a quasi-Hamiltonian algebra, we see $\tilde A$ as a quasi-Hamiltonian algebra using the same argument.  

\begin{exmp}
Consider the localised path algebra $A=\kk \bar{Q}_S$ considered in Example \ref{Exmp:Q1qHam} with its quasi-Hamiltonian algebra structure. Fusing $e_2$ onto $e_1$, we get a quasi-Hamiltonian algebra structure on $A^f$ which can be identified with the localised free algebra $\kk \langle a,a^\ast\rangle_S$, $S=\{1+aa^\ast,1+a^\ast a\}$. The double bracket is determined by 
\begin{equation} 
\begin{aligned} \label{Eq:qHamQ1fus}
 &\dgal{a,a}^f=\frac12 (a^2 \otimes 1 - 1 \otimes a^2)\,, \quad \dgal{a^\ast,a^\ast}^f=-\frac12((a^\ast)^2 \otimes 1 - 1 \otimes (a^\ast)^2)\,, \\
 &\dgal{a,a^\ast}^f=e_2 \otimes e_1+\frac12 (a^\ast a \otimes e_1 + e_2 \otimes a a^\ast +a^\ast \otimes a - a \otimes a^\ast)\,, 
\end{aligned}
\end{equation}
while the multiplicative moment map is $\Phi=(1+aa^\ast)(1+a^\ast a)^{-1}$. 
Indeed, to get the double brackets  \eqref{Eq:qHamQ1fus} we add to \eqref{Eq:qHamQ1} the terms given by $\dgal{-,-}_{fus}$, so we add \eqref{vv} to $\dgal{a,a}$,  \eqref{uu} to $\dgal{a^\ast,a^\ast}$, and  \eqref{vu} to $\dgal{a,a^\ast}$. 
The algebra $A^f$ is the localised path algebra of the double of the one-loop quiver, and the quasi-Hamiltonian structure obtained by fusion is the one of Van den Bergh \cite[\S 6.5]{VdB1}. 
\end{exmp}

\begin{lem} \label{L:IsoFusqHam0}
Let $(A,\dgal{-,-})$ be a double quasi-Poisson algebra over $B$. Consider the algebra $A_1=A^f_{e_2\to e_1}$ obtained by fusing $e_2$ onto $e_1$ and the algebra $A_2=A^f_{e_1 \to e_2}$ obtained by fusing $e_1$ onto $e_2$, which are endowed with a double quasi-Poisson bracket by Proposition \ref{Pr:IsoFusqHam}. 
  
Assume that there exists an element $\Phi_2\in e_2 A e_2$ invertible in $e_2 A e_2$ which satisfies the multiplicative property \eqref{Phim} for $e_2$. 
Then there exists an isomorphism of double quasi-Poisson algebras $A_1\to A_2$. If furthermore $A$ is quasi-Hamiltonian, then the isomorphism $A_1\to A_2$ is an isomorphism of quasi-Hamiltonian algebras. 
\end{lem}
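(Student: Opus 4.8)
The plan is to adapt the identity-induced isomorphism of Lemma \ref{L:IsoId} to the quasi-Poisson setting, where it must now be corrected by a twist built from $\Phi_2$. The subtlety absent from the Poisson case (Lemma \ref{L:IsoFusHam0}) is that the two fusion brackets \eqref{dgalf} on $A_1$ and $A_2$ do not differ by a mere relabelling: their correction terms $\dgal{-,-}_{fus}$ are \emph{centered at different idempotents} ($e_2$ for $A_1$, since we absorb $e_2$, and $e_1$ for $A_2$), and they carry opposite signs on the generators entering versus leaving the fused vertex, as one already sees in the one-loop computation of Example \ref{Exmp:Q1qHam}. The role of the invertible element $\Phi_2$ satisfying the multiplicative property \eqref{Phim} is precisely to absorb this discrepancy, playing here the part of the group-valued moment map in the braiding isomorphism of quasi-Hamiltonian geometry.

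First I would recall from Lemma \ref{L:IsoId} the $B'$-algebra isomorphism $\phi:A_1\to A_2$ induced by $\id_A$, which already intertwines the \emph{induced} double brackets $\dgal{-,-}_{ind}$ by the argument of Lemma \ref{L:IsoFus}. I would then define the desired map $\Psi:A_1\to A_2$ by composing $\phi$ with the conjugation, by the projection $\Phi_2^f$ of $\Phi_2$ under \eqref{Eq:prfus}, of the generators incident to the fused vertex. Concretely, a generator of first type (Lemma \ref{AfGenerat}) is left untouched, while generators of second, third and fourth type, namely those of the form $e_{12}u$, $ve_{21}$, $e_{12}we_{21}$ with $u,v,w$ touching $e_2$, are multiplied by $\Phi_2^{\pm1}$ on the appropriate side(s), the precise powers being dictated by the requirement of matching the two fusion corrections. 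Since $\Phi_2$ is invertible in $e_2Ae_2$, its projection is invertible in the fused corner and the symmetric construction with $\Phi_2^{-1}$ provides a two-sided inverse, so that $\Psi$ is an isomorphism of $B'$-algebras.

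The heart of the proof is to verify that $\Psi$ satisfies the morphism property \eqref{Eq:Morph} for $\dgal{-,-}^f$ on $A_1$ and $A_2$, which I would check on the four types of generators. For the contribution coming from $\dgal{-,-}_{ind}$ this follows as in Lemma \ref{L:IsoFus}, using \eqref{Eq:BrFus} and the intertwining property of $\phi$. The remaining task is to show that conjugating by $\Phi_2^f$ turns the fusion correction $\dgal{-,-}_{fus}$ of $A_1$ into that of $A_2$; here I would expand both sides using the explicit form of $\dgal{-,-}_{fus}$ from Appendix \ref{App:Dbr} and repeatedly invoke the multiplicative property \eqref{Phim} to move $\Phi_2$ across the double bracket, exactly as in the computation of $\dgal{b^\ast,(e_2+bb^\ast)^{-1}}_2$ carried out in Example \ref{Exmp:Q1qHam}. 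Once $\Psi$ is known to be a morphism of double brackets between two double quasi-Poisson algebras, it is by definition a morphism of double quasi-Poisson algebras.

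Finally, in the quasi-Hamiltonian case one may take $\Phi_2=e_2\Phi e_2$, so that the hypothesis is automatic. By Proposition \ref{Pr:IsoFusqHam} the multiplicative moment maps of $A_1$ and $A_2$ agree away from the fused vertex but carry, at the fused vertex, the products $\Phi_1^f\Phi_2^f$ and $\Phi_2^f\Phi_1^f$ respectively, in opposite orders; conjugation by $\Phi_2^f$ interchanges these two orders while fixing the orthogonal blocks $\Phi_s^f$ for $s\neq 1,2$, so $\Psi$ sends the moment map of $A_1$ to that of $A_2$ and is an isomorphism of quasi-Hamiltonian algebras. I expect the main obstacle to be the bracket verification of the third paragraph: keeping track of the signs and of the sides on which $\Phi_2$ must be inserted for each of the four generator types, and confirming that \eqref{Phim} produces exactly the sign-reversed fusion correction of $A_2$ and not some other correction term.
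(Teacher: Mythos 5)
Your overall strategy coincides with the paper's: start from the identity-induced isomorphism of Lemma \ref{L:IsoId}, correct it by insertions of $\Phi_2^{\pm1}$, and verify \eqref{Eq:Morph} type-by-type using the explicit fusion brackets of Appendix \ref{App:Dbr} together with the multiplicative property \eqref{Phim}. However, your concrete twisting rule is inverted, and this is fatal rather than a matter of bookkeeping. The paper attaches the twist to the legs at the \emph{absorbed} vertex $e_1$, not at $e_2$: its map $\psi$ sends $t\mapsto \Phi_2 e_{21}t$ for $t\in e_1A\he$, $t\mapsto te_{12}\Phi_2^{-1}$ for $t\in \he Ae_1$, $t\mapsto \Phi_2 e_{21}te_{12}\Phi_2^{-1}$ for $t\in e_1Ae_1$ (so \emph{first-type} generators are twisted), while $e_{12}u\mapsto u$ for $u\in e_2A\he$, $ve_{21}\mapsto v$ for $v\in \he Ae_2$ and, crucially, the fourth type is untouched: $e_{12}we_{21}\mapsto w$. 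Your rule (first type untouched, types 2--4 twisted) cannot be repaired by any choice of signs. Indeed, take $c=a\in e_1A\he$, so that for you $\Psi(c)=e_{21}a$, and $d=e_{12}b$ with $b\in e_2A\he$, so $\Psi(d)=\Phi_2^{\delta}b$ with $\delta=\pm1$. Using \eqref{tu} one finds $\Psi^{\otimes 2}\dgalU{c,d}=\Phi_2^{\delta}\dgal{a,b}'\otimes e_{21}\dgal{a,b}''-\frac12\, e_{21}a\otimes \Phi_2^{\delta}b$, whereas a direct computation with \eqref{ut} and \eqref{Phim} (which gives $\dgal{a,\Phi_2}=0$ here, since all four terms of \eqref{Phim} are killed by $a=e_1a\he$) yields, for \emph{either} sign $\delta$,
\begin{equation*}
\dgalD{e_{21}a,\Phi_2^{\delta}b}=\Phi_2^{\delta}\dgal{a,b}'\otimes e_{21}\dgal{a,b}''+\frac12\, e_{21}a\otimes \Phi_2^{\delta}b\,,
\end{equation*}
so the two sides differ by $e_{21}a\otimes \Phi_2^{\delta}b\neq 0$. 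The discrepancy disappears only with the paper's assignment $\Psi(c)=\Phi_2 e_{21}a$, $\Psi(d)=b$, where $\Phi_2$ sits \emph{inside} the bracket's argument and \eqref{Phim} generates the compensating cross-terms, as in the displayed computation \eqref{Eq:FusComp1}--\eqref{Eq:FusComp5}. The same inversion breaks your moment-map check: your map sends $\Phi_1 e_{12}\Phi_2 e_{21}$ to $e_{21}\Phi_1 e_{12}\Phi_2$, which is conjugate to, but not equal to, the component $\Phi_2 e_{21}\Phi_1 e_{12}$ of the moment map of $A_2$ given by Proposition \ref{Pr:IsoFusqHam}.

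Two further points. First, the correct map is \emph{not} the composition of $\phi$ with conjugation by $\Phi_2^f$: conjugating $A_2$ by the invertible element $\Phi_2+\he$ would also send $u\mapsto \Phi_2 u$ for $u\in e_2A\he$, which the paper's $\psi$ does not do; the twist marks only the $e_1$-legs recreated by the projection $\pi_2$, so it is a one-sided insertion depending on which original vertex a generator's leg sits at, not a global conjugation. Second, your division of labour --- induced parts match ``as in Lemma \ref{L:IsoFus}'', fusion corrections matched separately --- does not survive the twist: already for $c,d\in e_1A\he$ one has $\dgalindD{\psi(c),\psi(d)}\neq \psi^{\otimes2}\dgalindU{c,d}$, because \eqref{Phim} applied to the inserted $\Phi_2$'s produces extra $\Phi_2^2$-terms on the induced side (see \eqref{Eq:FusComp5}), and these cancel precisely against $\dgalfusD{\psi(c),\psi(d)}$ in \eqref{Eq:FusComp3}. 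The two contributions must therefore be verified \emph{jointly}, which is exactly why the paper carries out the full $45$-case check recorded in Appendix \ref{App:LIsoFusqHam0}.
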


The last part of this statement was announced in \cite[Proposition 3.8]{F19} as a non-commutative version of \cite[Proposition 5.7]{QuasiP}.

\begin{proof}[Proof (of Lemma \ref{L:IsoFusqHam0}).]
We first define an isomorphism $\psi:A_1 \to A_2$. (This is a morphism over the common base $B'=\kk \tilde e \oplus\bigoplus_{s\in I \setminus \{1,2\}}\kk e_s$, see the remark after Lemma \ref{L:IsoFusHam0}.) 
As in Lemma \ref{L:IsoId}, we define $\psi$ on a specialisation of the set of  generators of $A_1$, i.e. on a suitably chosen subset of the generators given in Lemma \ref{AfGenerat}. For $\he=1-e_1-e_2$, $\psi$ is given on generators of first type \eqref{type1} as 
 \begin{equation*}
 \begin{aligned}
  &\psi(t)=t,\,\,\text{if } t \in \he A \he; \qquad 
  \psi(t)=\Phi_2 e_{21}t,\,\,\text{if } t \in e_1 A \he; \\
  &\psi(t)=te_{12}\Phi_2^{-1},\,\,\text{if } t \in \he A e_1; \quad 
  \psi(t)=\Phi_2e_{21}te_{12} \Phi_2^{-1},\,\,\text{if } t \in e_1 A e_1; 
 \end{aligned}
 \end{equation*}
on generators of second type  \eqref{type2} as 
 \begin{equation*}
  \psi(e_{12}u)=u,\,\,\text{if } u \in e_2 A \he; \quad 
  \psi(e_{12}u)=ue_{12}\Phi_2^{-1},\,\,\text{if } u \in e_2 A e_1; 
 \end{equation*}
on generators of third type  \eqref{type3} as  
  \begin{equation*}
  \psi(ve_{21})=v,\,\,\text{if } v \in \he A e_2; \quad 
  \psi(ve_{21})=\Phi_2 e_{21}v,\,\,\text{if } v \in e_1 A e_2; 
 \end{equation*}
 on generators of fourth type \eqref{type4} as
   \begin{equation*}
  \psi(e_{12}we_{21})=w,\,\,\text{if } w \in e_2 A e_2.
 \end{equation*}
Similarly, we define a morphism $\theta:A_2\to A_1$ on a specialisation of the set of  generators of $A_2$, which is given on generators of first type \eqref{type1} as 
 \begin{equation*}
 \begin{aligned}
  &\theta(t')=t',\,\,\text{if } t' \in \he A \he; \qquad 
  \theta(t')= e_{12}t',\,\,\text{if } t' \in e_2 A \he; \\
  &\theta(t')=t'e_{21},\,\,\text{if } t' \in \he A e_2; \quad 
  \theta(t')=e_{12}t' e_{21},\,\,\text{if } t' \in e_2 A e_2; 
 \end{aligned}
 \end{equation*}
on generators of second type  \eqref{type2} as 
 \begin{equation*}
  \theta(e_{21}u')=e_{12}\Phi^{-1}_2 e_{21}u',\,\,\text{if } u' \in e_1 A \he; \quad 
  \theta(e_{21}u')=e_{12}\Phi^{-1}_2 e_{21} u'e_{21},\,\,\text{if } u' \in e_1 A e_2; 
 \end{equation*}
on generators of third type  \eqref{type3} as  
  \begin{equation*}
  \theta(v'e_{12})=v' e_{12}\Phi_2 e_{21},\,\,\text{if } v' \in \he A e_1; \quad 
  \theta(v'e_{12})=e_{12}v'e_{12}\Phi_2 e_{21},\,\,\text{if } v' \in e_2 A e_1; 
 \end{equation*}
 on generators of fourth type \eqref{type4} as
   \begin{equation*}
  \theta(e_{21}w'e_{12})=e_{12}\Phi^{-1}_2 e_{21} w'e_{12}\Phi_2 e_{21},\,\,\text{if } w' \in e_1 A e_1.
 \end{equation*}
It is then a straightforward exercise to show that $\psi \circ \theta = \Id_{A_2}$ and $\theta \circ \psi = \Id_{A_1}$. For example, 
\begin{equation*}
 \psi \circ \theta (e_{21}w'e_{12})=\psi(e_{12}\Phi^{-1}_2 e_{21}) \psi( w') \psi(e_{12}\Phi_2 e_{21})
 =\Phi^{-1}_2(\Phi_2e_{21}w'e_{12} \Phi_2^{-1}) \Phi_2=e_{21}w'e_{12}\,, \quad w' \in e_1 A e_1\,.
\end{equation*}
We easily see that in the quasi-Hamiltonian case we get $\psi(\Phi_1e_{12}\Phi_2e_{21})=\Phi_2e_{21}\Phi_1 e_{12}$, so the only property that remains to be shown is that $\psi:A_1 \to A_2$ is a morphism of double quasi-Poisson algebras. 

We  note from Proposition \ref{Pr:IsoFusqHam} that the double quasi-Poisson bracket on $A_1$ is given by 
\begin{equation} \label{dgalU}
  \dgalU{-,-}:= \dgalindU{-,-} + \dgalfusU{-,-}\,, 
\end{equation}
where  $\dgalindU{-,-}$ is induced by the double bracket $\dgal{-,-}$ in $A_1$ using \eqref{Eq:BrFus}, while $\dgalfusU{-,-}$ is the double bracket defined in Appendix \ref{App:Dbr} for $i=1, j=2$; 
 the double quasi-Poisson bracket on $A_2$ is given by 
\begin{equation} \label{dgalD}
  \dgalD{-,-}:= \dgalindD{-,-} + \dgalfusD{-,-}\,, 
\end{equation}
 where  $\dgalindD{-,-}$ is induced by the double bracket $\dgal{-,-}$ in $A_2$ using \eqref{Eq:BrFus}, while $\dgalfusD{-,-}$ is the double bracket defined in Appendix \ref{App:Dbr} for $i=2, j=1$. 
Hence, we need to check that 
\begin{equation} \label{Eq:FusqHam}
(\psi\otimes \psi)\dgalU{c,d}  
= \dgalD{\psi(c),\psi(d)}\,,       
\end{equation}
on each pair $(c,d)$ of specialisations of generators of $A_1$ described above. There are $9$ such specialisations, so making use of the cyclic antisymmetry we need to check \eqref{Eq:FusqHam} in $45$ cases. Let us explain how to carry out the computations in one case; the remaining cases are treated in a similar way, and the corresponding double brackets that must be computed are gathered in Appendix \ref{App:LIsoFusqHam0} for the reader's convenience.
 
Consider $c=a$, $d=b$, where $a,b\in e_1A\he$. We have that $c,d$ are generators of first type \eqref{type1} in $A_1$, so that $\dgalfusU{c,d}=0$ by \eqref{tt} (for $i=1, j=2$).  Also, we have by \eqref{Eq:BrFus} that 
\begin{equation*}
 \dgalindU{c,d}=e_1 \ast e_1\dgal{a,b} \he \ast \he = \dgal{a,b}\,,
\end{equation*}
since $\dgal{a,b}'=e_1 \dgal{a,b}' \he$ and the same holds for $\dgal{a,b}''$. Hence, by definition of \eqref{dgalU} we can simply write the left-hand side of \eqref{Eq:FusqHam} as  $\psi^{\otimes 2}\dgal{a,b}$. Meanwhile, we have that $\psi(c)=\Phi_2 e_{21}a$ and $\psi(d)=\Phi_2 e_{21}b$. Therefore, we get 
\begin{equation}
\begin{aligned} \label{Eq:FusComp1}
  \dgalfusD{\psi(c),\psi(d)} =&
 + \dgalfusD{\Phi_2, \Phi_2 }e_{21}b \ast e_{21}a  +\Phi_2 \ast \dgalfusD{e_{21}a, \Phi_2 }e_{21}b \\
& +\Phi_2 \dgalfusD{\Phi_2, e_{21}b} \ast e_{21}a  +\Phi_2 \ast \Phi_2 \dgalfusD{e_{21}a, e_{21}b}\,,
\end{aligned} 
\end{equation}
using the derivation rules of the double bracket $\dgalfusD{-,-}$. We now use the explicit form of this double bracket given in Appendix \ref{App:Dbr} (for $i=2, j=1$). To do so, note that in $A_2$, $\Phi_2$ is a generator of first type \eqref{type1} while $e_{21}a,e_{21}b$ are generators of second type \eqref{type2}. Hence the double bracket appearing in the first term of \eqref{Eq:FusComp1} is given by \eqref{tt}, the one in the second term by \eqref{ut}, the one in the third term by \eqref{tu}, and the one in the fourth term by \eqref{uu}. Therefore, 
\begin{equation}
\begin{aligned} \label{Eq:FusComp2}
  \dgalfusD{\psi(c),\psi(d)} =&
 +\frac12 \Phi_2 \ast (e_{21}a \otimes \Phi_2 - \Phi_2 e_{21}a \otimes e_2)e_{21}b \\
& +\frac12\Phi_2 (e_2 \otimes \Phi_2 e_{21}b - \Phi_2 \otimes e_{21}b) \ast e_{21}a  \\
&+\frac12 \Phi_2 \ast \Phi_2 (e_2 \otimes e_{21}a e_{21}b - e_{21}b e_{21}a \otimes e_2)\,.
\end{aligned} 
\end{equation}
Noting that $a e_2=0=b e_2$, we get that the last two terms of \eqref{Eq:FusComp2} vanish, and since the second and third terms cancel out this yields 
\begin{equation}\label{Eq:FusComp3}
  \dgalfusD{\psi(c),\psi(d)} = \frac12 (e_{21}a \otimes \Phi_2^2 e_{21}b  - \Phi_2^2  e_{21}a  \otimes e_{21}b)\,.
\end{equation} 
Finally, we compute that 
\begin{equation}
\begin{aligned} \label{Eq:FusComp4}
\dgalindD{\psi(c),\psi(d)} =&
 + \dgalindD{\Phi_2, \Phi_2 }e_{21}b \ast e_{21}a  +\Phi_2 \ast \dgalindD{e_{21}a, \Phi_2 }e_{21}b \\
& +\Phi_2 \dgalindD{\Phi_2, e_{21}b} \ast e_{21}a  +\Phi_2 \ast \Phi_2 \dgalindD{e_{21}a, e_{21}b} \\
=& + \dgal{\Phi_2, \Phi_2 }e_{21}b \ast e_{21}a  +\Phi_2 e_{21}\ast \dgal{a, \Phi_2}e_{21}b \\
& +\Phi_2 e_{21}\dgal{\Phi_2, b} \ast e_{21}a  +\Phi_2 e_{21} \ast \Phi_2e_{21} \dgal{a,b} \,,
\end{aligned} 
\end{equation}
where we have used the derivation rules of the double bracket $\dgalindD{-,-}$ for the first equality, and its definition \eqref{Eq:BrFus} using $\dgal{-,-}$ for the second equality. By assumption, $\Phi_2$ satisfies the multiplicative property \eqref{Phim} for $e_2$ in $A$, hence we can compute the double brackets  in the first three terms of \eqref{Eq:FusComp4}. We get that (since $e_2a=0=ae_2$ and the same holds for $b$, $\dgal{a, \Phi_2}=0=\dgal{\Phi_2,b}$ in this case) 
\begin{equation} \label{Eq:FusComp5}
\dgalindD{\psi(c),\psi(d)} =
 \frac12(\Phi_2^2 e_{21}a \otimes e_{21}b - e_{21}a \otimes \Phi_2^2e_{21}b)+\Phi_2 e_{21} \dgal{a,b}' \otimes  \Phi_2e_{21} \dgal{a,b}'' \,.
\end{equation}
Combining \eqref{Eq:FusComp3} and \eqref{Eq:FusComp5}, we get that 
\begin{equation}
\dgalD{\psi(c),\psi(d)} =\Phi_2 e_{21} \dgal{a,b}' \otimes  \Phi_2e_{21} \dgal{a,b}''= \psi^{\otimes 2} \dgal{a,b} \,,
\end{equation}
by definition of $\psi$, so this is precisely $\psi^{\otimes 2}\dgalU{c,d}$.
\end{proof}

\subsubsection{Fusion of several idempotents}

\begin{lem}   \label{L:IsoFusqHam1}
Let $A$ be a double quasi-Poisson algebra over $B$, and let $e_1,e_2,e_3\in B$ be orthogonal idempotents. 
Let $A_1:= (A^f_{e_3\to e_2})_{e_2 \to e_1}^f$ (resp. $A_2:=(A^f_{e_2 \to e_1})_{e_3 \to e_1}^f$) be the algebra obtained by fusing $e_3$ onto $e_1$, then $e_2$ onto $e_1$ 
(resp. $e_2$ onto $e_1$, then $e_3$ onto $e_1$). 
Then the identity map on $A$ induces an isomorphism of double quasi-Poisson algebras $\psi:A_1\to A_2$ over $B'=\bigoplus_{s\in I \setminus \{2,3\}}\kk e_s$. 
If furthermore $A$ is quasi-Hamiltonian, then $\psi$ is an isomorphism of quasi-Hamiltonian algebras. 
\end{lem}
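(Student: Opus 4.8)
The plan is to follow the strategy of Lemma \ref{L:IsoFusHam1}, upgrading each step to account for the corrective terms that Proposition \ref{Pr:IsoFusqHam} attaches to a fusion. Applying Lemma \ref{AfGenerat} twice, I would describe the generators of $A_1$ and of $A_2$ in the same form as in the Hamiltonian case: as elements $e_a a f_a$ for $a\in A$ with $e_a\in\{1-e_2-e_3,\,e_{12},\,e_{12}e_{23}\}$ and $f_a\in\{1-e_2-e_3,\,e_{21},\,e_{32}e_{21}\}$ for $A_1$, and with the analogous triples $\{1-e_2-e_3,\,e_{12},\,e_{13}\}$, $\{1-e_2-e_3,\,e_{21},\,e_{31}\}$ for $A_2$. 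I would then define $\psi:A_1\to A_2$ on generators by leaving $a$ untouched and relabelling the matrix units, sending $e_{12}e_{23}\mapsto e_{13}$ and $e_{32}e_{21}\mapsto e_{31}$ while fixing $e_{12}$, $e_{21}$ and $1-e_2-e_3$. As in Lemma \ref{L:IsoFusHam1}, this is visibly a $B'$-algebra isomorphism, its inverse being the reverse relabelling.

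Next I would dispose of the multiplicative moment map in the quasi-Hamiltonian case. By Proposition \ref{Pr:IsoFusqHam}, collapsing $e_1,e_2,e_3$ onto a single vertex produces the ordered product $\Phi_1\Phi_2\Phi_3$ at the surviving vertex, and both chains yield this product in the \emph{same} order: for $A_1$ one first forms $\Phi_2\Phi_3$ at vertex $2$ and then multiplies on the left by $\Phi_1$, while for $A_2$ one first forms $\Phi_1\Phi_2$ and then multiplies on the right by $\Phi_3$. Since $\psi$ is the identity on the $A$-part and consistently relabels the matrix units, it carries the multiplicative moment map of $A_1$ to that of $A_2$; the whole difficulty is thus reduced to showing that $\psi$ is a morphism of double brackets.

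Here I would decompose the double quasi-Poisson bracket on each $A_k$, obtained by applying Proposition \ref{Pr:IsoFusqHam} twice, into its twice-induced part and its accumulated fusion-correction part. The twice-induced part is built from the original bracket of $A$ using only \eqref{Eq:BrFus} applied twice, and is therefore intertwined by $\psi$ by the very computation that proves Lemma \ref{L:IsoFusHam1} (that argument never uses a quasi-Poisson hypothesis). The correction part is the sum of the second-fusion correction with the induction through the second fusion of the first-fusion correction; crucially, the formulas of Appendix \ref{App:Dbr} exhibit each such correction as a universal expression in the generators and matrix units, \emph{independent} of the ambient bracket of $A$, and induction via \eqref{Eq:BrFus} preserves this universality. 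Hence the assertion that $\psi$ intertwines the two correction parts is a bracket-free identity, which I would verify directly on pairs of specialised generators, exactly in the manner of Lemma \ref{L:IsoFusqHam0} using \eqref{tt}--\eqref{vu}.

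The main obstacle is precisely this last verification: for $A_1$ the corrections are attached to the pairs $(e_2,e_3)$ and then $(e_1,e_2)$, whereas for $A_2$ they are attached to $(e_1,e_2)$ and then $(e_1,e_3)$, and the first correction is moreover transported through the second fusion before being compared. One must check that these two a priori different prescriptions yield the same total correction after applying $\psi$, which reflects the order-independence of the quasi-Poisson correction when three vertices are collapsed to one. Because the corrections are universal, this amounts to a finite case analysis over the types of the generators, which I would organise as in the case-by-case check of Lemma \ref{L:IsoFusqHam0}; combining it with the twice-induced part and the moment-map computation then yields the desired isomorphism of (quasi-Hamiltonian) double quasi-Poisson algebras.
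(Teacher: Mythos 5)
Your proposal is correct and follows essentially the same route as the paper: the map $\psi$ is the one from the Hamiltonian Lemma \ref{L:IsoFusHam1}, the bracket is decomposed via two applications of Proposition \ref{Pr:IsoFusqHam} into a twice-induced part (intertwined by the Hamiltonian computation, which indeed uses no quasi-Poisson hypothesis) plus fusion corrections, the moment maps $\Phi_1 e_{12}\Phi_2 e_{23}\Phi_3 e_{32}e_{21}$ and $\Phi_1 e_{12}\Phi_2 e_{21}e_{13}\Phi_3 e_{31}$ are matched by relabelling, and everything reduces to the identity \eqref{Eq:IsoqHam3} comparing the \emph{sums} of corrections, checked case by case on specialised generators exactly as in Appendix \ref{App:LIsoFusqHam1}. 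You correctly anticipated the key subtlety that the corrections must be compared in total rather than termwise (the appendix shows they genuinely mix, e.g.\ a $3\to 2$ correction in $A_1$ splitting across both corrections in $A_2$), and your observation that the fusion brackets are universal in the generator types is precisely what makes that finite, bracket-free case analysis legitimate.
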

\begin{proof}
 The map $\psi:A_1 \to A_2$ induced by the identity is explicitly spelled out in the proof of Lemma \ref{L:IsoFusHam1}. 
 In particular,  if  $\dgal{-,-}_{k,ind}$ denotes the double bracket from $A$ induced in $A_k$, $k=1,2$, the above-mentioned proof also implies that 
 \begin{equation} \label{Eq:IsoqHam1}
  \dgal{\psi(c),\psi(d)}_{2,ind}=(\psi\otimes \psi)\dgal{c,d}_{1,ind}\,,\quad \text{for all } c,d\in A_1\,.
 \end{equation}
By construction $A_1$ and $A_2$ are obtained by the following composition of fusions 
\begin{equation} \label{Eq:IsoqHam2}
 A  \,\stackrel{e_3\to e_2}{\longrightarrow}\,  A^f_{e_3\to e_2}  \,\stackrel{e_2\to e_1}{\longrightarrow}\,  A_1\,, \qquad 
  A  \,\stackrel{e_2\to e_1}{\longrightarrow}\,  A^f_{e_2\to e_1}  \,\stackrel{e_3\to e_1}{\longrightarrow}\,  A_2\,. 
\end{equation}
Thus, using Proposition \ref{Pr:IsoFusqHam} twice, the double quasi-Poisson bracket on $A_1$ is given by 
\begin{equation}
 \dgal{-,-}_1:= \dgal{-,-}_{1,ind} + \dgal{-,-}_{1,fus}^{3\to 2} + \dgal{-,-}_{1,fus}^{2\to 1}\,. 
\end{equation}
Here, $\dgal{-,-}_{1,fus}^{3\to 2}$ denotes the double bracket from Appendix \ref{App:Dbr} which is added after the fusion $e_3\to e_2$ to get a double quasi-Poisson bracket on $A^f_{e_3\to e_2}$ and is then induced in $A_1$; $\dgal{-,-}_{1,fus}^{2\to 1}$ denotes the double bracket from Appendix \ref{App:Dbr} which is added after the fusion $e_2\to e_1$ to get a double quasi-Poisson bracket on $A_1$. In the same way, the double quasi-Poisson bracket on $A_2$ is given by 
\begin{equation}
 \dgal{-,-}_2:= \dgal{-,-}_{2,ind} + \dgal{-,-}_{2,fus}^{2\to 1} + \dgal{-,-}_{2,fus}^{3\to 1}\,,
\end{equation}
where the double brackets are defined in analogy to the case of $A_1$. 
In particular, we get from \eqref{Eq:IsoqHam1} that the map $\psi:A_1 \to A_2$ is  an isomorphism of double quasi-Poisson algebras provided that  for all $c,d\in A_1$, 
 \begin{equation} \label{Eq:IsoqHam3}
  \dgal{\psi(c),\psi(d)}_{2,fus}^{2\to 1} + \dgal{\psi(c),\psi(d)}_{2,fus}^{3\to 1}
  =(\psi\otimes \psi)\dgal{c,d}_{1,fus}^{3\to 2} + (\psi\otimes \psi)\dgal{c,d}_{1,fus}^{2\to 1}\,.
 \end{equation}
This is proved in Appendix \ref{App:LIsoFusqHam1}. If $A$ is quasi-Hamiltonian with multiplicative moment map $\Phi=\sum_s \Phi_s$ where $\Phi_s=e_s \Phi_s e_s$, then using Proposition \ref{Pr:IsoFusqHam}  the multiplicative moment map transforms under the morphisms \eqref{Eq:IsoqHam2} as 
\begin{equation*}
\begin{aligned}
  &\sum_s \Phi_s \mapsto e_2 \Phi_2 e_{23}\Phi_3 e_{32}+ \sum_{s\neq 2,3}\Phi_s 
 \mapsto e_1 \Phi_1 e_{12}\Phi_2 e_{23}\Phi_3 e_{32}e_{21}+ \sum_{s\neq 1,2,3}\Phi_s \in A_1 \,, \\
 &\sum_s \Phi_s \mapsto e_1 \Phi_1 e_{12}\Phi_2 e_{21}+ \sum_{s\neq 1,2}\Phi_s 
 \mapsto e_1 \Phi_1 e_{12}\Phi_2 e_{21} e_{13}\Phi_3 e_{31}+ \sum_{s\neq 1,2,3}\Phi_s \in A_2\,.
\end{aligned}
\end{equation*}
We can see that $\psi$ sends the multiplicative moment map of $A_1$ to the one of $A_2$. 
\end{proof}

\begin{lem}   \label{L:IsoFusqHam2}
Let $A$ be a double Poisson algebra over $B$, and let $e_1,e_2,e_3,e_4\in B$ be orthogonal idempotents. 
Let $A_1:= (A^f_{e_4\to e_3})_{e_2 \to e_1}^f$ (resp. $A_2:=(A^f_{e_2 \to e_1})_{e_4 \to e_3}^f$) be the algebra obtained by fusing $e_4$ onto $e_3$, then $e_2$ onto $e_1$ 
(resp. $e_2$ onto $e_1$, then $e_4$ onto $e_3$). 
Then the identity map on $A$ induces an isomorphism of double quasi-Poisson algebras $\psi:A_1\to A_2$ over $B'=\bigoplus_{s\in I \setminus \{2,4\}}\kk e_s$. 
If furthermore $A$ is quasi-Hamiltonian, then $\psi$ is an isomorphism of quasi-Hamiltonian algebras. 
\end{lem}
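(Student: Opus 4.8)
The plan is to mirror the proof of Lemma~\ref{L:IsoFusqHam1}, exploiting the fact that the two fusions now act on the \emph{disjoint} pairs $\{e_1,e_2\}$ and $\{e_3,e_4\}$. First I would define $\psi:A_1\to A_2$ exactly as in Lemma~\ref{L:IsoFusHam2}: both $A_1$ and $A_2$ carry the common set of generators $e_a a f_a$ with $a\in A$, $e_a\in\{1-e_4-e_2,e_{12},e_{34}\}$ and $f_a\in\{1-e_4-e_2,e_{21},e_{43}\}$, and $\psi$ is taken to be the identity on these generators; this is visibly a $B'$-algebra isomorphism. It then remains to check that $\psi$ is a morphism of double brackets.

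Applying Proposition~\ref{Pr:IsoFusqHam} twice, the double quasi-Poisson brackets decompose as
\begin{equation*}
 \dgal{-,-}_1 = \dgal{-,-}_{1,ind} + \dgal{-,-}_{1,fus}^{4\to 3} + \dgal{-,-}_{1,fus}^{2\to 1}\,, \qquad
 \dgal{-,-}_2 = \dgal{-,-}_{2,ind} + \dgal{-,-}_{2,fus}^{2\to 1} + \dgal{-,-}_{2,fus}^{4\to 3}\,,
\end{equation*}
where $\dgal{-,-}_{k,ind}$ is the bracket of $A$ induced all the way to $A_k$ via~\eqref{Eq:BrFus}, and the remaining terms are the corrective brackets of Appendix~\ref{App:Dbr} attached to each fusion and then induced. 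The induced parts are intertwined by $\psi$ by the computation underlying Lemma~\ref{L:IsoFusHam2}, that is $\dgal{\psi(c),\psi(d)}_{2,ind}=(\psi\otimes\psi)\dgal{c,d}_{1,ind}$, so the problem reduces to the equality of the corrective contributions
\begin{equation*}
 \dgal{\psi(c),\psi(d)}_{2,fus}^{2\to 1} + \dgal{\psi(c),\psi(d)}_{2,fus}^{4\to 3}
 = (\psi\otimes\psi)\dgal{c,d}_{1,fus}^{4\to 3} + (\psi\otimes\psi)\dgal{c,d}_{1,fus}^{2\to 1}
\end{equation*}
on pairs of generators $c,d$ of $A_1$.

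At this point the disjointness of the two pairs is decisive, and makes the verification much lighter than in Lemma~\ref{L:IsoFusqHam1}, where all three idempotents merge and the two corrective brackets genuinely interact. Each corrective bracket $\dgal{-,-}_{fus}^{2\to 1}$ is a biderivation determined by its values on generators, and there it depends only on the decomposition of its arguments with respect to $e_1,e_2$ and the matrix unit $e_{12}$; it treats $e_3,e_4,e_{34}$ as part of the spectator idempotent $1-e_2$. Performing $e_4\to e_3$ beforehand leaves this $(e_1,e_2)$-datum untouched, and $\psi$ matches the $e_{12}$- and $e_{34}$-structure of generators, so one obtains the two equalities termwise,
\begin{equation*}
 \dgal{\psi(c),\psi(d)}_{2,fus}^{2\to 1}=(\psi\otimes\psi)\dgal{c,d}_{1,fus}^{2\to 1}\,, \qquad
 \dgal{\psi(c),\psi(d)}_{2,fus}^{4\to 3}=(\psi\otimes\psi)\dgal{c,d}_{1,fus}^{4\to 3}\,,
\end{equation*}
and adding them yields the claim. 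I expect the main obstacle to be exactly this bookkeeping: one must confirm, from the explicit formulas of Appendix~\ref{App:Dbr}, that neither fusion produces a ``cross term'' mixing the two pairs, so that each corrective bracket really is supported in its own sector; the vanishing of such cross terms is precisely what the orthogonality $e_ie_j=0$ for $i\in\{1,2\}$, $j\in\{3,4\}$ guarantees.

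Finally, in the quasi-Hamiltonian case I would apply the moment-map part of Proposition~\ref{Pr:IsoFusqHam} twice to each chain. As the two fusions act on disjoint idempotents, both orders produce the same multiplicative moment map
\begin{equation*}
 e_1\Phi_1 e_{12}\Phi_2 e_{21} + e_3\Phi_3 e_{34}\Phi_4 e_{43} + \sum_{s\neq 1,2,3,4}\Phi_s
\end{equation*}
in $A_1$ and in $A_2$, and $\psi$ sends one to the other by construction. Hence $\psi$ is an isomorphism of quasi-Hamiltonian algebras.
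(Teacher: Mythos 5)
Your proposal is correct and follows essentially the same route as the paper: the identity map on the common generators $e_a a f_a$, the decomposition of each bracket into an induced part (handled by Lemma \ref{L:IsoFusHam2}) plus the two corrective fusion brackets, reduction to the termwise equalities \eqref{Eq:IsoqHam22} verified on generators via the formulas of Appendix \ref{App:Dbr}, and the common moment map $\Phi_1e_{12}\Phi_2e_{21}+\Phi_3e_{34}\Phi_4e_{43}+\sum_{s\neq 1,2,3,4}\Phi_s$. Your observation that orthogonality kills any cross terms between the two sectors is precisely the mechanism behind the paper's sample computation (e.g.\ $c=e_{34}a\he$, $d=e_{12}be_{43}$), where factors like $e_1e_{34}$ vanish.
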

\begin{proof}
As in Lemma  \ref{L:IsoFusHam2}, we note that we can write generators of $A_1$ and $A_2$ in the form 
\begin{equation} \label{Eq:IsoqHam21}
e_a a f_a\,, \quad \text{ for }  a \in A,\,\, e_a\in \{1-e_4-e_2,e_{12},e_{34}\},\,\, f_a\in \br{1-e_4-e_2,e_{21},e_{43}}\,.
\end{equation}
Let us prove that the morphism $\psi:A_1\to A_2$ that we seek is simply the identity when written in terms of these elements. 
Using Proposition \ref{Pr:IsoFusqHam} twice, the double quasi-Poisson bracket on $A_1$ is given by 
\begin{equation}
 \dgal{-,-}_1:= \dgal{-,-}_{1,ind} + \dgal{-,-}_{1,fus}^{4\to 3} + \dgal{-,-}_{1,fus}^{2\to 1} \,. 
\end{equation}
Here, $\dgal{-,-}_{1,ind}$ denotes the double bracket from $A$ induced in $A_1$; $\dgal{-,-}_{1,fus}^{4\to 3}$ denotes the double bracket from Appendix \ref{App:Dbr} which is added after the fusion $e_4\to e_3$ to get a double quasi-Poisson bracket on $A^f_{e_4\to e_3}$ and is then induced in $A_1$; $\dgal{-,-}_{1,fus}^{2\to 1}$ denotes the double bracket from Appendix \ref{App:Dbr} which is added after the fusion $e_2\to e_1$ to get a double quasi-Poisson bracket on $A_1$. In the same way, we can write 
\begin{equation}
 \dgal{-,-}_2:= \dgal{-,-}_{2,ind} + \dgal{-,-}_{2,fus}^{2\to 1} + \dgal{-,-}_{2,fus}^{4\to 3}  \,. 
\end{equation}
By  Lemma  \ref{L:IsoFusHam2}, we directly have that 
 \begin{equation}
  \dgal{\psi(c),\psi(d)}_{2,ind}=(\psi\otimes \psi)\dgal{c,d}_{1,ind}\,,\quad \text{for all } c,d\in A_1\,,
 \end{equation}
and we note that if we can show for all $c,d\in A_1$ 
 \begin{equation} \label{Eq:IsoqHam22}
  \dgal{\psi(c),\psi(d)}_{2,fus}^{4\to 3}=(\psi\otimes \psi)\dgal{c,d}_{1,fus}^{4\to 3}\,,\quad \text{and }
 \dgal{\psi(c),\psi(d)}_{2,fus}^{2\to 1}=(\psi\otimes \psi)\dgal{c,d}_{1,fus}^{2\to 1}\,,
 \end{equation}
then $\psi:A_1\to A_2$ will be a morphism of double quasi-Poisson algebras. It suffices to prove one of the two equalities in \eqref{Eq:IsoqHam22} since the other follows by symmetry. Checking that such an equality holds is easy on generators of the form \eqref{Eq:IsoqHam21}. 
For example, if we take $c=e_{34}a \he$ and $d=e_{12}be_{43}$ where $\he=1-e_2-e_4$, we have in $A_1$ that 
\begin{equation*}
 \dgal{c,d}_{1,fus}^{2\to 1}= \dgal{(e_{34}a),e_{12}(b e_{43})}_{1,fus}^{2\to 1}= \frac12 \left(e_1 \otimes (e_{34}a) e_{12}(be_{43}) - e_1(e_{34}a)\otimes e_{12}(be_{43}) \right)  = \frac12 \, e_1 \otimes cd\,,
\end{equation*}
after using \eqref{tu} since $e_{34}a \he \in A_1$ is a generator of first type, while  $e_{12}(b e_{43})\in A_1$ is a generator of second type. Meanwhile, we have in $A_2$ that 
\begin{equation*}
 \dgal{\psi(c),\psi(d)}_{2,fus}^{2\to 1}
= e_{34}\ast\dgal{ a,e_{12}b}_{2,fus}^{2\to 1} e_{43}
= \frac12 e_{34}\ast\left(e_1 \otimes a e_{12}b - e_1 a\otimes e_{12}b  \right)e_{43}  
= \frac12 \, e_1 \otimes \psi(c)\psi(d)\,,
\end{equation*}
where we used that the double bracket $\dgal{-,-}_{2,fus}^{2\to 1}$ is induced in $A_2$ from the double bracket in $(A_{e_2\to e_1})^f$ given in Appendix \ref{App:Dbr} with $j=2$, $i=1$. 

If $A$ is quasi-Hamiltonian, the multiplicative moment map in $A_1$ and $A_2$ can be written as 
\begin{equation*} 
\Phi = \Phi_1e_{12}\Phi_2e_{21}+\Phi_3e_{34}\Phi_4e_{43}+\sum_{s\neq 1,2,3,4}\Phi_s\,,
\end{equation*}
so $\psi$ is obviously a morphism of quasi-Hamiltonian algebras.
\end{proof}

We can now derive the quasi-Poisson version of Theorem \ref{Thm:IsoFusHam} by reproducing its proof with Lemmas \ref{L:IsoFusHam0}, \ref{L:IsoFusHam1} and \ref{L:IsoFusHam2} replaced by Lemmas \ref{L:IsoFusqHam0}, \ref{L:IsoFusqHam1} and \ref{L:IsoFusqHam2} respectively. We use the notion of fusion algebra respecting a partition defined before Theorem \ref{Thm:IsoFusHam}. Such an algebra inherits a double quasi-Poisson bracket by repeated use of Proposition \ref{Pr:IsoFusqHam}.

\begin{thm} \label{Thm:IsoFusqHam}
 Let $A$ be a double quasi-Poisson algebra over $B=\bigoplus_{s\in I}\kk e_s$. 
Assume that $A_1,A_2$ are fusion algebras respecting a partition $I=\sqcup_{j\in J} I_j$. 

Let $J'=\{j\in J \mid |I_j|>1\}$, and 
assume that for each $j'\in J'$ and $k\in I_{j'}$ there exists  $\Phi_k\in e_k A e_k$ invertible in $e_k A e_k$ which satisfies the multiplicative property \eqref{Phim} for $e_k$. 
Then, there is a double quasi-Poisson algebra isomorphism $\psi:A_1 \to A_2$. 
If furthermore $A$ is quasi-Hamiltonian, then the morphism  $\psi:A_1 \to A_2$ is an isomorphism of quasi-Hamiltonian algebras. 
\end{thm}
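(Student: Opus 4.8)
The plan is to follow closely the inductive scheme used to prove Theorem \ref{Thm:IsoFusHam}, carrying out induction on $|J|$ and substituting the quasi-Poisson fusion lemmas \ref{L:IsoFusqHam0}, \ref{L:IsoFusqHam1} and \ref{L:IsoFusqHam2} for their Poisson counterparts. The base case is the partition with $|J|=1$ and $I_1=I$, where $A_1$ and $A_2$ arise by fusing all idempotents of $B$ together in two possibly different orders; the inductive step reduces an arbitrary partition to this base case by first floating to the front all the fusions that identify idempotents inside the last block $I_{|J|}$, then treating that block by the base-case argument and invoking the induction hypothesis on the partition $I\setminus I_{|J|}$.

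The key point is that the steps appearing in this scheme are of two structurally different kinds, and only one of them requires the extra hypothesis. The reorderings that merely swap two fusions involving disjoint pairs of idempotents are handled by Lemma \ref{L:IsoFusqHam2}, whose isomorphism is the identity on the shared generating set and so needs nothing beyond the quasi-Poisson structure; likewise the three-idempotent reordering of Lemma \ref{L:IsoFusqHam1}, where everything is fused onto a common vertex, carries no invertibility requirement. The hypothesis enters only in the steps that \emph{reverse} the direction of a single fusion, i.e.\ where we apply Lemma \ref{L:IsoFusqHam0} to pass from fusing $e_j$ onto $e_i$ to fusing $e_i$ onto $e_j$. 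That lemma demands an invertible element satisfying the multiplicative property \eqref{Phim} at the relevant vertex of the intermediate fusion algebra, which is precisely what the hypothesis supplies for every $k$ lying in a nontrivial block.

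First I would verify that these invertible multiplicative elements persist through the fusions carried out within a block. When $e_j$ is fused onto $e_i$ with $i,j$ in the same block, the moment-map part of Proposition \ref{Pr:IsoFusqHam} produces from the local data $\Phi_i,\Phi_j$ an element at the merged vertex, the fused product $\Phi_i^f\Phi_j^f$, that is again invertible and satisfies \eqref{Phim}; since the multiplicative property is a condition local to a single idempotent, this holds componentwise even without a global moment map. Hence at every intermediate stage of a chain of fusions inside a nontrivial block there is still, at each surviving vertex, an invertible element satisfying \eqref{Phim}, so Lemma \ref{L:IsoFusqHam0} remains applicable wherever a reversal is needed. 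In the quasi-Hamiltonian case the conclusion is automatic: each of the three lemmas maps multiplicative moment maps to multiplicative moment maps, so the resulting isomorphism $\psi:A_1\to A_2$ intertwines the multiplicative moment maps of $A_1$ and $A_2$.

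The hard part will be the bookkeeping just described: one must confirm that a suitable invertible multiplicative element is available at exactly those vertices, and in exactly those intermediate fusion algebras, where a direction-reversing step (Lemma \ref{L:IsoFusqHam0}) is invoked, and that the reordering steps (Lemma \ref{L:IsoFusqHam2}), which carry no such requirement, suffice to bring the fusions internal to $I_{|J|}$ to the front. Once one checks that reversals are confined to fusions internal to nontrivial blocks, for which the hypothesis guarantees the needed elements and Proposition \ref{Pr:IsoFusqHam} guarantees their persistence under further fusion, the induction proceeds exactly as in Theorem \ref{Thm:IsoFusHam}.
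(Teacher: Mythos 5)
Your proposal is correct and matches the paper's proof, which consists precisely of rerunning the induction of Theorem \ref{Thm:IsoFusHam} with Lemmas \ref{L:IsoFusHam0}, \ref{L:IsoFusHam1}, \ref{L:IsoFusHam2} replaced by Lemmas \ref{L:IsoFusqHam0}, \ref{L:IsoFusqHam1}, \ref{L:IsoFusqHam2}, the hypothesis on the elements $\Phi_k$ serving exactly to authorise the direction-reversing applications of Lemma \ref{L:IsoFusqHam0}. Your explicit check that the local multiplicative elements persist through intermediate fusions (via the moment-map part of Proposition \ref{Pr:IsoFusqHam}, applied componentwise) is a point the paper leaves implicit but tacitly relies on, so spelling it out is a sound addition rather than a deviation.
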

In the statement, the set $J'$ corresponds to the idempotents in $B\subset A$ which are involved in the fusions defining $A_1$ and $A_2$. The extra assumption requiring the existence of $\Phi_k$, $k\in J'$,  is then sufficient to use Lemma \ref{L:IsoFusqHam0}. In particular, given specific $A_1$ and $A_2$, it is possible to construct the isomorphism with the existence of such elements $\Phi_k$ in a proper subset of $J'$, which can be empty as is easily seen from the case of Lemma \ref{L:IsoFusqHam1}.  

\subsubsection{Fusion of morphisms}

\begin{lem}   \label{L:IsoPhiqHam}
 Let $\psi:A_1 \to A_2$ be a morphism of double quasi-Poisson algebras over $B$. Let $A^f_1=(A_1)^f_{e_2 \to e_1}$, $A^f_2=(A_2)^f_{e_2 \to e_1}$, be the fusion algebras with double brackets  obtained by fusion of $e_2$ onto $e_1$. 
 Then $\psi$ induces a morphism of double quasi-Poisson algebras $\psi^f : A_1^f \to A_2^f$.  
 If furthermore $\psi$ is a morphism of quasi-Hamiltonian algebras, then so is $\psi^f$. 
\end{lem}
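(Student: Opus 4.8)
The plan is to define $\psi^f$ exactly as in the Hamiltonian case, by the formula \eqref{Eq:IsoFus}: on a generator $a^f=e_a a f_a$ of $A_1^f$ (as in Lemma \ref{AfGenerat}, with $e_a\in\{\epsilon,e_{12}\}$ and $f_a\in\{\epsilon,e_{21}\}$) set $\psi^f(a^f)=e_a\psi(a)f_a$, and extend as a $B^f$-algebra homomorphism. The first point to record is that $\psi^f$ \emph{preserves the type} of generators: since $\psi$ is $B$-linear we have $\psi(e_s x e_t)=e_s\psi(x)e_t$, so $\psi$ respects the decomposition $A_k=\bigoplus_{i,j=0,1,2}e_i'A_k e_j'$ (with $e_0'=\he$, $e_i'=e_i$ for $i=1,2$) used in Lemma \ref{L:IsoId}. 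Hence $\psi^f$ carries a generator of type \eqref{type1}--\eqref{type4} of $A_1^f$ to a generator of the same type of $A_2^f$.

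The feature distinguishing the present statement from Lemma \ref{L:IsoPhiHam} is that, by Proposition \ref{Pr:IsoFusqHam}, the double quasi-Poisson bracket on $A_k^f$ is the sum $\dgal{-,-}_k^f=\dgal{-,-}_{k,ind}+\dgal{-,-}_{k,fus}$ of the induced bracket and a correction term. I would therefore check the morphism identity \eqref{Eq:Morph} on each summand separately. For the induced part there is nothing new: $\psi$ is in particular a morphism of double brackets, so Lemma \ref{L:IsoFus} applies and yields $\dgalD{\psi^f(a^f),\psi^f(b^f)}_{2,ind}=(\psi^f\otimes\psi^f)\dgalU{a^f,b^f}_{1,ind}$ on all generators.

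The main obstacle is the correction term $\dgal{-,-}_{fus}$, and the observation that disposes of it is that the formulas defining it in Appendix \ref{App:Dbr} (the cases \eqref{tt}, \eqref{ut}, \eqref{tu}, \eqref{uu}, \eqref{vv}, \eqref{vu} and their counterparts) depend only on the fused idempotent $e_1$ and on the \emph{types} of the two arguments: each is a fixed universal expression in the underlying elements and the matrix units $e_{12},e_{21}$, and none of them involves the original double bracket of $A_1$ or $A_2$. Consequently, for generators $a^f,b^f$ of prescribed types, $\dgalU{a^f,b^f}_{1,fus}$ is a fixed noncommutative expression $E(a,b)$ built from $a,b$ and the $e_{ij}$, and applying the type-preserving algebra homomorphism $\psi^f\otimes\psi^f$ produces $E(\psi(a),\psi(b))$. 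Since $\psi^f(a^f)$ and $\psi^f(b^f)$ have the same respective types, the very same formula of Appendix \ref{App:Dbr} evaluates $\dgalD{\psi^f(a^f),\psi^f(b^f)}_{2,fus}$ to precisely $E(\psi(a),\psi(b))$. Hence $\psi^f$ also intertwines the correction terms; adding the two contributions shows that $\psi^f$ is a morphism of double brackets, and as $A_1^f,A_2^f$ are double quasi-Poisson algebras by Proposition \ref{Pr:IsoFusqHam}, it is a morphism of double quasi-Poisson algebras.

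Finally, for the quasi-Hamiltonian claim I would use the explicit multiplicative moment map $\Phi^{f\!f}=\Phi_1^f\Phi_2^f+\sum_{s\neq 1,2}\Phi_s^f$ of $A_1^f$ provided by Proposition \ref{Pr:IsoFusqHam}, and likewise for $A_2^f$ with the components $\Phi_s'$ of the moment map of $A_2$. As $\psi$ is a morphism of quasi-Hamiltonian algebras and is $B$-linear, $\psi(\Phi_s)=\Phi_s'$ for every $s$, whence $\psi^f(\Phi_s^f)=(\Phi_s')^f$; since $\psi^f$ is an algebra homomorphism it then sends $\Phi_1^f\Phi_2^f+\sum_{s\neq 1,2}\Phi_s^f$ to $(\Phi_1')^f(\Phi_2')^f+\sum_{s\neq 1,2}(\Phi_s')^f$, that is, the moment map of $A_1^f$ to that of $A_2^f$. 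Thus $\psi^f$ is a morphism of quasi-Hamiltonian algebras.
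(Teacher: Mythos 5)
Your proposal is correct and follows essentially the same route as the paper's proof: define $\psi^f$ by \eqref{Eq:IsoFus}, handle the induced part via Lemma \ref{L:IsoFus}, dispose of the correction term by noting that $\psi^f$ preserves generator types and that $\dgal{-,-}_{fus}$ is a universal expression in those types, and send moment map to moment map using Proposition \ref{Pr:IsoFusqHam}. You in fact spell out two points the paper leaves terse (the $B$-linearity reason why types are preserved, and the explicit verification $\psi^f(\Phi_s^f)=(\Phi_s')^f$), and both are accurate.
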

\begin{proof}
By Proposition \ref{Pr:IsoFusqHam}, the algebra $A^f_k$ with $k=1,2$ has a double quasi-Poisson bracket given by 
\begin{equation} 
  \dgal{-,-}^f_k:= \dgal{-,-}_{k,ind} + \dgal{-,-}_{k,fus}\,. 
\end{equation}
We claim that the map $\psi^f:A_1^f \to A_2^f$ obtained from $\psi$ by \eqref{Eq:IsoFus} 
is the morphism that we seek. 
By Lemma \ref{L:IsoFus}, we have that 
\begin{equation*}
 (\psi^f\otimes \psi^f)\dgal{c,d}_{1,ind}=\dgal{\psi^f(c),\psi^f(d)}_{2,ind}\,, \quad \text{ for all } c,d\in A_1^f\,.
\end{equation*}
Moreover, $\psi^f$ preserves the type of generators in $A_1^f,A_2^f$ as defined in Lemma \ref{AfGenerat}, and the double brackets $\dgal{-,-}_{k,fus}$ are defined in terms of these types of generators. Therefore 
\begin{equation*}
 (\psi^f\otimes \psi^f)\dgal{c,d}_{1,fus}=\dgal{\psi^f(c),\psi^f(d)}_{2,fus}\,, \quad \text{ for all } c,d\in A_1^f\,.
\end{equation*}
Thus $\psi^f:A_1^f \to A_2^f$ is a morphism of double quasi-Poisson algebras.

In the quasi-Hamiltonian case, we can check that $\psi^f$ maps the multiplicative moment map of $A_1^f$ to that of $A_2^f$. Hence, it is a morphism of quasi-Hamiltonian algebras.  
\end{proof}

\subsection{Application: Van den Bergh's quasi-Hamiltonian structure for quivers} \label{ssqHamQuiver} 

We can endow specific localisations of the path algebras of quivers with a quasi-Hamiltonian algebra structure defined by Van den Bergh \cite[\S6.7]{VdB1}. The double bracket was explicitly spelled out first in \cite[Proposition 2.6]{CF}, then in full generalities in \cite[Theorem 3.3]{F19}. To define this structure, we fix a quiver $Q$, and consider the path algebra of its double $\kk \bar{Q}$ following the conventions given in \ref{sss:Quiver}. We consider the algebra $A_Q$ obtained by universal localisation of $\kk \bar{Q}$ from the set $S=\{1+a a^\ast \mid a\in \bar Q\}$. This localisation can be understood as adding local inverses $e_{t(a)}+aa^\ast \in e_{t(a)}A_Q e_{t(a)}$. 

 For each vertex $s\in I$, consider a total ordering $<_s$ on the set $T_s=\{a\in \bar{Q}\mid t(a)=s\}$. This induces an ordering function at the  vertex $s$
\begin{equation*}
o_s(-,-): \bar Q \times \bar Q \to \{-1,0,1\}\,,  
\end{equation*}
which is defined on arrows $a,b\in \bar{Q}$ by $o_s(a,b)=+1$ if $a<_s b$, $o_s(a,b)=-1$ if $b<_s a$, while it is zero otherwise, i.e. if $a=b\in T_s$, if $a \notin T_s$ or if $b \notin T_s$. The algebra $A_Q$ has a double quasi-Poisson bracket defined by 
  \begin{subequations}
        \begin{align}
 \dgal{a,a}\,=\,&\frac{1}{2}o_{t(a)}(a,a^*)\left( a^2\otimes e_{t(a)}- e_{h(a)}\otimes a^2 \right)\,, \qquad \text{for }a\in\bar{Q}\,, \label{loopG}\\
 \dgal{a,a^*}\,=\,&\, e_{h(a)}\otimes e_{t(a)}
 +\frac{1}{2} a^*a\otimes e_{t(a)} +\frac{1}{2} e_{h(a)}\otimes aa^* \nonumber\\
 & +\frac{1}{2}o_{t(a)}(a,a^*)\, (a^*\otimes a-a\otimes a^*)\,,\qquad \qquad \text{for } a\in Q\,, \label{aastG}
        \end{align}
  \end{subequations}
 and for $b,c\in\bar{Q}$ such that $ c\ne b,b^*$ 
 \begin{equation}
  \begin{aligned}
 \dgal{b,c}\,=\,&-\frac{1}{2}o_{t(b)}(b,c)\,(b\otimes c)-\frac{1}{2}o_{h(b)}(b^*,c^*)\,(c\otimes b)
\\ \label{a<bG}
 &+\frac{1}{2}o_{t(b)}(b,c^*)\, cb\otimes e_{t(b)} + \frac{1}{2}o_{h(b)}(b^*,c)\,e_{h(b)}\otimes bc   \,.
  \end{aligned}
 \end{equation}
Moreover, $A_Q$ is quasi-Hamiltonian for the multiplicative moment map 
\begin{equation} \label{EqPhiVdB}
  \Phi=\sum_{s\in I} \Phi_s\,, \quad \Phi_s=\prod_{a\in T_s}^{\longrightarrow}(e_s+ a a^\ast)^{\epsilon(a)}\,.
\end{equation}

\begin{thm} \label{Thm:qHQuivers}
 Up to isomorphism, the quasi-Hamiltonian algebra $(A_Q,\dgal{-,-},\Phi)$ only depends on $Q$ seen as an undirected graph. 
\end{thm}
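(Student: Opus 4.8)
The plan is to transpose the proof of Theorem~\ref{Thm:HQuivers} to the double quasi-Poisson setting, replacing each ingredient by its quasi-Poisson counterpart while tracking the corrective terms that fusion introduces (Proposition~\ref{Pr:IsoFusqHam}) and the role of the ordering functions $o_s$ appearing in \eqref{loopG}--\eqref{a<bG}. As before, I start from the separated quiver $Q^{sep}$ with vertex set $I^{sep}=\{v_b,v_{b^\ast}\mid b\in Q\}$ and arrows $\{b:v_b\to v_{b^\ast}\mid b\in Q\}$, and form its double $\bar Q^{sep}$. Since $\bar Q^{sep}$ is a disjoint union of basic two-vertex quivers, each consisting of a single arrow together with its reverse, Example~\ref{Exmp:Q1qHam} supplies a quasi-Hamiltonian structure on the relevant localisation of each connected piece, and Lemma~\ref{Lem:SumqHam} assembles them into a quasi-Hamiltonian structure on $A^{sep}$, the universal localisation of $\kk\bar Q^{sep}$ at $\{1+bb^\ast\mid b\in\bar Q^{sep}\}$. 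In particular, every vertex $v$ of $\bar Q^{sep}$ carries an invertible element $\Phi_v\in e_vA^{sep}e_v$ obeying the multiplicative property \eqref{Phim}.

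Next I fuse together exactly those vertices of $\bar Q^{sep}$ that become identified in $\bar Q$, so as to recover $A_Q$ as a fusion algebra. At each vertex $s\in I$, the separated vertices being merged correspond bijectively to the arrows of $T_s$ leaving them, and the order in which one performs the fusions induces a total ordering $<_s$ on $T_s$. Running the fusions in that order and applying Proposition~\ref{Pr:IsoFusqHam} repeatedly reproduces the explicit double bracket \eqref{loopG}--\eqref{a<bG} together with the multiplicative moment map \eqref{EqPhiVdB}; this last identification is the computation already carried out in \cite[Theorem 3.3]{F19}. Because each separated vertex carries an invertible local moment-map element, the hypotheses of Theorem~\ref{Thm:IsoFusqHam} are met, so the resulting quasi-Hamiltonian algebra is independent, up to isomorphism, of the order of fusion and hence of the choice of orderings $<_s$.

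For independence of orientation I reverse arrows one at a time at the level of the separated quiver. Fixing $b\in Q^{sep}$, the isomorphism of Example~\ref{Exmp:Q1qHam} reverses $b$ on its own two-vertex piece while leaving the other summands untouched; extending it by the identity and using the description of the sum bracket from \ref{ssOplus} and Lemma~\ref{Lem:SumqHam}, one obtains an isomorphism of quasi-Hamiltonian algebras from $A^{sep}$ onto the separated algebra attached to the quiver with $b$ reversed. Lemma~\ref{L:IsoPhiqHam} then lifts this isomorphism through the same chain of fusions, producing an isomorphism of quasi-Hamiltonian algebras $A_Q\overset{\sim}{\to}A_{Q'}$, where $Q'$ is obtained from $Q$ by reversing $b$. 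Iterating over all arrows shows that $(A_Q,\dgal{-,-},\Phi)$ depends only on the underlying undirected graph, as claimed.

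The step I expect to be most delicate is the interaction of orientation reversal with the correction terms. Contrary to the Poisson case, fusion does not merely restrict the double bracket but adds the contribution $\dgal{-,-}_{fus}$ of Appendix~\ref{App:Dbr}, so that the arrow-reversing isomorphism of Example~\ref{Exmp:Q1qHam}---which is the genuinely nonlinear substitution $a\mapsto b^\ast$, $a^\ast\mapsto-(1+bb^\ast)^{-1}b$ rather than a mere relabelling---must be checked to remain an isomorphism of the \emph{fused} structures. Verifying this compatibility, and confirming that the ordering functions $o_s$ produced by the fusion order match those in \eqref{loopG}--\eqref{a<bG} on both sides, is where the argument requires the most care; fortunately Lemma~\ref{L:IsoPhiqHam} reduces it to a statement before fusion, which is precisely Example~\ref{Exmp:Q1qHam}.
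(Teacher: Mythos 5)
Your proposal is correct and follows essentially the same route as the paper's own proof: decompose into the separated quiver $\bar Q^{sep}$, build the quasi-Hamiltonian structure from Example~\ref{Exmp:Q1qHam} and Lemma~\ref{Lem:SumqHam}, recover $A_Q$ by fusion using Proposition~\ref{Pr:IsoFusqHam} and the identification in \cite[Theorem 3.3]{F19}, invoke Theorem~\ref{Thm:IsoFusqHam} for independence of the fusion order (your explicit check that the invertible local elements $\Phi_v$ supply its hypothesis is a welcome addition the paper leaves implicit), and handle orientation reversal via the one-arrow isomorphism pushed through fusion by Lemma~\ref{L:IsoPhiqHam}. Your closing remark correctly identifies that Lemma~\ref{L:IsoPhiqHam} is exactly what absorbs the interaction between the nonlinear arrow-reversing map and the fusion correction terms, which is how the paper's proof concludes as well.
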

\begin{proof}
We first prove that the structure is independent of the direction of the arrow in a one-arrow quiver. If $Q$ is the quiver $1\to 2$, the quasi-Hamiltonian algebra structure on $A_Q$ defined above is nothing else than the structure defined on $A_1$ in Example \ref{Exmp:Q1qHam}. The same is true for the opposite quiver: the quasi-Hamiltonian structure on $A_{Q^{op}}$ for $Q^{op}$ given by $2\to 1$ is the structure on $A_2$ given in Example \ref{Exmp:Q1qHam}. In that same example, it was proved that there exists an isomorphism $\psi:A_1\to A_2$ of quasi-Hamiltonian algebras, thus the structures on $A_Q$ and $A_{Q^{op}}$ are isomorphic, as claimed.  

Consider now an arbitrary quiver $Q$. 
 As in the proof of Theorem \ref{Thm:HQuivers}, we note that $\bar Q$ can be obtained by identifying vertices in the double $\bar Q^{sep}$ of the  separated quiver $Q^{sep}$. 
The algebra $A_{Q^{sep}}$ is quasi-Hamiltonian by combining Lemma \ref{Lem:SumqHam} and the case of a one-arrow quiver described at the beginning of this proof.
In particular, the double quasi-Poisson bracket is given by 
\begin{equation} \label{Eq:BasicqBr}
  \dgal{b,b^\ast}=e_{v_{b^\ast}}\otimes e_{v_b}+\frac12 (b b^\ast \otimes e_{v_b} +  e_{v_{b^\ast}}\otimes b^\ast b)\,,
\end{equation}
for all $b\in Q^{sep}$ and it is zero on every other pair of generators. The multiplicative moment map is  defined as 
\begin{equation}
  \Phi=\sum_{b \in \bar{Q}^{sep}} \Phi_{v_b}\,,
\quad \Phi_{v_b}=e_{v_b}+bb^\ast \text{ if }b \in Q^{sep},\,\,\text{or }\Phi_{v_b}=(e_{v_b}+bb^\ast)^{-1} \text{ if }b \in \bar{Q}^{sep} \setminus Q^{sep} \,. 
\end{equation}

Fusing idempotents in $A_{Q^{sep}}$ according to the identification of vertices from $Q^{sep}$ to $Q$ gives us $A_Q$. It is proved in \cite[Theorem 3.3]{F19} that the quasi-Hamiltonian algebra structure obtained on $A_Q$ by fusion using Proposition \ref{Pr:IsoFusqHam} is precisely the one given above. In particular, it does not depend on the order in which we fuse the idempotents by Theorem \ref{Thm:IsoFusqHam}.  

It remains to see that this structure is independent of the directions of the arrows in $Q$, up to isomorphism. This follows from Lemma \ref{L:IsoPhiqHam} and the isomorphism that exists for a one-arrow quiver which was given at the beginning of the proof.  
\end{proof}
\begin{rem} \label{Rem:qHamMod}
 In \cite[\S3.2]{F19}, modifications of the algebra $A_Q$ and its quasi-Hamiltonian algebra structure were considered. 
Namely, fixing a choice of coefficients $\gamma_a\in \kk$, $a\in \bar Q$, satisfying $\gamma_a=\gamma_{a^\ast}$, introduce the algebra $A_{Q,\gamma}$ obtained by universal localisation of $\kk \bar{Q}$ from the set $S_\gamma=\{1+(\gamma_a-1)e_{t(a)}+a a^\ast \mid a\in \bar Q\}$. 
The double quasi-Poisson bracket is given by \eqref{loopG}, \eqref{a<bG}, 
while we consider \eqref{aastG} with its first term multiplied by $\gamma_a$. The multiplicative moment map $\Phi=\sum_{s\in I} \Phi_s$ is such that  
\begin{equation*}
 \Phi_s= \prod_{a\in T_s}^{\longrightarrow}(\gamma_a e_s+ a a^\ast)^{\epsilon(a)}\,.
\end{equation*}
The proof of Theorem \ref{Thm:qHQuivers} can then be adapted to $A_{Q,\gamma}$. 
If $\gamma_a\neq 0$ for all $a\in \bar{Q}$, we have furthermore  an isomorphism of double quasi-Poisson algebras $ \psi:A_Q \to A_{Q,\gamma}$ given on generators by 
\begin{equation}
\psi(a)=\gamma_a^{-1}a,\,\, \psi(a^\ast)=a^\ast,\,\, \text{ for all }a\in Q\,.
\end{equation}
 Since $\psi((e_{t(a)}+aa^\ast)^{\epsilon(a)})=\gamma_a^{-\epsilon(a)}\,(\gamma_a e_{t(a)}+aa^\ast)^{\epsilon(a)}$ for all $a\in \bar{Q}$ due to the condition $\gamma_a=\gamma_{a^\ast}$, $\psi$ is in fact an isomorphism of quasi-Hamiltonian algebras if $a$ is a loop whenever $\gamma_a \neq 1$. 
\end{rem}


\section{\texorpdfstring{$H_0$}{H0}-Poisson structures} \label{S:H0}

\subsection{Definition and general results} \label{ss:H0-gen}

Let $A$ be a $\kk$-algebra. Let $[A,A]$ be the vector space spanned by commutators in $A$, from which we can define $H_0(A):=A/[A,A]$, the zeroth Hochschild homology of $A$. Denote by $a \mapsto \bar{a}$ the map which sends an element of $A$ to its image in $A/[A,A]$. Remark that any derivation $\del \in \Der(A)$ induces a linear map on $H_0(A)$ since $\del([a,b])\in [A,A]$ for any $a,b \in A$. 
Following Crawley-Boevey \cite{CB11}, we say that a $\kk$-bilinear map $\langle -,- \rangle: H_0(A)\times H_0(A)\to H_0(A)$ is a \emph{$H_0$-Poisson structure} on $A$  if it is a Lie bracket, i.e. 
\begin{equation} \label{Eq:H0Lie}
\begin{aligned}
\dhh{\bar a, \bar b}=- \dhh{\bar b,\bar a}\,, \quad  
 \dhh{\bar a, \dhh{\bar b, \bar c}}+\dhh{\bar b, \dhh{\bar c, \bar a}}+\dhh{\bar c, \dhh{\bar a, \bar b}}=0 \,, 
\end{aligned}
\end{equation}
and each linear map  $ \dhh{\bar a,-} : H_0(A)\to H_0(A)$ is induced by a derivation $\del_a \in \Der(A)$. We will write $(A,\dhh{-,-})$ when we want to emphasise the $H_0$-Poisson structure on $A$.

In the relative setting where $A$ is a $B$-algebra (following the convention of Section \ref{S:Base}), we require that the map $\dhh{\bar e,-}$ is induced by the trivial derivation $\del_e=0_A$ for each $e \in B$. In that case, we say that the $H_0$-Poisson structure $\dhh{-,-}$ is $B$-linear.  

Let $(A_1,\dhu{-,-})$ and $(A_2,\dhd{-,-})$ be two $B$-algebras with $H_0$-Poisson structures. 
Note that if $\phi:A_1\to A_2$ is a morphism of $B$-algebras, then $\phi([a,b])=[\phi(a),\phi(b)]$ for any $a,b \in A$ so $\phi$ induces a morphism $\bar \phi: H_0(A_1)\to H_0(A_2)$. 
We say that $\phi:A_1\to A_2$ is a \emph{$H_0$-Poisson morphism} if it is a morphism of $B$-algebras such that the induced map $\bar \phi: H_0(A_1)\to H_0(A_2)$ is a morphism of Lie algebras, i.e. for any $\bar a,\bar b\in H_0(A_1)$, 
\begin{equation}\label{Eq:H0Morph}
 \dhd{\bar\phi(\bar a),\bar \phi (\bar b)}=\bar\phi( \dhu{\bar a,\bar b})\,.
\end{equation}
We say that it is an \emph{$H_0$-Poisson isomorphism} if it is an isomorphism of $B$-algebras (hence $\bar \phi$ is an isomorphism of Lie algebras).

 Let $A$ be a $B$-algebra with double bracket $\dgal{-,-}$. Consider the bilinear map 
\begin{equation} \label{Eq:brH0}
 \br{-,-}=m \circ \dgal{-,-} \, : \, A \times A \to A\,,
\end{equation}
obtained by composing the double bracket with the multiplication $m$ of $A$. 
It inherits the $B$-linearity of $\dgal{-,-}$ in the following form: for any $e\in B$, $\br{e,-}$ and $\br{-,e}$ are identically zero.   
Then, as noticed in \cite[Lemma 2.4.1]{VdB1}, we have that the operation \eqref{Eq:brH0}  induces well-defined maps 
 \begin{equation} 
 \br{-,-}: H_0(A)\times A \to A\,, \quad \br{-,-}: H_0(A)\times H_0(A)\to H_0(A)\,. 
 \end{equation}
 The first operation is such that for any $\bar a \in H_0(A)$ we have $\br{\bar a,-}\in \Der(A)$, while the second operation is antisymmetric. 
By construction, this second linear map is given by 
 \begin{equation} \label{Eq:H0ind1}
  \br{\bar a, \bar b}=\overline{\br{\bar a,b}}=\overline{\br{a,b}}\,,
 \end{equation}
for any $\bar a, \bar b\in H_0(A)$ and lifts $a,b \in A$.  
We have a $B$-linear $H_0$-Poisson structure on $A$ provided that the operation $\br{-,-}$ on $H_0(A)$ satisfies Jacobi identity. 
\begin{lem} \emph{(\cite[Lemma 2.6.2]{VdB1})} \label{Lem:IndH0}
 Let $A$ be a $B$-algebra with double bracket $\dgal{-,-}$, and write $\br{-,-,-}_3=m \circ \dgal{-,-,-}$ for the map obtained by composing the triple bracket $\dgal{-,-,-}$ given by \eqref{Eq:TripBr} with the multiplication $m : A^{\times 3}\to A$. 
Then the induced map $\br{-,-}: H_0(A)\times H_0(A)\to H_0(A)$ is a $H_0$-Poisson structure if $\br{-,-,-}_3$ vanishes identically. 

In particular, we get a $H_0$-Poisson structure if $\dgal{-,-}$ is a double (quasi-)Poisson bracket.
\end{lem}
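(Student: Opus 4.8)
The plan is to notice that of the three requirements for an $H_0$-Poisson structure, two have already been secured in the discussion preceding the statement (following \cite[Lemma 2.4.1]{VdB1}): the induced operation $\br{-,-}$ on $H_0(A)$ is antisymmetric, and each $\br{\bar a,-}$ is induced by the derivation $\br{a,-}=m\dgal{a,-}\in\Der(A)$. The $B$-linearity is also immediate, since $\br{e,-}$ and $\br{-,e}$ vanish for $e\in B$. Hence the sole remaining point is the Jacobi identity for $\br{-,-}$ on $H_0(A)$, and the strategy is to express the Jacobiator directly through $\br{-,-,-}_3=m\circ\dgal{-,-,-}$.

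First I would unwind a single Jacobiator term. Writing $\dgal{b,c}=\dgal{b,c}'\otimes\dgal{b,c}''$ and using that $\br{a,-}$ is a derivation (by \eqref{Eq:outder}), one has $\br{a,\br{b,c}}=\br{a,\dgal{b,c}'}\dgal{b,c}''+\dgal{b,c}'\br{a,\dgal{b,c}''}$. Applying $m$ and passing to $H_0(A)$, where any product may be cyclically permuted, the first summand becomes $\overline{m(\dgal{a,\dgal{b,c}'}\otimes\dgal{b,c}'')}$, i.e. the image of the \emph{first} term of the triple bracket \eqref{Eq:TripBr}. Abbreviating $P(a;b,c):=\overline{m(\dgal{a,\dgal{b,c}'}\otimes\dgal{b,c}'')}$, the second summand $\overline{\dgal{b,c}'\,m\dgal{a,\dgal{b,c}''}}$ becomes, after cyclic rearrangement in $H_0(A)$ and one application of the cyclic antisymmetry \eqref{Eq:cycanti} of $\dgal{-,-}$ (which supplies the crucial sign), exactly $-P(a;c,b)$. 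Thus $\overline{\br{a,\br{b,c}}}=P(a;b,c)-P(a;c,b)$.

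Next I would sum over the three cyclic permutations of $(a,b,c)$. A short check—matching each of the three terms of \eqref{Eq:TripBr} under $m$ and using that projection to $H_0(A)$ is invariant under the rotations $\tau_{(123)}$ occurring there—identifies $P(a;b,c)+P(b;c,a)+P(c;a,b)$ with $\overline{\br{a,b,c}_3}$, and likewise the opposite ordering with $\overline{\br{a,c,b}_3}$. Therefore the Jacobiator equals $\overline{\br{a,b,c}_3}-\overline{\br{a,c,b}_3}$; if $\br{-,-,-}_3$ vanishes identically, both terms are zero, the Jacobi identity holds, and $\br{-,-}$ is a (necessarily $B$-linear) $H_0$-Poisson structure. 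I expect this reassembly—tracking each summand of \eqref{Eq:TripBr} through $m$ and the cyclic identifications, and in particular the sign produced by \eqref{Eq:cycanti}—to be the only delicate part of the argument.

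It remains to deduce the ``in particular'' clause. If $\dgal{-,-}$ is a double Poisson bracket then $\dgal{-,-,-}\equiv 0$, so a fortiori $\br{-,-,-}_3\equiv 0$. If $\dgal{-,-}$ is a double quasi-Poisson bracket then $\dgal{a,b,c}$ is given by \eqref{qPabc}, and applying $m$ collapses the eight terms attached to each idempotent $e_s$: the first four all equal $c\,e_s a\,e_s b\,e_s$ with signs $+,-,-,+$ and cancel, while the last four all equal $e_s a\,e_s b\,e_s c$ with signs $-,+,+,-$ and cancel. Hence $\br{-,-,-}_3\equiv 0$ already in $A$ (before passing to $H_0(A)$), and the first part applies.
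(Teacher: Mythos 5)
Your proof is correct, and it effects the same reduction as the paper: the Jacobi identity for the induced bracket on $H_0(A)$ is traced back to the vanishing of $\br{-,-,-}_3$, and the quasi-Poisson case is settled by precisely the sign cancellation you describe when $m$ is applied to \eqref{qPabc} (the paper disposes of this in one line; your explicit $+,-,-,+$ and $-,+,+,-$ bookkeeping is the verification behind it). The genuine difference is how the key identity is obtained. The paper quotes \cite[Corollary 2.4.4]{VdB1}, which is the identity $\br{a,\br{b,c}}-\br{b,\br{a,c}}-\br{\br{a,b},c}=\br{a,b,c}_3-\br{b,a,c}_3$ valid in $A$ itself, and then merely observes that inducing it in $H_0(A)$ (using antisymmetry there) yields the Jacobi identity. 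You instead re-derive the weaker, $H_0$-level statement from scratch: the derivation property of $\br{a,-}$ coming from \eqref{Eq:outder}, the invariance of $\overline{m(-)}$ under the rotation $\tau_{(123)}$ appearing in \eqref{Eq:TripBr}, and the sign supplied by \eqref{Eq:cycanti} correctly combine to give the Jacobiator as $\overline{\br{a,b,c}_3}-\overline{\br{a,c,b}_3}$. The two right-hand sides are consistent: by \eqref{Eq:TriAnti} the triple bracket is cyclically equivariant and $\overline{m(-)}$ kills rotations, so $\overline{\br{b,a,c}_3}=\overline{\br{a,c,b}_3}$. What your route buys is self-containedness (no appeal to Van den Bergh's stronger identity in $A$, which would otherwise have to be proved separately); what the paper's citation buys is brevity and a statement holding in $A$ before passing to the quotient. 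Both arguments handle the remaining axioms (antisymmetry of the induced bracket and the fact that each $\br{\bar a,-}$ is induced by a derivation of $A$) by the same discussion preceding the lemma, as you note.
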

\begin{proof}
From \cite[Corollary 2.4.4]{VdB1}, we have the following identity in $A$ 
\begin{equation}
 \br{a,\br{b,c}} - \br{b,\br{a,c}}- \br{\br{a,b},c}=\br{a,b,c}_3-\br{b,a,c}_3\,.
\end{equation}
Hence if the right-hand side vanishes, this equality induced in $H_0(A)$ is just Jacobi identity. 

The second part of the statement is obvious for a double Poisson bracket, since $\dgal{-,-,-}=0$ identically. 
For a double quasi-Poisson bracket, $\br{-,-,-}_3=0$ because applying the multiplication map to \eqref{qPabc} gives zero.  
\end{proof}

\subsubsection{Structures induced by Hamiltonian algebras} \label{sss:H0ham}
We  assume that  $(A,\dgal{-,-},\mu)$ is a Hamiltonian algebra over $B=\bigoplus_{s\in I}\kk e_s$. Fix $(\lambda_s)\in \kk^I$ or equivalently $\lambda:=\sum_{s\in I}\lambda_s e_s\in B$, 
and note that by definition of the moment map, \eqref{mum} implies that for any $a \in A$  
\begin{equation*}
 \dgal{a, \mu-\lambda}=\sum_s (e_s a \otimes e_s-e_s \otimes a e_s)\,.
\end{equation*}
In particular, $\br{a, \mu-\lambda}=0$ after multiplication. This yields that, if $\overline{(\mu-\lambda)}$ denotes the vector subspace of $H_0(A)$ spanned by the image of 
the ideal $(\mu-\lambda)$ under the map $A\to A/[A,A]$, we have $\br{\bar a, \overline{(\mu-\lambda)}}\in \overline{(\mu-\lambda)}$ for any $\bar{a}\in H_0(A)$. 
In particular, the $H_0$-Poisson structure descends from $H_0(A)$ to a Lie bracket $\br{-,-}^\lambda$ on $H_0(A)^\lambda:=H_0(A)/\overline{(\mu-\lambda)}$ given by 
\begin{equation} \label{Eq:H0ind2}
 \br{\bar a +\overline{(\mu-\lambda)}, \bar b + \overline{(\mu-\lambda)}}^\lambda = (\br{a,b}+[A,A])+\overline{(\mu-\lambda)}\,, 
\quad \text{for any } a,b\in A\,.
\end{equation}

Set  $A^\lambda=A/(\mu-\lambda)$, and remark that we can identify $H_0(A)^\lambda$ with $H_0(A^\lambda):=A^\lambda/[A^\lambda,A^\lambda]$. Under this identification, the Lie bracket $\br{-,-}^\lambda$ is given by 
\begin{equation*}
 \br{\bar a +\overline{(\mu-\lambda)}, \bar b + \overline{(\mu-\lambda)}}^\lambda = \overline{\br{a,b}+(\mu-\lambda)}\,.
\end{equation*}
(Here, the bar in the right-hand side denotes the map $A^\lambda \to H_0(A^\lambda)$.) We get that $A^\lambda$ is endowed with a $H_0$-Poisson structure since 
the linear map $\br{\bar a +\overline{(\mu-\lambda)},-}$ on $H_0(A^\lambda)$ is induced by $\del_a\in \Der(A^\lambda)$ given by 
\begin{equation*}
 \del_a(b+(\mu-\lambda))=\br{a,b}+(\mu-\lambda)\,,
\end{equation*}
for any lifts $a,b\in A$. Note that the induced linear map on $H_0(A^\lambda)$ is independent of the lift, though $\del_a,\del_{a+\mu-\lambda}\in \Der(A^\lambda)$ are not the same in general. Indeed  
$\br{\mu-\lambda,b}=\sum_s (b e_s-e_s b)$ may be nonzero, but it vanishes modulo commutators. Combining this discussion with Lemma \ref{Lem:IndH0}, we have obtained the following result. 

\begin{prop} \emph{(\cite[Proposition 2.6.5]{VdB1})} \label{Pr:HamH0}
 Let $(A,\dgal{-,-},\mu)$ be a Hamiltonian algebra. Then for any $\lambda \in B$, the $H_0$-Poisson structure $\br{-,-}$ on $A$ descends to a $H_0$-Poisson structure $\br{-,-}^\lambda$ on $A^\lambda$.
\end{prop}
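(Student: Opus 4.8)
The plan is to show that the $H_0$-Poisson structure on $A$ produced by Lemma \ref{Lem:IndH0} is compatible with the two-sided ideal $(\mu-\lambda)$, so that it passes to the quotient $A^\lambda = A/(\mu-\lambda)$. Since $\dgal{-,-}$ is a double Poisson bracket its associated triple bracket vanishes, so Lemma \ref{Lem:IndH0} already equips $A$ with a $B$-linear $H_0$-Poisson structure $\br{-,-}$ whose Hamiltonian maps $\br{\bar a,-}$ lift to derivations of $A$. The entire statement is therefore a descent argument along the surjection $A\twoheadrightarrow A^\lambda$.

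The key computation I would isolate first is that $\mu-\lambda$ is \emph{Poisson-central} for $\br{-,-}$. Using cyclic antisymmetry \eqref{Eq:cycanti} together with the additive property \eqref{mum} of each $\mu_s$, and recalling that $\lambda\in B\subset\Cas(A)$, one obtains
\begin{equation*}
\dgal{a,\mu-\lambda}=\sum_{s\in I}\bigl(e_s a\otimes e_s - e_s\otimes a e_s\bigr)\,,
\end{equation*}
whence applying the multiplication $m$ yields $\br{a,\mu-\lambda}=0$ for every $a\in A$. This is the crux of the argument; everything that follows is formal.

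From here I would argue that, for each $\bar a\in H_0(A)$, the derivation $\br{\bar a,-}\in\Der(A)$ preserves the ideal $(\mu-\lambda)$: by the Leibniz rule it suffices that $\br{\bar a,\mu-\lambda}=0$, which is the previous step, so that $\br{\bar a,b(\mu-\lambda)c}\in(\mu-\lambda)$. Antisymmetry of the induced bracket on $H_0(A)$ then gives $\br{\overline{(\mu-\lambda)},\bar b}\subseteq\overline{(\mu-\lambda)}$, so that $\overline{(\mu-\lambda)}$ is a Lie ideal and the bracket descends to $H_0(A)^\lambda=H_0(A)/\overline{(\mu-\lambda)}$ via formula \eqref{Eq:H0ind2}, with antisymmetry and Jacobi inherited from $H_0(A)$. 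After identifying $H_0(A)^\lambda$ with $H_0(A^\lambda)$, I would exhibit for each $\bar a$ the derivation $\del_a$ of $A^\lambda$ sending $b+(\mu-\lambda)\mapsto\br{a,b}+(\mu-\lambda)$, which induces $\br{\bar a+\overline{(\mu-\lambda)},-}^\lambda$; this is exactly the defining condition of an $H_0$-Poisson structure, and combined with Lemma \ref{Lem:IndH0} it finishes the proof.

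The main obstacle, more a point requiring care than a genuine difficulty, is the independence of $\del_a$ from the chosen lift $a$ of its class modulo $[A,A]$ and modulo $(\mu-\lambda)$. Altering the lift by an element of the ideal changes $\del_a$ by a derivation expressed through $\br{\mu-\lambda,b}=\sum_s(be_s-e_sb)$, which is a sum of commutators and hence acts trivially on $H_0(A^\lambda)$; thus the induced map on $H_0(A^\lambda)$ is unaffected even though the two derivations of $A^\lambda$ genuinely differ. Verifying this compatibility carefully is the one place where the bookkeeping must be done explicitly.
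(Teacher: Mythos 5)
Your proposal is correct and follows essentially the same route as the paper's own argument in \S\,\ref{sss:H0ham}: the computation $\br{a,\mu-\lambda}=0$ from \eqref{mum} and cyclic antisymmetry, the descent of the Lie bracket to $H_0(A)^\lambda\simeq H_0(A^\lambda)$, the derivations $\del_a$ of $A^\lambda$, and the observation that $\br{\mu-\lambda,b}=\sum_s(be_s-e_sb)$ is a sum of commutators so the induced map on $H_0(A^\lambda)$ is lift-independent even though the derivations $\del_a$ themselves vary. Your isolation of the lift-independence issue as the one point requiring explicit bookkeeping matches the paper's own emphasis, and the final appeal to Lemma \ref{Lem:IndH0} for the Jacobi identity is exactly how the paper concludes.
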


\begin{prop}  \label{Pr:HamH0b}
Let $\phi:(A_1,\dgal{-,-}_1)\to (A_2,\dgal{-,-}_2)$ be a morphism of double Poisson algebras. Then $\phi:(A_1,\br{-,-}_1)\to (A_2,\br{-,-}_2)$ is a $H_0$-Poisson morphism.  

If $\phi$ is a morphism of Hamiltonian algebras, then for any $\lambda \in B$, $\phi$ induces a $H_0$-Poisson morphism $\phi^\lambda:(A_1/(\mu_1-\lambda),\br{-,-}_1^\lambda)\to (A_2/(\mu_2-\lambda),\br{-,-}_2^\lambda)$.
\end{prop}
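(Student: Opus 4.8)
The plan is to handle the two assertions in turn, both of which reduce to the single observation that the multiplication map $m$ intertwines $\phi\otimes\phi$ with $\phi$: since $\phi$ is an algebra homomorphism, $m\circ(\phi\otimes\phi)=\phi\circ m$ on $A_1\otimes A_1$.

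For the first assertion, recall that by Lemma \ref{Lem:IndH0} both $A_1$ and $A_2$ carry the $H_0$-Poisson structures $\br{-,-}_1,\br{-,-}_2$ obtained from their double Poisson brackets via \eqref{Eq:brH0}. As $\phi$ is a $B$-algebra homomorphism it induces $\bar\phi:H_0(A_1)\to H_0(A_2)$, so it remains only to verify \eqref{Eq:H0Morph}. Here I would fix lifts $a,b\in A_1$ and compute in $A_2$:
\begin{equation*}
\br{\phi(a),\phi(b)}_2 = m\,\dgal{\phi(a),\phi(b)}_2 = m\,(\phi\otimes\phi)\dgal{a,b}_1 = \phi\big(m\,\dgal{a,b}_1\big) = \phi\big(\br{a,b}_1\big),
\end{equation*}
using the morphism property \eqref{Eq:Morph} in the second equality and $m\circ(\phi\otimes\phi)=\phi\circ m$ in the third. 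Passing to $H_0(A_2)$ and invoking \eqref{Eq:H0ind1} then gives $\br{\bar\phi(\bar a),\bar\phi(\bar b)}_2=\bar\phi(\br{\bar a,\bar b}_1)$, which is exactly \eqref{Eq:H0Morph}.

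For the second assertion, I first check that $\phi$ descends to the quotients. Since $\phi$ is a $B$-algebra morphism we have $\phi(\lambda)=\lambda$ for $\lambda\in B$, and since $\phi$ is a morphism of Hamiltonian algebras $\phi(\mu_1)=\mu_2$; hence $\phi(\mu_1-\lambda)=\mu_2-\lambda$ and $\phi$ carries the two-sided ideal $(\mu_1-\lambda)$ into $(\mu_2-\lambda)$. This yields a well-defined $B$-algebra homomorphism $\phi^\lambda:A_1/(\mu_1-\lambda)\to A_2/(\mu_2-\lambda)$, which induces $\bar\phi^\lambda$ on zeroth Hochschild homology. The brackets $\br{-,-}_1^\lambda,\br{-,-}_2^\lambda$ exist by Proposition \ref{Pr:HamH0}, and using their description \eqref{Eq:H0ind2} (under the identification $H_0(A)^\lambda\cong H_0(A^\lambda)$ from \ref{sss:H0ham}), the morphism identity for $\phi^\lambda$ reduces, on lifts $a,b$, to comparing $\overline{\br{\phi(a),\phi(b)}_2+(\mu_2-\lambda)}$ with $\overline{\phi(\br{a,b}_1)+(\mu_2-\lambda)}$; these coincide by the first part.

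The argument is essentially bookkeeping once the intertwining relation $m\circ(\phi\otimes\phi)=\phi\circ m$ is in place, so there is no serious obstacle. The only points requiring care are that $\phi$ genuinely respects the ideals $(\mu_i-\lambda)$ — which is precisely where the Hamiltonian hypothesis $\phi(\mu_1)=\mu_2$ enters — and that the descended bracket $\br{-,-}^\lambda$ is evaluated on lifts exactly as in \eqref{Eq:H0ind2}, so that the computation of the first part applies verbatim after reduction modulo $(\mu_2-\lambda)$.
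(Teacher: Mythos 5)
Your proof is correct and follows essentially the same route as the paper: the first part is the paper's computation with the intertwining relation $m\circ(\phi\otimes\phi)=\phi\circ m$ made explicit, and the second part matches the paper's descent argument via $\phi((\mu_1-\lambda))\subset(\mu_2-\lambda)$ and the description \eqref{Eq:H0ind2} of the induced bracket. You also correctly pinpoint that the Hamiltonian hypothesis $\phi(\mu_1)=\mu_2$ is used exactly to ensure the ideals are respected.
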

\begin{proof}
 In the first case,  we just have to show that $\bar \phi$ is a  morphism of Lie algebras. 
For any $a,b \in A_1$, \eqref{Eq:Morph} yields $\phi (\br{a,b}_1)=\br{\phi(a),\phi(b)}_2$. Hence, we get from \eqref{Eq:H0ind1} that for any 
 $\bar a, \bar b\in H_0(A_1)$ with arbitrary lifts $a,b \in A_1$, 
 \begin{equation*}
  \br{\bar \phi (\bar a), \bar \phi(\bar b)}_2=\br{\overline{ \phi (a)}, \overline{ \phi( b)}}_2=\overline{\br{\phi(a),\phi(b)}_2}
  =\overline{\phi(\br{a,b}_1)}=\bar \phi(\overline{\br{a,b}_1})=\bar \phi(\br{\bar a,\bar b}_1)\,.
 \end{equation*}
In the second case, note that $\phi((\mu_1-\lambda))\subset (\mu_2-\lambda)$ so $\phi$ induces a morphism $\phi^\lambda:A^\lambda_1\to  A^\lambda_2$ hence a morphism $\overline{\phi^\lambda}:H_0(A^\lambda_1)\to  H_0(A^\lambda_2)$ 
where $A^\lambda_i=A/(\mu_i-\lambda)$. 
In the same way, since 
$\bar \phi(\overline{(\mu_1-\lambda)})\subset \overline{(\mu_2-\lambda)}$, we have that $\bar \phi$ induces a map 
\begin{equation*}
\bar \phi^\lambda: H_0(A^\lambda_1)\simeq H_0(A_1)/\overline{(\mu_1-\lambda)} \longrightarrow H_0(A_2)/\overline{(\mu_2-\lambda)} \simeq  H_0(A^\lambda_2)\,,
\end{equation*}
which coincides with the map $\overline{\phi^\lambda}$ induced by $\phi^\lambda$. We can thus use \eqref{Eq:H0ind2} to conclude. 
\end{proof}
\begin{exmp}
Consider the Hamiltonian algebra structure on the path algebra of a double quiver $\bar{Q}$ given in \ref{ssHamQuiver}. 
By Lemma \ref{Lem:IndH0}, we get a $H_0$-Poisson structure on $\kk \bar{Q}$, and its associated Lie bracket on $H_0(\kk\bar{Q})$ is the necklace Lie bracket \cite{BLB,Gi01}.  
By Proposition \ref{Pr:HamH0}, the double bracket on $\kk \bar{Q}$ descends to a $H_0$-Poisson structure on $\Pi^\lambda(Q):=\kk \bar{Q}/(\mu-\lambda)$ \cite{CB11,VdB1}.  
The algebra $\Pi^\lambda(Q)$ is called a \emph{deformed preprojective algebra} \cite[Section 2]{CBH}. 
It was proved by Crawley-Boevey and Holland in \cite[Lemma 2.2]{CBH} that deformed preprojective algebras are independent of the orientation chosen on $Q$. Using Theorem \ref{Thm:HQuivers}, we obtain that the $H_0$-Poisson structure hence defined is independent of the orientation chosen on $Q$ up to isomorphism, and we can easily check that these isomorphisms are realised by the maps considered by Crawley-Boevey and Holland.
\end{exmp}

 \subsubsection{Structures induced by quasi-Hamiltonian algebras} 
We  assume that  $(A,\dgal{-,-},\Phi)$ is a quasi-Hamiltonian algebra over $B=\bigoplus_{s\in I}\kk e_s$. Fix $(\rc_s)\in (\kk^\times)^I$ or equivalently $\rc:=\sum_{s\in I} \rc_s e_s\in B^\times$. 
By definition of the multiplicative moment map, for any $a \in A$ we have  
\begin{equation*}
 \dgal{a, \Phi-\rc}=-\frac12\sum_{s\in I} (\Phi_s \otimes a e_s -\Phi_sa \otimes e_s + e_s\otimes a\Phi_s -e_s a\otimes \Phi_s)\,,
\end{equation*}
so that $\br{a, \Phi-\rc}=0$ after multiplication. We can thus adapt the discussion from \ref{sss:H0ham} to the quasi-Hamiltonian setting with $A^{\rc}=A/(\Phi-\rc)$ and get the following results.  
\begin{prop} \emph{(\cite[Proposition 5.1.5]{VdB1})} \label{Pr:HamqH0}
 Let $(A,\dgal{-,-},\Phi)$ be a quasi-Hamiltonian algebra. Then for any $\rc \in B^\times$, the $H_0$-Poisson structure $\br{-,-}$ on $A$ descends to a $H_0$-Poisson structure $\br{-,-}^{\rc}$ on $A^{\rc}$.
\end{prop}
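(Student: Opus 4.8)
The plan is to repeat, almost verbatim, the argument of \ref{sss:H0ham} that established Proposition \ref{Pr:HamH0}, substituting the multiplicative identity for the additive one. The starting point is the display immediately preceding the statement, which computes $\dgal{a,\Phi-\rc}$ explicitly; applying the multiplication $m$ collapses it to $\br{a,\Phi-\rc}=0$ for every $a\in A$, exactly as $\br{a,\mu-\lambda}=0$ in the Hamiltonian case. This is the only place where the specific form of the (multiplicative) moment map enters, and everything downstream is structural.

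First I would record that, by Lemma \ref{Lem:IndH0}, the double quasi-Poisson bracket $\dgal{-,-}$ already induces a $B$-linear $H_0$-Poisson structure $\br{-,-}$ on $A$, with each $\br{\bar a,-}$ realised by the derivation $\del_a=\br{a,-}\in\Der(A)$. Next I would observe that $\del_a$ annihilates $\Phi-\rc$, since $\del_a(\Phi-\rc)=\br{a,\Phi-\rc}=0$. Because $\del_a$ is a derivation killing the generator $\Phi-\rc$, it preserves the two-sided ideal $(\Phi-\rc)$: on a typical element $\sum_i x_i(\Phi-\rc)y_i$ the Leibniz rule produces only terms lying again in $(\Phi-\rc)$. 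Hence $\del_a$ descends to a derivation $\del_a^{\rc}$ of $A^{\rc}=A/(\Phi-\rc)$.

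I would then transport this to homology. Writing $\overline{(\Phi-\rc)}$ for the image of the ideal in $H_0(A)$, the previous paragraph gives $\br{\bar a,\overline{(\Phi-\rc)}}\subseteq\overline{(\Phi-\rc)}$, and antisymmetry of $\br{-,-}$ yields the same in the other slot; so the Lie bracket on $H_0(A)$ descends to the quotient $H_0(A)^{\rc}:=H_0(A)/\overline{(\Phi-\rc)}$, defined on lifts by $\br{\bar a,\bar b}^{\rc}=\overline{\br{a,b}}$. Identifying $H_0(A)^{\rc}$ with $H_0(A^{\rc})=A^{\rc}/[A^{\rc},A^{\rc}]$, the descended bracket inherits antisymmetry and the Jacobi identity from $\br{-,-}$, each $\br{\bar a,-}^{\rc}$ is induced by $\del_a^{\rc}$, and $B$-linearity is inherited, so that $A^{\rc}$ carries the required $H_0$-Poisson structure.

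The only point deserving care, and hence the crux, is independence of the lift in the definition of $\br{-,-}^{\rc}$, i.e. that $\del_a^{\rc}$ is unchanged when $a$ is replaced by a representative differing by an element of $(\Phi-\rc)$. As in the Hamiltonian case, $\del_a$ and $\del_{a'}$ with $a'-a\in(\Phi-\rc)$ need not agree as derivations of $A^{\rc}$, but they induce the same map on $H_0(A^{\rc})$: the discrepancy is governed by $\br{\Phi-\rc,b}$, which by cyclic antisymmetry \eqref{Eq:cycanti} equals the reversed product $-\dgal{b,\Phi-\rc}''\dgal{b,\Phi-\rc}'$; this differs from $-\br{b,\Phi-\rc}=0$ by a sum of commutators, so it lies in $[A,A]$ and therefore dies in $H_0(A^{\rc})$. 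Verifying this cleanly is the heart of the matter; the remaining checks are formal consequences of Lemma \ref{Lem:IndH0} and the constructions of \ref{sss:H0ham}.
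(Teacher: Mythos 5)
Your proof is correct and follows the paper's own route: the paper derives $\br{a,\Phi-\rc}=0$ from the multiplicative moment map property and then simply adapts the discussion of \ref{sss:H0ham} to $A^{\rc}=A/(\Phi-\rc)$, which is exactly the adaptation you carry out in detail (ideal preservation by the derivations $\br{a,-}$, antisymmetry for the other slot, identification $H_0(A)/\overline{(\Phi-\rc)}\simeq H_0(A^{\rc})$). The only, harmless, variation is at the lift-independence step, where the Hamiltonian template uses the explicit value $\br{\mu-\lambda,b}=\sum_s(be_s-e_sb)\in[A,A]$ --- here one could likewise compute $\br{\Phi-\rc,b}=\sum_s[b,\Phi_s]$ directly from \eqref{Phim} --- whereas you deduce $\br{\Phi-\rc,b}\in[A,A]$ abstractly from the cyclic antisymmetry \eqref{Eq:cycanti}, which is equally valid.
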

\begin{prop}  \label{Pr:HamqH0b}
Let $\phi:(A_1,\dgal{-,-}_1)\to (A_2,\dgal{-,-}_2)$ be a morphism of double quasi-Poisson algebras. Then $\phi:(A_1,\br{-,-}_1)\to (A_2,\br{-,-}_2)$ is a $H_0$-Poisson morphism.  

If $\phi$ is a morphism of quasi-Hamiltonian algebras, then for any $\rc \in B^\times$, 
$\phi$ induces a $H_0$-Poisson morphism $\phi^\rc:(A_1/(\Phi_1-\rc),\br{-,-}_1^{\rc})\to (A_2/(\Phi_2-\rc),\br{-,-}_2^{\rc})$.
\end{prop}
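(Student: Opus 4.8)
The plan is to mirror the proof of its additive counterpart, Proposition \ref{Pr:HamH0b}, replacing the data $(\mu_i,\lambda)$ by the multiplicative data $(\Phi_i,\rc)$ throughout. The argument is formally the same, and the only genuinely new verification concerns compatibility with the relevant ideals, so I would spell out only the points that differ from the double Poisson case.

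For the first assertion, I would start from the morphism identity \eqref{Eq:Morph}, $\phi^{\otimes 2}\dgalU{a,b}=\dgalD{\phi(a),\phi(b)}$, and compose it with the multiplication. Since $\phi$ is an algebra homomorphism we have $\phi\circ m=m\circ\phi^{\otimes 2}$, which yields $\phi(\br{a,b}_1)=\br{\phi(a),\phi(b)}_2$ for all $a,b\in A_1$. Because both brackets are double quasi-Poisson, Lemma \ref{Lem:IndH0} ensures that $\br{-,-}_1$ and $\br{-,-}_2$ descend to $H_0$-Poisson structures, and I would then pass to $H_0$ and run the same chain of equalities as in Proposition \ref{Pr:HamH0b}, using \eqref{Eq:H0ind1}, to obtain \eqref{Eq:H0Morph} for $\bar\phi$. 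As in the double Poisson case, this step does not use that $\phi$ preserves the moment map.

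For the second assertion, I would first record that $\phi(\Phi_1)=\Phi_2$ by hypothesis, and that $\phi(\rc)=\rc$ because $\rc\in B^\times\subset B$ and $\phi$ is a $B$-algebra map; hence $\phi(\Phi_1-\rc)=\Phi_2-\rc$, so $\phi$ carries the two-sided ideal $(\Phi_1-\rc)$ into $(\Phi_2-\rc)$ and descends to a $B$-algebra morphism $\phi^\rc:A_1/(\Phi_1-\rc)\to A_2/(\Phi_2-\rc)$, inducing $\overline{\phi^\rc}$ on $H_0$. Symmetrically $\bar\phi(\overline{(\Phi_1-\rc)})\subset\overline{(\Phi_2-\rc)}$, so $\bar\phi$ descends to a map between the quotients $H_0(A_i)/\overline{(\Phi_i-\rc)}$; under the identifications $H_0(A_i)/\overline{(\Phi_i-\rc)}\simeq H_0(A_i/(\Phi_i-\rc))$ recalled in the discussion preceding Proposition \ref{Pr:HamqH0} (itself adapted from \ref{sss:H0ham}), this descended map coincides with $\overline{\phi^\rc}$. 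Finally I would invoke Proposition \ref{Pr:HamqH0}, which supplies the brackets $\br{-,-}_i^\rc$, and deduce the Lie-morphism property of $\phi^\rc$ by quotienting the one already established for $\bar\phi$, exactly as in Proposition \ref{Pr:HamH0b}.

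I do not expect a serious obstacle here, since the computation of the bracket is identical to the additive case and the single place where the moment-map structure intervenes is the ideal-compatibility identity, which is now $\phi(\Phi_1-\rc)=\Phi_2-\rc$ in place of $\phi(\mu_1-\lambda)=\mu_2-\lambda$. The only point requiring mild care is checking that the bracket induced on each quotient is precisely the one constructed in Proposition \ref{Pr:HamqH0}, i.e. that quotienting the $H_0$-Poisson bracket by $\overline{(\Phi_i-\rc)}$ agrees with the $H_0$-Poisson structure of $A_i/(\Phi_i-\rc)$; this transfers verbatim from the Hamiltonian discussion and so poses no real difficulty.
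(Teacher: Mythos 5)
Your proposal is correct and follows exactly the route the paper intends: the paper itself gives no separate proof of Proposition \ref{Pr:HamqH0b}, stating only that the discussion of \ref{sss:H0ham} and the proof of Proposition \ref{Pr:HamH0b} adapt to the quasi-Hamiltonian setting with $(\mu,\lambda)$ replaced by $(\Phi,\rc)$, which is precisely your argument. Your identification of the one genuinely new ingredient --- the ideal compatibility $\phi(\Phi_1-\rc)=\Phi_2-\rc$, valid since $\phi(\Phi_1)=\Phi_2$ by hypothesis and $\phi$ fixes $B$ --- together with the appeal to Lemma \ref{Lem:IndH0} for the quasi-Poisson case (where $\br{-,-,-}_3$ vanishes because multiplication kills the right-hand side of \eqref{qPabc}) matches the paper's reasoning in full.
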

 
\begin{exmp}
Consider the quasi-Hamiltonian algebra structure on the localisation $A_Q$ of the path algebra of a double quiver $\bar{Q}$ given in \ref{ssqHamQuiver}, which depends on an ordering of the arrows. 
The algebra $\Lambda^{\rc}(Q):=A_Q/(\Phi-\rc)$ is called a \emph{multiplicative preprojective algebra} \cite{CBShaw}. 
As noticed by Van den Bergh \cite[Proposition 6.8.1]{VdB1}, the double bracket on $A_Q$ descends to a $H_0$-Poisson structure on $\Lambda^{\rc}(Q)$, see Proposition \ref{Pr:HamqH0}.  
It was proved by Crawley-Boevey and Shaw \cite[Theorem 1.4]{CBH} that multiplicative preprojective algebras are independent of the orientation chosen on $Q$, and of the ordering of the arrows. Using Theorem \ref{Thm:qHQuivers}, we obtain that the $H_0$-Poisson structure on a multiplicative preprojective algebra is independent of the orientation and the ordering of the arrows up to isomorphism, and we can check that such isomorphisms are precisely realised by the maps considered by Crawley-Boevey and Shaw.
\end{exmp}
 
  \subsubsection{Relation to the affine moduli space of representations} \label{ss:RepSp}

We assume that the base field $\kk$ is an algebraically closed field of characteristic zero. Consider an algebra $A$ over $B=\bigoplus_{s\in I}\kk e_s$. We denote the affine representation space (relative to $B$) of $A$ with dimension vector  $\alpha\in \N^I$ by $\Rep(A,\alpha)$. Explicitly, $\Rep(A,\alpha)$ parametrises representations $\rho$ of $A$ on $\kk^N$, $N=\sum_s \alpha_s$, such that
the idempotent matrix  $\rho(e_s)$ is given by $\diag(0_{\alpha_1},\ldots,0_{\alpha_{s-1}},\Id_{\alpha_s},0_{\alpha_{s+1}},\ldots, 0_{\alpha_{|I|}})$ after identification of $I$ with $\{1,\ldots,|I|\}$. 
In other words,  under the decomposition 
\begin{equation} \label{Eq:kdec}
 \kk^N = \kk^{\alpha_1} \oplus \kk^{\alpha_2} \oplus \ldots \oplus \kk^{\alpha_{|I|}} \,,
\end{equation}
$\rho(e_s)$ projects an element $(v_1,\ldots,v_{|I|})$ onto $(0,\ldots,0,v_s,0,\ldots,0)$, where for all $r\in I$ we have $v_r\in \kk^{\alpha_{r}}$. 

There is a natural action of ${\Gl_\alpha}:=\prod_s \Gl_{\alpha_s}(\kk)$ by change of basis with respect to the decomposition \eqref{Eq:kdec}. 
For any $a\in A$, we denote by $\X(a)$ the function on $\Rep(A,\alpha)$ which returns the matrix representing $a$ at each point, i.e. 
$\X(a)(\rho)=\rho(a)$. 
Following \cite{CB11,VdB1}, we note that the map 
\begin{equation}
 \tr : A\to \kk[\Rep(A,\alpha)]\,, \quad \tr(a)=\sum_{1\leq i\leq N} \X(a)_{ii}\,,
\end{equation}
has its image which generates $\kk[\Rep(A,\alpha)]^{\Gl_\alpha}$. Furthermore, given $\bar{a}\in H_0(A)$ and two lifts $a_1,a_2\in A$, we note that $\tr(a_1)=\tr(a_2)$ since the trace vanishes on commutators of matrices.

\begin{thm} \emph{(\cite[Theorem 4.5]{CB11})} \label{Thm:H0Rep}
Let $\dhh{-,-}$ be a $H_0$-Poisson structure on $A$. Then, for any dimension vector $\alpha$, there is a unique Poisson bracket $\br{-,-}$ on $\OO[\Rep(A,\alpha)]^{\Gl_\alpha}$ such that for any $a,b\in A$ 
\begin{equation} \label{Eq:H0Rep}
 \br{\tr (a), \tr (b)}=\tr (\dhh{\bar a, \bar b}^l)\,,
\end{equation}
where $\dhh{\bar a, \bar b}^l\in A$ is an arbitrary lift of $\dhh{\bar a, \bar b}\in H_0(A)$.
\end{thm}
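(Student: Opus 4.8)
The plan is to produce the bracket on $R:=\OO[\Rep(A,\alpha)]^{\Gl_\alpha}$ as a quotient of the Lie--Poisson (Kirillov--Kostant--Souriau) bracket on a symmetric algebra, with the $H_0$-Poisson structure $\dhh{-,-}$ supplying the Lie bracket. First I would record the two elementary inputs. Since $\tr$ vanishes on $[A,A]$ it descends to a $\kk$-linear map $H_0(A)\to R$, still written $\bar a\mapsto\tr(a)$, whose image generates $R$ as a $\kk$-algebra (both facts are recalled in the excerpt). Uniqueness is then immediate: a Poisson bracket is a biderivation, hence determined by its values on the algebra generators $\tr(a)$, and \eqref{Eq:H0Rep} prescribes exactly these values. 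Everything therefore reduces to constructing one such bracket.

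The one place representation theory enters is through the functoriality of $\Rep(A,\alpha)$ under derivations: every $\del\in\Der(A)$ induces a $\Gl_\alpha$-invariant derivation $\del_{\Rep}$ of $\OO[\Rep(A,\alpha)]$, characterised by $\del_{\Rep}(\X(b)_{ij})=\X(\del(b))_{ij}$, which being equivariant restricts to a derivation of $R$ with $\del_{\Rep}(\tr(b))=\tr(\del(b))$. For each $\bar a\in H_0(A)$ the definition of an $H_0$-Poisson structure furnishes a derivation $\del_a\in\Der(A)$ inducing $\dhh{\bar a,-}$ on $H_0(A)$; I denote by $P_{\bar a}\in\Der(R)$ the restriction to $R$ of $(\del_a)_{\Rep}$, so that $P_{\bar a}(\tr(b))=\tr(\del_a(b))=\tr(\dhh{\bar a,\bar b}^l)$.

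Next I would form the symmetric algebra $\operatorname{Sym}(H_0(A))$ on the vector space $H_0(A)$ and equip it with the Lie--Poisson bracket $\br{-,-}_{\mathrm{LP}}$ extending $\dhh{-,-}$ by the Leibniz rule; this is a genuine Poisson bracket precisely because $\dhh{-,-}$ satisfies the Jacobi identity \eqref{Eq:H0Lie}. The descended trace extends, by the universal property of $\operatorname{Sym}$, to a surjective algebra homomorphism $\sigma:\operatorname{Sym}(H_0(A))\to R$ with $\sigma(\bar a)=\tr(a)$, and the aim is to transport the bracket along $\sigma$. The crux is to show that $K:=\ker\sigma$ is a Poisson ideal. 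For this I would compare the two $\sigma$-derivations $\sigma\circ\br{\bar a,-}_{\mathrm{LP}}$ and $P_{\bar a}\circ\sigma$ from $\operatorname{Sym}(H_0(A))$ to $R$: on a generator $\bar b$ both equal $\tr(\dhh{\bar a,\bar b}^l)$, hence they coincide on all of $\operatorname{Sym}(H_0(A))$, giving $\sigma(\br{\bar a,f}_{\mathrm{LP}})=P_{\bar a}(\sigma(f))$ for every $f$. Taking $f=k\in K$ yields $\sigma(\br{\bar a,k}_{\mathrm{LP}})=P_{\bar a}(0)=0$, so $\br{\bar a,k}_{\mathrm{LP}}\in K$ for every generator $\bar a$; by antisymmetry and the Leibniz rule for $\br{k,-}_{\mathrm{LP}}$ this propagates to $\br{K,\operatorname{Sym}(H_0(A))}_{\mathrm{LP}}\subseteq K$. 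Thus $\br{-,-}_{\mathrm{LP}}$ descends to a Poisson bracket on $R$, and on generators it reads $\br{\tr(a),\tr(b)}=\sigma(\dhh{\bar a,\bar b})=\tr(\dhh{\bar a,\bar b}^l)$, which is exactly \eqref{Eq:H0Rep}.

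The main obstacle is precisely this Poisson-ideal step, and the argument above shows it collapses to the single compatibility $\sigma\circ\br{\bar a,-}_{\mathrm{LP}}=P_{\bar a}\circ\sigma$; this rests entirely on the existence of the derivation $P_{\bar a}$ of $R$, i.e. on the passage $\del\mapsto\del_{\Rep}$ together with the fact that $\dhh{\bar a,-}$ is genuinely induced by a derivation of $A$. The only point requiring minor care is that $P_{\bar a}$ be a well-defined derivation of $R$ independent of the chosen lift $\del_a$: any two lifts differ by a derivation sending $A$ into $[A,A]$, whose induced derivation annihilates every $\tr(b)$ and therefore vanishes on $R$ since the traces generate it. Consequently $P_{\bar a}$ depends only on $\bar a$, the construction goes through, and the bracket obtained is the unique one identified in the first step.
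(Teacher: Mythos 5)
Since this theorem is quoted from Crawley-Boevey (the paper itself gives no proof, citing \cite[Theorem 4.5]{CB11}), your proposal can only be compared with that argument, and its architecture is essentially the standard one: uniqueness because the trace functions generate $\OO[\Rep(A,\alpha)]^{\Gl_\alpha}$ and a Poisson bracket is a biderivation; existence by equipping $\operatorname{Sym}(H_0(A))$ with the Lie--Poisson bracket, mapping it onto the invariant ring by $\sigma(\bar a)=\tr(a)$, and showing $\ker\sigma$ is a Poisson ideal via the derivations of the invariant ring induced by the $\del_a$. Your treatment of the genuinely delicate points is right: the comparison of the two $\sigma$-derivations $\sigma\circ\br{\bar a,-}_{\mathrm{LP}}$ and $P_{\bar a}\circ\sigma$ on generators, the propagation $\br{K,\operatorname{Sym}(H_0(A))}_{\mathrm{LP}}\subseteq K$ by the Leibniz rule together with the fact that $K$ is an ideal (this is what takes care of well-definedness in the \emph{first} argument, which a naive definition $\br{\tr(a),-}:=P_{\bar a}$ would leave open), and the independence of $P_{\bar a}$ from the lift, since two derivations inducing the same map on $H_0(A)$ differ by a derivation with values in $[A,A]$, whose induced derivation kills every $\tr(b)$ and hence vanishes on the ring they generate.

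There is, however, one step that fails as literally stated in the paper's relative setting. You assert that \emph{every} $\del\in\Der(A)$ induces a derivation $\del_{\Rep}$ of $\OO[\Rep(A,\alpha)]$ with $\del_{\Rep}(\X(b)_{ij})=\X(\del(b))_{ij}$. On the relative scheme the functions $\X(e_s)_{ij}$ are \emph{constants} (representations send $e_s$ to a fixed idempotent matrix), so this prescription is consistent only if $\X(\del(e_s))$ vanishes on $\Rep(A,\alpha)$; but for a general $\kk$-derivation one only has $\del(e_s)=e_s\del(e_s)(1-e_s)+(1-e_s)\del(e_s)e_s$, whose matrix of functions need not vanish --- for instance $\del=[x,-]$ on the path algebra of $1\to 2$ with $x=e_1xe_2$ gives $\del(e_1)=-x$. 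Moreover the definition of an $H_0$-Poisson structure only demands that $\dhh{\bar a,-}$ be induced by \emph{some} $\del_a\in\Der(A)$, with no requirement that $\del_a(B)=0$, so you cannot simply assume the lift is $B$-linear. The gap is real but closes quickly: since $B=\bigoplus_{s\in I}\kk e_s$ is separable, the restriction $\del_a|_B:B\to A$ is an inner derivation, say $\del_a(b)=[c,b]$ for all $b\in B$ and some $c\in A$; replacing $\del_a$ by $\del_a-[c,-]$ produces a $B$-linear derivation inducing the same map on $H_0(A)$, because inner derivations take values in $[A,A]$. After this normalisation, $\del_{\Rep}$ exists, is $\Gl_\alpha$-equivariant, restricts to the invariants, and the rest of your construction goes through verbatim.
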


\begin{prop} \label{Pr:H0Rep}
 If $\phi: (A_1,\dhu{-,-})\to (A_2,\dhd{-,-})$ is a $H_0$-Poisson morphism, 
then the morphism $\bar \phi_\alpha:\OO[\Rep(A_1,\alpha)]^{\Gl_\alpha} \to \OO[\Rep(A_2,\alpha)]^{\Gl_\alpha}$, uniquely defined by  
 $\bar \phi_\alpha(\tr (a))=\tr (\phi(a))$ for any $a \in A_1$, is a Poisson morphism.
\end{prop}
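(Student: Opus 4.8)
The plan is to realise $\bar\phi_\alpha$ as the pullback along a morphism of representation schemes and then to verify the Poisson property on the trace generators only. First I would construct the underlying geometric map. Since $\phi:A_1\to A_2$ is a morphism of $B$-algebras, precomposition sends a representation $\rho$ of $A_2$ to the representation $\rho\circ\phi$ of $A_1$; as $\phi(e_s)=e_s$ for all $s\in I$, the idempotent normalisation defining $\Rep(-,\alpha)$ is preserved, so we obtain a morphism of affine schemes $\Rep(\phi,\alpha):\Rep(A_2,\alpha)\to\Rep(A_1,\alpha)$, $\rho\mapsto\rho\circ\phi$. Because the $\Gl_\alpha$-action is by conjugation, $(g\cdot\rho)\circ\phi=g\,(\rho\circ\phi)\,g^{-1}$, so this map is $\Gl_\alpha$-equivariant and its comorphism restricts to a $\kk$-algebra homomorphism on invariant rings. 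On the trace generators we have $\X(a)(\rho\circ\phi)=\rho(\phi(a))=\X(\phi(a))(\rho)$, whence this comorphism sends $\tr(a)\mapsto\tr(\phi(a))$. This identifies it with $\bar\phi_\alpha$, which is therefore a well-defined algebra homomorphism; uniqueness is automatic since the elements $\tr(a)$, $a\in A_1$, generate $\OO[\Rep(A_1,\alpha)]^{\Gl_\alpha}$.

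Next I would reduce the Poisson compatibility to generators. As $\bar\phi_\alpha$ is an algebra homomorphism and the two Poisson brackets supplied by Theorem \ref{Thm:H0Rep} are biderivations, the collection of pairs $(f,g)$ satisfying $\bar\phi_\alpha(\br{f,g}_1)=\br{\bar\phi_\alpha(f),\bar\phi_\alpha(g)}_2$ is closed under the Leibniz rule in each slot and under linear combinations. It therefore suffices to check the identity for $f=\tr(a)$ and $g=\tr(b)$ with $a,b\in A_1$.

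The core computation then runs as follows. By Theorem \ref{Thm:H0Rep} applied in $A_1$ together with the description of $\bar\phi_\alpha$,
\[
\bar\phi_\alpha(\br{\tr(a),\tr(b)}_1)=\bar\phi_\alpha\big(\tr(\dhu{\bar a,\bar b}^l)\big)=\tr\big(\phi(\dhu{\bar a,\bar b}^l)\big),
\]
while Theorem \ref{Thm:H0Rep} applied in $A_2$ gives
\[
\br{\bar\phi_\alpha(\tr(a)),\bar\phi_\alpha(\tr(b))}_2=\br{\tr(\phi(a)),\tr(\phi(b))}_2=\tr\big(\dhd{\overline{\phi(a)},\overline{\phi(b)}}^l\big).
\]
Now $\phi(\dhu{\bar a,\bar b}^l)$ is a lift in $A_2$ of $\bar\phi(\dhu{\bar a,\bar b})$, whereas $\dhd{\overline{\phi(a)},\overline{\phi(b)}}^l$ is a lift of $\dhd{\bar\phi(\bar a),\bar\phi(\bar b)}$. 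By the $H_0$-Poisson morphism property \eqref{Eq:H0Morph} these two classes in $H_0(A_2)$ coincide, and since $\tr$ vanishes on commutators it depends only on the class in $H_0$. Hence the two traces displayed above are equal, which establishes the identity on generators and therefore the Poisson morphism property.

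The only genuinely delicate point is the well-definedness of $\bar\phi_\alpha$: the generators $\tr(a)$ satisfy relations, so the naive assignment $\tr(a)\mapsto\tr(\phi(a))$ requires justification, which is provided here by the equivariant pullback $\Rep(\phi,\alpha)$ rather than by arguing with generators and relations directly. Once this is secured, the Poisson statement is a formal consequence of Theorem \ref{Thm:H0Rep}, the fact that $\tr$ factors through $H_0$, and equation \eqref{Eq:H0Morph}; no estimates or case analysis are needed.
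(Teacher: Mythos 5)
Your proof is correct and its core computation is exactly the paper's: the paper likewise evaluates both sides on the generators $\tr(a),\tr(b)$ via Theorem \ref{Thm:H0Rep}, obtaining $\tr\big(\dhd{\bar\phi(\bar a),\bar\phi(\bar b)}^l\big)$ and $\tr\big((\bar\phi\dhu{\bar a,\bar b})^l\big)$, and concludes by \eqref{Eq:H0Morph} together with the independence of $\tr$ from the choice of lift. Your two additional justifications --- constructing the $\Gl_\alpha$-equivariant precomposition morphism $\Rep(A_2,\alpha)\to\Rep(A_1,\alpha)$ to secure well-definedness of $\bar\phi_\alpha$, and the explicit biderivation argument reducing the Poisson identity to trace generators --- are both correct and merely make explicit steps the paper leaves implicit.
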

\begin{proof}
Denote by $\br{-,-}_k$ the Poisson bracket on $\OO[\Rep(A_k,\alpha)]^{\Gl_\alpha}$ induced by Theorem \ref{Thm:H0Rep} for $k=1,2$. 
Then, on generators $\tr(a),\tr(b)$ of $\OO[\Rep(A_1,\alpha)]^{\Gl_\alpha}$, we have that 
\begin{subequations}
\begin{align}
 \br{\bar \phi_\alpha(\tr (a)), \bar \phi_\alpha(\tr (b))}_2 =& \br{\tr (\phi(a)),\tr (\phi(b))}_2 
 =\tr (\dhd{\overline{\phi(a)}, \overline{\phi(b)}}^l) 
 =\tr (\dhd{\bar \phi(\bar a), \bar\phi(\bar b)}^l)\,, \label{eq:phiH0a} \\
\bar \phi_\alpha(\br{\tr (a), \tr (b)}_1)=& \bar \phi_\alpha\big(\tr (\dhu{\bar a, \bar b}^l)\big)
=\tr \big(\phi(\dhu{\bar a, \bar b}^l)\big)
  =\tr \big((\bar \phi \dhu{\bar a, \bar b})^l\big) \,. \label{eq:phiH0b}
\end{align}
\end{subequations}
For \eqref{eq:phiH0a}, we used the definition of $\bar \phi_\alpha$ in the first equality, the definition \eqref{Eq:H0Rep} of the Poisson bracket in the second, the fact that $\bar \phi:H_0(A_1)\to H_0(A_2)$ is induced by $\phi$ in the third; 
we used these results similarly in \eqref{eq:phiH0b}. As the final terms in \eqref{eq:phiH0a} and \eqref{eq:phiH0b} are equal due to  \eqref{Eq:H0Morph} since $\phi$ is a $H_0$-Poisson morphism, we get that $\bar \phi_\alpha$ is a Poisson morphism. 
If $\phi$ is an isomorphism with inverse $\phi^{-1}:A_2 \to A_1$, 
then the inverse of $\bar \phi_\alpha$ is given by  $\tr (a)\mapsto\tr (\phi^{-1}(a))$ for any $a\in A_2$.
\end{proof}

 \subsection{Some applications} \label{s:H0app}
 
We have obtained in \ref{ss:H0-gen} that morphisms of double (quasi-)Poisson algebras induce morphisms of quotients of these algebras obtained by fixing the (multiplicative) moment maps to a linear combination of idempotents. In particular, the induced morphisms define morphisms of coordinate rings of moduli spaces of representations. We investigate these constructions in the rest of this section. We will study several basic examples and their relation to the first Weyl algebra and quantum tori, as well as the morphisms that they induce on ($q$-)Calogero-Moser spaces which will appear in Section \ref{S:dual}.     

In this subsection, all algebras are $B$-algebras for the convention of Section \ref{S:Base}. 
In that case, a $B$-linear morphism is a morphism satisfying $\phi(e)=e$ for all $e\in B$. 
 
\subsubsection{Using Hamiltonian algebras} \label{ss:H0Ham}

Given a Hamiltonian algebra $(A,\dgal{-,-},\mu)$ over $B$, let us denote by $\Aut(A):=\Aut_B(A)$ its group of $B$-linear automorphisms, and $\HAut(A)$ the subgroup of automorphisms which are morphisms of Hamiltonian algebras. Following \cite{BP}, let us also introduce $\Aut(A;\mu)$ as the subgroup of automorphisms of $A$ preserving $\mu$. It is clear that we have the inclusions 
\begin{equation} \label{Eq:Aut}
 \HAut(A) \subset  \Aut(A;\mu) \subset \Aut(A)\,.
\end{equation}
\begin{rem}
 The inclusions in \eqref{Eq:Aut} are not necessarily equalities. 
Consider 
$A=\kk\langle x,y,z\rangle$ with double Poisson bracket given on generators by 
\begin{equation*}
 \dgal{x,y}=1\otimes 1,\quad \dgal{z,z}=z\otimes 1 - 1 \otimes z,\quad \dgal{x,z}=0=\dgal{y,z}\,,
\end{equation*}
and moment map  $\mu=[x,y]+z$. Consider the automorphisms $\phi,\psi:A\to A$ defined on generators by  
\begin{equation*}
 \phi(x)= \frac12 x,\,\, \phi(y)= y,\,\, \phi(z)= z+\frac12[x,y], \qquad 
\psi(x)=-x,\,\, \psi(y)=y,\,\, \psi(z)=z\,.
\end{equation*}
Then $\phi$ preserves $\mu$ but it is not a morphism of double brackets, while $\psi$ does not preserve $\mu$. 
\end{rem}

The first inclusion from \eqref{Eq:Aut} descends to an inclusion $\overline{\HAut(A)} \subset  \overline{\Aut(A;\mu)}$ in the quotient  $A^\lambda=A/(\mu-\lambda)$ for any $\lambda \in B$. Moreover, the automorphisms in $\overline{\HAut(A)}$ preserve the induced $H_0$-Poisson structure by Proposition \ref{Pr:HamH0b}. 

\begin{exmp} \label{Exmp:A1}
Consider the Jordan quiver $Q_\circ$ consisting of the vertex set $I=\{0\}$ and a single arrow which is a loop $a:0\to 0$. Its double $\bar Q_\circ$ contains an additional arrow $a^\ast:0\to 0$. The path algebra of $\bar Q_\circ$ has a Hamiltonian algebra structure constructed in  \ref{ssHamQuiver}. Under $a\mapsto x,a^\ast \mapsto y$ we can induce the Hamiltonian algebra structure on the free algebra $\Fd=\kk\langle x,y\rangle$ by taking the  double Poisson bracket 
\begin{equation} \label{Eq:dbrF2}
 \dgal{x,y}=1\otimes 1\,, \quad \dgal{x,x}=0=\dgal{y,y}\,,
\end{equation}
with the moment map $\mu=[x,y]$. Meanwhile, we can start with the opposite quiver $Q^{op}_\circ$ given by $b:0\to 0$, and the isomorphism 
$b\mapsto x, b^\ast \mapsto y$  induces the same Hamiltonian algebra structure on $\Fd \simeq \kk \bar{Q}^{op}_\circ$. 
By Theorem \ref{Thm:HQuivers}, we have isomorphic Hamiltonian algebras if we start with $Q_\circ$ or $Q^{op}_\circ$, and the isomorphism can be computed from Example \ref{Exmp:Q1} to be 
\begin{equation}
 \psi:\kk\bar{Q}_\circ\to \kk\bar{Q}_\circ^{op}\,, \quad 
\psi(a)=b^\ast,\,\, \psi(a^\ast)=-b\,.
\end{equation}
 Under the identifications with $\Fd$, $\psi$ induces an isomorphism of Hamiltonian algebras given by  
\begin{equation} \label{Eq:A1FT}
 \cF:\Fd  \to \Fd \,, \quad 
\cF(x)=y,\,\, \cF(y)=-x\,.
\end{equation}
The automorphism $\cF$ satisfies $\cF^4=\id$ and is sometimes called the formal Fourier transform. We can also easily check that for all $\gamma\in \kk$ and $k\geq 0$ the automorphism 
\begin{equation} \label{Eq:A1phi}
 \phi_{k,\gamma}:\Fd  \to \Fd \,, \quad 
\phi_{k,\gamma}(x)=x+\gamma y^k,\,\, \phi_{k,\gamma}(y)=y\,,
\end{equation}
defines an isomorphism of Hamiltonian algebras as it preserves \eqref{Eq:dbrF2} and the moment map. Hence, we also get the isomorphism of Hamiltonian algebras 
\begin{equation}  \label{Eq:A1phib}
 \phi_{k,\gamma}'=\cF^{-1}\circ \phi_{k,-\gamma} \circ \cF :\Fd  \to \Fd \,, \quad 
\phi_{k,\gamma}'(x)=x,\,\, \phi_{k,\gamma}'(y)=y+\gamma x^k\,. 
\end{equation}
Imposing the relation $\mu=1$, the automorphisms \eqref{Eq:A1phi} and \eqref{Eq:A1phib} descend to automorphisms of the first Weyl algebra 
$\Aalg=\kk\langle x,y \rangle/(xy-yx-1)$. 
\end{exmp}

It is a result of Dixmier \cite[Th\'{e}or\`{e}me 8.10]{D68} that the images in $\Aalg$ of the automorphisms \eqref{Eq:A1phi} and \eqref{Eq:A1phib} generate the whole group of automorphisms. 
\begin{cor} \label{Cor:A1}
Let $\Fd$ be the free algebra on two generators and $\Aalg$ be the first Weyl algebra. 
Then $\HAut(\Fd)$ surjects onto $\Aut(\Aalg)$. 
\end{cor}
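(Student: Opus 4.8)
The plan is to produce a surjective group homomorphism $\HAut(\Fd)\to\Aut(\Aalg)$ by descent to the quotient $\Aalg=\Fd/(\mu-1)$, and then to feed it the generation statement attributed to Dixmier. First I would record that descent is well defined on the whole of $\HAut(\Fd)$. An element $\phi\in\HAut(\Fd)$ is a $B$-linear automorphism of $\Fd$ which, being a morphism of Hamiltonian algebras, satisfies $\phi(\mu)=\mu$; since $\phi$ is a $\kk$-algebra automorphism fixing $1$, we get $\phi(\mu-1)=\mu-1$, so $\phi$ preserves the two-sided ideal $(\mu-1)$ and descends to a $\kk$-algebra automorphism $\bar\phi$ of $\Aalg$. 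Because descent is compatible with composition and sends $\id$ to $\id$, the assignment $\rho:\phi\mapsto\bar\phi$ is a group homomorphism $\HAut(\Fd)\to\Aut(\Aalg)$. Proposition~\ref{Pr:HamH0b} is not logically needed for the surjectivity statement, but I would note in passing that it guarantees each $\bar\phi$ also respects the induced $H_0$-Poisson structure on $\Aalg$.

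Next I would exhibit a generating set of $\Aut(\Aalg)$ inside the image of $\rho$. By Example~\ref{Exmp:A1}, for every $k\ge 0$ and $\gamma\in\kk$ the automorphisms $\phi_{k,\gamma}$ and $\phi'_{k,\gamma}$ of $\Fd$ lie in $\HAut(\Fd)$, since they preserve both the double Poisson bracket \eqref{Eq:dbrF2} and the moment map $\mu=[x,y]$ (for $\phi'_{k,\gamma}$ this follows by writing it as $\cF^{-1}\circ\phi_{k,-\gamma}\circ\cF$ with $\cF\in\HAut(\Fd)$). Their images under $\rho$ are exactly the elementary automorphisms $x\mapsto x+\gamma y^k,\ y\mapsto y$ and $x\mapsto x,\ y\mapsto y+\gamma x^k$ of $\Aalg$. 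By Dixmier \cite[Th\'{e}or\`{e}me 8.10]{D68}, these elementary automorphisms generate $\Aut(\Aalg)$. As the image of the homomorphism $\rho$ is a subgroup of $\Aut(\Aalg)$ containing a generating set, it is all of $\Aut(\Aalg)$, which is precisely the asserted surjectivity.

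I do not expect a genuine obstacle internal to the corollary: the two facts to check are that $\phi_{k,\gamma}$ and $\phi'_{k,\gamma}$ belong to $\HAut(\Fd)$ and that they descend to the stated elementary automorphisms, both already established in Example~\ref{Exmp:A1}. The substantive mathematical input is external, namely Dixmier's generation theorem, while the new content here is merely the observation that his generators are visibly realised by \emph{Hamiltonian} automorphisms of the free algebra coming from Theorem~\ref{Thm:HQuivers}. In particular, surjectivity requires no control over the (large) kernel of $\rho$, which contains for instance every automorphism of $\Fd$ acting trivially modulo $(\mu-1)$; this is why the argument reduces to matching generators rather than analysing $\rho$ as an isomorphism.
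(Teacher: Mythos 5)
Your proof is correct and is essentially the paper's argument: the paper's corollary follows directly from Example \ref{Exmp:A1} (which puts $\phi_{k,\gamma}$, $\phi'_{k,\gamma}=\cF^{-1}\circ\phi_{k,-\gamma}\circ\cF$ in $\HAut(\Fd)$ and notes their descent to $\Aalg$ upon imposing $\mu=1$), the general descent $\overline{\HAut(A)}\subset\operatorname{Aut}(A^\lambda)$ set up in \S\,\ref{ss:H0Ham}, and Dixmier's generation theorem \cite[Th\'{e}or\`{e}me 8.10]{D68}. Your only additions — checking explicitly that $\phi(\mu)=\mu$ implies $\phi(\mu-1)=\mu-1$ so descent is a well-defined group homomorphism on all of $\HAut(\Fd)$, and remarking that surjectivity needs only generators in the image, not control of the kernel — are accurate fillings-in of details the paper leaves implicit.
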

\begin{rem} \label{Rem:A1}
Using Proposition \ref{Pr:HamH0b}, any automorphism of the first Weyl algebra preserves the $H_0$-Poisson structure induced by \eqref{Eq:dbrF2}. However, this result is not interesting as $H_0(\Aalg)$ is trivial. If we set instead the moment map $\mu$ to $0$ in Example \ref{Exmp:A1}, we get that the $H_0$-Poisson structure induced by \eqref{Eq:dbrF2} on $\kk[x,y]$ is the canonical Poisson bracket defined by $\br{x,y}=1$ and $\br{x,x}=0=\br{y,y}$. 
\end{rem}

Let us now assume that $\kk$ is of characteristic zero and algebraically closed. The elements of $H_0(A^\lambda)$ induce generators of representation spaces using the trace map as in \ref{ss:RepSp}, and we can obtain the following diagram for any dimension vector $\alpha \in \N^I$
\begin{center}
   \begin{tikzpicture}
  \node (HA) at (-3,1.5) {$\HAut(A)$}; 
  \node (Amu) at (0,1.5) {$\Aut(A;\mu)$};
  \node (A) at (3,1.5) {$ \Aut(A)$};
  \node (HAl) at (-3,0) {$\overline{\HAut(A)}$};
  \node (Amul) at (0,0) {$\overline{\Aut(A;\mu)}$};
  \node (Al) at (3,0) {$\Aut(A^\lambda)$};
  \node (HArep) at (-3,-1.5) {$\overline{\HAut(A)}_\alpha$};
  \node (Amurep) at (0,-1.5) {$\overline{\Aut(A;\mu)}_\alpha$};
  \node (Arep) at (3,-1.5) {$\Aut(\mathcal{A}^\lambda_\alpha)$};
   \path[->,>=stealth,font=\scriptsize]  (HA) edge   (Amu) ;
\path[->,>=stealth,font=\scriptsize]  (Amu) edge   (A) ;
   \path[->,>=stealth,font=\scriptsize]  (HAl) edge   (Amul) ;
\path[->,>=stealth,font=\scriptsize]  (Amul) edge   (Al) ;
   \path[->,>=stealth,font=\scriptsize]  (HArep) edge   (Amurep) ;
\path[->,>=stealth,font=\scriptsize]  (Amurep) edge   (Arep) ;
\path[->,>=stealth,font=\scriptsize]  (HA) edge   (HAl) ;
\path[->,>=stealth,font=\scriptsize]  (HAl) edge   (HArep) ;
\path[->,>=stealth,font=\scriptsize]  (Amu) edge   (Amul) ;
\path[->,>=stealth,font=\scriptsize]  (Amul) edge   (Amurep) ;
   \end{tikzpicture}
\end{center}

\noindent where $\mathcal{A}^\lambda_\alpha=\OO[\Rep(A^\lambda,\alpha)]^{\Gl_\alpha}$ denotes the coordinate ring of $\X(\mu)^{-1}(\lambda \Id_\alpha)/\!/\Gl_\alpha$ for $\lambda \Id_\alpha=(\lambda_s \Id_{\alpha_s})_s$.  The automorphisms in $\overline{\HAut(A)}_\alpha$ are Poisson by Proposition \ref{Pr:H0Rep}. 

For a quiver $Q$, consider the Hamiltonian algebra structure on $\kk \bar{Q}$ given in \ref{ssHamQuiver}. We get that $\HAut(\kk \bar{Q})$ induces Poisson automorphisms on the corresponding quiver varieties by the above argument. It is an interesting question to understand what are the properties of these morphisms. 

\begin{exmp} \label{Ex:HamQ1}
We work over $\kk=\CC$. 
 Consider the quiver $Q_1$ formed by the vertices $I=\{0,\infty\}$ and arrows $x:0\to 0$, $v=0\to \infty$.  The Hamiltonian automorphisms on $\CC\langle x,y\rangle$ from 
Example \ref{Exmp:A1} can be extended to $\CC \bar{Q}_1$ using $x\mapsto x$, $y\mapsto x^\ast$ and acting as the identity on $v,v^\ast$. Fix $n\in \N^\times$. If we take $(\lambda_0,\lambda_\infty)=(1,-n)$ and $(\alpha_0,\alpha_\infty)=(n,1)$ and denote the corresponding quiver variety as $\Cn$, the elements of $\HAut(\CC \bar{Q}_1)$ descend to Poisson automorphisms on $\Cn$. As the group of automorphisms generated by the images of $\phi_{k,\gamma},\phi_{k,\gamma}'$ acts transitively on $\Cn$ by \cite[Theorems 1.2,1.3]{BW}, the same result holds for the image of $\HAut(\CC \bar{Q}_1)$. The variety $\Cn$ is the $n$-th Calogero-Moser space \cite{W}, see \ref{ss:DualRCM}. 

If $Q_2$ is the quiver obtained from $Q_1$ by adding an arrow $\infty \to 0$, we can reproduce the same construction with the same parameters to get a quiver variety $\CnTwo$. It is proved in \cite{BP} that a subgroup of $\Aut(\CC\bar{Q}_2;\mu)$ acts transitively on $\CnTwo$ when induced onto this space, see also \cite{MeTac}. 
It is not known if the subgroup of Poisson automorphisms induced by $\HAut(\CC\bar{Q}_2)$ also acts transitively. 
\end{exmp}

\subsubsection{Using quasi-Hamiltonian algebras} \label{ss:H0qHam}

Given a quasi-Hamiltonian algebra $(A,\dgal{-,-},\Phi)$ over $B$, we can reproduce the construction from \ref{ss:H0Ham}.  
Namely, we can define the group of $B$-linear automorphisms $\Aut(A)$, the subgroup of quasi-Hamiltonian automorphisms $\qHAut(A)$, and the subgroup $\Aut(A;\Phi)$ of automorphisms of $A$ preserving $\Phi$. We get the inclusions 
\begin{equation} \label{Eq:qAut}
 \qHAut(A) \subset  \Aut(A;\Phi) \subset \Aut(A)\,.
\end{equation}
The first inclusion descends to an inclusion of automorphisms of the quotient  $A^\rc=A/(\Phi-\rc)$ for any $\rc \in B^\times$ such that the image of $\qHAut(A)$ preserves the $H_0$-Poisson structure obtained from  Proposition \ref{Pr:HamqH0b}. If $\kk$ is of characteristic zero and algebraically closed, they furthermore descend to automorphisms of affine moduli spaces of representations, for which the elements in the image of $\qHAut(A)$ are Poisson isomorphisms by Proposition \ref{Pr:H0Rep}.

\begin{exmp} \label{Exmp:B1}
Consider the Jordan quiver $Q_\circ$ as in Example \ref{Exmp:A1}. The localisation $A_{Q_\circ}$ of the path algebra of $\bar Q_\circ$ has a quasi-Hamiltonian algebra structure constructed in  \ref{ssqHamQuiver}. 
Let $\Ft=\kk\langle x,y\rangle_S$ be the universal localisation of $\kk\langle x,y\rangle$ with respect to the set $S=\{1+xy,1+yx\}$. Under $a\mapsto y,a^\ast \mapsto x$ we can induce the quasi-Hamiltonian algebra structure on $\Ft \simeq A_{Q_\circ}$ with double quasi-Poisson bracket 
\begin{equation} \label{Eq:dbrqFa}
\begin{aligned}
 &\dgal{x,x}_1=-\frac12(x^2\otimes 1-1\otimes x^2)\,, \quad \dgal{y,y}_1=+\frac12 (y^2 \otimes 1 - 1 \otimes y^2)\,, \\
 &\dgal{y,x}_1=1\otimes 1+\frac12 (xy\otimes 1 +1 \otimes yx + x \otimes y - y\otimes x)\,,
\end{aligned}
\end{equation}
and the multiplicative moment map $\Phi=(1+yx)(1+xy)^{-1}$. This corresponds to taking the ordering $a<a^\ast$.  If we consider the other ordering $a^\ast<a$, we get the double quasi-Poisson bracket 
\begin{equation} \label{Eq:dbrqFb}
\begin{aligned}
 &\dgal{x,x}_2=+\frac12(x^2\otimes 1-1\otimes x^2)\,, \quad \dgal{y,y}_2=-\frac12 (y^2 \otimes 1 - 1 \otimes y^2)\,, \\
 &\dgal{y,x}_2=1\otimes 1+\frac12 (xy\otimes 1 +1 \otimes yx - x \otimes y + y\otimes x)\,,
\end{aligned}
\end{equation}
and the multiplicative moment map $\Phi'=(1+xy)^{-1}(1+yx)$. We can read from the proof of Lemma \ref{L:IsoFusqHam0} that the isomorphism of quasi-Hamiltonian algebras from $(\Ft,\dgal{-,-}_1,\Phi)$ to $(\Ft,\dgal{-,-}_2,\Phi')$ is given by 
\begin{equation} \label{Eq:B1psi}
 \psi:\Ft \to \Ft\,, \quad 
\psi(x)=x(1+xy),\,\, \psi(y)=(1+xy)^{-1}y\,.
\end{equation}
As in Example \ref{Exmp:A1}, we can start with the opposite quiver $Q_\circ^{op}$ given by $b:0\to 0$, and considering the ordering $b^\ast<b$ we get a quasi-Hamiltonian algebra structure on $A_{Q_\circ^{op}}$. Under the identification  $\Ft \simeq A_{Q_\circ^{op}}$ given by  
$b\mapsto y, b^\ast \mapsto x$, we again get a quasi-Hamiltonian algebra structure on  $\Ft$, which is given by $\dgal{-,-}_2$ and $\Phi_2$ defined above.  
By Theorem \ref{Thm:qHQuivers}, we have isomorphic quasi-Hamiltonian algebras if we start with $Q_\circ$ or $Q_\circ^{op}$, and the isomorphism\footnote{We take $a<a^\ast$ and $b^\ast<b$ to define the quasi-Hamiltonian structures.} can be computed from Example \ref{Exmp:Q1qHam} to be 
\begin{equation*}
a\mapsto b^\ast\,, \quad a^\ast \mapsto -(1+bb^\ast)^{-1}b\,.
\end{equation*}
 Hence it gives  an isomorphism of quasi-Hamiltonian algebras from $(\Ft,\dgal{-,-}_1,\Phi)$ to $(\Ft,\dgal{-,-}_2,\Phi')$ as 
\begin{equation} \label{Eq:B1xi}
 \xi:\Ft \to \Ft\,, \quad 
\xi(x)=-(1+yx)^{-1}y,\,\, \xi(y)=x\,.
\end{equation}
Note that $\xi\circ \psi^{-1}\in \qHAut(\Ft)$ when $\Ft$ is endowed with $\dgal{-,-}_2$ and $\Phi'$. Finally, we note that the automorphism 
\begin{equation} \label{Eq:B1phi}
 \phi_\beta:\Ft \to \Ft\,, \quad 
\phi_\beta(x)=\beta^{-1}x,\,\, \phi_\beta(y)=\beta y\,,\quad \beta\in \kk^\times\,,
\end{equation}
is such that $\phi_\beta\in \qHAut(\Ft)$ for both quasi-Hamiltonian algebra structures. 
After imposing the relations $\Phi=\rc^{-1}$, $\Phi'=\rc^{-1}$ for $\rc\in \kk^\times$, the automorphisms $\psi,\xi,\phi_\beta$ descend to automorphisms $\bar\psi,\bar\xi,\bar\phi_\beta$ of the localised first quantised Weyl algebra 
$\Balg^\rc$ defined as 
\begin{equation} \label{Eq:algB1}
\Balg^\rc=  (\Aalg^\rc)_{1+xy}\,, \quad \Aalg^\rc:=\kk\langle x,y\rangle/\big(1+xy-\rc(1+ yx)\big) \,.
\end{equation} 
They can be used to define the following elements of $\Aut(\Balg^\rc)$,
\begin{equation} \label{Eq:AutB1}
 (x,y)\mapsto ((1+yx)x,y (1+yx)^{-1}),\,\, 
(x,y)\mapsto (-(1+yx)^{-1}y,x),\,\, 
(x,y)\mapsto (\beta^{-1}x,\beta y),\,\,\beta \in \kk^\times\,.
\end{equation}
\end{exmp}

Alev and Dumas classified the automorphisms of $\Balg^\rc$ in \cite[Th\'{e}or\`{e}me 1.7]{AD}, and they obtained that for $\rc\neq \pm 1$, $\Aut(\Balg^\rc)$ is generated by the three automorphisms \eqref{Eq:AutB1}. 
\begin{cor} \label{Cor:B1}
Fix $\rc\in \kk\setminus \{0, \pm 1\}$. 
Let $\Ft=\kk\langle x,y\rangle_S$ for $S=\{1+xy,1+yx\}$  and $\Balg^{\rc}$ be the localisation of the first quantised Weyl algebra defined by \eqref{Eq:algB1}. 
Then, the automorphisms of $\Balg^\rc$ are all induced by  isomorphisms of quasi-Hamiltonian algebras on $\Ft$ (possibly for different structures). 
\end{cor}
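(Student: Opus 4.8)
The plan is to combine the explicit isomorphisms constructed in Example \ref{Exmp:B1} with the classification of $\Aut(\Balg^\rc)$ due to Alev and Dumas. Recall from Example \ref{Exmp:B1} that $\Ft$ carries two quasi-Hamiltonian structures, $(\dgal{-,-}_1,\Phi)$ and $(\dgal{-,-}_2,\Phi')$, and that the maps $\psi,\xi,\phi_\beta$ of \eqref{Eq:B1psi}, \eqref{Eq:B1xi} and \eqref{Eq:B1phi} are isomorphisms of quasi-Hamiltonian algebras relating these structures ($\psi$ and $\xi$ send the first structure to the second, while $\phi_\beta$ is an automorphism of each). First I would observe that imposing $\Phi=\rc^{-1}$ and imposing $\Phi'=\rc^{-1}$ cut out the \emph{same} two-sided ideal of $\Ft$: a direct check shows that both relations are equivalent to the defining relation $1+xy=\rc(1+yx)$ of $\Aalg^\rc$ after multiplication by the invertible element $\rc(1+xy)$. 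Hence both quotients coincide with the localised quantised Weyl algebra $\Balg^\rc$ of \eqref{Eq:algB1}.

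Next I would descend the three isomorphisms. Since each of $\psi,\xi,\phi_\beta$ sends the chosen multiplicative moment map to the prescribed one, each maps the defining ideal of $\Balg^\rc$ to itself and therefore descends to an automorphism $\bar\psi,\bar\xi,\bar\phi_\beta$ of the single algebra $\Balg^\rc$; this is the mechanism of Proposition \ref{Pr:HamqH0b}. By construction (Example \ref{Exmp:B1}) the three automorphisms \eqref{Eq:AutB1} lie in the subgroup of $\Aut(\Balg^\rc)$ generated by $\bar\psi,\bar\xi,\bar\phi_\beta$. By \cite[Th\'{e}or\`{e}me 1.7]{AD}, for $\rc\in\kk\setminus\{0,\pm1\}$ the group $\Aut(\Balg^\rc)$ is generated by the automorphisms \eqref{Eq:AutB1}; consequently $\Aut(\Balg^\rc)$ is generated by $\bar\psi,\bar\xi,\bar\phi_\beta$.

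To conclude, I would note that inverses and composites of isomorphisms of quasi-Hamiltonian algebras are again such isomorphisms: the double-bracket identity \eqref{Eq:Morph} is preserved under inversion (as recorded right after its statement), and the condition on the multiplicative moment map is symmetric. Therefore every word in $\bar\psi^{\pm1},\bar\xi^{\pm1},\bar\phi_\beta^{\pm1}$ is a finite composite of automorphisms of $\Balg^\rc$, each of which is the descent of an isomorphism of quasi-Hamiltonian algebras on $\Ft$, so that every element of $\Aut(\Balg^\rc)$ is induced in this way.

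The point that requires care — and which I expect to be the only genuine subtlety — is that the individual factors of such a word use \emph{different} quasi-Hamiltonian structures on $\Ft$: $\psi$ and $\xi$ relate $(\dgal{-,-}_1,\Phi)$ to $(\dgal{-,-}_2,\Phi')$, their inverses go the other way, and $\phi_\beta$ fixes either one, so no single quasi-Hamiltonian structure on $\Ft$ is preserved along a general word. What makes the composition legitimate — and explains the caveat ``possibly for different structures'' in the statement — is precisely the first step: because all the structures involved descend to one and the same algebra $\Balg^\rc$, their descents are honest automorphisms of $\Balg^\rc$ that may be freely composed. I do not anticipate any delicate computation here beyond re-examining the identifications already carried out in Example \ref{Exmp:B1}.
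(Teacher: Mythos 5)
Your proposal is correct and follows essentially the same route as the paper: the corollary is obtained there by combining Example \ref{Exmp:B1} (where $\psi,\xi,\phi_\beta$ are shown to be quasi-Hamiltonian isomorphisms between the structures $(\dgal{-,-}_1,\Phi)$ and $(\dgal{-,-}_2,\Phi')$ and to descend to the generators \eqref{Eq:AutB1}) with the Alev--Dumas classification, exactly as you do. The two points you spell out explicitly --- that $\Phi=\rc^{-1}$ and $\Phi'=\rc^{-1}$ cut out the same ideal, namely the one generated by $1+xy-\rc(1+yx)$, so all descents land in the single algebra $\Balg^\rc$ and may be composed there even though no single structure on $\Ft$ is preserved --- are precisely the subtlety the paper records just after the corollary when noting that $\qHAut(\Ft)$ does \emph{not} surject onto $\Aut(\Balg^\rc)$.
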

Contrary to the case of the first Weyl algebra obtained in Corollary \ref{Cor:A1}, we do not have a surjection $\qHAut(\Ft)\to \Aut(\Balg^\rc)$ since $\psi:\Ft\to \Ft$ given by \eqref{Eq:B1psi} does not preserve the double bracket \eqref{Eq:dbrqFa} or its multiplicative moment map $\Phi$. This illustrates the key difference between Lemma \ref{L:IsoFusHam0} and  Lemma \ref{L:IsoFusqHam0} : in the quasi-Hamiltonian setting, performing fusion in the opposite order induces a non-trivial isomorphism.  

\begin{exmp} \label{Exmp:C1}
 Let $\Lt=\kk\langle x^{\pm 1},y^{\pm 1}\rangle$ denote the (non-commutative) algebra of Laurent polynomials in two variables. 
There is a quasi-Hamiltonian algebra structure on $\Lt$ with double quasi-Poisson bracket 
\begin{equation} \label{Eq:dbrT}
\begin{aligned}
 &\dgal{x,x}=+\frac12(x^2\otimes 1-1\otimes x^2)\,, \quad \dgal{y,y}=-\frac12 (y^2 \otimes 1 - 1 \otimes y^2)\,, \\
 &\dgal{x,y}=\frac12 (yx\otimes 1 +1 \otimes xy - x \otimes y + y\otimes x)\,,
\end{aligned}
\end{equation}
and multiplicative moment map $\Phi=xyx^{-1}y^{-1}$. This quasi-Hamiltonian algebra was considered in \cite{CF} after localisation of $A_{Q_\circ}$. It can also be obtained from an intersection pairing on the fundamental group $\pi$ of a punctured torus as we have 
\begin{equation*}
 \Lt \simeq \kk\langle x^{\pm 1},y^{\pm 1},\Phi^{\pm1}\rangle/(\Phi-xyx^{-1}y^{-1})=: \pi\,,
\end{equation*}
 see the work of Massuyeau-Turaev \cite{MT14}. 
Elements of $\Aut(\pi)$ which are not acting by conjugation can be found in \cite[Appendix]{Go03}, and they include the Dehn twists
\begin{equation}
\begin{aligned}  \label{Eq:C1tau}
  &\tau:\Lt\to \Lt\,, \quad \tau(x)=xy,\,\, \tau(y)=y\,, \\
 &\tilde \tau:\Lt\to \Lt\,, \quad \tilde\tau(x)=x,\,\, \tilde\tau(y)=yx\,.
\end{aligned}
\end{equation}
We have $\tau,\tilde \tau \in \Aut(\Lt,\Phi)$, and we can prove that both automorphisms preserve the double bracket \eqref{Eq:dbrT}, hence these elements belong to $\qHAut(\Lt)$. We also get that 
\begin{equation}
\begin{aligned}
  &\sigma:\Lt\to \Lt\,, \quad \sigma(x)=y^{-1},\,\, \sigma(y)=yxy^{-1}\,, 
\end{aligned}
\end{equation}
belongs to $\qHAut(\Lt)$ in view of $\sigma=\tau^{-1}\circ \tilde \tau \circ \tau^{-1}$. Finally, we note that the automorphism 
\begin{equation} \label{Eq:C1phi}
 \phi_{\alpha,\beta}:\Lt \to \Lt\,, \quad 
\phi_{\alpha,\beta}(x)=\alpha x,\,\, \phi_{\alpha,\beta}(y)=\beta y\,,\quad \alpha,\beta\in \kk^\times\,,
\end{equation}
is such that $\phi_{\alpha,\beta}\in \qHAut(\Lt)$. 
Imposing the relation $\Phi=\rc$ for $\rc\in \kk^\times$, all these automorphisms  descend to automorphisms of the quantum torus  
\begin{equation}\label{Eq:algC1}
\Calg^\rc=\kk\langle x^{\pm 1},y^{\pm 1}\rangle/(xy-\rc yx)\,. 
\end{equation}
They can be used to define the following elements of $\Aut(\Calg^\rc)$,
\begin{equation} \label{Eq:AutC1}
 (x,y)\mapsto (xy,y),\,\, 
(x,y)\mapsto (y^{-1},x),\,\, 
(x,y)\mapsto (\alpha x,\beta y),\,\,\alpha,\beta \in \kk^\times\,.
\end{equation}
\end{exmp}

We get from \cite[Theorem 1.5]{KPS} or \cite[Proposition 1.6]{AD} that for $\rc\neq \pm 1$, the automorphisms \eqref{Eq:AutC1} generate $\Aut(\Calg^\rc)$. 
The first two automorphisms in \eqref{Eq:AutC1} generate a  subgroup isomorphic to $\Sl_2(\Z)$, and we have in particular $\Aut(\Calg^\rc)\simeq \Sl_2(\Z) \ltimes (\kk^\times)^2$. 
\begin{cor} \label{Cor:C1}
Fix $\rc\in \kk\setminus \{0, \pm 1\}$. 
Let $\Lt=\kk\langle x^{\pm1},y^{\pm1}\rangle$ and $\Calg^{\rc}$ be the quantum torus defined by \eqref{Eq:algC1}. 
Then $\qHAut(\Lt)$ surjects onto $\Aut(\Calg^\rc)$. 
\end{cor}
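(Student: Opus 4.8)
The plan is to realise the desired surjection as the descent map that sends a quasi-Hamiltonian automorphism of $\Lt$ to the automorphism it induces on the quotient $\Lt/(\Phi-\rc)$, and then to invoke the known generating set of $\Aut(\Calg^\rc)$. This parallels the proof of Corollary \ref{Cor:A1}, with Example \ref{Exmp:C1} playing the role there played by Example \ref{Exmp:A1}.

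First I would check that this descent is a well-defined group homomorphism $\qHAut(\Lt)\to\Aut(\Calg^\rc)$. Any $\theta\in\qHAut(\Lt)$ preserves the multiplicative moment map $\Phi=xyx^{-1}y^{-1}$, hence stabilises the two-sided ideal $(\Phi-\rc)$, and therefore descends to a $\kk$-algebra automorphism $\bar\theta$ of the quotient $\Lt/(\Phi-\rc)$. By the presentation \eqref{Eq:algC1} this quotient is exactly $\Calg^\rc$, since imposing $\Phi=\rc$ is equivalent to the relation $xy=\rc yx$. The assignment $\theta\mapsto\bar\theta$ is visibly multiplicative, so it is a group homomorphism; note that here $B=\kk$ (the Jordan quiver has a single vertex), so $B$-linearity is automatic and $\Aut(\Calg^\rc)$ is the full $\kk$-algebra automorphism group, matching the target classified in the literature.

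Next I would verify that the image of this homomorphism contains the three automorphisms listed in \eqref{Eq:AutC1}. This is precisely the content of the computations already carried out in Example \ref{Exmp:C1}: the Dehn twist $\tau$, the element $\sigma=\tau^{-1}\circ\tilde\tau\circ\tau^{-1}$, and the scalings $\phi_{\alpha,\beta}$ all belong to $\qHAut(\Lt)$, because each preserves both the double bracket \eqref{Eq:dbrT} and its moment map. After imposing $\Phi=\rc$, so that $yxy^{-1}=\rc^{-1}x$ holds in the quotient, these descend respectively to $(x,y)\mapsto(xy,y)$, to $(x,y)\mapsto(y^{-1},\rc^{-1}x)$, and to $(x,y)\mapsto(\alpha x,\beta y)$; composing the middle one with a suitable scaling (itself in the image) produces $(x,y)\mapsto(y^{-1},x)$. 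Hence all three generators in \eqref{Eq:AutC1} lie in the image.

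Finally, since the image is a subgroup of $\Aut(\Calg^\rc)$ containing this generating set, and since for $\rc\in\kk\setminus\{0,\pm1\}$ the automorphisms \eqref{Eq:AutC1} generate the whole of $\Aut(\Calg^\rc)$ by \cite[Theorem 1.5]{KPS} (equivalently \cite[Proposition 1.6]{AD}), I conclude that the homomorphism is surjective. The only genuine subtlety, and the place where essentially all the work sits, is the already-completed verification in Example \ref{Exmp:C1} that $\tau,\tilde\tau,\sigma$ and $\phi_{\alpha,\beta}$ truly preserve the quasi-Hamiltonian structure; granting that together with the cited generation result, the corollary reduces to pure bookkeeping.
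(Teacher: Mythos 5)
Your proof is correct and takes essentially the same route as the paper: the corollary is immediate from Example \ref{Exmp:C1}, which exhibits $\tau,\sigma,\phi_{\alpha,\beta}\in\qHAut(\Lt)$ descending (via the well-defined map $\theta\mapsto\bar\theta$ on $\Lt/(\Phi-\rc)\simeq\Calg^\rc$) to the three automorphisms \eqref{Eq:AutC1}, combined with the generation result of \cite[Theorem 1.5]{KPS} or \cite[Proposition 1.6]{AD} for $\rc\neq\pm 1$. Your observation that $\sigma$ actually descends to $(x,y)\mapsto(y^{-1},\rc^{-1}x)$, so that a scaling $\bar\phi_{\rc,1}$ must be composed to recover the generator $(x,y)\mapsto(y^{-1},x)$, correctly supplies the small detail the paper glosses over with the phrase ``can be used to define''.
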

\begin{rem}
Similarly to the case of the first Weyl algebra discussed in Remark \ref{Rem:A1}, the  $H_0$-Poisson structure on $\Calg^\rc$ is trivial for $\rc \neq +1$. For $\rc=+1$, we get that the $H_0$-Poisson structure induced by \eqref{Eq:dbrT} on $\kk[x^{\pm1},y^{\pm1}]$ is the Poisson bracket defined by $\br{x,y}=xy$ and $\br{x,x}=0=\br{y,y}$. 
\end{rem}

\begin{exmp} \label{Ex:C1bis}
We work over $\kk=\CC$. 
 Consider the quasi-Hamiltonian algebras $\Lt$ from Example \ref{Exmp:C1} and $A_Q$ from \ref{ssqHamQuiver}, where $Q$ consists of a unique arrow $v:0\to \infty$. After fusion of the idempotent $e_0\in A_Q$ onto the unit of $\Lt$, we get an algebra $A'$ which is quasi-Hamiltonian  by Proposition \ref{Pr:IsoFusqHam}. Fix $n\in \N^\times$ and $q\in \CC^\times$ not a root of unity. Taking $(\rc_0,\rc_\infty)=(q,q^{-n})$ and $(\alpha_0,\alpha_\infty)=(n,1)$, the corresponding affine moduli space of representations  $\Cn^q$ is the $q$-Calogero-Moser space \cite{CF,Oblomkov}. Since $\qHAut(\Lt)\subset \qHAut(A')$ descends to an algebra acting by Poisson automorphisms on representation spaces, we get an action of $\Sl_2(\Z)$ by Poisson automorphisms on $\Cn^q$. 
\end{exmp}


\section{Dual integrable systems from quivers} \label{S:dual}

We have seen as part of Theorems \ref{Thm:HQuivers} and \ref{Thm:qHQuivers} that the (quasi-)Hamiltonian algebra associated with a quiver only depends on the underlying undirected graph, up to isomorphism. In particular,  we obtain from Proposition \ref{Pr:H0Rep} an isomorphism of Poisson varieties after considering the orbit spaces obtained by (quasi-)Hamiltonian reduction of the representation spaces of these algebras. We will investigate this observation using a family of quivers denoted $Q_m$, and their opposites $Q^{op}_m$ obtained by reversing all the arrows in each quiver $Q_m$. The motivation underlying this investigation is that the quiver should also dictate a particular choice of local coordinates on a subset of the associated Poisson variety, such that the isomorphism described above will lead us to dual integrable systems. When the quiver is a cyclic quiver extended by one arrow, the choice of local coordinates that we consider on the associated space will satisfy the principle $\pP$ stated as follows  
\begin{itemize}
 \item the matrices representing the arrows in the cyclic quiver are diagonal matrices, which are either equal or related through the moment map equations;
\item the matrix representing the additional arrow is a (co)vector with all entries equal to $+1$. 
\end{itemize}
These two rules will serve us to fix the choice of representatives, up to a residual finite action. 
Furthermore, we will see that this choice naturally ensures that we obtain Lax matrices for integrable systems in the Calogero-Moser (CM)  and Ruijsenaars-Schneider (RS) families \cite{Cal,Mo,RS86,Su}. By following this general idea, we will see that the reversing of arrows $Q_m\rightsquigarrow Q_m^{op}$ yields  
\begin{enumerate}
 \item self-duality of the generalised rational CM systems from \cite{CS} in \ref{ss:DualRCM};
 \item the standard duality of the hyperbolic CM and rational RS systems in \ref{ss:DualTCM}; 
 \item the new dual integrable system of a modified hyperbolic RS system in \ref{ss:DualmRS};
 \item the standard self-duality of the hyperbolic RS system in \ref{ss:DualRS}.
\end{enumerate}
For the interested reader which has only a limited background on integrable systems, we recommend to consult \cite[Lectures 1-2]{EtingCM} for an introduction to the subject of CM systems using the perspective of Hamiltonian reduction.  

\begin{rem}
 Below, we work over $\kk=\CC$, we fix $m\geq 1$ and we let $I=\Z/m\Z$.  All the computations provided are given under the assumption that $m\geq 2$ to simplify the presentation. The statements regarding the local coordinates and their Poisson brackets also hold in the case $m=1$, and the computations in that case can be easily adapted by the reader.
\end{rem}

\subsection{Self-duality of rational CM systems} \label{ss:DualRCM}

We consider the quiver $Q_m$ with vertex set $\tilde I = I \cup \{\infty\}$, and  $m+1$ arrows given by $x_s:s\to s+1$ for each $s\in I$, and $v:\infty  \to 0$. In the double $\bar{Q}_m$, we denote the opposite arrows by $x_s^\ast : s+1\to s$, $v^\ast:0\to \infty$. We also consider the quiver $Q_m^{op}$ with the same vertex set but with arrows  $y_s : s+1\to s$, $w:0\to \infty$. The double $\bar{Q}_m^{op}$ has additional arrows $y_s^\ast :s\to s+1$, $w^\ast:\infty  \to 0$. These quivers are depicted in Figure \ref{fig:M1}.

Using \ref{ssHamQuiver}, the path algebra of each quiver has a Hamiltonian algebra structure, and by Theorem \ref{Thm:HQuivers} and Example \ref{Exmp:Q1} we have an isomorphism of Hamiltonian algebras given by 
\begin{equation} \label{Eq:psiH1}
\psi:\CC \bar{Q}_m\to \CC \bar{Q}_m^{op}\,,\quad \psi(x_s)=y_s^\ast,\,\, \psi(x_s^\ast)=-y_s, \quad \psi(v)=w^\ast,\,\, \psi(v^\ast)=-w\,.
\end{equation}
Denote the moment maps by $\mu$ and $\mu^{op}$. 
In view of Proposition \ref{Pr:H0Rep}, the map $\psi$ induces a Poisson isomorphism 
\begin{equation}\label{Eq:psiH2}
\Psi : \Rep(\CC \bar{Q}_m/(\mu-\tilde\lambda),\alpha)/\!/\Gl_\alpha \to \Rep(\CC \bar{Q}_m^{op}/(\mu^{op}-\tilde\lambda),\alpha)/\!/\Gl_\alpha\,,
\end{equation}
 for any $\alpha\in \N^{m+1}$ and $\tilde\lambda=\sum_{s\in I}\lambda_s e_s+\lambda_\infty e_\infty$ with $\lambda_s,\lambda_\infty \in \CC$. 
We will consider the cases where the dimension vector $\alpha$ is such that for some $n\geq 1$ we put $\alpha_s=n$ for each $s\in I$ and $\alpha_\infty=1$, while the parameters defining $\tilde\lambda$ are subject to $\lambda_\infty=-n|\lambda|$ for $|\lambda|:=\sum_s \lambda_s$. 
Moreover the $(\lambda_s)$ are assumed to satisfy the following regularity conditions 
\begin{equation}
 \sum_{s\in I} \lambda_s\neq 0\,, \quad k \sum_{s\in I} \lambda_s \neq \lambda_r+\ldots + \lambda_{r'}\,,
\end{equation}
for all $k\in \Z$ and $1\leq r \leq r'< m-1$. Under these conditions, the spaces appearing in \eqref{Eq:psiH2} are smooth and irreducible of dimension $2n$ \cite{CS}.

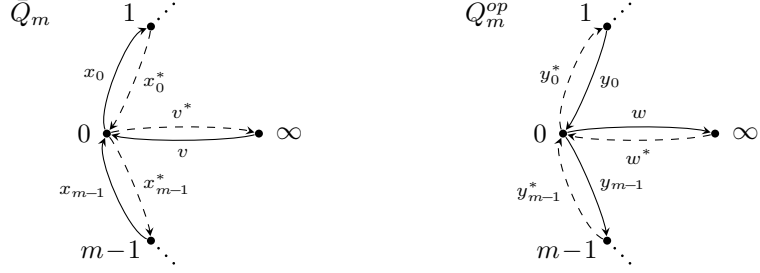
\begin{figure}
\centering
  \begin{tikzpicture}
 \node[fill,circle,inner sep=0pt,minimum size=3pt]  (IN) at (-2,0) {};
 \node[fill,circle,inner sep=0pt,minimum size=3pt]  (INb) at (4,0) {};
 \node[fill,circle,inner sep=0pt,minimum size=3pt] (C) at (-4,0) {} ; 
 \node[fill,circle,inner sep=0pt,minimum size=3pt] (C2) at (-3.42,-1.42) {} ; 
 \node[fill,circle,inner sep=0pt,minimum size=3pt] (D) at (-3.42,1.42) {} ; 
 \node[fill,circle,inner sep=0pt,minimum size=3pt] (Cb) at (2,0) {} ; 
 \node[fill,circle,inner sep=0pt,minimum size=3pt] (C2b) at (2.58,-1.42) {} ; 
 \node[fill,circle,inner sep=0pt,minimum size=3pt] (Db) at (2.58,1.42) {} ; 
  \node (INname) at (-1.6,0) {$\mathbf{\infty}$}; 
  \node (IN2name) at (4.4,0) {$\mathbf{\infty}$};
  \node (Cname) at (-4.3,0) {$0$};
  \node (C2name) at (-3.95,-1.55) {$m\!-\!1$};
  \node (Dname) at (-3.7,1.6) {$1$};
  \node (Cnameb) at (1.7,0) {$0$};
  \node (C2nameb) at (2.05,-1.55) {$m\!-\!1$};
  \node (Dnameb) at (2.3,1.6) {$1$};
  \node (Qm) at (-5,1.6) {$\bar{Q}_m$};
  \node (Qmop) at (1,1.6) {$\bar{Q}_m^{op}$};
   \path[->,>=stealth,font=\scriptsize]  
   (IN) edge [bend left=15,looseness=0.5] node[below] {$v$}  (C) ;
   \path[->,>=stealth,dashed,font=\scriptsize]  
   (C) edge [bend left=15,looseness=0.5] node[above] {$v^\ast$}  (IN) ;
   \path[->,>=stealth,dashed,font=\scriptsize]  
   (INb) edge [bend left=15,looseness=0.5] node[below] {$w^\ast$}  (Cb) ;
   \path[->,>=stealth,font=\scriptsize]  
   (Cb) edge [bend left=15,looseness=0.5] node[above] {$w$}  (INb) ;
 \node[fill,circle,inner sep=0pt,minimum size=1pt]  (up1) at (-3.32,1.52) {};
 \node[fill,circle,inner sep=0pt,minimum size=1pt]  (up2) at (-3.22,1.62) {};
 \node[fill,circle,inner sep=0pt,minimum size=1pt]  (up3) at (-3.12,1.72) {};
 \node[fill,circle,inner sep=0pt,minimum size=1pt]  (do1) at (-3.32,-1.52) {};
 \node[fill,circle,inner sep=0pt,minimum size=1pt]  (do2) at (-3.22,-1.62) {};
 \node[fill,circle,inner sep=0pt,minimum size=1pt]  (do3) at (-3.12,-1.72) {};
 \node[fill,circle,inner sep=0pt,minimum size=1pt]  (up1b) at (2.68,1.52) {};
 \node[fill,circle,inner sep=0pt,minimum size=1pt]  (up2b) at (2.78,1.62) {};
 \node[fill,circle,inner sep=0pt,minimum size=1pt]  (up3b) at (2.88,1.72) {};
 \node[fill,circle,inner sep=0pt,minimum size=1pt]  (do1b) at (2.68,-1.52) {};
 \node[fill,circle,inner sep=0pt,minimum size=1pt]  (do2b) at (2.78,-1.62) {};
 \node[fill,circle,inner sep=0pt,minimum size=1pt]  (do3b) at (2.88,-1.72) {};
   \path[->,>=stealth,font=\scriptsize]  
   (C) edge [bend left=45,looseness=0.5] node[left] {$x_{0}$} (D) ;
   \path[->,>=stealth,dashed,font=\scriptsize]  
   (D) edge [bend left=15,looseness=0.5] node[right] {$x^\ast_{0}$} (C) ;
   \path[->,>=stealth,font=\scriptsize]  
   (C2) edge [bend left=45,looseness=0.5] node[left] {$x_{m\!-\!1}$} (C) ;
   \path[->,>=stealth,dashed,font=\scriptsize]  
   (C) edge [bend left=15,looseness=0.5] node[right] {$x^\ast_{m\!-\!1}$} (C2) ;
   \path[->,>=stealth,dashed,font=\scriptsize]  
   (Cb) edge [bend left=45,looseness=0.5] node[left] {$y^\ast_{0}$} (Db) ;
   \path[->,>=stealth,font=\scriptsize]  
   (Db) edge [bend left=15,looseness=0.5] node[right] {$y_{0}$} (Cb) ;
   \path[->,>=stealth,dashed,font=\scriptsize]  
   (C2b) edge [bend left=45,looseness=0.5] node[left] {$y^\ast_{m\!-\!1}$} (Cb) ;
   \path[->,>=stealth,font=\scriptsize]  
   (Cb) edge [bend left=15,looseness=0.5] node[right] {$y_{m\!-\!1}$} (C2b) ;
   \end{tikzpicture}
\caption{On the left : the double quiver $\bar{Q}_m$ whose continuous arrows belong to the original quiver $Q_m$. On the right : the double quiver $\bar{Q}_m^{op}$ whose continuous arrows belong to the original quiver $Q_m^{op}$.} \label{fig:M1}
\end{figure}

\subsubsection{Space associated with $Q_m$}

Denote the reduced space $\Rep(\CC \bar{Q}_m/(\mu-\tilde \lambda),\alpha)/\!/\Gl_\alpha$ by $\Cn$. By construction it can be described as the set of matrices 
\begin{equation}
 X_s,X_s^\ast \in \Mat_{n\times n}(\CC)\,, \quad V\in \Mat_{1\times n}(\CC),\,\, V^\ast \in \Mat_{n\times 1}(\CC) \,,
\end{equation}
satisfying the $m$ relations 
\begin{equation} \label{Eq:momH1}
 X_sX_s^\ast - X_{s-1}^\ast X_{s-1}-\delta_{s0} V^\ast V = \lambda_s \Id_n\,,
\end{equation}
where we identify the elements in the same orbit of the action 
\begin{equation}
 g\cdot (X_s,X_s^\ast,V,V^\ast) = (g_sX_s g_{s+1}^{-1},g_{s+1}X_s^\ast g_s^{-1},V g_0^{-1},g_0V^\ast)\,, \quad 
g=(g_s)\in \Gl_n(\CC)^m\,.
\end{equation}
In view of the principle  $\pP$, we consider the subspace $\Cn'\subset \Cn$ where there exists a representative with  
\begin{equation}
 X_s=\diag(q_1,\ldots,q_n),\,\, s\in I, \quad V=(1,\ldots,1)\,,
\end{equation}
where $q_i\neq 0$ and $q_i^m\neq q_j^m$ for all $i\neq j$. Solving the constraint \eqref{Eq:momH1}, we see that we can take  
\begin{equation}
 (X_s^\ast)_{ij}=\delta_{ij}p_j +\delta_{ij}\frac{1}{q_j}(\lambda_1+\ldots+\lambda_s)
-\delta_{(i\neq j)}|\lambda| \frac{q_i^{m-s-1}q_j^s}{q_i^m-q_j^m},\,\, s\in I, \quad V^\ast=-|\lambda|(1,\ldots,1)^T\,,
\end{equation}
where the $(p_i)\in \CC^n$ are free, and we recall $|\lambda|=\sum_s \lambda_s$. This choice is unique up to the action by the generalised symmetric group $\ZSn$, where $S_n$ acts by simultaneous permutation of the $(q_i,p_i)$, while $(k_i)\in\Z_m^n$ acts by $(q_i,p_i)\mapsto (q_i \mu_m^{k_i},p_i)$ where $\mu_m$ is a fixed primitive $m$-th root of unity. 
\begin{lem} \label{L:DarbH1}
 The reduced Poisson bracket is such that $\br{q_i,q_j}=0=\br{p_i,p_j}$ and $\br{q_i,p_j}=\frac1m \delta_{ij}$. 
\end{lem}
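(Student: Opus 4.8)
The plan is to realise the reduced Poisson bracket as the symplectic reduction of the canonical symplectic form on the representation space and to pull this form back to the slice $\Cn'$, where it will take Darboux form. Recall that $\Rep(\CC\bar{Q}_m,\alpha)$ carries the canonical symplectic form
\begin{equation*}
 \omega=\sum_{s\in I}\tr(dX_s\wedge dX_s^\ast)+\tr(dV\wedge dV^\ast)\,,
\end{equation*}
whose reduction at the level $\tilde\lambda$ by $\Gl_\alpha$ yields exactly the Poisson structure on $\Cn$ furnished by Theorem \ref{Thm:H0Rep} for the double bracket \eqref{Eq:VdBHam1}; this is the standard identification of the $H_0$-Poisson structure of a doubled quiver with the Poisson structure of the corresponding quiver variety. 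The coordinates $(q_i,p_i)$ realise $\Cn'$, up to the residual finite action of $\ZSn$, as the image of a slice $S\subset \X(\mu)^{-1}(\tilde\lambda\Id_\alpha)$ of orbit representatives; since the reduced form pulls back along the quotient map to $\omega|_S$ and the residual action is finite, it suffices to compute $\omega|_S$ in the coordinates $(q_i,p_i)$.

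First, $V=(1,\dots,1)$ and $V^\ast=-|\lambda|(1,\dots,1)^T$ are constant along $S$, so $dV=dV^\ast=0$ and the term $\tr(dV\wedge dV^\ast)$ drops out. Next, on $S$ each $X_s=\diag(q_1,\dots,q_n)$ is diagonal and depends only on the $q_i$, so $dX_s=\diag(dq_1,\dots,dq_n)$ and
\begin{equation*}
 \tr(dX_s\wedge dX_s^\ast)=\sum_{i}dq_i\wedge d(X_s^\ast)_{ii}\,.
\end{equation*}
In particular the off-diagonal entries of $X_s^\ast$, which involve only the $q_j$, never contribute. Writing $\Lambda_s=\lambda_1+\dots+\lambda_s$, we have $(X_s^\ast)_{ii}=p_i+\Lambda_s/q_i$, hence $d(X_s^\ast)_{ii}=dp_i-(\Lambda_s/q_i^2)\,dq_i$ and $dq_i\wedge d(X_s^\ast)_{ii}=dq_i\wedge dp_i$. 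Summing over the $m$ vertices $s\in I$ gives
\begin{equation*}
 \omega|_S=m\sum_{i}dq_i\wedge dp_i\,,
\end{equation*}
which is in Darboux form up to the overall factor $m$. Inverting this two-form yields $\br{q_i,q_j}=0=\br{p_i,p_j}$ and $\br{q_i,p_j}=\tfrac1m\delta_{ij}$, once the sign conventions are fixed to agree with those of the bracket produced by Theorem \ref{Thm:H0Rep}.

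The main obstacle is the first, conceptual step: identifying the reduced bracket of Theorem \ref{Thm:H0Rep} with the symplectic reduction of $\omega$, and fixing the normalisation and sign so that the factor $m$ lands in the denominator. Once this is settled, the rest is the one-line pullback above. An alternative avoiding symplectic language would work entirely with trace functions: since $\dgal{X_s,X_t}=0$ for all $s,t\in I$, the traces of words in the $X_s$ Poisson-commute, giving $\br{q_i,q_j}=0$ at once, and the mixed brackets would be extracted by pairing a symmetric function of the $q_i$ with a trace function linear in the $X_s^\ast$; this route is more laborious, since isolating the individual $q_i,p_j$ and verifying $\br{p_i,p_j}=0$ requires extra bookkeeping, which is why the symplectic pullback is preferable.
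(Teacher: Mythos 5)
Your proof is correct, but it takes a genuinely different route from the paper — and, amusingly, the ``more laborious alternative'' you sketch and discard at the end is precisely the paper's actual proof. The paper never writes down a symplectic form: it computes the brackets $\br{\overline{x^k},\overline{x^l}}^\lambda$, $\br{\overline{x^k},\overline{x^\ast_{m-1}x_{m-1}x^l}}^\lambda$ and $\br{\overline{x^\ast_{m-1}x_{m-1}x^k},\overline{x^\ast_{m-1}x_{m-1}x^l}}^\lambda$ directly in $H_0(A^\lambda)$ from the defining relations $\dgal{x_r,x_s^\ast}=\delta_{rs}e_{r+1}\otimes e_r$, pushes them to $\Cn$ via Theorem \ref{Thm:H0Rep} to get the brackets of $\tr X_{cyc}^k$ and $\tr X^\ast_{m-1}X_{m-1}X_{cyc}^l$, and then extracts the coordinate brackets exactly as in Etingof's Proposition 2.7 (the reference the paper cites); the bookkeeping you feared is manageable because $\tr X_{cyc}^k=\sum_i q_i^{km}$ and $\tr X^\ast_{m-1}X_{m-1}X_{cyc}^l=\sum_i q_i^{lm}(q_ip_i+\lambda_1+\dots+\lambda_{m-1})$ plus functions of the $q_i$ alone, which are Casimirs for the $q$-brackets. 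What the paper's route buys is that it stays entirely inside the non-commutative formalism of the text: the bracket on $\Cn$ is \emph{defined} through Theorem \ref{Thm:H0Rep}, so no external identification is needed. Your route buys a two-line coordinate computation ($\omega|_S=m\sum_i dq_i\wedge dp_i$, since the off-diagonal entries of $X_s^\ast$ pair with vanishing off-diagonal entries of $dX_s$ on the slice — that part is fine), but the ``main obstacle'' you flag is a real obligation, not a formality within this paper: you must show that the Crawley-Boevey bracket of Theorem \ref{Thm:H0Rep} coincides with the Marsden--Weinstein reduction of $\omega$. This can be closed in one step you did not spell out: by Van den Bergh, the double bracket \eqref{Eq:VdBHam1} induces on $\Rep(\CC\bar{Q}_m,\alpha)$ precisely the canonical symplectic Poisson structure of $\omega$, and this structure satisfies $\br{\tr(a),\tr(b)}=\tr(m\circ\dgal{a,b})$; its reduction therefore satisfies the trace identity \eqref{Eq:H0Rep} on invariants, and the \emph{uniqueness} clause of Theorem \ref{Thm:H0Rep} forces it to equal the bracket the paper works with. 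With that sentence added (and the sign convention pinned down by checking one bracket, e.g. that $\br{\tr X_{cyc},\tr X^\ast_{m-1}X_{m-1}}=\tr X_{cyc}$ comes out with $\br{q_i,p_j}=+\tfrac1m\delta_{ij}$), your argument is complete; the slice-pullback step itself is legitimate since the representative is unique up to the finite group $\ZSn$ and the relevant locus is smooth.
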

\begin{proof}
 By construction, the double bracket on $\CC\bar{Q}_m$ is such that $\dgal{x_r,x^\ast_s}=\delta_{rs} e_{r+1}\otimes e_r$, $\dgal{x_r,x_s}=0$ and $\dgal{x_r^\ast,x_s^\ast}=0$. Denote by $\br{-,-}^\lambda$ the $H_0$-Poisson structure on $A^\lambda:=\CC\bar{Q}_m/(\mu-\tilde \lambda)$ and $\bar a \in H_0(A^\lambda)$ the image of an element $a\in A^\lambda$. We can compute from these results that for $x:=x_0\ldots x_{m-1}$ and any $k,l\in \N$, 
\begin{equation*}
 \br{\overline{x^k},\overline{x^l}}^\lambda=0,\quad \br{\overline{x^k},\overline{x^\ast_{m-1} x_{m-1} x^l}}^\lambda=k\, \overline{x^{k+l}}\,, \quad
 \br{\overline{x^\ast_{m-1} x_{m-1} x^k},\overline{x^\ast_{m-1} x_{m-1} x^l}}^\lambda=(k-l)\, \overline{x^\ast_{m-1} x_{m-1} x^{k+l}}\,.
\end{equation*}
Hence by Theorem \ref{Thm:H0Rep}, if we let $X_{cyc}=X_0\ldots X_{m-1}$ we get on $\Cn$
\begin{equation}
\begin{aligned}
 & \br{\tr X_{cyc}^k, \tr X_{cyc}^l}=0,  \quad \br{\tr X_{cyc}^k, \tr X^\ast_{m-1}X_{m-1}X_{cyc}^l} =k \tr X_{cyc}^{k+l}\,, \\
&\br{\tr X^\ast_{m-1}X_{m-1} X_{cyc}^k, \tr X^\ast_{m-1}X_{m-1}X_{cyc}^l} =(k-l)\tr X^\ast_{m-1}X_{m-1}X_{cyc}^{k+l}\,.
\end{aligned}
\end{equation}
It remains to use these identities and the local coordinates as in the proof of \cite[Proposition 2.7]{EtingCM}.
\end{proof}

The above description of $\Cn$ was essentially given by Chalykh and Silantyev \cite[Section V]{CS}. 

\subsubsection{Space associated with $Q_m^{op}$}

Denote the reduced space $\Rep(\CC \bar{Q}_m^{op}/(\mu^{op}-\tilde \lambda),\alpha)/\!/\Gl_\alpha$ by $\Cn^{op}$. As in the previous case, it can be described as the set of matrices 
\begin{equation}
 Y_s,Y_s^\ast \in \Mat_{n\times n}(\CC)\,, \quad W^\ast\in \Mat_{1\times n}(\CC),\,\, W \in \Mat_{n\times 1}(\CC) \,,
\end{equation}
satisfying 
\begin{equation} \label{Eq:momH1op}
 Y_{s-1}Y_{s-1}^\ast - Y_{s}^\ast Y_{s}+\delta_{s0} W W^\ast = \lambda_s \Id_n\,,
\end{equation}
under identifications of the elements of each orbit for the action 
\begin{equation}
 g\cdot (Y_s,Y_s^\ast,W,W^\ast) = (g_{s+1}Y_s g_{s}^{-1},g_{s}Y_s^\ast g_{s+1}^{-1},g_0W,,W^\ast g_0^{-1})\,, \quad 
g=(g_s)\in \Gl_n(\CC)^m\,.
\end{equation}
Following the principle  $\pP$, we consider the subspace $(\Cn^{op})'\subset \Cn^{op}$ where there exists a representative with  
\begin{equation}
 Y_s=\diag(\mathring{q}_1,\ldots,\mathring{q}_n),\,\, s\in I, \quad W=(1,\ldots,1)^T\,,
\end{equation}
where $\mathring{q}_i\neq 0$ and $\mathring{q}_i^m\neq \mathring{q}_j^m$ for all $i\neq j$. We can then  take  
\begin{equation}
 (Y_s^\ast)_{ij}=\delta_{ij}\mathring{p}_j -\delta_{ij}\frac{1}{\mathring{q}_j}(\lambda_1+\ldots+\lambda_s)
-\delta_{(i\neq j)}|\lambda| \frac{\mathring{q}_i^{s} \mathring{q}_j^{m-s-1}}{\mathring{q}_i^m-\mathring{q}_j^m},\,\, s\in I, \quad 
W^\ast=|\lambda|(1,\ldots,1)\,.
\end{equation}
This choice is unique up to the action of $\ZSn$. 
\begin{lem} \label{L:DarbH2}
 The reduced Poisson bracket is such that $\br{\mathring{q}_i,\mathring{q}_j}=0=\br{\mathring{p}_i,\mathring{p}_j}$ 
and $\br{\mathring{q}_i,\mathring{p}_j}=\frac1m \delta_{ij}$. 
\end{lem}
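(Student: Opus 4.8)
The plan is to reproduce, for the opposite quiver, the computation carried out for $\Cn$ in Lemma \ref{L:DarbH1}; the only genuine changes are the bookkeeping of arrow orientations, since $Q_m^{op}$ is again a cyclic quiver with one pendant arrow at the vertex $0$. First I would record the double bracket on $\CC\bar{Q}_m^{op}$ supplied by \ref{ssHamQuiver}: since $t(y_s)=s+1$ and $h(y_s)=s$, formula \eqref{Eq:VdBHam1} gives $\dgal{y_r,y_s^\ast}=\delta_{rs}\,e_r\otimes e_{r+1}$, together with $\dgal{y_r,y_s}=0=\dgal{y_r^\ast,y_s^\ast}$; note that the two tensor legs are swapped relative to the $x$-case of Lemma \ref{L:DarbH1}. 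The role played by the loop $x=x_0\cdots x_{m-1}$ based at $0$ is now taken by $y=y_{m-1}y_{m-2}\cdots y_0$, which is again a cycle based at $0$, and the role of $x_{m-1}^\ast x_{m-1}$ is taken by $y_0^\ast y_0$.

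Next, writing $\br{-,-}^\lambda$ for the $H_0$-Poisson structure on $\CC\bar{Q}_m^{op}/(\mu^{op}-\tilde\lambda)$, I would establish the same three families of brackets as before,
\begin{equation*}
\begin{gathered}
\br{\overline{y^k},\overline{y^l}}^\lambda=0,\qquad
\br{\overline{y^k},\overline{y_0^\ast y_0\, y^l}}^\lambda=k\,\overline{y^{k+l}},\\
\br{\overline{y_0^\ast y_0\, y^k},\overline{y_0^\ast y_0\, y^l}}^\lambda=(k-l)\,\overline{y_0^\ast y_0\, y^{k+l}}.
\end{gathered}
\end{equation*}
These follow from the derivation rules and cyclic antisymmetry exactly as in Lemma \ref{L:DarbH1}: reversing the orientation merely swaps the two tensor legs of the unique nonzero double bracket, and one checks that the signs recombine to yield the displayed identities. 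Applying Theorem \ref{Thm:H0Rep} and setting $Y_{cyc}=Y_{m-1}\cdots Y_0$, I obtain on $\Cn^{op}$
\begin{equation*}
\begin{gathered}
\br{\tr Y_{cyc}^k,\tr Y_{cyc}^l}=0,\qquad
\br{\tr Y_{cyc}^k,\tr Y_0^\ast Y_0\, Y_{cyc}^l}=k\,\tr Y_{cyc}^{k+l},\\
\br{\tr Y_0^\ast Y_0\, Y_{cyc}^k,\tr Y_0^\ast Y_0\, Y_{cyc}^l}=(k-l)\,\tr Y_0^\ast Y_0\, Y_{cyc}^{k+l}.
\end{gathered}
\end{equation*}

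Finally I would substitute the coordinates dictated by the principle $\pP$. On $(\Cn^{op})'$ one has $Y_{cyc}=\diag(\mathring q_1^m,\ldots,\mathring q_n^m)$, while the diagonal of $Y_0^\ast Y_0$ is linear in the $\mathring p_i$; the three families then compute the Poisson brackets of the power sums of the $\mathring q_i^m$ and of the associated momentum traces in terms of the same data. Extracting $\br{\mathring q_i,\mathring q_j}=0=\br{\mathring p_i,\mathring p_j}$ and $\br{\mathring q_i,\mathring p_j}=\tfrac1m\delta_{ij}$ then proceeds verbatim as in the proof of \cite[Proposition 2.7]{EtingCM}, using that the $\ZSn$-invariant functions of $(\mathring q,\mathring p)$ separate points on $(\Cn^{op})'$. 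The main obstacle is not conceptual but a matter of care: one must check that the orientation reversal reproduces precisely the same trace identities, with no spurious overall sign relative to Lemma \ref{L:DarbH1}, so that the final bracket $\br{\mathring q_i,\mathring p_j}$ comes out with the positive coefficient $\tfrac1m$; once the first family is matched, the remaining two follow by the same derivation-rule bookkeeping, and the Darboux extraction is identical with $Y_{cyc}$ in place of $X_{cyc}$.
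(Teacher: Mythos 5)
Your proposal is correct and takes essentially the same route as the paper: the paper's own proof merely records the double bracket $\dgal{y_r,y_s^\ast}=\delta_{rs}\,e_r\otimes e_{r+1}$ on $\CC\bar{Q}_m^{op}$ and states that it suffices to adapt the proof of Lemma \ref{L:DarbH1}, which is precisely the adaptation you spell out, with the correct choices of the cycle $y=y_{m-1}\cdots y_0$ and the loop $y_0^\ast y_0$ at vertex $0$, the same three families of trace identities, and the same final extraction via \cite[Proposition 2.7]{EtingCM}. Your remark that the orientation reversal only swaps the tensor legs, so that the identities come out with the same signs after multiplication and passage to $H_0$, is exactly the point that makes the adaptation go through.
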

\begin{proof}
 By construction, the double bracket on $\CC\bar{Q}_m^{op}$ is such that $\dgal{y_r,y^\ast_s}=\delta_{rs} e_{r}\otimes e_{r+1}$, $\dgal{y_r,y_s}=0$ and $\dgal{y_r^\ast,y_s^\ast}=0$. It then suffices to adapt the proof of Lemma \ref{L:DarbH1}. 
\end{proof}

\subsubsection{Duality}

After rescaling the Poisson brackets on $\Cn,\Cn^{op}$ by a factor $m$, we have a set of Darboux coordinates on a dense subset of each space. 
Let $X_{cyc}=X_0\ldots X_{m-1}$, $X_{cyc}^\ast=X^\ast_{m-1}\ldots X^\ast_0$, and $Y_{cyc}=Y_{m-1}\ldots Y_0$,  $Y^\ast_{cyc}=Y^\ast_0\ldots Y^\ast_{m-1}$.
It is clear that the Poisson isomorphism \eqref{Eq:psiH2} is such that 
\begin{equation*}
 \tr Y_{cyc}^{k}\circ \Psi =(-1)^{km} \tr (X_{cyc}^\ast)^{k}\,, \quad 
\tr X_{cyc}^{k}\circ \Psi^{-1} = \tr (Y_{cyc}^\ast)^{k}\,.
\end{equation*}
In particular, these identities expressed in terms of Darboux coordinates yield that $(\tr (Y_{cyc}^\ast)^{k})_{k=1}^n$ form an integrable system on $\Cn^{op}$, and the same holds for 
 $(\tr (X_{cyc}^\ast)^{k})_{k=1}^n$ on $\Cn$. Moreover, they are action-angle dual by definition. 
In the coordinates $(q_i,p_i)$, the functions $(\tr (X_{cyc}^\ast)^{k})_{k=1}^n$ define a generalisation of the CM system which was introduced by Chalykh and Silantyev \cite{CS}. The same holds for the functions $(\tr (Y_{cyc}^\ast)^{k})_{k=1}^n$ in the coordinates $(\mathring{q}_i,\mathring{p}_i)$, so that we get self-duality for CM systems having $\ZSn$ symmetry. 

\begin{rem}
 In the case $m=1$, we recover the well-known duality of the CM system of type $A_{n-1}$ \cite{KKS78,R88,W}. 
For $m=2$ we get the duality of type $B_n$ in view of \cite[Example 5.6]{CS}. 
\end{rem}

\subsection{Duality of hyperbolic CM - rational RS systems} \label{ss:DualTCM}

We modify \ref{ss:DualRCM} as follows. 
Consider the algebra $\tilde{R}_m$ obtained by adding to $\CC\bar{Q}_m$ local inverses $x_s^{-1}=e_{s+1}x_s^{-1} e_s$ satisfying 
\begin{equation}\label{Eq:Qrel}
 x_sx_s^{-1}=e_s,\,\, x_s^{-1}x_s=e_{s+1},\quad  \text{for all } s\in I\,.
\end{equation}
 Introducing $\tilde x_s=x_s x_s^\ast$ and $\tilde v = v^\ast$, $\tilde{R}_m$ is the path algebra of the quiver $\tilde{Q}_m$  with relations \eqref{Eq:Qrel} depicted on the left of Figure \ref{fig:M2}. The double bracket on $\CC\bar{Q}_m$ uniquely extend to $\tilde{R}_m$ by the derivation rule \cite[Proposition 2.5.3]{VdB1}, hence $\tilde{R}_m$ is a Hamiltonian algebra. 

Reproducing this construction, we can form the algebra $\tilde{R}_m^{op}$ by adding local inverses $(y_s^\ast)^{-1}=e_{s+1}(y_s^\ast)^{-1} e_s$ to the path algebra $\CC\bar{Q}_m^{op}$ of the double of the quiver $Q_m^{op}$. (Note that the superscript $op$ indicates that we work with the opposite quiver, not that we take the opposite multiplication in $\tilde{R}_m$.) Taking $z_s=y_s^\ast y_s$, $\tilde{z}_s=y_s^\ast$, and $\tilde w = w^\ast$, $\tilde{R}_m^{op}$ is the path algebra of the quiver $\tilde{Q}_m^{op}$ with relation depicted on the right of Figure \ref{fig:M2}. We also obtain a Hamiltonian algebra structure on $\tilde{R}_m^{op}$ such that the map $\psi$ \eqref{Eq:psiH1} extends to an isomorphism of Hamiltonian algebras  
\begin{equation} \label{Eq:psiHt1}
\tilde{\psi}: \tilde{R}_m\to \tilde{R}_m^{op}\,,\quad \tilde{\psi}(x_s)=\tilde{z}_s,\,\, \tilde{\psi}(\tilde{x}_s)=-z_s, \quad 
\tilde{\psi}(v)=\tilde{w},\,\, \tilde{\psi}(\tilde v)=-w\,.
\end{equation}
The map $\tilde{\psi}$ induces a Poisson isomorphism $\tilde{\Psi}$ on associated Poisson varieties as in \ref{ss:DualRCM}. We consider the same dimension vector $\alpha$ and regular parameter $\tilde{\lambda}$ as in \ref{ss:DualRCM} for the rest of this subsection.

\begin{figure}
\centering
  \begin{tikzpicture}
 \node[fill,circle,inner sep=0pt,minimum size=3pt]  (IN) at (-2,0) {};
 \node[fill,circle,inner sep=0pt,minimum size=3pt]  (INb) at (4,0) {};
 \node[fill,circle,inner sep=0pt,minimum size=3pt] (C) at (-4,0) {} ; 
 \node[fill,circle,inner sep=0pt,minimum size=3pt] (C2) at (-3.42,-1.42) {} ; 
 \node[fill,circle,inner sep=0pt,minimum size=3pt] (D) at (-3.42,1.42) {} ; 
 \node[fill,circle,inner sep=0pt,minimum size=3pt] (Cb) at (2,0) {} ; 
 \node[fill,circle,inner sep=0pt,minimum size=3pt] (C2b) at (2.58,-1.42) {} ; 
 \node[fill,circle,inner sep=0pt,minimum size=3pt] (Db) at (2.58,1.42) {} ; 
  \node (INname) at (-1.6,0) {$\mathbf{\infty}$}; 
  \node (IN2name) at (4.4,0) {$\mathbf{\infty}$};
  \node (Cname) at (-4.2,-0.05) {$^0$};
  \node (C2name) at (-3.7,-1.6) {$^{m\text{-}1}$};
  \node (Dname) at (-3.6,1.5) {$^1$};
  \node (Cnameb) at (1.8,-0.05) {$^0$};
  \node (C2nameb) at (2.3,-1.6) {$^{m\text{-}1}$};
  \node (Dnameb) at (2.4,1.5) {$^1$};
  \node (Qm) at (-5.3,1.6) {$\tilde{Q}_m$};
  \node (Qmop) at (0.7,1.6) {$\tilde{Q}_m^{op}$};
   \path[->,>=stealth,font=\scriptsize]  
   (IN) edge [bend left=15,looseness=0.5] node[below] {$v$}  (C) ;
   \path[->,>=stealth,dashed,font=\scriptsize]  
   (C) edge [bend left=15,looseness=0.5] node[above] {$\tilde v$}  (IN) ;
   \path[->,>=stealth,dashed,font=\scriptsize]  
   (INb) edge [bend left=15,looseness=0.5] node[below] {$\tilde w$}  (Cb) ;
   \path[->,>=stealth,font=\scriptsize]  
   (Cb) edge [bend left=15,looseness=0.5] node[above] {$w$}  (INb) ;
 \node[fill,circle,inner sep=0pt,minimum size=1pt]  (up1) at (-3.28,1.52) {};
 \node[fill,circle,inner sep=0pt,minimum size=1pt]  (up2) at (-3.18,1.62) {};
 \node[fill,circle,inner sep=0pt,minimum size=1pt]  (up3) at (-3.08,1.72) {};
 \node[fill,circle,inner sep=0pt,minimum size=1pt]  (do1) at (-3.28,-1.52) {};
 \node[fill,circle,inner sep=0pt,minimum size=1pt]  (do2) at (-3.18,-1.62) {};
 \node[fill,circle,inner sep=0pt,minimum size=1pt]  (do3) at (-3.08,-1.72) {};
 \node[fill,circle,inner sep=0pt,minimum size=1pt]  (up1b) at (2.72,1.52) {};
 \node[fill,circle,inner sep=0pt,minimum size=1pt]  (up2b) at (2.82,1.62) {};
 \node[fill,circle,inner sep=0pt,minimum size=1pt]  (up3b) at (2.92,1.72) {};
 \node[fill,circle,inner sep=0pt,minimum size=1pt]  (do1b) at (2.72,-1.52) {};
 \node[fill,circle,inner sep=0pt,minimum size=1pt]  (do2b) at (2.82,-1.62) {};
 \node[fill,circle,inner sep=0pt,minimum size=1pt]  (do3b) at (2.92,-1.72) {};
   \path[->,>=stealth,font=\scriptsize]  
   (C) edge [bend left=15,looseness=0.5] node[left] {$x_{0}$} (D) ;
   \path[->,>=stealth,font=\scriptsize]  
   (D) edge [bend left=15,looseness=0.5] node[right] {$x^{-1}_{0}$} (C) ;
   \path[->,>=stealth,font=\scriptsize]  
   (C2) edge [bend left=15,looseness=0.5] node[left] {$x_{m\!-\!1}$} (C) ;
   \path[->,>=stealth,font=\scriptsize]  
   (C) edge [bend left=15,looseness=0.5] node[right] {$x^{-1}_{m\!-\!1}$} (C2) ;
\path[->,>=stealth,dashed,font=\scriptsize] (C) edge [out=245,in=115,looseness=40] node[left] {$\tilde{x}_0$} (C);
\path[->,>=stealth,dashed,font=\scriptsize] (C2) edge [out=290,in=160,looseness=40] node[left] {$\tilde{x}_{m-1}$} (C2);
\path[->,>=stealth,dashed,font=\scriptsize] (D) edge [out=200,in=70,looseness=40] node[left] {$\tilde{x}_{1}$} (D);
   \path[->,>=stealth,dashed,font=\scriptsize]  
   (Cb) edge [bend left=15,looseness=0.5] node[left] {$\tilde{z}_{0}$} (Db) ;
   \path[->,>=stealth,dashed,font=\scriptsize]  
   (Db) edge [bend left=15,looseness=0.5] node[right] {$\tilde{z}^{-1}_{0}$} (Cb) ;
   \path[->,>=stealth,dashed,font=\scriptsize]  
   (C2b) edge [bend left=15,looseness=0.5] node[left] {$\tilde{z}_{m\!-\!1}$} (Cb) ;
   \path[->,>=stealth,dashed,font=\scriptsize]  
   (Cb) edge [bend left=15,looseness=0.5] node[right] {$\tilde{z}^{-1}_{m\!-\!1}$} (C2b) ;
\path[->,>=stealth,font=\scriptsize] (Cb) edge [out=245,in=115,looseness=40] node[left] {$z_0$} (Cb);
\path[->,>=stealth,font=\scriptsize] (C2b) edge [out=290,in=160,looseness=40] node[left] {$z_{m-1}$} (C2b);
\path[->,>=stealth,font=\scriptsize] (Db) edge [out=200,in=70,looseness=40] node[left] {$z_{1}$} (Db);
   \end{tikzpicture}
\caption{Representation as quivers with relations of the algebras obtained from $\CC\bar{Q}_m$ and $\CC\bar{Q}_m^{op}$ by localisation.} \label{fig:M2}
\end{figure}
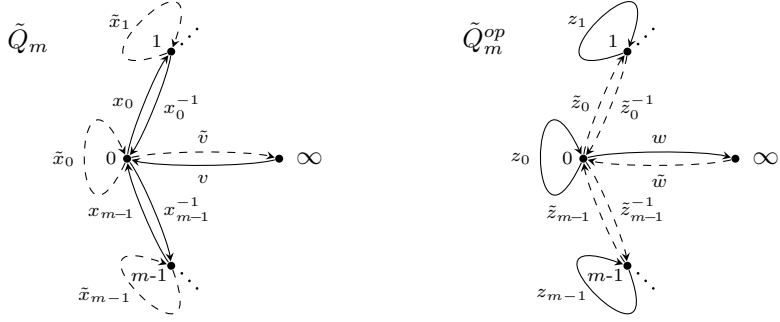

\subsubsection{Space associated with $\tilde{Q}_m$}

Denote the reduced space $\Rep(\tilde{R}_m/(\mu-\tilde\lambda),\alpha)/\!/\Gl_\alpha$ by $\tCn$. It is the subset of $\Cn$ where each $X_s$ is invertible, and where we use the elements $\tilde{X}_s=X_s X_s^\ast$, $\tilde V=V^\ast$. 
To follow the principle  $\pP$  with the matrices $(X_s,V)$, we consider again the subspace $\Cn'\subset \tCn\subset \Cn$ where there exists a representative with  
\begin{equation}
 X_s=\diag(x_1,\ldots,x_n),\,\, s\in I, \quad V=(1,\ldots,1)\,,
\end{equation}
where $x_i\neq 0$ and $x_i^m\neq x_j^m$ for all $i\neq j$, and where we take  
\begin{equation}
 (\tilde{X}_s)_{ij}=\delta_{ij}p_j +\delta_{ij}(\lambda_1+\ldots+\lambda_s)
-\delta_{(i\neq j)}|\lambda| \frac{x_i^{m-s}x_j^s}{x_i^m-x_j^m},\,\, s\in I, \quad \tilde V=-|\lambda|(1,\ldots,1)^T\,,
\end{equation}
for free parameters $(p_i)\in \CC^n$. This choice is unique up to the action by $\ZSn$. The variables $(x,p)$ correspond to $(q,qp)$ in the choice associated with $Q_m$ in \ref{ss:DualRCM}, so that we get the following result from Lemma \ref{L:DarbH1}.     
\begin{lem} \label{L:DarbHt1}
 The reduced Poisson bracket is such that $\br{x_i,x_j}=0=\br{p_i,p_j}$ and $\br{x_i,p_j}=\frac1m x_i\delta_{ij}$. 
\end{lem}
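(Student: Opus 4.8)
The plan is to deduce Lemma~\ref{L:DarbHt1} directly from Lemma~\ref{L:DarbH1} by recognising the present coordinates as an explicit change of variables of the Darboux coordinates on $\Cn$. This is legitimate because $\tCn$ is the open subset of $\Cn$ on which every $X_s$ is invertible (equivalently $x_i\neq0$), so the reduced Poisson bracket on invariant functions simply restricts from $\Cn$ to $\tCn$. Write $(q_i,\rho_i)$ for the coordinates on $\Cn$ of \ref{ss:DualRCM}, so that Lemma~\ref{L:DarbH1} reads $\br{q_i,q_j}=0=\br{\rho_i,\rho_j}$ and $\br{q_i,\rho_j}=\tfrac1m\delta_{ij}$. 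First I would fix the change of variables. Since the representatives in \ref{ss:DualRCM} and here share the same diagonal matrices $X_s$ and the same covector $V$, one has $x_i=q_i$; reading off the diagonal of $\tilde X_s=X_sX_s^\ast$ from the formula for $(X_s^\ast)_{ij}$ gives $(\tilde X_s)_{ii}=q_i\rho_i+(\lambda_1+\cdots+\lambda_s)$, and comparing with the parametrisation $(\tilde X_s)_{ii}=p_i+(\lambda_1+\cdots+\lambda_s)$ used in this subsection yields $p_i=q_i\rho_i$. This is the identification $(x_i,p_i)=(q_i,q_i\rho_i)$ announced before the statement.

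Next I would transport the three brackets through this substitution using only the derivation property of the Poisson bracket. One has $\br{x_i,x_j}=\br{q_i,q_j}=0$ at once, while $\br{x_i,p_j}=\br{q_i,q_j\rho_j}=q_j\br{q_i,\rho_j}=\tfrac1m q_j\delta_{ij}=\tfrac1m x_i\delta_{ij}$, the Kronecker delta forcing $q_j=q_i=x_i$. For the remaining bracket I would expand via Leibniz, the contributions involving $\br{\rho_i,\rho_j}$ and $\br{q_i,q_j}$ vanishing by Lemma~\ref{L:DarbH1}:
\begin{equation*}
 \br{p_i,p_j}=\br{q_i\rho_i,q_j\rho_j}=\frac1m\delta_{ij}\bigl(q_j\rho_i-q_i\rho_j\bigr)\,,
\end{equation*}
which vanishes once the delta sets $i=j$. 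This gives all three asserted identities.

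The residual work — the closest thing to an obstacle here — is of a bookkeeping nature rather than a genuine difficulty. One must verify that $(x,p)=(q,q\rho)$ describes the \emph{same} point of $\Cn$, which I would confirm by matching the off-diagonal entries, $(\tilde X_s)_{ij}=q_i(X_s^\ast)_{ij}=-|\lambda|\,q_i^{m-s}q_j^s/(q_i^m-q_j^m)$ for $i\neq j$, together with $\tilde V=V^\ast$; all of these depend on $q=x$ alone, so they are automatically consistent with the substitution and impose no relation between $\rho$ and $p$. One should also note that $\rho_i\mapsto p_i=q_i\rho_i$ is invertible on the slice since $x_i\neq0$ there, so it is a genuine change of coordinates on a dense subset, and that the residual finite $\ZSn$-symmetry plays no role in the purely local bracket computation above. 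Granting these verifications, the lemma follows exactly as indicated by the sentence preceding it.
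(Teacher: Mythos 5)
Your proposal is correct and follows exactly the paper's route: the paper proves Lemma \ref{L:DarbHt1} by the one-line observation that the variables $(x,p)$ on $\Cn'$ correspond to $(q,qp)$ in the parametrisation of \ref{ss:DualRCM}, and then invokes Lemma \ref{L:DarbH1}, which is precisely the change of variables and Leibniz computation you carry out. Your additional verifications (matching the off-diagonal entries of $\tilde X_s$ and $\tilde V$, invertibility of $\rho_i\mapsto q_i\rho_i$ on the slice where $x_i\neq 0$) are the details the paper leaves implicit, and they check out.
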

In particular, on a dense subspace of $\Cn'$ we have Darboux coordinates $(q_i,p_i)$ for $x_i=e^{\frac1m q_i}$. 

\subsubsection{Space associated with $\tilde{Q}_m^{op}$}

Denote the reduced space $\Rep(\tilde{R}_m^{op}/(\mu^{op}-\tilde\lambda),\alpha)/\!/\Gl_\alpha$ by $\tCn^{op}$. The space  $\tCn^{op}\subset \Cn^{op}$ can be described as the set of matrices 
\begin{equation}
 Z_s \in \Mat_{n\times n}(\CC),\,\, \tilde{Z}_s\in \Gl_n(\CC), \quad \tilde W \in \Mat_{1\times n}(\CC),\,\, W \in \Mat_{n\times 1}(\CC) \,,
\end{equation}
satisfying 
\begin{equation} \label{Eq:momHtop}
 \tilde{Z}^{-1}_{s-1}Z_{s-1}\tilde{Z}_{s-1} - Z_{s}+\delta_{s0} W \tilde W = \lambda_s \Id_n\,,
\end{equation}
under identification of the elements in each orbit for the action 
\begin{equation}
 g\cdot (Z_s,\tilde{Z}_s,W,\tilde W) = (g_{s}Z_s g_{s}^{-1},g_{s}\tilde{Z}_s g_{s+1}^{-1},g_0W,,\tilde W g_0^{-1})\,, \quad 
g=(g_s)\in \Gl_n(\CC)^m\,.
\end{equation}
To apply the principle  $\pP$  to the matrices $(Z_s,W)$ representing the continuous arrows of $\tilde{Q}_m^{op}$, we need the $Z_s$ to be related diagonal matrices. In view of \eqref{Eq:momHtop} with $s\neq 0$, $Z_{s-1}$ and $Z_s+\lambda_s \Id_n$ share the same spectrum.   
So we consider the subspace $(\tCn^{op})'\subset \tCn^{op}$ where  there exists a representative with  
\begin{equation} \label{Eq:ZW}
 Z_s=\diag(\tilde{q}_1,\ldots,\tilde{q}_n)-(\lambda_1+\ldots+\lambda_s)\Id_n,\,\, s\in I, \quad W=(1,\ldots,1)^T\,,
\end{equation}
and we assume that 
\begin{equation}\label{Eq:zcond}
\tilde{q}_i\neq 0,\,\, \tilde{q}_i\neq \tilde{q}_j,\,\, \tilde{q}_i \neq \tilde{q}_j \pm |\lambda|,
\quad \text{ for all } i\neq j\,.   
\end{equation}
\begin{lem} \label{L:ZopCoord}
 The subspace $(\tCn^{op})'$ can be parametrised by $(\tilde{q}_i)\in \CC^n$ satisfying \eqref{Eq:zcond} and $(\omega_i)\in (\CC^\times)^n$ where the matrices 
$(Z_s,W)$ are given by \eqref{Eq:ZW}, together with 
\begin{equation*}
 \tilde{Z}_s=\Id_n,\,\, s\neq m-1, \quad (\tilde{Z}_{m-1})_{ij}=-|\lambda| \frac{\omega_i}{\tilde{q}_i-\tilde{q}_j-|\lambda|} \prod_{k\neq j} \frac{\tilde{q}_k-\tilde{q}_j-|\lambda|}{\tilde{q}_k-\tilde{q}_j}\,, \quad 
\tilde{W}_j= |\lambda| \prod_{k\neq j} \frac{\tilde{q}_k-\tilde{q}_j-|\lambda|}{\tilde{q}_k-\tilde{q}_j}\,.
\end{equation*}
Moreover, this choice is unique up to $S_n$ action by simultaneous permutation of entries.
\end{lem}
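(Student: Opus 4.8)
The plan is to start from a representative in the prescribed normal form and then read off the remaining matrices, together with the residual gauge freedom, from the moment map equations \eqref{Eq:momHtop}. Write $D=\diag(\tilde q_1,\ldots,\tilde q_n)$, so that $Z_s=D-(\lambda_1+\ldots+\lambda_s)\Id_n$ and $W=(1,\ldots,1)^T$. First I would substitute this ansatz into \eqref{Eq:momHtop} for $s\neq 0$; after the scalar shifts cancel, the equation collapses to $\tilde Z_{s-1}^{-1}D\tilde Z_{s-1}=D$, so each $\tilde Z_{s-1}$ with $s=1,\ldots,m-1$ commutes with $D$. Since the $\tilde q_i$ are pairwise distinct by \eqref{Eq:zcond}, this forces $\tilde Z_0,\ldots,\tilde Z_{m-2}$ to be diagonal.

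Next I would pin down the residual gauge group. Preserving the diagonal form of the $Z_s$ with the chosen ordering forces each $g_s$ to commute with $D$, hence to be diagonal, and preserving $W=(1,\ldots,1)^T$ then forces $g_0=\Id_n$; reordering the $\tilde q_i$ corresponds to the simultaneous permutations $g_s=P$, which also fix $W$. Using the diagonal matrices $g_1,\ldots,g_{m-1}$ I can successively normalise $\tilde Z_s\mapsto g_s\tilde Z_s g_{s+1}^{-1}$ to $\Id_n$ for $s=0,\ldots,m-2$; this consumes all the continuous gauge freedom, and the only transformations preserving the resulting normal form are the simultaneous permutations $g_s=P$, acting by permuting the $\tilde q_i$ (and, as one checks, the $\omega_i$ introduced below). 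This yields both the assertion $\tilde Z_s=\Id_n$ for $s\neq m-1$ and the uniqueness up to $S_n$.

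It then remains to solve the single equation \eqref{Eq:momHtop} with $s=0$ for $M:=\tilde Z_{m-1}$ and $\tilde W$. Multiplying $M^{-1}DM-D+W\tilde W=|\lambda|\Id_n$ on the left by $M$ and reading off entries gives $(\tilde q_i-\tilde q_j-|\lambda|)\,M_{ij}=-\rho_i\tilde W_j$, where $\rho_i:=(MW)_i=\sum_k M_{ik}$; by \eqref{Eq:zcond} together with $|\lambda|\neq 0$ no denominator vanishes, so $M_{ij}=-\rho_i\tilde W_j/(\tilde q_i-\tilde q_j-|\lambda|)$. Feeding this back into $\rho_i=\sum_k M_{ik}$ shows that every index $i$ with $\rho_i\neq 0$ imposes the Cauchy-type relation $\sum_k\tilde W_k/(\tilde q_i-\tilde q_k-|\lambda|)=-1$. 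The key computation is to invert this system: I would note that the monic degree-$n$ numerator of $1+\sum_k\tilde W_k/(z-\tilde q_k-|\lambda|)$ must vanish at the $n$ distinct points $\tilde q_i$, hence equals $\prod_i(z-\tilde q_i)$, and then evaluate at $z=\tilde q_j+|\lambda|$ to isolate $\tilde W_j$ and obtain exactly the stated product formula. Finally, once $\tilde W$ is fixed the relations for the $\rho_i$ become automatically consistent, so the column sums $\omega_i:=\rho_i$ stay free; invertibility of the Cauchy matrix $(1/(\tilde q_i-\tilde q_j-|\lambda|))_{ij}$ and the nonvanishing of each $\tilde W_j$ show that $M\in\Gl_n(\CC)$ precisely when all $\omega_i\neq 0$, which both forces $\omega\in(\CC^\times)^n$ and gives the displayed formula for $(\tilde Z_{m-1})_{ij}$; the count $2n=n+n$ matches $\dim(\tCn^{op})$. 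The main obstacle is this explicit inversion of the Cauchy system and the verification that its non-degeneracy conditions coincide with \eqref{Eq:zcond}, everything else being routine bookkeeping with diagonal matrices.
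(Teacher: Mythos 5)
Your proof is correct and, for most of its length, runs parallel to the paper's: diagonality of $\tilde{Z}_0,\ldots,\tilde{Z}_{m-2}$ from \eqref{Eq:momHtop} with $s\neq 0$, gauge-fixing to $\tilde{Z}_s=\Id_n$ (which you justify more carefully than the paper does, including why the residual group is exactly $S_n$), and the entrywise identity $(\tilde{q}_i-\tilde{q}_j-|\lambda|)(\tilde{Z}_{m-1})_{ij}=-\omega_i\tilde{W}_j$. Where you genuinely diverge is in how the system determining $\tilde{W}$ arises: the paper rewrites the $s=0$ equation as $\tilde{Z}_{m-1}^{-1}Z_{m-1}\tilde{Z}_{m-1}=Z_0+\lambda_0\Id_n-W\tilde{W}$ and matches characteristic polynomials via the rank-one determinant formula \eqref{Eq:tSpec1}, whereas you feed the entrywise formula back into $\omega_i=(\tilde{Z}_{m-1}W)_i$ to obtain the $n$ relations $\sum_k\tilde{W}_k/(\tilde{q}_i-\tilde{q}_k-|\lambda|)=-1$ directly. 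Both routes conclude with the same polynomial identification and evaluation at $z=\tilde{q}_j+|\lambda|$; yours is more elementary (no determinants until the end), while the paper's isospectrality mechanism is the one reused in Lemmas \ref{L:qXCoord}, \ref{L:qYCoord} and \ref{L:qZopCoord}. One point to tighten: you only get the relation at indices with $\omega_i\neq 0$, and you should say up front that $\omega_i=0$ would make the $i$-th row of $\tilde{Z}_{m-1}$ vanish, contradicting $\tilde{Z}_{m-1}\in\Gl_n(\CC)$ (which is part of the data), so the relation holds at all $n$ points; as written, establishing $\omega\in(\CC^\times)^n$ only at the end reads slightly circular. On the other hand, your factorisation $\tilde{Z}_{m-1}=-\diag(\omega_1,\ldots,\omega_n)\,C\,\diag(\tilde{W}_1,\ldots,\tilde{W}_n)$ with $C$ an invertible Cauchy matrix is a genuine improvement: it proves the converse direction of the parametrisation — every $(\tilde{q},\omega)$ satisfying \eqref{Eq:zcond} with $\omega\in(\CC^\times)^n$ yields an invertible $\tilde{Z}_{m-1}$, hence an actual point of $(\tCn^{op})'$ — which the lemma needs and the paper leaves implicit (it only checks the forward direction $\omega_i\neq 0$). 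A cosmetic slip: your $\omega_i=\sum_k(\tilde{Z}_{m-1})_{ik}$ are row sums, not column sums.
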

\begin{proof}
Let us start with a representative such that \eqref{Eq:ZW} holds. 
 By assumption on $(\tilde{q}_i)$ and using \eqref{Eq:momHtop} with $s\neq 0$, $\tilde Z_s$ is diagonal, and we can fix the gauge (up to a residual permutation action) so that $\tilde Z_s=\Id_n$. If we set $\omega_i=(\tilde{Z}_{m-1}W)_i$, we get from \eqref{Eq:momHtop} with $s=0$ that 
\begin{equation}
 (\tilde{Z}_{m-1})_{ij}=-\frac{\omega_i}{\tilde{q}_i-\tilde{q}_j-|\lambda|}\tilde{W}_j\,.
\end{equation}
Using again the same equation in the form $\tilde{Z}^{-1}_{m-1}Z_{m-1}\tilde{Z}_{m-1} = Z_{0}+\lambda_0 \Id_n- W \tilde W $, we get that $Z_{m-1}$ has the same spectrum as the right-hand side. Since the eigenvalues of $Z_{m-1}$ are distinct, we must have the equality of characteristic polynomials in $\eta$
\begin{equation} \label{Eq:tSpec1}
 \prod_{k=1}^n (\tilde{q}_k-\lambda_1-\ldots-\lambda_{m-1}-\eta)
=\det(Z_{m-1}-\eta \Id_n)= \det ( Z_{0}+(\lambda_0-\eta) \Id_n- W \tilde W)\,.
\end{equation}
The right-hand side of \eqref{Eq:tSpec1} is a rank one deformation of a generically invertible matrix, so 
\begin{equation*}
\begin{aligned}
  \det ( Z_{0}+(\lambda_0-\eta) \Id_n- W \tilde W)=&  \det ( Z_{0}+(\lambda_0-\eta) \Id_n)\, [1-\tilde W( Z_{0}+(\lambda_0-\eta) \Id_n)^{-1} W]  \\
=& \prod_{k=1}^n (\tilde{q}_k+\lambda_0-\eta) - \sum_{l=1}^n \tilde W_l \prod_{k\neq l} (\tilde{q}_k+\lambda_0-\eta)\,.
\end{aligned}
\end{equation*}
Using this expression and evaluating \eqref{Eq:tSpec1} at $\eta=\tilde{q}_l+\lambda_0$, we get the claimed entries for $\tilde W$, hence for $\tilde{Z}_{m-1}$. It is then easy to check that $\omega_i=(\tilde{Z}_{m-1}W)_i$. By invertibility of $\tilde{Z}_{m-1}$ we get $\omega_i \neq 0$ for all $i$.
\end{proof}

\begin{lem} \label{L:DarbH3}
 The reduced Poisson bracket is such that $\br{\tilde{q}_i,\tilde{q}_j}=0=\br{\omega_i,\omega_j}$ and $\br{\tilde{q}_i,\omega_j}=\delta_{ij}\omega_j$. 
\end{lem}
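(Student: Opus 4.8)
The plan is to compute the reduced bracket by the same method as in Lemmas \ref{L:DarbH1} and \ref{L:DarbHt1}: exhibit a family of $\Gl_\alpha$-invariant functions that separate the coordinates $(\tilde{q}_i,\omega_i)$ on $(\tCn^{op})'$, compute the $H_0$-Poisson brackets of their lifts in $\tilde{R}_m^{op}/(\mu^{op}-\tilde\lambda)$, transport these to Poisson brackets on $\tCn^{op}$ via Theorem \ref{Thm:H0Rep} and Proposition \ref{Pr:H0Rep}, and finally restrict to the slice \eqref{Eq:ZW} and read off the brackets of the coordinates.

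For the generating functions I would take, writing $\tilde z_{cyc}=\tilde z_0\tilde z_1\cdots \tilde z_{m-1}$ for the loop based at the vertex $0$,
\begin{equation*}
 a_k=\tr z_0^k\,, \qquad c_k=\overline{\tilde w\, z_0^k\, \tilde z_{cyc}\, w}\,, \qquad k\geq 0\,.
\end{equation*}
Since $\alpha_\infty=1$, the second family consists of scalar invariants attached to closed paths based at $\infty$, so both families lie in the image of the trace map and define classes in $H_0$. On the slice one has $Z_0=\diag(\tilde q_i)$ (the empty sum in \eqref{Eq:ZW} at $s=0$) and $(\tilde Z_{cyc}W)_i=\omega_i$, whence
\begin{equation*}
 a_k=\sum_i \tilde q_i^{\,k}\,, \qquad c_k=\sum_i \tilde W_i\,\tilde q_i^{\,k}\,\omega_i\,,
\end{equation*}
with $\tilde W_i$ the explicit nonzero functions of $\tilde q$ from Lemma \ref{L:ZopCoord}. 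Because the $\tilde q_i$ are distinct and the $\tilde W_i$ are nonzero, the $a_k$ determine the $\tilde q_i$ and the $c_k$ then determine the $\omega_i$ by a Vandermonde argument, so it suffices to compute the brackets among the $a_k$ and $c_k$.

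At the algebra level I would first record the double brackets on $\tilde R_m^{op}$, obtained from those of $\CC\bar Q_m^{op}$ (namely $\dgal{y_r,y_s^\ast}=\delta_{rs}\,e_r\otimes e_{r+1}$, all others zero) through the derivation rules and the induced rule on the inverses $(\tilde z_s)^{-1}=(y_s^\ast)^{-1}$, and then evaluate the induced $H_0$-brackets modulo the moment-map relations \eqref{Eq:momHtop}. The clean identities to aim for are $\br{\overline{a_k},\overline{a_l}}^{\tilde\lambda}=0$, the mixed identity $\br{\overline{z_0},\overline{c_l}}^{\tilde\lambda}=\overline{c_l}$ (whose transport reflects exactly $\br{\tilde q_i,\omega_j}=\delta_{ij}\omega_j$), and a closed expression for $\br{\overline{c_k},\overline{c_l}}^{\tilde\lambda}$. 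Transporting these to $\tCn^{op}$ and restricting to the slice then yields the statement after inverting the Vandermonde relations.

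The main obstacle is twofold, and both difficulties are computational rather than conceptual. First, the double-bracket bookkeeping in the localised algebra: since $z_0$, $\tilde z_{cyc}$ and the inverses $(\tilde z_s)^{-1}$ are products, each bracket expands into many terms through the Leibniz rules, and one must use the relations \eqref{Eq:momHtop} repeatedly to collapse them to the forms above. Second, passing from the brackets of $a_k,c_k$ to the per-index statements requires solving the Vandermonde system; the only genuinely delicate point is the vanishing $\br{\omega_i,\omega_j}=0$, because the expansion of $\br{c_k,c_l}$ contains cross terms proportional to the already-determined mixed brackets $\br{\tilde q_i,\omega_j}$ which must be shown to cancel. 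I would organise this final step exactly as in the proof of \cite[Proposition 2.7]{EtingCM} invoked for Lemma \ref{L:DarbH1}.
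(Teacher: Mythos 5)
Your plan is correct and follows the paper's strategy exactly — invariant functions, $H_0$-Poisson brackets of their lifts, transport via Theorem \ref{Thm:H0Rep}, restriction to the slice, Vandermonde inversion — but it differs in the choice of the second family of invariants, and the paper's choice is computationally lighter. The paper takes $\tr Z_0^k$ together with $\tr(\tilde{Z}_{cyc}Z_0^l)$, i.e.\ the lifts $\overline{z_0^k}$ and $\overline{\tilde{z}z_0^l}$ for the loop $\tilde z=\tilde z_0\cdots\tilde z_{m-1}$ based at the vertex $0$, avoiding the framing arrows altogether; the relevant double brackets then close on $z_0,\tilde z_s$ alone, namely $\dgal{\tilde z_r,\tilde z_s}=0$, $\dgal{z_0,\tilde z_s}=\delta_{s0}\,e_0\otimes\tilde z_0$, $\dgal{z_0,z_0}=e_0\otimes z_0-z_0\otimes e_0$, and yield the clean hierarchy $\br{\overline{z_0^k},\overline{\tilde z z_0^l}}^\lambda=k\,\overline{\tilde z z_0^{k+l-1}}$ and a closed $w$-matrix analogue of your $\br{c_k,c_l}$, with no moment-map substitutions needed at the algebra level. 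Your framing-based invariants $c_k=\overline{\tilde w z_0^k\tilde z_{cyc}w}$ are legitimate (they are traces of closed paths at $\infty$ since $\alpha_\infty=1$, your mixed identity $\br{\overline{z_0^k},\overline{c_l}}^{\tilde\lambda}=k\,\overline{c_{k+l-1}}$ checks out, and the separation argument via the nonvanishing $\tilde W_i$ from Lemma \ref{L:ZopCoord} works), but the price is that $\br{c_k,c_l}$ picks up extra Leibniz terms from $\dgal{w,\tilde w}=e_\infty\otimes e_0$, which must be collapsed by rewriting $w\tilde w$ through \eqref{Eq:momHtop} — precisely the bookkeeping you flag. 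Note also that on the slice your functions are simply proportional to the paper's, $c_l=|\lambda|\,\tr(\tilde Z_{cyc}Z_0^l)$, since $\tilde W_i=|\lambda|\prod_{k\neq i}\frac{\tilde q_k-\tilde q_i-|\lambda|}{\tilde q_k-\tilde q_i}$ and $(\tilde Z_{m-1})_{ii}=\omega_i\prod_{k\neq i}\frac{\tilde q_k-\tilde q_i-|\lambda|}{\tilde q_k-\tilde q_i}$, so the two computations must agree there; the delicate cancellation you identify in extracting $\br{\omega_i,\omega_j}=0$ (cross terms through the $\tilde q$-dependent prefactors) is present in both versions and is handled, as you propose, as in the proof of Lemma \ref{L:DarbH1}.
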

\begin{proof}
The double bracket on $\tilde{R}_m^{op}$ is obtained by extending the one on $\CC\bar{Q}_m^{op}$ as explained at the beginning of this subsection. We can obtain in that way 
\begin{equation*}
 \dgal{\tilde{z}_r,\tilde{z}_s}=0\,, \quad \dgal{z_0,\tilde{z}_s}=\delta_{s0} e_0 \otimes \tilde{z}_0\,, \quad 
\dgal{z_0,z_0}=e_0 \otimes z_0 - z_0 \otimes e_0\,.
\end{equation*}
 Denote by $\br{-,-}^\lambda$ the $H_0$-Poisson structure on $A^\lambda:=\tilde{R}_m^{op}/(\mu-\tilde \lambda)$ and $\bar a \in H_0(A^\lambda)$ the image of an element $a\in A^\lambda$. We can compute from the above double brackets that for $\tilde{z}:=\tilde{z}_0\ldots \tilde{z}_{m-1}$ and any $k,l\in \N$, 
 \begin{equation*}
  \br{\overline{z_0^k},\overline{z_0^l}}^\lambda=0,\quad 
 \br{\overline{z_0^k},\overline{\tilde{z} z_0^l}}^\lambda=k\, \overline{\tilde{z} z_0^{k+l-1}}\,, \quad
\br{\overline{\tilde{z} z_0^k},\overline{\tilde{z} z_0^l}}^\lambda=\left[\sum_{r=1}^{k-1}-\sum_{r=1}^{l-1} \right] \, \overline{\tilde{z}z_0^{k+l-r-1}\tilde{z} z_0^r}\,. 
 \end{equation*}
 Hence by Theorem \ref{Thm:H0Rep}, if we let $\tilde{Z}_{cyc}=\tilde{Z}_0\ldots \tilde{Z}_{m-1}$ we get on $\tCn^{op}$
 \begin{equation}
 \begin{aligned}
  & \br{\tr Z_0^k, \tr Z_0^l}=0,  \quad \br{\tr Z_0^k, \tr \tilde{Z}_{cyc} Z_0^l} =k \tr \tilde{Z} Z_0^{k+l-1}\,, \\
 &\br{\tr \tilde{Z}_{cyc} Z_0^k, \tr \tilde{Z}_{cyc} Z_0^l} =
\left[\sum_{r=1}^{k-1}-\sum_{r=1}^{l-1} \right]\, \tr (\tilde{Z}_{cyc} Z_0^{k+l-r-1} \tilde{Z}_{cyc} Z_0^r)\,.
 \end{aligned}
 \end{equation}
 It remains to use these identities and the local coordinates to get the reduced Poisson structure.
\end{proof}
As a corollary, on a dense subspace of $\tCn^{op}$ we have Darboux coordinates $(\tilde{q}_i,\tilde{p}_i)$ for $\omega_i=e^{\tilde{p}_i}$. 

\subsubsection{Duality}

So far, we have obtained Darboux coordinates on dense subsets of $\tCn,\tCn^{op}$. Denoting $X_{cyc}=X_0\ldots X_{m-1}$ and $\tilde{Z}_{cyc}=\tilde{Z}_0\ldots \tilde{Z}_{m-1}$, we can use the Poisson diffeomorphism $\tilde{\Psi}:\tCn\to \tCn^{op}$ to obtain that 
\begin{equation*}
 \tr Z_{0}^{k}\circ \tilde{\Psi} =(-1)^{k} \tr \tilde{X}_{0}^{k}\,, \quad 
\tr X_{cyc}^{k}\circ \tilde{\Psi}^{-1} = \tr (\tilde{Z}_{cyc})^{k}\,.
\end{equation*}
Hence, we can conclude that $(\tr \tilde{X}_{0}^{k})_{k=1}^n$ and $(\tr (\tilde{Z}_{cyc})^{k})_{k=1}^n$ form integrable systems on $\tCn$ and $\tCn^{op}$ respectively, which are in action-angle duality. In the coordinates $(q_i,p_i)$ we can write the functions $(\tr \tilde{X}_{0}^{k})_{k=1}^n$ as hyperbolic CM Hamiltonians of type $A_{n-1}$ of order $km$ in the momenta $(p_i)$. In the coordinates $(\tilde{q}_i,\tilde{p}_i)$, the functions $\tr (\tilde{Z}_{cyc})^{k}$ are all defining Hamiltonians of the rational RS system of type $A_{n-1}$. The dependence on $m$ of this second family is only visible in the coupling $\lambda_\infty$. 

The construction of this subsection using two quivers and their representation spaces is similar to the choice of two slices inside one phase space outlined by Gorsky and Rubtsov  \cite[\S4.5]{GR}. 

\begin{rem} \label{Rem:compl1}
The duality of the hyperbolic CM system and rational RS system in the real case goes back to Ruijsenaars \cite{R88}.  
Since we work over $\CC$, we do not distinguish the hyperbolic and trigonometric CM systems which are equivalent up to making the change of coordinates $q_i \mapsto \sqrt{-1} q_i$. 
Another consequence of the fact that we work in the complex setting is that the second integrable system is written in the coordinates $(\tilde{q}_i,\tilde{p}_i)$ as rational RS Hamiltonians in  MacDonald form. In that case,  the Lax matrix and the Hamiltonians do not involve square roots contrary to the original real case \cite{RS86}. We also use the MacDonald form for the hyperbolic RS system and its variants presented in the next subsections.
\end{rem}

\subsection{Modified hyperbolic RS systems and their duals} \label{ss:DualmRS}

We consider the quivers $Q_m$ and $Q_m^{op}$ as in \ref{ss:DualRCM}. We construct the path algebras over $\CC$  of their doubles, depicted in Figure \ref{fig:M1}, then we define the algebras $A_m:=A_{Q_m}$, $A_m^{op}:=A_{Q_m^{op}}$ by universal localisation as in \ref{ssqHamQuiver}. To avoid confusion with the results in \ref{ss:DualRCM}, we will denote the elements of the double with a hat instead of a star, e.g. $x_s^\ast$ will be denoted $\hat{x}_s$. 
We fix the following ordering $<_s$ on the elements $a$ of the doubles such that $t(a)=s$,  
\begin{equation}
 \begin{aligned}
  x_s<_s \hat{x}_{s-1},\,\, s\in I\setminus\{0\},\quad \text{and }\quad  x_0<_0 \hat{x}_{m-1} <_0 \hat{v}\,; \\ 
  \hat{y}_s<_s y_{s-1},\,\, s\in I\setminus\{0\},\quad \text{and }\quad \hat{y}_0 <_0 y_{m-1} <_0 w\,. 
 \end{aligned}
\end{equation}
(For $m=1$, we only have $x_0<_0 \hat{x}_{0} <_0 \hat{v}$ or $\hat{y}_0 <_0 y_{0} <_0 w$.) 
By \ref{ssqHamQuiver}, this defines a quasi-Hamiltonian algebra structure on $A_m$ and $A_m^{op}$. Using Theorem \ref{Thm:qHQuivers} and Example \ref{Exmp:Q1qHam}, the choice of orderings gives the following isomorphism of quasi-Hamiltonian algebras 
\begin{equation} \label{Eq:psiqH1}
\hat{\psi}:A_m\to A_m^{op}\,,\quad \hat{\psi}(x_s)=\hat{y}_s,\,\, \hat{\psi}(\hat{x}_s)=-(e_{s+1}+y_s\hat{y}_s)^{-1}y_s, \quad 
\hat{\psi}(v)=\hat{w},\,\, \hat{\psi}(\hat{v})=-(e_0+w \hat{w})^{-1}w\,.
\end{equation}
Denote the multiplicative moment maps by $\Phi$ and $\Phi^{op}$. 
In view of Proposition \ref{Pr:H0Rep}, the map $\hat{\psi}$ induces a Poisson isomorphism 
\begin{equation}\label{Eq:psiqH2}
\hat{\Psi} : \Rep(A_m/(\Phi-\tilde{\rc}),\alpha)/\!/\Gl_\alpha \to \Rep(A_m^{op}/(\Phi^{op}-\tilde{\rc}),\alpha)/\!/\Gl_\alpha\,,
\end{equation}
 for any $\alpha\in \N^{m+1}$ and $\tilde{\rc}=\sum_{s\in I}\rc_s e_s+\rc_\infty e_\infty$ with $\rc_s,\rc_\infty \in \CC^\times$. We will consider the cases where $\alpha_s=n$ for each $s\in I$ with $n\geq 1$, $\alpha_\infty=1$, while $\rc_\infty=(\prod_s \rc_s)^{-n}$ and the $(\rc_s)$ are subject to the regularity conditions 
\begin{equation}
 \prod_{s\in I} \rc_s\neq 1\,, \quad \prod_{s\in I} \rc_s^k \neq  \prod_{r\leq \rho \leq r'}\rc_\rho\,,
\end{equation}
for all $k\in \Z$ and $1\leq r \leq r'<m-1$. Under these conditions, the spaces appearing in \eqref{Eq:psiqH2} are smooth and irreducible of dimension $2n$ \cite{BEF,CF}.

\subsubsection{Space associated with $Q_m$}

Denote the reduced space $\Rep(A_m/(\Phi-\tilde{\rc}),\alpha)/\!/\Gl_\alpha$ by $\hCn$. By construction it can be described as the set of matrices 
\begin{equation}
 X_s,\hat{X}_s \in \Mat_{n\times n}(\CC)\,, \quad V\in \Mat_{1\times n}(\CC),\,\, \hat{V} \in \Mat_{n\times 1}(\CC) \,,
\end{equation}
satisfying the $m$ relations 
\begin{equation} \label{Eq:momqH1}
 (\Id_n+X_s\hat{X}_s) (\Id_n+ \hat{X}_{s-1} X_{s-1})^{-1}(\Id_n+\delta_{s0} \hat V V)^{-1} = \rc_s \Id_n\,,
\end{equation}
where all the factors appearing in \eqref{Eq:momqH1} are invertible, and we identify the elements in the same orbit of the action 
\begin{equation}
 g\cdot (X_s,\hat X_s,V,\hat V) = (g_sX_s g_{s+1}^{-1},g_{s+1}\hat{X}_s g_s^{-1},V g_0^{-1},g_0 \hat{V})\,, \quad 
g=(g_s)\in \Gl_n(\CC)^m\,.
\end{equation}
In view of the principle  $\pP$, we consider the subspace $\hCn'\subset \hCn$ where there exists a representative with  
\begin{equation} \label{Eq:XW}
 X_s=\diag(x_1,\ldots,x_n),\,\, s\in I, \quad V=(1,\ldots,1)\,,
\end{equation}
where the $(x_i)\in \CC^n$ satisfy the following conditions with $t:=\prod_{s\in I}\rc_s$, 
\begin{equation}\label{Eq:xcond}
x_i\neq 0,\,\, x_i^m\neq x_j^m,\,\, x_i^m \neq t x_j^m, \quad \text{ for all } i\neq j\,.   
\end{equation}

\begin{lem}  \label{L:qXCoord}
 The subspace $\hCn'$ can be parametrised by $(x_i)\in \CC^n$ satisfying \eqref{Eq:xcond} and $(\rho_i)\in (\CC^\times)^n$ where the matrices 
$(X_s,V)$ are given by \eqref{Eq:XW}, together with\footnote{In the local form of $\hat{X}_s$, we consider that $\rc_1\ldots \rc_s$ for $s=0$ is the empty product equal to $+1$. Hereafter, we follow this convention.} 
\begin{equation*}
 \hat{X}_s=-\delta_{ij}\frac{1}{x_i}+\rc_1\ldots \rc_s(1-t^{-1}) \rho_j\frac{x_i^{m-s-1}x_j^s}{x_j^m-t^{-1}x_i^m} 
\prod_{l\neq i} \frac{x_l^m-t^{-1}x_i^m}{x_l^m-x_i^m}\,, \quad 
\hat{V}_i= -(1-t^{-1}) \prod_{l\neq i} \frac{x_l^m-t^{-1}x_i^m}{x_l^m-x_i^m}\,.
\end{equation*}
Moreover, this choice is unique up to $\ZSn$ action by simultaneous permutation of entries for $S_n$, and rescaling $x_i\mapsto \mu_m^{k_i}x_i$ for $(k_i)\in \Z_m^n$ with $\mu_m$ a fixed primitive $m$-th root of unity.
\end{lem}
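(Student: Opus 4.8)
The plan is to follow the strategy of Lemma \ref{L:ZopCoord}, adapted to the multiplicative setting: starting from a representative of a point of $\hCn'$ in the normal form $X_s = D := \diag(x_1,\ldots,x_n)$ and $V=(1,\ldots,1)$, I would use the relations \eqref{Eq:momqH1} with $s\neq 0$ to propagate $\hat X_0$ around the cycle, and then exploit the rank-one structure of the $s=0$ relation together with a Cauchy-type linear system to pin down $\hat X_s$ and $\hat V$. Since all $X_s$ equal $D$, each relation with $s\neq 0$ reads $\Id_n + D\hat X_s = \rc_s(\Id_n + \hat X_{s-1}D)$, i.e. $\hat X_s = \rc_s D^{-1}\hat X_{s-1}D + (\rc_s-1)D^{-1}$; iterating yields the closed form $\hat X_s = P_s D^{-s}\hat X_0 D^s + (P_s - 1)D^{-1}$ with $P_s := \rc_1\cdots\rc_s$. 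In coordinates this conjugation-and-shift produces precisely the factors $x_i^{-s}x_j^s$ and the diagonal term $(P_s-1)\delta_{ij}x_i^{-1}$ of the claimed formula, so everything reduces to determining $\hat X_0$ and $\hat V$.

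Next I would substitute the closed form for $\hat X_{m-1}$ into the $s=0$ relation. Using $\rc_0 P_{m-1}=t$, it becomes $\Id_n + D\hat X_0 = t(\Id_n + \hat V V)(\Id_n + D^{-(m-1)}\hat X_0 D^m)$, and subtracting the $\hat V$-free part isolates a rank-one matrix: the left-hand side $(1-t)\Id_n + D\hat X_0 - tD^{-(m-1)}\hat X_0 D^m$ equals $t\hat V\cdot[V(\Id_n + D^{-(m-1)}\hat X_0 D^m)]$. Writing $c_j$ for the $j$-th entry of the row vector $V(\Id_n + D^{-(m-1)}\hat X_0 D^m)$ and reading the equation entrywise gives, off the diagonal, $(\hat X_0)_{ij}$ proportional to $\hat V_i c_j/(x_i^m - tx_j^m)$, and on the diagonal the relation $(1-t)(1+x_i(\hat X_0)_{ii}) = t\hat V_i c_i$. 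These $c_j$ will turn out to be exactly the free parameters $\rho_j$.

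The crux is solving for $\hat V$. Imposing the self-consistency $c_j = [V(\Id_n + D^{-(m-1)}\hat X_0 D^m)]_j$ after back-substituting the entrywise formulas, the $c_j$ cancel and one is left with the linear Cauchy-type system $\sum_k \hat V_k/(x_j^m - t^{-1}x_k^m) = -x_j^{-m}$ for all $j$. I would solve it by a residue argument: clearing denominators produces a degree-$n$ polynomial $g(u) = u\sum_k \hat V_k\prod_{l\neq k}(u-t^{-1}x_l^m) + \prod_k(u-t^{-1}x_k^m)$ vanishing at each $u=x_j^m$, whence $g(u)=C\prod_j(u-x_j^m)$; evaluating at $u=0$ gives $C=t^{-n}$, and at $u=t^{-1}x_k^m$ gives $\hat V_k = (t^{-1}-1)\prod_{l\neq k}\frac{t^{-1}x_k^m - x_l^m}{x_k^m - x_l^m}$, the claimed formula. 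Reinserting $\hat V$ and $c_j=\rho_j$ reproduces the stated $\hat X_0$, and Step 1 then delivers all $\hat X_s$. Here the conditions \eqref{Eq:xcond} are exactly what keep the denominators $x_i^m - tx_j^m$ and $x_i^m - x_j^m$ nonzero and the Cauchy matrix invertible, while the surviving $n$ parameters $c_j=\rho_j\in\CC^\times$ (crucially independent of the $\hat V$-equation, so freely prescribable) record the fiber coordinates.

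Finally, for uniqueness I would identify the residual gauge group preserving the normal form. A gauge transformation $g=(g_s)$ fixing $(X_s=D,\,V=(1,\ldots,1))$ forces, upon going around the cycle, $g_0$ to conjugate $\diag(x_i^m)$ to a diagonal matrix with the same (distinct) entries, so $g_0$ is monomial; the permutation part yields simultaneous permutations of the $(x_i,\rho_i)$, giving $S_n$, and the diagonal part compatible with $Vg_0^{-1}=V$ is generated by $g_s=\diag(\mu_m^{-sk_i})$, which rescales $x_i\mapsto\mu_m^{k_i}x_i$, giving $\Z_m^n$. Together they generate $\ZSn$, as asserted. The main obstacle is the third step: the explicit inversion of the Cauchy system and the careful bookkeeping of signs and powers of $t$ needed to match the closed forms; the remaining steps are direct, if lengthy, computations.
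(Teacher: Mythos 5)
Your proposal is correct, and its skeleton is the same as the paper's: your closed form $\hat X_s = P_s D^{-s}\hat X_0 D^s + (P_s-1)D^{-1}$ is exactly the paper's identity $(\hat X_s + X_s^{-1})_{ij} = \rc_1\cdots\rc_s\,x_i^{-s}x_j^s\,(\hat X_0+X_0^{-1})_{ij}$, and your parameters $c_j$ coincide with the paper's $\rho_j=(VD^{-m+1}(\hat X_0+X_0^{-1})D^m)_j$ from \eqref{Eq:rho}. The one step where you genuinely diverge is the determination of $\hat V$. The paper rewrites the $s=0$ relation as $t^{-1}(\hat X_0+X_0^{-1})D^{-m}(\hat X_0+X_0^{-1})^{-1}=D^{-m}+D^{-1}\hat V\,VD^{-m+1}$, notes both sides share the same spectrum, and equates characteristic polynomials, evaluating at $\eta=x_i^{-m}$ — the same device used in Lemmas \ref{L:ZopCoord} and \ref{L:qZopCoord}. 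You instead eliminate the $c_j$ from the self-consistency condition and solve the resulting Cauchy-type system $\sum_k \hat V_k/(x_j^m-t^{-1}x_k^m)=-x_j^{-m}$ by interpolation; your polynomial identity $g(u)=t^{-n}\prod_j(u-x_j^m)$ is the paper's characteristic-polynomial identity in disguise, so the two routes are equivalent. What the paper's spectral argument buys is that it never divides by $c_j$; your elimination step does, so you should justify $c_j\neq 0$: it follows since $\Id_n+X_0\hat X_0=D(\hat X_0+D^{-1})$ is invertible (forced by the localisation defining $A_m$), so no column of $\hat X_0+D^{-1}$, hence no $c_j$, can vanish — the same fact the paper invokes to conclude $\rho_i\neq 0$, and which you need anyway to place the fibre coordinates in $(\CC^\times)^n$. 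Your residual-gauge computation for the $\ZSn$ uniqueness agrees with what the paper leaves as ``easily obtained''; finally, as in the paper, surjectivity of the parametrisation still requires the (routine) check that for arbitrary $(x_i,\rho_i)$ the displayed matrices satisfy all $m$ relations \eqref{Eq:momqH1} with the required invertibility, which the paper records as verifying \eqref{Eq:rho} for these particular matrices.
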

\begin{proof}
 In  $\hCn'$, we can rewrite the moment map conditions  \eqref{Eq:momqH1} as 
\begin{equation} \label{Eq:momqH1b}
 X_s(\hat{X}_s+X_s^{-1})X_{s-1}^{-1} ( \hat{X}_{s-1} +X_{s-1}^{-1})^{-1}(\Id_n+\delta_{s0} \hat V V)^{-1} = \rc_s \Id_n\,.
\end{equation}
Taking a representative such that \eqref{Eq:XW} holds, we can use these identities to get 
\begin{equation*}
 (\hat{X}_s+X_s^{-1})_{ij}=\rc_1\ldots \rc_s \, x_i^{-s}x_j^s \,(\hat{X}_0+X_0^{-1})_{ij}\,, \quad 
(\hat{X}_0+X_0^{-1})_{ij}=t\frac{x_i^{-1}\hat{V}_i \rho_j}{1-t x_i^{-m}x_j^m}\,,
\end{equation*}
where we have set 
\begin{equation} \label{Eq:rho}
\rho_i=(VD^{-m+1}(\hat{X}_0+X_0^{-1})D^m)_i\,, \quad \text{for } D=\diag(x_1,\ldots,x_n) \,.
\end{equation}
From this,  we obtain that if $\hat{V}$ has the form claimed in the statement, then it will follow that it is true also for the matrices $\hat{X}_s$. In particular, it can then be checked that \eqref{Eq:rho} is satisfied for these particular matrices. 

To determine the entries of $\hat{V}$ for a representative with \eqref{Eq:XW}, we note that the moment map condition implies 
\begin{equation} \label{Eq:momqH1c}
t^{-1} (\hat{X}_0+X_0^{-1})D^{-m} ( \hat{X}_{0} +X_{0}^{-1})^{-1}= D^{-m} + D^{-1}\hat V\, VD^{-m+1}\,.
\end{equation}
The matrices on both sides of this equality share the same spectrum hence, as in the proof of Lemma \ref{L:ZopCoord}, equality of their characteristic polynomials in $\eta$ yields  
\begin{equation}
 \prod_{k=1}^n (t^{-1}x^{-m}_k - \eta) = \prod_{k=1}^n (x^{-m}_k - \eta) + \sum_l \hat{V}_l x_l^{-m} \prod_{k\neq l} (x^{-m}_k - \eta)\,.
\end{equation}
Evaluating this identity at $\eta=x^{-m}_i$, we get the desired $\hat{V}_i$.  

Finally, we note that $\rho_i\neq 0$ because the matrices $\hat{X}_{s} +X_{s}^{-1}$ are invertible. The uniqueness of the representative up to $\ZSn$ action is easily obtained.
\end{proof}

\begin{lem} \label{L:DarbqH1}
 The reduced Poisson bracket is such that $\br{x_i,x_j}=0=\br{\rho_i,\rho_j}$ and $\br{x_i,\rho_j}=\frac1m \delta_{ij}x_i\rho_j$. 
\end{lem}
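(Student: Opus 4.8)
The plan is to mimic the proof of Lemma \ref{L:DarbH3}, replacing additive moment-map computations by their multiplicative counterparts. The starting point is to record the relevant double brackets in $A_m = A_{Q_m}$ coming from the formulas \eqref{loopG}, \eqref{aastG}, \eqref{a<bG} of \ref{ssqHamQuiver}, specialised to the cyclic variables $x_s, \hat x_s$ and the chosen ordering $x_s <_s \hat x_{s-1}$. Setting $x := x_0 \cdots x_{m-1}$ (the image of the full cycle) and letting $\hat x_0 + x_0^{-1}$ play the role that $\tilde z$ played in Lemma \ref{L:DarbH3}, I would first compute the $H_0$-Poisson brackets in $A^\rc := A_m/(\Phi - \tilde\rc)$ between the classes $\overline{x^k}$ and $\overline{(\hat x_0 x_0 + \cdots)\, x^l}$ (more precisely the traces appearing in the local coordinates). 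The key intermediate identities are the three bracket relations of the form $\br{\overline{x^k},\overline{x^l}}^\rc = 0$, a mixed relation giving $k\,\overline{(\cdots)x^{k+l}}$, and a self-bracket of the ``hatted'' generators, exactly parallel to those displayed just before Lemma \ref{L:DarbH3} but now multiplicative in flavour because $\Phi$ is a \emph{multiplicative} moment map satisfying \eqref{Phim}.

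Once these algebraic bracket identities in $H_0(A^\rc)$ are established, I would invoke Theorem \ref{Thm:H0Rep} to transport them to the coordinate ring $\OO[\Rep(A^\rc,\alpha)]^{\Gl_\alpha} = \OO[\hCn]$ as honest Poisson brackets among the invariant functions $\tr X_{cyc}^k$ and $\tr(\text{suitable word})$, where $X_{cyc} = X_0\cdots X_{m-1}$. The final step is to substitute the explicit local parametrisation from Lemma \ref{L:qXCoord}: with $X_s = \diag(x_1,\ldots,x_n)$ so that $X_{cyc} = \diag(x_1^m,\ldots,x_n^m)$, the traces $\tr X_{cyc}^k$ become power sums in the $x_i^m$, and the mixed traces become the corresponding symmetric expressions involving the $\rho_i$. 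Matching the Poisson brackets of these power sums against the formulas obtained from Theorem \ref{Thm:H0Rep} then forces, by the usual Vandermonde/independence argument for the generating functions (as in the cited \cite[Proposition 2.7]{EtingCM} and the proof of Lemma \ref{L:DarbH3}), the claimed brackets
\begin{equation*}
 \br{x_i,x_j}=0=\br{\rho_i,\rho_j}\,, \qquad \br{x_i,\rho_j}=\tfrac1m \delta_{ij}x_i\rho_j\,.
\end{equation*}

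I expect the main obstacle to be the first step: extracting the correct $H_0$-bracket identities from the multiplicative double bracket. Unlike the additive case of Lemma \ref{L:DarbH3}, where $\dgal{z_0,z_0}=e_0\otimes z_0 - z_0\otimes e_0$ is simple, here the loops satisfy the more intricate relations \eqref{loopG}--\eqref{aastG} with the half-integer correction terms and the ordering-dependent signs $o_{t(a)}(a,a^\ast)$, and the combination $\hat X_0 + X_0^{-1}$ involves inverting $X_0$, so the derivation rules must be applied to negative powers via $\dgal{x_0^{-1},-} = -x_0^{-1}\dgal{x_0,-}x_0^{-1}$. Keeping track of how these correction terms collapse modulo commutators when passing to $H_0$ — and verifying that the multiplicative moment-map relation $\Phi = \tilde\rc$ is used correctly to kill the unwanted contributions — is where the real bookkeeping lies; the multiplicative character is precisely what turns the additive bracket $\br{x_i,p_j}=\tfrac1m\delta_{ij}$ of Lemma \ref{L:DarbH1} into the log-canonical $\br{x_i,\rho_j}=\tfrac1m\delta_{ij}x_i\rho_j$. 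Beyond that, the transport via Theorem \ref{Thm:H0Rep} and the coordinate substitution are routine, following verbatim the template already used for Lemmas \ref{L:DarbH1}, \ref{L:DarbH2} and \ref{L:DarbH3}.
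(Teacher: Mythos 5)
Your skeleton coincides with the paper's proof: compute the double brackets among the cyclic generators of $A_m$ from \eqref{loopG}--\eqref{a<bG} with the chosen ordering, pass to the $H_0$-Poisson bracket on $A^{\rc}=A_m/(\Phi-\tilde{\rc})$, establish the three bracket identities (vanishing self-bracket of powers of the cycle $x=x_0\cdots x_{m-1}$, a mixed relation of weight $k$, and the self-bracket of the dressed generator), transport them to $\OO[\hCn]$ via Theorem \ref{Thm:H0Rep}, and finish in the local parametrisation of Lemma \ref{L:qXCoord} as in \cite[Proposition 2.7]{EtingCM} and \cite[\S3.1]{CF}. That is exactly the paper's route.

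The one genuine defect is your choice of auxiliary element: $\hat{x}_0+x_0^{-1}$ does not exist in $A_m$, since the universal localisation defining $A_{Q_m}$ only inverts the elements $1+aa^\ast$, not the arrows themselves; so the proposed $H_0$-computation in $A^{\rc}$, and the invocation of Theorem \ref{Thm:H0Rep} for traces of words containing $x_0^{-1}$, are not well-formed as stated. The paper instead works with $\check{x}_0:=e_0+x_0\hat{x}_0\in A_m$ (formally $x_0(\hat{x}_0+x_0^{-1})$, i.e.\ your element dressed by $x_0$), for which the double brackets close in the clean form \eqref{Eq:dbrhat} and the mixed identity reads $\br{\overline{x^k},\overline{\check{x}_0x^{l}}}^{\rc}=k\,\overline{\check{x}_0x^{k+l}}$ --- note the absence of the degree shift present in the additive model, Lemma \ref{L:DarbH3}, that you patterned your argument on; the combination $\hat{X}_s+X_s^{-1}$ belongs to the matrix-level computation of Lemma \ref{L:qXCoord} on the locus $x_i\neq 0$, not to the algebra-level $H_0$ step. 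The slip is repairable: replace $\hat{x}_0+x_0^{-1}$ by $\check{x}_0$ throughout, or pass to the localisation $\check{A}_m$ of \ref{ss:DualRS} at the price of restricting to the open locus where the $X_s$ are invertible (which contains $\hCn'$, so the coordinate statement is unaffected). Two smaller corrections: the derivation rule for an inverse in the \emph{first} slot uses the inner bimodule structure, $\dgal{x_0^{-1},-}=-x_0^{-1}\ast\dgal{x_0,-}\ast x_0^{-1}$, not the outer conjugation you wrote (which is the rule for the second slot); and the relation $\Phi=\tilde{\rc}$ plays no role in producing the three identities --- they hold already in $H_0(A_m)$ by \eqref{Eq:dbrhat} and merely descend to the quotient, so there are no unwanted contributions for the moment map to kill.
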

\begin{proof}
 Due to the choice of ordering, we get from \ref{ssqHamQuiver} that the double bracket on $A_m$ satisfies  
\begin{equation*}
 \begin{aligned}
  \dgal{x_s,x_r}=&\frac12 \delta_{s,r+1}\, x_rx_{r+1}\otimes e_{r+1} - \frac12 \delta_{s,r-1}\, e_r\otimes x_{r-1}x_r\,, \quad 
\dgal{\hat{x}_0,\hat{x}_0}=0\,, \\
\dgal{x_s,\hat{x}_0}=&\delta_{s,0}\,\left[e_1 \otimes e_0 +\frac12 \hat{x}_0x_0\otimes e_0 + \frac12 e_1 \otimes x_0\hat{x}_0 \right]
-\frac12 \delta_{s,1}\, x_1 \otimes \hat{x}_0 + \frac12 \delta_{s,m-1}\, \hat{x}_0 \otimes x_{m-1}\,.
 \end{aligned}
\end{equation*}
 Introducing $x:=x_0\ldots x_{m-1}$ and $\check{x}_0:=e_0+x_0\hat{x}_0$, we can then get  
\begin{equation}
 \begin{aligned} \label{Eq:dbrhat}
  \dgal{x,x}=&\frac12 (x^2\otimes e_0 - e_0 \otimes x^2)\,, \quad 
\dgal{\check{x}_0,\check{x}_0}=\frac12 (\check{x}_0^2 \otimes e_0 - e_0 \otimes \check{x}_0^2)\,, \\
\dgal{x,\check{x}_0}=&\frac12 (x \otimes \check{x}_0 + \check{x}_0 \otimes x + \check{x}_0 x \otimes e_0 - e_0 \otimes x \check{x}_0)\,.
 \end{aligned}
\end{equation}
 Denote by $\br{-,-}^{\rc}$ the $H_0$-Poisson structure on $A^{\rc}:=A_m/(\Phi-\tilde{\rc})$ and $\bar a \in H_0(A^{\rc})$ the image of an element $a\in A^{\rc}$. In a way similar to the proof of \cite[Proposition 4.4]{CF}, we can compute from \eqref{Eq:dbrhat} that for  any $k,l\in \N$, 
 \begin{equation*}
  \br{\overline{x^k},\overline{x^l}}^{\rc}=0,\quad 
 \br{\overline{x^k},\overline{\check{x}_0 x^l}}^{\rc}=k\, \overline{\check{x}_0 x^{k+l}}\,, \quad
\br{\overline{\check{x}_0 x^k},\overline{\check{x}_0 x^l}}^{\rc}=\left[\sum_{r=1}^{k}-\sum_{r=1}^{l} \right] \, \overline{\check{x}_0 x^{k+l-r-} \check{x}_0 x^r}\,.
 \end{equation*}
 Hence by Theorem \ref{Thm:H0Rep}, if we let $X_{cyc}:=X_0\ldots X_{m-1}$ and $\check{X}_0:=\Id_n+X_0\hat{X}_0$, we get on $\hCn$
 \begin{equation} \label{Eq:trbrX}
 \begin{aligned}
  & \br{\tr X_{cyc}^k, \tr X_{cyc}^l}=0,  \quad \br{\tr X_{cyc}^k, \tr \check{X}_0 X_{cyc}^l} =k \tr \check{X}_0 X_{cyc}^{k+l}\,, \\
 &\br{\tr \check{X}_0 X_{cyc}^k, \tr \check{X}_0 X_{cyc}^l} =
\left[\sum_{r=1}^{k}-\sum_{r=1}^{l} \right]\, \tr (\check{X}_0 X_{cyc}^{k+l-r} \check{X}_0 X_{cyc}^r)\,.
 \end{aligned}
 \end{equation}
We can then derive the reduced Poisson structure by writing these identities in local coordinates, which can be done e.g. by adapting \cite[\S3.1]{CF}. 
\end{proof}

As an application of this lemma, we can get Darboux coordinates $(q_i,p_i)$ on a dense subset of $\hCn$ by considering $x_i=e^{\frac1m q_i}$ and $\rho_i=e^{p_i}$.

\begin{rem}
 As part of the local computations needed to prove Lemma \ref{L:DarbqH1}, we can get that  
\begin{equation} \label{Eq:nu}
  \br{\nu_i, \nu_j} 
=\nu_i \nu_j
\frac{x_i^m+x_j^m}{x_i^m-x_j^m}\frac{(t^{-1}-1)^2 x_i^m x_j^m}{(x_i^m - t^{-1}x_j^m)(x_j^m - t^{-1}x_i^m)}\,, \quad 
\nu_i := \rho_i \prod_{l\neq i}\frac{x_l^m-t^{-1}x_i^m}{x_l^m-x_i^m}\,.
\end{equation}
The Poisson bracket of the elements $(x_i,\nu_i)$ was first obtained in this form by Fock and Rosly \cite{FockRosly}, see also \cite[\S~2.4]{Oblomkov}. Note that the Poisson bracket \eqref{Eq:nu} is invariant under replacing $t^{-1}$ by $t$. This explains the difference between our parametrisation in Lemma \ref{L:qXCoord} and the one in \cite[Sections~3,4]{CF}. There are another two ways to obtain Darboux coordinates, namely using the Ruijsenaars form as in \cite[Appendix]{FockRosly}, or a form related to the $q$KP hierarchy \cite[Section 6]{Iliev00}. We refer to \cite[\S~4.2.1]{Fthesis} for a review of these different possibilities in the case $m=1$, which are easily adapted to any $m\geq 1$. 
\end{rem}

\subsubsection{Space associated with $Q_m^{op}$}

Denote the reduced space $\Rep(A_m^{op}/(\Phi^{op}-\tilde{\rc}),\alpha)/\!/\Gl_\alpha$ by $\hCn^{op}$. It can be given as the set of matrices 
\begin{equation}
 Y_s,\hat{Y}_s \in \Mat_{n\times n}(\CC)\,, \quad \hat{W}\in \Mat_{1\times n}(\CC),\,\, W \in \Mat_{n\times 1}(\CC) \,,
\end{equation}
satisfying the $m$ relations 
\begin{equation} \label{Eq:momqH2}
 (\Id_n+\hat{Y}_s Y_s)^{-1} (\Id_n+ Y_{s-1} \hat{Y}_{s-1})(\Id_n+\delta_{s0} W\hat W) = \rc_s \Id_n\,,
\end{equation}
where all the factors appearing in \eqref{Eq:momqH2} are invertible, and we identify the elements in the same orbit for the action 
\begin{equation}
 g\cdot (Y_s,\hat Y_s,W,\hat W) = (g_{s+1} Y_s g_{s}^{-1},g_{s}\hat{Y}_s g_{s+1}^{-1},g_0 W , \hat{W}g_0^{-1})\,, \quad 
g=(g_s)\in \Gl_n(\CC)^m\,.
\end{equation}
In analogy with the space associated with $Q_m$, we consider the subspace $(\hCn^{op})'\subset \hCn^{op}$ where there exists a representative with  
\begin{equation} \label{Eq:YW}
 Y_s=\diag(y_1,\ldots,y_n),\,\, s\in I, \quad W=(1,\ldots,1)^T\,,
\end{equation}
where the $(y_i)\in \CC^n$ satisfy the conditions  \eqref{Eq:xcond}, and we set again $t:=\prod_{s\in I}\rc_s$. 

\begin{lem}  \label{L:qYCoord}
 The subspace $(\hCn^{op})'$ can be parametrised by $(y_i)\in \CC^n$ satisfying \eqref{Eq:xcond} and $(\tau_i)\in (\CC^\times)^n$ where the matrices 
$(Y_s,W)$ are given by \eqref{Eq:YW}, together with 
\begin{equation*}
 \hat{Y}_s=-\delta_{ij}\frac{1}{y_i}+{\rc}_1^{-1}\ldots \rc_s^{-1}(1-t) \tau_i\frac{y_i^{s}y_j^{m-s-1}}{y_i^m-t y_j^m} 
\prod_{k\neq j} \frac{y_k^m-t y_j^m}{y_k^m-y_j^m}\,, \quad 
\hat{W}_j= -(1-t) \prod_{k\neq j} \frac{y_k^m-t y_j^m}{y_k^m-y_j^m}\,.
\end{equation*}
Moreover, this choice is unique up to $\ZSn$ action.
\end{lem}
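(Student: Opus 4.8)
The plan is to mirror the proof of Lemma \ref{L:qXCoord} verbatim, exchanging the role of the two quivers and tracking the inversion of the parameter $t$ that distinguishes the opposite orientation. First I would start from a representative satisfying the normalisation \eqref{Eq:YW}, and rewrite the moment map relations \eqref{Eq:momqH2} in the factorised form analogous to \eqref{Eq:momqH1b}, namely
\begin{equation*}
 (\hat{Y}_s+Y_s^{-1})^{-1} Y_s^{-1}Y_{s-1}(\hat{Y}_{s-1}+Y_{s-1}^{-1})(\Id_n+\delta_{s0}W\hat W)=\rc_s\Id_n\,.
\end{equation*}
For $s\neq 0$ the rank-one term is absent, so this forces a recursion expressing $\hat{Y}_s+Y_s^{-1}$ in terms of $\hat{Y}_0+Y_0^{-1}$ with a scalar prefactor built from $\rc_1^{-1}\ldots\rc_s^{-1}$ and the powers $y_i^{s}y_j^{-s}$; here the appearance of $t=\prod_s\rc_s$ rather than $t^{-1}$ (compared to Lemma \ref{L:qXCoord}) is exactly the effect of reversing the orientation, and I would introduce the auxiliary vector $\tau_i$ in parallel with \eqref{Eq:rho} so that $(\hat{Y}_0+Y_0^{-1})_{ij}$ becomes proportional to $\tau_i\hat{W}_j/(y_i^m-ty_j^m)$.

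The main computational step is the determination of the entries $\hat W_j$, and I would handle it by the same spectral argument used for $\hat V$ in Lemma \ref{L:qXCoord} and for $\tilde W$ in Lemma \ref{L:ZopCoord}. Using the $s=0$ instance of the moment map condition, one shows that $(\hat Y_0+Y_0^{-1})D^{-m}(\hat Y_0+Y_0^{-1})^{-1}$ and a rank-one deformation of $D^{-m}$ share the same spectrum (for $D=\diag(y_1,\ldots,y_n)$), giving an equality of characteristic polynomials in a variable $\eta$. Since this right-hand side is a rank-one perturbation of a diagonal matrix, its determinant factorises, and evaluating the resulting polynomial identity at $\eta=y_i^{-m}$ isolates $\hat W_i$ and yields the product formula $\hat W_j=-(1-t)\prod_{k\neq j}(y_k^m-ty_j^m)/(y_k^m-y_j^m)$. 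Substituting back into the recursion produces the stated closed form for $\hat Y_s$, and one checks that the auxiliary relation defining $\tau_i$ is indeed satisfied by these matrices, exactly as in the proof of Lemma \ref{L:qXCoord}.

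Finally I would verify that $\tau_i\neq 0$ for all $i$, which follows from the invertibility of the factors $\hat{Y}_s+Y_s^{-1}$ imposed by \eqref{Eq:momqH2}, so that $(\tau_i)\in(\CC^\times)^n$ as claimed. The uniqueness up to the $\ZSn$ action is obtained as before: the conditions \eqref{Eq:xcond} guarantee that the gauge fixing \eqref{Eq:YW} determines the representative up to simultaneous permutation of the pairs $(y_i,\tau_i)$ together with the residual rescaling $y_i\mapsto\mu_m^{k_i}y_i$ coming from the freedom in extracting an $m$-th root when diagonalising the cyclic product. I do not expect any genuine obstacle here, since the opposite quiver is related to the original one by the isomorphism $\hat\psi$ of \eqref{Eq:psiqH1}; the only point demanding care is bookkeeping the replacement $t\leftrightarrow t^{-1}$ and the transposition of index roles $y_i^{s}y_j^{m-s-1}$ versus $x_i^{m-s-1}x_j^{s}$, which reflects that $Y_s$ now points from $s+1$ to $s$ rather than from $s$ to $s+1$.
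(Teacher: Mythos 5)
Your proposal follows exactly the route of the paper's own proof, which consists precisely of the statement ``derive as in Lemma \ref{L:qXCoord}'', with the determination of $\hat W$ done through the spectral identity \eqref{Eq:momqH2b} and the equality of characteristic polynomials of a rank-one perturbation of a diagonal matrix; your recursion for $\hat Y_s+Y_s^{-1}$, the definition of $\tau_i$ mirroring \eqref{Eq:rho}, the non-vanishing of $\tau_i$ via invertibility of the factors, and the $\ZSn$ uniqueness all coincide with the intended argument (the paper conjugates $D^{-m}$ by $Y_0+\hat Y_0$ where you use $\hat Y_0+Y_0^{-1}$, a purely cosmetic difference).

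One local slip should be repaired: your displayed factorisation of \eqref{Eq:momqH2} misplaces $Y_s^{-1}$. Since $\Id_n+\hat Y_sY_s=(\hat Y_s+Y_s^{-1})Y_s$, the correct analogue of \eqref{Eq:momqH1b} is
\begin{equation*}
 Y_s^{-1}\,(\hat Y_s+Y_s^{-1})^{-1}\,Y_{s-1}\,(\hat Y_{s-1}+Y_{s-1}^{-1})\,(\Id_n+\delta_{s0}W\hat W)=\rc_s\Id_n\,,
\end{equation*}
whereas your version, using $(\hat Y_s+Y_s^{-1})^{-1}Y_s^{-1}=(\Id_n+Y_s\hat Y_s)^{-1}$, replaces the first factor of \eqref{Eq:momqH2} by its conjugate $(\Id_n+Y_s\hat Y_s)^{-1}$. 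On the slice $Y_s=Y_{s-1}=D$ your equation would force $(\hat Y_s+D^{-1})=\rc_s^{-1}(\hat Y_{s-1}+D^{-1})$, with no $y_i^{s}y_j^{-s}$ twist, contradicting the formula for $\hat Y_s$ you are proving; the correct equation gives $(\hat Y_s+D^{-1})=\rc_s^{-1}\,D(\hat Y_{s-1}+D^{-1})D^{-1}$, and it is exactly this conjugation by $D$ that produces the powers $y_i^{s}y_j^{-s}$ which you (correctly) assert in the subsequent prose. Since everything after the display uses the correct recursion, the error is self-contained and the proof goes through once the display is fixed.
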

\begin{proof}
 The result can be derived in the same way as Lemma \ref{L:qXCoord}. 
To obtain the form of $\hat W$, we remark that the moment map \eqref{Eq:momqH2} implies the following identity : 
\begin{equation} \label{Eq:momqH2b}
 (Y_0+\hat{Y}_0)^{-1} D^{-m} (Y_0+\hat{Y}_0) = t^{-1} D^{-m} + t^{-1} D^{1-m}W \hat{W}D^{-1}\,, \quad D=\diag(y_1,\ldots,y_n)\,.
\end{equation}
It then suffices to equal both characteristic polynomials to determine $\hat{W}$. 
\end{proof}

\begin{lem} \label{L:DarbqH2}
 The reduced Poisson bracket is such that $\br{y_i,y_j}=0=\br{\tau_i,\tau_j}$ and $\br{y_i,\tau_j}=\frac1m \delta_{ij}y_i\tau_j$. 
\end{lem}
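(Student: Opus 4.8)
The plan is to adapt the proof of Lemma \ref{L:DarbqH1} to the reversed quiver, exploiting that $A_m^{op}$ is obtained from $A_m$ by reversing all arrows with the mirror ordering $\hat{y}_s<_s y_{s-1}$, $\hat{y}_0<_0 y_{m-1}<_0 w$. First I would read off from \ref{ssqHamQuiver} the double bracket on $A_m^{op}$ for this ordering; the relevant brackets among the $y_s$ and $\hat{y}_0$ are the images under arrow reversal of those displayed at the start of the proof of Lemma \ref{L:DarbqH1}, with the inner and outer tensor legs interchanged in accordance with the cyclic antisymmetry \eqref{Eq:cycanti}.

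Next, setting $y:=y_{m-1}\ldots y_0$ and $\check{y}_0:=e_0+\hat{y}_0 y_0$, I would compute the double brackets $\dgal{y,y}$, $\dgal{\check{y}_0,\check{y}_0}$ and $\dgal{y,\check{y}_0}$, which have the same shape as \eqref{Eq:dbrhat}. Passing to $H_0(A^{\rc})$ for $A^{\rc}=A_m^{op}/(\Phi^{op}-\tilde{\rc})$ and applying Theorem \ref{Thm:H0Rep}, the induced Poisson brackets of the traces $\tr Y_{cyc}^k$ and $\tr \check{Y}_0 Y_{cyc}^l$, with $Y_{cyc}:=Y_{m-1}\ldots Y_0$ and $\check{Y}_0:=\Id_n+\hat{Y}_0 Y_0$, then take exactly the form \eqref{Eq:trbrX}.

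Finally, I would substitute the explicit local form of the matrices $(Y_s,\hat{Y}_s,W,\hat{W})$ from Lemma \ref{L:qYCoord} into these trace identities and read off the reduced brackets of the coordinates $(y_i,\tau_i)$, exactly as in the last step of the proof of Lemma \ref{L:DarbqH1} (following \cite[\S3.1]{CF}). The main obstacle is purely the bookkeeping of the orientation reversal: one must check that reversing the arrows, together with the parametrisation of Lemma \ref{L:qYCoord} which differs from that of Lemma \ref{L:qXCoord} by the exchange of $t^{-1}$ and $t$, conspires to produce the clean bracket $\br{y_i,\tau_j}=\frac1m\delta_{ij}y_i\tau_j$ with no correction terms. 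Since this is the same local computation already performed for $\hCn$, the symmetry of the whole construction under arrow reversal guarantees that the outcome mirrors that of Lemma \ref{L:DarbqH1}.
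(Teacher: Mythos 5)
Your proposal is correct and follows the paper's own proof essentially verbatim: read off the double bracket on $A_m^{op}$ from the mirror ordering, pass via $y=y_{m-1}\ldots y_0$ and $\check{y}_0=e_0+\hat{y}_0y_0$ to trace identities (the paper's \eqref{Eq:trbrY}, identical in form to \eqref{Eq:trbrX}), and conclude from Lemma \ref{L:DarbqH1} through the substitution $(x_j,\rho_j,t)\mapsto(y_j,\tau_j,t^{-1})$ rather than redoing the local computation. The only minor imprecision is your claim that the double brackets of $y$ and $\check{y}_0$ have ``the same shape'' as \eqref{Eq:dbrhat}: in the paper they appear with tensor factors swapped and signs flipped (consistent with the cyclic antisymmetry you invoke), but this washes out after multiplication and trace, exactly as you anticipate.
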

\begin{proof}
 Due to the choice of ordering, the double bracket on $A_m^{op}$ is such that 
\begin{equation*}
 \begin{aligned}
  \dgal{y_s,y_r}=&\frac12 \delta_{s,r+1}\, e_{r+1}\otimes y_{r+1}y_r - \frac12 \delta_{s,r-1}\, y_r y_{r-1}\otimes e_r\,, \quad 
\dgal{\hat{y}_0,\hat{y}_0}=0\,, \\
\dgal{y_s,\hat{y}_0}=&\delta_{s,0}\,\left[e_0 \otimes e_1 +\frac12 \hat{y}_0y_0\otimes e_1 + \frac12 e_0 \otimes y_0\hat{y}_0 \right]
-\frac12 \delta_{s,1}\, \hat{y}_0 \otimes y_1 + \frac12 \delta_{s,m-1}\,y_{m-1}\otimes \hat{y}_0\,.
 \end{aligned}
\end{equation*}
 Introducing $y:=y_{m-1}\ldots y_0$ and $\check{y}_0:=e_0+\hat{y}_0y_0$, we can then get  
\begin{equation*}
 \begin{aligned}
  \dgal{y,y}=&\frac12 (e_0 \otimes y^2 - y^2\otimes e_0)\,, \quad 
\dgal{\check{y}_0,\check{y}_0}=\frac12 (e_0 \otimes \check{y}_0^2 - \check{y}_0^2 \otimes e_0 )\,, \\
\dgal{y,\check{y}_0}=&\frac12 (y \otimes \check{y}_0 + \check{y}_0 \otimes y + e_0 \otimes y \check{y}_0- \check{y}_0 y \otimes e_0 )\,.
 \end{aligned}
\end{equation*}
If we let $Y_{cyc}:=Y_{m-1}\ldots Y_{0}$ and $\check{Y}_0:=\Id_n+\hat{Y}_0Y_0$, we get on $\hCn^{op}$ the following identities 
 \begin{equation}
 \begin{aligned} \label{Eq:trbrY}
  & \br{\tr Y_{cyc}^k, \tr Y_{cyc}^l}=0,  \quad \br{\tr Y_{cyc}^k, \tr \check{Y}_0 Y_{cyc}^l} =k \tr \check{Y}_0 Y_{cyc}^{k+l}\,, \\
 &\br{\tr \check{Y}_0 Y_{cyc}^k, \tr \check{Y}_0 Y_{cyc}^l} =
\left[\sum_{r=1}^{k}-\sum_{r=1}^{l} \right]\, \tr (\check{Y}_0 Y_{cyc}^{k+l-r} \check{Y}_0 Y_{cyc}^r)\,,
 \end{aligned}
 \end{equation}
which can be derived as in Lemma \ref{L:DarbqH1}.  Notice that \eqref{Eq:trbrX} and \eqref{Eq:trbrY} are the same equations if we replace  
$X_{cyc},\check{X}_0$ by $Y_{cyc},\check{Y}_0$, or vice-versa. If we also remark that the functions 
$\tr X_{cyc}^k,\tr \check{X}_0 X_{cyc}^l$ written in coordinates in $\hCn'$, and the functions 
$\tr Y_{cyc}^k,\tr \check{Y}_0 Y_{cyc}^l$ written in coordinates in $(\hCn^{op})'$ are exactly the same when we replace 
$(x_j,\rho_j,t)$ by $(y_j,\tau_j,t^{-1})$, the statement follows from  Lemma \ref{L:DarbqH1}. 
\end{proof}
As a corollary, we can get Darboux coordinates $(\hat{q}_i,\hat{p}_i)$ on a dense subset of $\hCn^{op}$ by considering $y_i=e^{\frac1m \hat{q}_i}$ and $\tau_i=e^{\hat{p}_i}$. 

\subsubsection{Duality}

We have obtained Darboux coordinates on dense subsets of $\hCn,\hCn^{op}$, which we now use to get integrable systems in action-angle duality. Introducing $X_{cyc}=X_0\ldots X_{m-1}$, 
$Y_{cyc}=Y_{m-1}\ldots Y_{0}$, $\hat{Y}_{cyc}=\hat{Y}_0\ldots \hat{Y}_{m-1}$ and $L_{cyc}=L_{m-1}\ldots L_{0}$ for $L_s=(\Id_n+\hat{X}_sX_s)^{-1}\hat{X}_s$, we can use the Poisson diffeomorphism $\hat{\Psi}:\hCn\to \hCn^{op}$ to obtain that 
\begin{equation*}
 \tr \hat{Y}_{cyc}^{k}\circ \hat{\Psi} = \tr X^{k}\,, \quad 
\tr L_{cyc}^{k}\circ \hat{\Psi}^{-1} =(-1)^{km}\, \tr Y_{cyc}^{k}\,.
\end{equation*}
Hence, we can conclude that $(\tr L_{cyc}^{k})_{k=1}^n$ and $( \tr \hat{Y}_{cyc}^{k})_{k=1}^n$ form integrable systems on $\hCn$ and $\hCn^{op}$ respectively, which are in action-angle duality. 
In the coordinates $(\hat{q}_i,\hat{p}_i)$ we can write the functions $(\tr \hat{Y}_{cyc}^{k})_{k=1}^n$ as deformations of the hyperbolic (or trigonometric, see Remark \ref{Rem:compl1}) RS  Hamiltonians of type $A_{n-1}$. They are written explicitly as the family $(\tr Y^j)$ for $m=1$, and the family $(H_{m,j})$ for $m\geq 2$ in \cite{CF}, and their relation to other integrable systems is discussed. 

In order to write the functions $(\tr L_{cyc}^{k})_{k=1}^n$ in the coordinates $(q_i,p_i)$, we note that on $\hCn'$ 
\begin{equation}
 \begin{aligned}
&  \Id_n+\hat{X}_s X_s=\rc_1\ldots \rc_s\, (1-t)\, T_s^L C T_s^R\,, \quad \text{ where } \\
&(T_s^L)_{ij}=\delta_{ij}x_i^{m-s-1}\prod_{k\neq i}\frac{x_k^m-t^{-1}x_i^m}{x_k^m-x_i^m}\,,\quad 
(T_s^R)_{ij}=\delta_{ij} \rho_j x_j^{s+1}\,, \quad 
C_{ij}=\frac{1}{x_i^m-t x_j^m}\,.
 \end{aligned}
\end{equation}
As $C$ is a Cauchy matrix, its inverse can be computed to be 
\begin{equation}
 C^{-1}_{ij}=(1-t)(1-t^{-1})\frac{x_i^m x_j^m}{x_i^m-t^{-1}x_j^m}\,
\prod_{k\neq j} \frac{x_k^m-t^{-1} x_j^m}{x_k^m-x_j^m} \,\prod_{l\neq i}\frac{x_l^m-t x_i^m}{x_l^m-x_i^m}\,.
\end{equation}
Therefore, the entries of $L_s=(\Id_n+\hat{X}_s X_s)^{-1}\hat{X}_s$ are given by 
\begin{equation}
 \begin{aligned}
(L_s)_{ij}=&-\frac{1-t^{-1}}{\rc_1\ldots \rc_s}\rho_i^{-1}\, \frac{x_i^{m-s-1}x_j^s}{x_i^m-t^{-1}x_j^m}\, \prod_{l\neq i}\frac{x_l^m-t x_i^m}{x_l^m-x_i^m}\\
&+(1-t)(1-t^{-1})\sum_k \frac{x_i^{m-s-1}x_k^m x_j^s}{(x_i^m-t^{-1}x_k^m)(x_k^m-t x_j^m)}\rho_i^{-1}\rho_j \prod_{l\neq i}\frac{x_l^m-t x_i^m}{x_l^m-x_i^m}\, \prod_{a\neq k} \frac{x_a^m-t^{-1} x_k^m}{x_a^m-x_k^m}\,.
 \end{aligned}
\end{equation}
In the simplest case $m=1$, the first element in the family $(\tr L_{cyc}^{k})_{k=1}^n$ can be written as 
\begin{equation}
 \tr L_0= -\sum_i \rho_i^{-1}\prod_{l\neq i}\frac{x_l-t x_i}{x_l-x_i}
+(1-t^{-1})^2\sum_{i,k}\frac{x_k}{(x_i-t^{-1}x_k)^2}\, \prod_{l\neq i}\frac{x_l-t x_i}{x_l-x_i}\, \prod_{a\neq k} \frac{x_a-t^{-1} x_k}{x_a-x_k}\,.
\end{equation}
In terms of the Darboux coordinates $(q_i,p_i)$ and $t=e^{2\gamma}$, $\gamma\in \CC^\times$, this can be transformed into 
\begin{equation}
\begin{aligned}
  \tr L_0=& -\sum_ie^{-p_i}\prod_{l\neq i}\frac{\sinh\left(\frac{q_l-q_i}{2}-\gamma \right)}{\sinh\left(\frac{q_l-q_i}{2}\right)} \\
&+\sum_{i,k}\frac{\sinh^2\left(\gamma \right)\, e^{-q_i}}{\sinh^2\left(\frac{q_i-q_k}{2}+\gamma\right)}\,
 \prod_{l\neq i} \frac{\sinh\left(\frac{q_l-q_i}{2}-\gamma \right)}{\sinh\left(\frac{q_l-q_i}{2}\right)}\, 
\prod_{a\neq k} \frac{\sinh\left(\frac{q_a-q_k}{2}+\gamma \right)}{\sinh\left(\frac{q_a-q_k}{2}\right)}\,.
\end{aligned}
\end{equation}
The duality between the families $(\tr L_{cyc}^{k})_{k=1}^n$ and $( \tr \hat{Y}_{cyc}^{k})_{k=1}^n$ seem to be new. 

\subsection{Self-duality of hyperbolic RS systems} \label{ss:DualRS}

We modify \ref{ss:DualmRS} as follows. Consider the algebra $\check{A}_m$ obtained by adding to $A_m$ local inverses $x_{s}^{-1}=e_{s+1}x_s^{-1}e_s$, i.e. these elements satisfy \eqref{Eq:Qrel}. 
We let $\check{x}_s=e_s+x_s\hat{x}_s$ and $\check{v}=\hat{v}$. Note that $x_s,v,\check{x}_s,\check{v}$, the idempotents and the inverses $x_s^{-1},\check{x}_s^{-1},(e_\infty+v \check{v})^{-1}, (e_0+\check{v}v)^{-1}$ generate $\check{A}_m$. Moreover, $\check{A}_m$ is a quasi-Hamiltonian algebra if we extend the double bracket from $A_{Q_m}$ described in \ref{ss:DualmRS} by localisation. 

In the same way, we introduce the algebra $\check{A}_m^{op}$ obtained by adding to $A_m^{op}$ defined in \ref{ss:DualmRS} local inverses $y_{s}^{-1}=e_{s}y_s^{-1}e_{s+1}$. We use $z_s=e_s+\hat{y}_sy_s$, $\check{z}_s=\hat{y}_s$, $\check{w}=\hat{w}$ and $w$ together with the idempotents and the inverses belonging to  $\check{A}_m^{op}$ as generators of this algebra. It is also a quasi-Hamiltonian algebra by localisation. By construction, the map $\hat{\psi}$ \eqref{Eq:psiqH1} extends to an isomorphism of quasi-Hamiltonian algebras which can be written as 
\begin{equation} \label{Eq:psiqHt1}
\check{\psi}:\check{A}_m\to \check{A}_m^{op}\,,\quad \check{\psi}(x_s)=\check{z}_s,\,\, \check{\psi}(\check{x}_s)=z_s^{-1}, \quad 
\check{\psi}(v)=\check{w},\,\, \check{\psi}(\check{v})=-(e_0+w \check{w})^{-1}w\,.
\end{equation}
The map $\check{\psi}$ induces a Poisson isomorphism $\check{\Psi}$ on associated Poisson varieties as in \ref{ss:DualmRS}. We consider the same dimension vector $\alpha$ and regular parameter $\tilde{\rc}$ as in \ref{ss:DualmRS} for the rest of this subsection.

To use the principle  $\pP$ in order to find dual integrable systems, we will see $\check{A}_m$ and $\check{A}_m^{op}$ as algebras attached to the quivers with relations $\tilde{Q}_m,\tilde{Q}_m^{op}$ depicted in Figure \ref{fig:M2}. (We use $\check{x}_s$ instead of $\tilde{x}_s$ and do the same for $\check{v},\check{z}_s,\check{w}$ to avoid confusion with the cases considered in \ref{ss:DualTCM}.) 

\subsubsection{Space associated with $\tilde{Q}_m$}

Denote the reduced space $\Rep(\check{A}_m/(\Phi-\tilde{\rc}),\alpha)/\!/\Gl_\alpha$ by $\cCn$. It is the subset of $\hCn$ described in \ref{ss:DualmRS} where each $X_s$ is invertible, and where we use the elements $\check{X}_s=\Id_n+X_s \hat{X}_s$, $\check V=\hat{V}$. 
To follow the principle  $\pP$  with the matrices $(X_s,V)$, we consider again the subspace $\hCn'\subset \cCn\subset \hCn$ where there exists a representative with $X_s,V$ satisfying \eqref{Eq:XW}. The diagonal entries $(x_i)$ of the $X_s$  satisfy \eqref{Eq:xcond}, and we have 
\begin{equation}
 \check{X}_s=\rc_1\ldots \rc_s(1-t^{-1}) \rho_j\frac{x_i^{m-s}x_j^s}{x_j^m-t^{-1}x_i^m} 
\prod_{l\neq i} \frac{x_l^m-t^{-1}x_i^m}{x_l^m-x_i^m}\,, \quad 
\hat{V}_i= -(1-t^{-1}) \prod_{l\neq i} \frac{x_l^m-t^{-1}x_i^m}{x_l^m-x_i^m}\,.
\end{equation}
for $t:=\prod_s \rc_s$. The $(\rho_i)\in (\CC^\times)^n$ are free, and this choice is unique up to the action by $\ZSn$. The Poisson bracket between the variables $(x,\rho)$ is given in Lemma \ref{L:DarbqH1}. 
In particular, on a dense subspace of $\cCn$ we have Darboux coordinates $(q_i,p_i)$ for $x_i=e^{\frac1m q_i}$ and $\rho_i=e^{p_i}$.

\subsubsection{Space associated with $\tilde{Q}_m^{op}$}

Denote the reduced space $\Rep(\check{A}_m^{op}/(\Phi-\tilde{\rc}),\alpha)/\!/\Gl_\alpha$ by $\cCn^{op}$. 
It can be given as the set of matrices 
\begin{equation}
 Z_s,\check{Z}_s \in \Gl_{n}(\CC)\,, \quad \check{W}\in \Mat_{1\times n}(\CC),\,\, W \in \Mat_{n\times 1}(\CC) \,,
\end{equation}
satisfying the $m$ relations 
\begin{equation} \label{Eq:momqHc1}
Z_s^{-1} \check{Z}_{s-1}^{-1} Z_{s-1} \check{Z}_{s-1} (\Id_n+\delta_{s0} W\check W) = \rc_s \Id_n\,,
\end{equation}
where all the factors appearing in \eqref{Eq:momqHc1} are invertible, and we identify the elements in the same orbit of the action 
\begin{equation}
 g\cdot (Z_s,\check Z_s,W,\check W) = (g_{s} Z_s g_{s}^{-1},g_{s}\check{Z}_s g_{s+1}^{-1},g_0 W , \hat{W}g_0^{-1})\,, \quad 
g=(g_s)\in \Gl_n(\CC)^m\,.
\end{equation}
To apply the principle  $\pP$  to the matrices $(Z_s,W)$ representing the continuous arrows of $\tilde{Q}_m^{op}$, we need the $Z_s$ to be related diagonal matrices. In view of \eqref{Eq:momqHc1} with $s\neq 0$, $Z_{s-1}$ and $\rc_sZ_s$ share the same spectrum.   
So we consider the subspace $(\cCn^{op})'\subset \cCn^{op}$ where  there exists a representative with  
\begin{equation} \label{Eq:cZW}
 Z_s={\rc}_1^{-1}\ldots \rc_s^{-1}\,\diag(z_1,\ldots,z_n),\,\, s\in I, \quad W=(1,\ldots,1)^T\,,
\end{equation}
and we assume that for $t:=\prod_{s\in I}\rc_s$, 
\begin{equation}\label{Eq:czcond}
z_i\neq 0,\,\, z_i\neq z_j,\,\, z_i \neq t z_j, \quad \text{ for all } i\neq j\,.   
\end{equation}

\begin{lem} \label{L:qZopCoord}
 The subspace $(\cCn^{op})'$ can be parametrised by $(z_i)\in \CC^n$ satisfying \eqref{Eq:czcond} and $(\sigma_i)\in (\CC^\times)^n$ where the matrices 
$(Z_s,W)$ are given by \eqref{Eq:cZW}, together with 
\begin{equation*}
 \check{Z}_s=\Id_n,\,\, s\neq m-1, \quad 
(\check{Z}_{m-1})_{ij}=(1-t)\sigma_i \frac{z_i}{z_i-tz_j} \prod_{k\neq j} \frac{z_k-tz_j}{z_k-z_j}\,, \quad 
\check{W}_j= -(1-t)  \prod_{k\neq j} \frac{z_k-tz_j}{z_k-z_j}\,.
\end{equation*}
Moreover, this choice is unique up to $S_n$ action by simultaneous permutation of entries. 
\end{lem}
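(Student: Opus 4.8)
The plan is to follow the strategy of Lemmas \ref{L:ZopCoord} and \ref{L:qYCoord}, reducing everything to a single spectral computation for the matrix $\check Z_{m-1}$. Starting from a representative in the form \eqref{Eq:cZW}, write $D=\diag(z_1,\ldots,z_n)$, so that $Z_s=\rc_1^{-1}\cdots\rc_s^{-1}D$ and in particular $Z_{m-1}=(\rc_0/t)D$ since $\rc_0\cdots\rc_{m-1}=t$. First I would exploit the moment map relations \eqref{Eq:momqHc1} with $s\neq 0$: since $\rc_s Z_s=Z_{s-1}$, these reduce to $\check Z_{s-1}^{-1}Z_{s-1}\check Z_{s-1}=Z_{s-1}$, i.e. each $\check Z_{s-1}$ (for $s=1,\ldots,m-1$) commutes with the diagonal matrix $Z_{s-1}$ with pairwise distinct entries, hence is diagonal. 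Using the residual diagonal gauge freedom in the action (which preserves the form \eqref{Eq:cZW} up to the permutation action of $S_n$), I can set $\check Z_s=\Id_n$ for all $s\neq m-1$ exactly as in Lemma \ref{L:ZopCoord}.

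It then remains to determine $C:=\check Z_{m-1}$ and $\check W$ from the single relation \eqref{Eq:momqHc1} at $s=0$. Substituting the above and cancelling $\rc_0$ turns it into
\begin{equation*}
C^{-1}DC\,(\Id_n+W\check W)=t\,D\,,
\end{equation*}
so that $C^{-1}DC=tD(\Id_n+W\check W)^{-1}$ is conjugate to $D$ while being a rank-one deformation of a diagonal matrix. Equating characteristic polynomials in $\eta$ and expanding the right-hand side by the matrix determinant lemma (as in the proof of Lemma \ref{L:ZopCoord}), I obtain the identity
\begin{equation*}
\prod_{k}(tz_k-\eta)-\eta\sum_{l}\check W_l\prod_{k\neq l}(tz_k-\eta)=\Big(1+\sum_l\check W_l\Big)\prod_i(z_i-\eta)\,.
\end{equation*}
Comparing constant terms gives $1+\sum_l\check W_l=t^n$, and evaluating at $\eta=tz_j$ (using $z_j-tz_j=z_j(1-t)$) yields the claimed formula for $\check W_j$.

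To recover $C$, I would expand $DC\,(\Id_n+W\check W)=tCD$ entrywise: the rank-one term gives $C_{ij}(tz_j-z_i)=(DCW)_i\,\check W_j$. Setting $\sigma_i:=(CW)_i=(\check Z_{m-1}W)_i$, this reads $C_{ij}=-z_i\sigma_i\check W_j/(z_i-tz_j)$, and inserting the formula for $\check W_j$ produces precisely the stated $(\check Z_{m-1})_{ij}$. Invertibility of $\check Z_{m-1}$ forces every $\sigma_i\neq0$ (a vanishing $\sigma_i$ would make the $i$-th row zero), so $(\sigma_i)\in(\CC^\times)^n$; conversely one checks directly that for arbitrary such data the formulas define an invertible $\check Z_{m-1}$ solving \eqref{Eq:momqHc1}. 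The only remaining gauge freedom is the simultaneous permutation of indices, giving uniqueness up to $S_n$. The main obstacle I anticipate is the spectral step: one must correctly track the cyclic products of the $\rc_s$ so that the $s=0$ relation collapses to the clean conjugation equation, and then the identity $1+\sum_l\check W_l=t^n$ (from the constant term) is exactly what is needed for the determinant-lemma expansion to reproduce the normalised product formulas rather than a rescaled version. Everything else is a routine transcription of Lemmas \ref{L:qXCoord}--\ref{L:qYCoord}.
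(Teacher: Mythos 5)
Your proposal is correct and follows essentially the same route as the paper's proof: the $s\neq 0$ moment map relations force $\check{Z}_0,\ldots,\check{Z}_{m-2}$ to be diagonal and gauge-trivialisable, and the $s=0$ relation is then exploited both entrywise (yielding $(\check{Z}_{m-1})_{ij}$ in terms of $\sigma_i=(\check{Z}_{m-1}W)_i$ and $\check{W}_j$) and spectrally, via equality of characteristic polynomials for a rank-one perturbation of a diagonal matrix, to pin down $\check{W}$. The only cosmetic difference is that the paper works with the inverted form $t\,\check{Z}_{m-1}^{-1}D^{-1}\check{Z}_{m-1}=D^{-1}+W\check{W}D^{-1}$, where the perturbation is additive, and evaluates at $\eta=z_j^{-1}$, thereby avoiding your extra determinant-lemma normalisation step $1+\sum_l\check{W}_l=t^n$.
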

\begin{proof}
 Let us start with a representative such that \eqref{Eq:cZW} holds. 
 By assumption on $(z_i)$ and using \eqref{Eq:momqHc1} with $s\neq 0$, $\tilde Z_s$ is diagonal, and we can fix the gauge (up to a finite action) so that $\tilde Z_s=\Id_n$. Using the case $s=0$ in \eqref{Eq:momqHc1}, we get that 
\begin{equation}
 (\check{Z}_{m-1})_{ij}=-\frac{\sigma_i \check{W}_j}{1-t z_i^{-1}z_j}\,,
\end{equation}
where we have set $\sigma_i=(\check{Z}_{m-1}W)_i$. The equation that we have just used can also be written as 
\begin{equation} \label{Eq:momqHc2}
 t \check{Z}_{m-1}^{-1}D^{-1}\check{Z}_{m-1}= D^{-1}+W\, (\check{W}D^{-1})\,, \quad D=\diag(z_1,\ldots,z_{n})\,.
\end{equation}
Using that the two sides of \eqref{Eq:momqHc2} share the same spectrum, we get as in the proof of Lemma \ref{L:ZopCoord} the following equality of characteristic polynomials in $\eta$ 
\begin{equation}
 \prod_{k=1}^n (t z_k^{-1} - \eta) = \prod_{k=1}^n (z_k^{-1} - \eta) + \sum_l \check{W}_l z_l^{-1} \prod_{k\neq l} (z^{-1}_k - \eta)\,.
\end{equation}
Evaluating this identity at $\eta=z^{-1}_j$, we get the desired $\check{W}_j$.  
 
Finally, we can check that $\sigma_i=(\check{Z}_{m-1}W)_i$ holds, and we note that $\sigma_i\neq 0$ because the matrix $\check{Z}_{m-1}$ is invertible. The residual $S_n$ action is clear.
\end{proof}

\begin{lem} \label{L:DarbqH3}
 The reduced Poisson bracket is such that $\br{z_i,z_j}=0=\br{\sigma_i,\sigma_j}$ and $\br{z_i,\sigma_j}=\delta_{ij}z_i\sigma_j$. 
\end{lem}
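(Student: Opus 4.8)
The plan is to follow the template of Lemmas \ref{L:DarbH3}, \ref{L:DarbqH1} and \ref{L:DarbqH2}: single out two distinguished endomorphisms of the vertex $0$, compute their double brackets, transport the resulting $H_0$-Poisson brackets to traces via Theorem \ref{Thm:H0Rep}, and finally evaluate everything in the local coordinates of Lemma \ref{L:qZopCoord}. The double quasi-Poisson bracket on $\check{A}_m^{op}$ is the extension by localisation (the derivation rule \cite[Proposition 2.5.3]{VdB1}) of the one on $A_m^{op}$ recorded in the proof of Lemma \ref{L:DarbqH2}. The two elements I would use are the loop $z_0=e_0+\hat{y}_0y_0$ and the cyclic product $\check{z}:=\check{z}_0\cdots\check{z}_{m-1}=\hat{y}_0\cdots\hat{y}_{m-1}$, both lying in $e_0\check{A}_m^{op}e_0$; note that $z_0$ is exactly the element $\check{y}_0$ of Lemma \ref{L:DarbqH2}, so its self-bracket $\dgal{z_0,z_0}=\frac12(e_0\otimes z_0^2-z_0^2\otimes e_0)$ is already available.

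First I would compute $\dgal{\check{z},\check{z}}$ and $\dgal{z_0,\check{z}}$ from the double brackets of the $\hat{y}_s$ dictated by \ref{ssqHamQuiver} and the chosen ordering, together with the derivation rules. I expect them to reproduce the universal multiplicative shape of \eqref{Eq:dbrhat}, namely $\dgal{\check{z},\check{z}}=\frac12(e_0\otimes\check{z}^2-\check{z}^2\otimes e_0)$ and a mixed bracket quadratic in $z_0,\check{z}$ of the same form as the third line of \eqref{Eq:dbrhat}. Passing to $H_0(\check{A}_m^{op}/(\Phi-\tilde{\rc}))$ and invoking Theorem \ref{Thm:H0Rep}, this produces on $\cCn^{op}$ trace identities of exactly the form \eqref{Eq:trbrY}, with $Y_{cyc}$ replaced by the loop matrix $Z_0$ and $\check{Y}_0$ by the cyclic matrix $\check{Z}_{cyc}=\check{Z}_0\cdots\check{Z}_{m-1}$; equivalently, $\sum_i z_i^k$ and $\tr\check{Z}_{cyc}Z_0^l$ are the relevant generating functions.

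It then remains to substitute the local coordinates of Lemma \ref{L:qZopCoord}. There $\tr Z_0^k=\sum_i z_i^k$, while, using that $\check{Z}_s=\Id$ for $s\neq m-1$, the function $\tr\check{Z}_{cyc}Z_0^l$ equals $\sum_i\sigma_i\,c_i(z)\,z_i^l$ with $c_i(z)$ the Cauchy-type prefactor appearing in $(\check{Z}_{m-1})_{ii}$. Expanding the first identity gives $\br{z_i,z_j}=0$ immediately, and matching the other two families forces $\br{z_i,\sigma_j}=\delta_{ij}z_i\sigma_j$ and $\br{\sigma_i,\sigma_j}=0$. This last extraction is the only genuinely computational point and the main obstacle: inverting the generating-function relations relies on the same Cauchy-matrix algebra already used in the proof of Lemma \ref{L:qZopCoord}, and I would carry it out by adapting the local computation of \cite[\S3.1]{CF}, just as in Lemma \ref{L:DarbqH1}. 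The disappearance of the factor $\frac1m$ present in Lemma \ref{L:DarbqH2} simply reflects that here it is $Z_0$ itself, rather than its $m$-th power, that is diagonalised.
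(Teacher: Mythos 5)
Your skeleton is exactly the paper's: the same two distinguished elements $z_0=e_0+\hat{y}_0y_0$ and $\check{z}=\check{z}_0\cdots\check{z}_{m-1}$, the same passage through the induced $H_0$-Poisson structure and Theorem \ref{Thm:H0Rep} to trace identities, and the same local coordinates from Lemma \ref{L:qZopCoord}. The one place you diverge is the final extraction, which you flag as the main obstacle and propose to handle by redoing the Cauchy-matrix computation of \cite[\S3.1]{CF} as in Lemma \ref{L:DarbqH1}; the paper instead makes this step free. It observes that the identities \eqref{Eq:trbrZ} are literally \eqref{Eq:trbrX} under the swap $X_{cyc},\check{X}_0 \leftrightarrow Z_0,\check{Z}_{cyc}$, and that the functions $\tr Z_0^k$, $\tr \check{Z}_{cyc}Z_0^l$ written in coordinates on $(\cCn^{op})'$ coincide with $\tr X_{cyc}^k$, $\tr \check{X}_0 X_{cyc}^l$ on $\hCn'$ after the substitution $(x_j^m,\rho_j,t)\mapsto(z_j,\sigma_j,t^{-1})$, so Lemma \ref{L:DarbqH1} yields the brackets at once --- this substitution also \emph{explains} the disappearance of the factor $\frac1m$ that you correctly anticipated. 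Your direct route would work, but note one misprediction you would discover in carrying it out: the double brackets do \emph{not} reproduce \eqref{Eq:dbrhat} verbatim under $(x,\check{x}_0)\mapsto(\check{z},z_0)$. The paper computes
\begin{equation*}
\dgal{\check{z},\check{z}}=\tfrac12\,(\check{z}^2\otimes e_0 - e_0\otimes \check{z}^2)\,, \quad
\dgal{z_0,z_0}=\tfrac12\,(e_0\otimes z_0^2 - z_0^2\otimes e_0)\,, \quad
\dgal{\check{z},z_0}=-\tfrac12\,(e_0\otimes \check{z}z_0 + z_0\check{z}\otimes e_0 + \check{z}\otimes z_0 - z_0\otimes \check{z})\,,
\end{equation*}
whose signs differ from your guess (your $\dgal{\check{z},\check{z}}$ has the opposite sign, and the mixed bracket is not the third line of \eqref{Eq:dbrhat}); these discrepancies cancel at the level of the trace identities, which do come out in the form \eqref{Eq:trbrY} as you predicted, so the error is harmless but shows the pattern-matching at the double-bracket level cannot be taken on faith.
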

\begin{proof}
We first work in $A_m^{op}$ as in Lemma \ref{L:DarbqH2}. 
 Due to the choice of ordering, we have 
\begin{equation*}
 \begin{aligned}
  \dgal{\hat{y}_s,\hat{y}_r}=&\frac12 \delta_{s,r+1}\, \hat{y}_r \hat{y}_{r+1}\otimes e_{r+1} - \frac12 \delta_{s,r-1}\, e_r\otimes \hat{y}_{r-1}\hat{y}_r\,, \quad 
\dgal{y_0,y_0}=0\,, \\
\dgal{y_0,\hat{y}_s}=&\delta_{s,0}\,\left[e_0 \otimes e_1 +\frac12 \hat{y}_0y_0\otimes e_1 + \frac12 e_0 \otimes y_0\hat{y}_0 \right]
-\frac12 \delta_{s,m-1}\, \hat{y}_{m-1} \otimes y_0 + \frac12 \delta_{s,1}\,y_{0}\otimes \hat{y}_1\,.
 \end{aligned}
\end{equation*}
By localisation, these relations hold in $\check{A}_m^{op}$, where we can determine the double brackets between the generators  $z_0=e_0+\hat{y}_0y_0$ and $\check{z}_s=\hat{y}_s$. 
 Introducing $\check{z}:=\check{z}_0\ldots \check{z}_{m-1}$, we can then get  
\begin{equation*}
 \begin{aligned}
  \dgal{\check{z},\check{z}}=&\frac12 (\check{z}^2 \otimes e_0 - e_0\otimes \check{z}^2)\,, \quad 
\dgal{z_0,z_0}=\frac12 (e_0 \otimes z_0^2 - z_0^2 \otimes e_0 )\,, \\
\dgal{\check{z},z_0}=&-\frac12 (e_0\otimes \check{z}z_0 + z_0\check{z} \otimes e_0 + \check{z} \otimes z_0 - z_0 \otimes \check{z} )\,.
 \end{aligned}
\end{equation*}
If we let $\check{Z}_{cyc}:=\check{Z}_0\ldots \check{Z}_{m-1}$, we get from these double brackets the following identities on $\hCn^{op}$ 
 \begin{equation}
 \begin{aligned} \label{Eq:trbrZ}
  & \br{\tr Z_0^k, \tr Z_0^l}=0,  \quad \br{\tr Z_0^k, \tr \check{Z}_{cyc} Z_0^l} = k \tr(\check{Z}_{cyc}Z_0^{k+l})\,, \\
 &\br{\tr \check{Z}_{cyc} Z_0^k, \tr\check{Z}_{cyc} Z_0^l} = 
\left[\sum_{r=1}^{k}-\sum_{r=1}^{l} \right]\, \tr (\check{Z}_{cyc} Z_0^{k+l-r} \check{Z}_{cyc} Z_0^r)\,,
 \end{aligned}
 \end{equation}
which can be derived as in Lemma \ref{L:DarbqH1}.  
 Notice that \eqref{Eq:trbrX} and \eqref{Eq:trbrZ} are the same equations if we replace  
 $X_{cyc},\check{X}_0$ by $Z_0,\check{Z}_{cyc}$ (in that order), or vice-versa. 
If we also remark that the functions 
 $\tr X_{cyc}^k,\tr \check{X}_0 X_{cyc}^l$ written in coordinates in $\hCn'$, and the functions 
 $\tr Z_0^k,\tr \check{Z}_{cyc} Z_0^l$ written in coordinates in $(\cCn^{op})'$ are exactly the same upon replacing  
$(x_j^m,\rho_j,t)$ by $(z_j,\sigma_j,t^{-1})$, then the statement follows from  Lemma \ref{L:DarbqH1}. 
\end{proof}
As a corollary, we can get Darboux coordinates $(\check{q}_i,\check{p}_i)$ on a dense subset of $\cCn^{op}$ by considering $z_i=e^{\check{q}_i}$ and $\sigma_i=e^{\check{p}_i}$.

\subsubsection{Duality}

So far, we have obtained Darboux coordinates on dense subsets of $\cCn,\cCn^{op}$. Denoting $X_{cyc}=X_0\ldots X_{m-1}$ and $\check{Z}_{cyc}=\check{Z}_0\ldots \check{Z}_{m-1}$, we can use the Poisson diffeomorphism $\check{\Psi}:\cCn\to \cCn^{op}$ to obtain that 
\begin{equation*}
 \tr Z_{0}^{-k}\circ \check{\Psi} = \tr \check{X}_{0}^{k}\,, \quad 
\tr X_{cyc}^{k}\circ \check{\Psi}^{-1} = \tr (\check{Z}_{cyc})^{k}\,.
\end{equation*}
Hence, we conclude that we have action-angle duality between the integrable systems $(\tr \check{X}_{0}^{k})_{k=1}^n$ and $(\tr (\check{Z}_{cyc})^{k})_{k=1}^n$ defined on $\cCn$ and $\cCn^{op}$ respectively. In the coordinates $(q_i,p_i)$ we can write the functions $(\tr \check{X}_{0}^{k})_{k=1}^n$ as hyperbolic (or trigonometric, see Remark \ref{Rem:compl1}) RS Hamiltonians of type $A_{n-1}$, which are exponential of order $km$ in the momenta $(p_i)$. In the coordinates $(\check{q}_i,\check{p}_i)$, the functions $\tr (\check{Z}_{cyc})^{k}$ are also Hamiltonians of the hyperbolic  RS system of type $A_{n-1}$, but they are simply exponential of order $k$ in the momenta. The dependence on $m$ of this second family is hidden in the coupling $t=\prod_{s\in I} \rc_s$.

\begin{rem}
 Fix $m=1$. In that case, the duality map hence obtained is close to the one considered in \cite[Proposition 3.8]{CF}, though the duality map used in that paper relates the spaces with inverse parameters $t$ and $t^{-1}$.  
Note also that, upon identifying the generators $(x,y)$ from Example \ref{Exmp:C1} with $(x_0,x_0^{-1}\check{x}_0)$, the quasi-Hamiltonian isomorphisms \eqref{Eq:AutC1} induce Poisson automorphisms on $\cCn$ which act as the identity on $V$ and $\check{V}$ while $X_0,\check{X}_0$ transform as
\begin{equation}
 (X_0,\check{X}_0)\mapsto (\check{X}_0,\check{X}_0 X_0^{-1}\check{X}_0)\,, \quad 
(X_0,\check{X}_0) \mapsto (\check{X}_0^{-1}X_0,X_0)\,, \quad 
(X_0,\check{X}_0) \mapsto (\alpha X_0,\alpha\beta \check{X}_0),
\end{equation}
with $\alpha,\beta\in \CC^\times$. 
  As noted in Example \ref{Ex:C1bis}, we get in particular that $\Sl_2(\Z)$ acts on $\cCn$ by Poisson automorphism. This $\Sl_2(\Z)$ action was related to the self-duality of the hyperbolic RS system in \cite[\S2.2]{GR}. The analogue of this result in the real setting can be found in \cite{FK12,FK13}. The original discovery of self-duality of the hyperbolic RS system in the real case is due to Ruijsenaars \cite{R88}. 
\end{rem}

\subsection{Additional remarks and further directions}

\subsubsection{} The flows of the Hamiltonian vector fields associated with the two integrable systems presented in \ref{ss:DualRCM} can be computed explicitly by adapting \cite{CS}. In particular, they are complete in $\Cn$ and $\Cn^{op}$. Similarly, the flows associated with the different integrable systems found in the other subsections can also be computed explicitly (see \cite{CF} to get the flows of $(\tr \hat{Y}_{cyc}^k)$ from \ref{ss:DualmRS} and the systems in \ref{ss:DualRS}), and they are complete in the phase spaces that support them.

\subsubsection{} The different integrable systems defined in the previous subsections admit spin extensions \cite{GH,KrZ}, which can be described as adding internal degrees of freedom to the different particles. The phase space of such spin extensions can be obtained by adding multiple framing arrows from the vertex $\infty$ to the different vertices in the cyclic quiver, see e.g. \cite{CF2,CS,Fai,Si}. Motivated by the work of Reshetikhin on the duality of spin systems in the CM-RS family \cite{Res,Res2}, it would be interesting to understand if the duality map can be realised at the level of quivers for systems with spins.


\appendix

\section{The fusion double bracket}  \label{App:Dbr}

Let $A$ be an algebra over $B=\oplus_{s\in I}\kk e_s$. Fix $i,j \in I$, $i \neq j$, and consider the fusion algebra $A^f=A^f_{e_j \to e_i}$ obtained by fusing $e_j$ onto $e_i$ as in \ref{ssFus}. The fusion algebra $A^f$ always admits a particular double bracket, denoted $\dgal{-,-}_{fus}$, which we define now. 
For $\epsilon=1-e_j$, recall from Lemma \ref{AfGenerat} that the generators of $A^f$ are of the form $\epsilon t \epsilon$ for $t\in \epsilon A \epsilon$, 
$e_{ij} u  \epsilon$ for $u\in e_j A \epsilon$, $\epsilon v e_{ji}$ for $v \in \epsilon A e_j$, and $e_{ij} w e_{ji}$ for $e_j A e_j$.  The double bracket $\dgal{-,-}_{fus}$ is 
given on generators as follows :
\begin{subequations}
  \begin{align}
&\dgal{\epsilon t \epsilon , \epsilon \tilde{t} \epsilon}_{fus}=0\,, \label{tt}\\
&\dgal{\epsilon t \epsilon , e_{ij} u \epsilon}_{fus}=\frac12 \left( e_i \otimes t e_{ij}u - e_i t  \otimes e_{ij}u\right)\,, \label{tu}\\
&\dgal{\epsilon t \epsilon ,\epsilon v e_{ji} }_{fus}=\frac12 \left( v e_{ji} t   \otimes e_i - v e_{ji} \otimes t e_i\right)\,, \label{tv}\\
&\dgal{\epsilon t \epsilon , e_{ij} w e_{ji}}_{fus}=\frac12 \left( e_{ij}we_{ji}t \otimes e_i + e_i \otimes t e_{ij} w e_{ji} - e_{ij} w e_{ji} \otimes t e_i - e_i t \otimes e_{ij} w e_{ji}\right)\,, \label{tw}
  \end{align}
\end{subequations}
when the first component $\epsilon t \epsilon$ is a generator of the first type \eqref{type1}; 
\begin{subequations}
  \begin{align}
&\dgal{e_{ij} u  \epsilon , \epsilon t \epsilon}_{fus}=\frac12 (e_{ij}u \otimes e_i t- t e_{ij} u  \otimes e_i) \,, \label{ut}\\
&\dgal{e_{ij} u \epsilon , e_{ij} \tilde{u} \epsilon}_{fus}= \frac12 (e_i \otimes e_{ij}u e_{ij}\tilde{u}-e_{ij}\tilde{u} e_{ij}u \otimes e_i) \,, \label{uu}\\
&\dgal{e_{ij} u \epsilon ,\epsilon v e_{ji} }_{fus}=\frac12(e_{ij}u \otimes e_i ve_{ji}-v e_{ji} \otimes e_{ij}u e_i)\,, \label{uv}\\
&\dgal{e_{ij} u\epsilon , e_{ij} w e_{ji}}_{fus}=\frac12(e_i \otimes e_{ij}u e_{ij}we_{ji} - e_{ij}w  e_{ji} \otimes e_{ij}u e_i) \,, \label{uw}
  \end{align}
\end{subequations}
when the first component $e_{ij} u  \epsilon$ is a generator of the second type \eqref{type2}; 
\begin{subequations}
  \begin{align}
 &\dgal{\epsilon v e_{ji} , \epsilon t \epsilon}_{fus}=\frac12 (t e_i \otimes v e_{ji}-e_i \otimes v e_{ji}t) \,, \label{vt}\\
&\dgal{\epsilon v e_{ji}, e_{ij} u \epsilon}_{fus}= \frac12 ( e_{ij}u e_i \otimes v e_{ji} - e_i v e_{ji}\otimes e_{ij}u) \,, \label{vu}\\
&\dgal{\epsilon v e_{ji},\epsilon \tilde{v} e_{ji} }_{fus}=\frac12( \tilde{v} e_{ji}v e_{ji}\otimes e_i - e_i \otimes v e_{ji}\tilde{v} e_{ji})\,, \label{vv}\\
&\dgal{\epsilon v e_{ji} , e_{ij} w e_{ji}}_{fus}=\frac12(e_{ij}we_{ji}ve_{ji}\otimes e_i - e_i v e_{ji}\otimes e_{ij}we_{ji} ) \,, \label{vw}
  \end{align}
\end{subequations}
when the first component $\epsilon v e_{ji}$ is a generator of the third type \eqref{type3}; 
\begin{subequations}
  \begin{align}
 &\dgal{e_{ij} w e_{ji} , \epsilon t \epsilon}_{fus}=\frac12 (t e_i \otimes e_{ij}we_{ji}+e_{ij}we_{ji}\otimes e_i t - t e_{ij}we_{ji}\otimes e_i - e_i \otimes e_{ij}we_{ji}t) \,, \label{wt}\\
&\dgal{e_{ij} w e_{ji}, e_{ij} u \epsilon}_{fus}= \frac12 ( e_{ij}u e_i \otimes e_{ij}w  e_{ji}  -  e_{ij}u e_{ij}we_{ji} \otimes e_i) \,, \label{wu}\\
&\dgal{e_{ij} w e_{ji},\epsilon v e_{ji} }_{fus}=\frac12( e_{ij}we_{ji} \otimes e_i v e_{ji} - e_i \otimes e_{ij}we_{ji}ve_{ji})\,, \label{wv}\\
&\dgal{e_{ij} w e_{ji} , e_{ij} \tilde{w} e_{ji}}_{fus}=0 \,, \label{ww}
  \end{align}
\end{subequations}
when the first component $e_{ij} w e_{ji}$ is a generator of the fourth type \eqref{type4}. 
The case corresponding to $i=1,j=2$ gives the double bracket considered in Proposition \ref{Pr:IsoFusqHam} on $A^f_{e_2\to e_1}$.
The double bracket $\dgal{-,-}_{fus}$ was first introduced by Van den Bergh in \cite[Theorem 5.3.1]{VdB1}, and the explicit form given above was computed in \cite[Lemma 2.19]{F19}.

\section{Identities for the proof of Lemma \ref{L:IsoFusqHam0}} \label{App:LIsoFusqHam0}

In this section, we collect  the values of all the elements in $A_2\otimes A_2$ given by 
\begin{equation*}
 \dgalindU{c,d}\,,\,\,\, \dgalfusU{c,d}\,, \quad \dgalindD{\psi(c),\psi(d)}\,,\,\,\, \dgalfusD{\psi(c),\psi(d)}\,,
\end{equation*}
for all the needed specialisations of the generators $c,d \in A_1$. It is then easy to check that \eqref{Eq:FusqHam} is satisfied in all such cases, i.e. 
\begin{equation*}
 \psi^{\otimes2}\dgalindU{c,d}+\psi^{\otimes2}\dgalfusU{c,d}= \dgalindD{\psi(c),\psi(d)}+\dgalfusD{\psi(c),\psi(d)}\,.
\end{equation*}
As a result, the proof of Lemma \ref{L:IsoFusqHam0} is completed.

\begin{rem}
  In each case, we write $\dgalindU{c,d}$ as an element of $e A f \otimes e' A f'$ 
for some $e,e'\in\{\he,e_1,e_{12}\}$ and $f,f'\in\{\he,e_1,e_{21}\}$. 
In particular, this completely characterises how $\psi^{\otimes 2}$ acts on $\dgalindU{c,d}$. 
\end{rem}

\subsection{} We consider the first specialisation of a generator of first type, that is $c=a$ for $a\in \he A \he$.

\textit{Case 1.1 : $d=b$, $b \in \he A \he$.} 
\begin{equation}
 \begin{aligned}
  &\dgalindU{c,d}=\he \dgal{a,b}'\he \otimes \he \dgal{a,b}''\he, \,\,\, \dgalfusU{c,d}=0, \\
  &\dgalindD{\psi(c),\psi(d)}=\dgal{a,b}, \,\,\, \dgalfusD{\psi(c),\psi(d)} =0\,.
 \end{aligned}
\end{equation}

\textit{Case 1.2 : $d=b$, $b \in e_1 A \he$.} 
\begin{equation}
 \begin{aligned}
  &\dgalindU{c,d}=e_1 \dgal{a,b}'\he \otimes \he \dgal{a,b}''\he, \,\,\, \dgalfusU{c,d}=0, \\ 
  &\dgalindD{\psi(c),\psi(d)}=\Phi_2 e_{21}\dgal{a,b}, \,\,\, \dgalfusD{\psi(c),\psi(d)} =0\,.
 \end{aligned}
\end{equation}

\textit{Case 1.3 : $d=b$, $b \in \he A e_1$.} 
\begin{equation}
 \begin{aligned}
  &\dgalindU{c,d}=\he \dgal{a,b}'\he \otimes \he \dgal{a,b}''e_1, \,\,\, \dgalfusU{c,d}=0, \\
  &\dgalindD{\psi(c),\psi(d)}=\dgal{a,b}e_{12}\Phi_2^{-1}, \,\,\, \dgalfusD{\psi(c),\psi(d)} =0\,.
 \end{aligned}
\end{equation}

\textit{Case 1.4 : $d=b$, $b \in e_1 A e_1$.} 
\begin{equation}
 \begin{aligned}
  &\dgalindU{c,d}=e_1 \dgal{a,b}'\he \otimes \he \dgal{a,b}''e_1, \,\,\, \dgalfusU{c,d}=0, \\ 
  &\dgalindD{\psi(c),\psi(d)}=\Phi_2 e_{21}\dgal{a,b}e_{12}\Phi_2^{-1}, \,\,\, \dgalfusD{\psi(c),\psi(d)} =0\,.
 \end{aligned}
\end{equation}

\textit{Case 1.5 : $d=e_{12}b$, $b \in e_2 A \he$.} 
\begin{equation}
 \begin{aligned}
  &\dgalindU{c,d}=e_{12} \dgal{a,b}'\he \otimes \he \dgal{a,b}''\he, \,\,\, \dgalfusU{c,d}=0, \\
  &\dgalindD{\psi(c),\psi(d)}=\dgal{a,b}, \,\,\, \dgalfusD{\psi(c),\psi(d)} =0\,.
 \end{aligned}
\end{equation}

\textit{Case 1.6 : $d=e_{12}b$, $b \in e_2 A e_1$.} 
\begin{equation}
 \begin{aligned}
  &\dgalindU{c,d}=e_{12} \dgal{a,b}'\he \otimes \he \dgal{a,b}''e_1, \,\,\, \dgalfusU{c,d}=0, \\
  &\dgalindD{\psi(c),\psi(d)}=\dgal{a,b}e_{12}\Phi_2^{-1}, \,\,\, \dgalfusD{\psi(c),\psi(d)} =0\,.
 \end{aligned}
\end{equation}

\textit{Case 1.7 : $d=be_{21}$, $b \in \he A e_2$.} 
\begin{equation}
 \begin{aligned}
  &\dgalindU{c,d}=\he \dgal{a,b}'\he \otimes \he \dgal{a,b}''e_{21}, \,\,\, \dgalfusU{c,d}=0, \\
  &\dgalindD{\psi(c),\psi(d)}=\dgal{a,b}, \,\,\, \dgalfusD{\psi(c),\psi(d)} =0\,.
 \end{aligned}
\end{equation}

\textit{Case 1.8 : $d=be_{21}$, $b \in e_1 A e_2$.} 
\begin{equation}
 \begin{aligned}
  &\dgalindU{c,d}=e_1 \dgal{a,b}'\he \otimes \he \dgal{a,b}''e_{21}, \,\,\, \dgalfusU{c,d}=0, \\ 
  &\dgalindD{\psi(c),\psi(d)}=\Phi_2 e_{21}\dgal{a,b}, \,\,\, \dgalfusD{\psi(c),\psi(d)} =0\,.
 \end{aligned}
\end{equation}

\textit{Case 1.9 : $d=e_{12}be_{21}$, $b \in e_2 A e_2$.} 
\begin{equation}
 \begin{aligned}
  &\dgalindU{c,d}=e_{12}\dgal{a,b}'\he \otimes \he \dgal{a,b}''e_{21}, \,\,\dgalfusU{c,d}=0, \\
  &\dgalindD{\psi(c),\psi(d)}=\dgal{a,b}, \,\, \dgalfusD{\psi(c),\psi(d)} =0.
 \end{aligned}
\end{equation}

\subsection{} We consider the second specialisation of a generator of first type, that is $c=a$ for $a\in e_1 A \he$. 

\textit{Case 2.1 : $d=b$, $b\in e_1 A \he$.} This was done in the proof of Lemma \ref{L:IsoFusqHam0}. 

\textit{Case 2.2 : $d=b$, $b \in \he A e_1$.} 
\begin{equation}
 \begin{aligned}
  \dgalindU{c,d}=&\he \dgal{a,b}'\he \otimes e_1\dgal{a,b}''e_1, \,\,\, \dgalfusU{c,d}=0, \\ 
  \dgalindD{\psi(c),\psi(d)}=&\Phi_2 e_{21}\ast\dgal{a,b}e_{12}\Phi_2^{-1}-\frac12 (b e_{12}\Phi_2 e_{21}a \otimes \Phi_2^{-1} - b e_{12}\Phi_2^{-1}e_{21}a \otimes \Phi_2), \\
  \dgalfusD{\psi(c),\psi(d)} =&\frac12(b e_{12}\Phi_2 e_{21}a \otimes \Phi_2^{-1} - b e_{12}\Phi_2^{-1}e_{21}a \otimes \Phi_2)\,.
 \end{aligned}
\end{equation}

\textit{Case 2.3 : $d=b$, $b \in e_1 A e_1$.} 
\begin{equation}
 \begin{aligned}
  \dgalindU{c,d}=&e_1 \dgal{a,b}'\he \otimes e_1\dgal{a,b}''e_1, \quad \dgalfusU{c,d}=0, \\ 
  \dgalindD{\psi(c),\psi(d)}=&\Phi_2 e_{21}\ast\Phi_2 e_{21}\dgal{a,b}e_{12}\Phi_2^{-1}\\
  &-\frac12(\Phi_2 e_{21}be_{12}\Phi_2 e_{21} a \otimes \Phi_2^{-1}- \Phi_2^2 e_{21}a \otimes e_{21}be_{12}\Phi_2^{-1})\\
  &-\frac12(e_{21}a \otimes \Phi_2^{2} e_{21}be_{12}\Phi_2^{-1} - \Phi_2e_{21}be_{12}\Phi_2^{-1}e_{21}a \otimes \Phi_2), \\
  \dgalfusD{\psi(c),\psi(d)} =&
  +\frac12(\Phi_2 e_{21}be_{12}\Phi_2 e_{21} a \otimes \Phi_2^{-1}- \Phi_2^2 e_{21}a \otimes e_{21}be_{12}\Phi_2^{-1}) \\
  &+\frac12(e_{21}a \otimes \Phi_2^{2} e_{21}be_{12}\Phi_2^{-1} - \Phi_2e_{21}be_{12}\Phi_2^{-1}e_{21}a \otimes \Phi_2)\,.
 \end{aligned}
\end{equation}

\textit{Case 2.4 : $d=e_{12}b$, $b \in e_2 A \he$.} 
\begin{equation}
 \begin{aligned}
  \dgalindU{c,d}=&e_{12}\dgal{a,b}'\he \otimes e_1\dgal{a,b}''\he, \quad \dgalfusU{c,d}=-\frac12 a \otimes e_{12}b, \\ 
  \dgalindD{\psi(c),\psi(d)}=&\Phi_2 e_{21}\ast\dgal{a,b}-\frac12(e_{21}a \otimes \Phi_2b + \Phi_{2}e_{21}a \otimes b), \\
  \dgalfusD{\psi(c),\psi(d)} =&\frac12 e_{21}a \otimes \Phi_2 b\,.
 \end{aligned}
\end{equation}

\textit{Case 2.5 : $d=e_{12}b$, $b \in e_2 A e_1$.} 
\begin{equation}
 \begin{aligned}
  \dgalindU{c,d}=&e_{12}\dgal{a,b}'\he \otimes e_1\dgal{a,b}''e_1, \quad \dgalfusU{c,d}=-\frac12 a \otimes e_{12}b, \\ 
  \dgalindD{\psi(c),\psi(d)}=&\Phi_2 e_{21}\ast\dgal{a,b}e_{12}\Phi_2^{-1}-\frac12 \Phi_{2}e_{21}a \otimes b e_{12}\Phi_2^{-1}\\
  &-\frac12(b e_{12} \Phi_2 e_{21}a \otimes \Phi_2^{-1}-be_{12}\Phi_2^{-1} e_{21}a \otimes \Phi_2  + e_{21}a \otimes \Phi_2b e_{12}\Phi_2^{-1} )\,,\\
  \dgalfusD{\psi(c),\psi(d)} =&+\frac12 (be_{12}\Phi_2e_{21}a \otimes \Phi_2^{-1}-be_{12}\Phi_2^{-1}e_{21}a\otimes \Phi_2 + e_{21}a \otimes \Phi_2be_{12}\Phi_2^{-1})\,.
 \end{aligned}
\end{equation}

\textit{Case 2.6 : $d=be_{21}$, $b \in \he A e_2$.}\footnote{This is the first case where one needs to be careful when computing $\psi^{\otimes 2}\dgalfusU{c,d}$ in order to verify \eqref{Eq:FusqHam}. It is given in this case by $\frac12 \psi(b e_{21}a) \otimes \psi(e_1)$, and we have by definition of $\psi$ that  $\psi(e_1)=\Phi_2 e_{21}e_{12}\Phi_2^{-1}=e_2$. For the first factor, 
we need to remark that $be_{21}a$ is \emph{not} a specialisation of generators of $A_1$ as defined earlier, but it is a product of two. Hence, $\psi(b e_{21}a)=\psi(be_{21})\psi(a)=b \Phi_2 e_{21} a$.} 
\begin{equation}
 \begin{aligned}
  \dgalindU{c,d}=&\he\dgal{a,b}'\he \otimes e_1\dgal{a,b}''e_{21}, \quad \dgalfusU{c,d}=\frac12 b e_{21}a \otimes e_1, \\ 
  \dgalindD{\psi(c),\psi(d)}=&\Phi_2 e_{21}\ast\dgal{a,b}+ \frac12 (b e_{21}a \otimes \Phi_2 + b\Phi_2 e_{21}a \otimes e_2)\,,\\
  \dgalfusD{\psi(c),\psi(d)} =&-\frac12 b e_{21}a \otimes \Phi_2\,.
 \end{aligned}
\end{equation}

\textit{Case 2.7 : $d=be_{21}$, $b \in e_1 A e_2$.}
\begin{equation}
 \begin{aligned}
  \dgalindU{c,d}=&e_1\dgal{a,b}'\he \otimes e_1\dgal{a,b}''e_{21}, \quad \dgalfusU{c,d}=\frac12 b e_{21}a \otimes e_1, \\ 
  \dgalindD{\psi(c),\psi(d)}=&\Phi_2 e_{21}\ast \Phi_2 e_{21}\dgal{a,b}+ \frac12 \Phi_2 e_{21}b \Phi_2 e_{21}a \otimes e_2 \\
&-\frac12 (e_{21}a \otimes \Phi_2^2 e_{21}b - \Phi_2^2 e_{21}a \otimes e_{21} b - \Phi_2 e_{21}b e_{21}a \otimes \Phi_2) \,,\\  
  \dgalfusD{\psi(c),\psi(d)} =&+\frac12 (e_{21}a \otimes \Phi_2^2 e_{21}b - \Phi_2^2 e_{21}a \otimes e_{21} b - \Phi_2 e_{21}b e_{21}a \otimes \Phi_2)\,.
 \end{aligned}
\end{equation}

\textit{Case 2.8 : $d=e_{12}be_{21}$, $b \in e_2 A e_2$.}
\begin{equation}
 \begin{aligned}
  \dgalindU{c,d}=&e_{12}\dgal{a,b}'\he \otimes e_1\dgal{a,b}''e_{21}, \quad \dgalfusU{c,d}=\frac12 (e_{12}be_{21}a \otimes e_1-a \otimes e_{12}be_{21}), \\ 
  \dgalindD{\psi(c),\psi(d)}=&\Phi_2 e_{21}\ast \dgal{a,b}+ \frac12(b \Phi_2e_{21}a \otimes e_2 - \Phi_2 e_{21}a \otimes b)\\  
  &-\frac12(e_{21}a \otimes \Phi_2 b - b e_{21}a \otimes \Phi_2)\,, \\
  \dgalfusD{\psi(c),\psi(d)} =&+\frac12(e_{21}a \otimes \Phi_2 b - b e_{21}a \otimes \Phi_2)\,.
 \end{aligned}
\end{equation}

\subsection{} We consider the third specialisation of a generator of first type, that is $c=a$ for $a\in \he A e_1$.

\textit{Case 3.1 : $d=b$, $b\in \he A e_1$.} 
\begin{equation}
 \begin{aligned}
  \dgalindU{c,d}=&\he\dgal{a,b}'e_1 \otimes \he\dgal{a,b}''e_1, \quad \dgalfusU{c,d}=0, \\ 
  \dgalindD{\psi(c),\psi(d)}=&\dgal{a,b}e_{12}\Phi_2^{-1}\ast e_{12}\Phi_2^{-1} - \frac12( b e_{12} \Phi_2^{-2} \otimes a e_{12} - b e_{12} \otimes a e_{12}\Phi_2^{-2}) \,, \\
  \dgalfusD{\psi(c),\psi(d)} =&+\frac12( b e_{12} \Phi_2^{-2} \otimes a e_{12} - b e_{12} \otimes a e_{12}\Phi_2^{-2}) \,.
 \end{aligned}
\end{equation}

\textit{Case 3.2 : $d=b$, $b\in e_1 A e_1$.} 

\begin{equation}
 \begin{aligned}
  \dgalindU{c,d}=&e_1\dgal{a,b}'e_1 \otimes \he\dgal{a,b}''e_1, \quad \dgalfusU{c,d}=0, \\ 
  \dgalindD{\psi(c),\psi(d)}=&\Phi_2 e_{21}\dgal{a,b}e_{12}\Phi_2^{-1}\ast e_{12}\Phi_2^{-1} \\
  &-\frac12(\Phi_2 \otimes a e_{12} \Phi_2^{-1}e_{21}be_{12}\Phi_2^{-1}-\Phi_2^{-1} \otimes a e_{12} \Phi_2e_{21}be_{12}\Phi_2^{-1}) \\
  &-\frac12(\Phi_2 e_{21}be_{12}\Phi_2^{-2} \otimes a e_{12} -\Phi_2 e_{21}be_{12} \otimes a e_{12} \Phi_2^{-2})\,, \\
  \dgalfusD{\psi(c),\psi(d)} =&+\frac12(\Phi_2 \otimes a e_{12} \Phi_2^{-1}e_{21}be_{12}\Phi_2^{-1}-\Phi_2^{-1} \otimes a e_{12} \Phi_2e_{21}be_{12}\Phi_2^{-1}) \\
  &+\frac12(\Phi_2 e_{21}be_{12}\Phi_2^{-2} \otimes a e_{12} -\Phi_2 e_{21}be_{12} \otimes a e_{12} \Phi_2^{-2}) \,.
 \end{aligned}
\end{equation}

\textit{Case 3.3 : $d=e_{12}b$, $b\in e_2 A \he$.} 

\begin{equation}
 \begin{aligned}
  \dgalindU{c,d}=&e_{12}\dgal{a,b}'e_1 \otimes \he\dgal{a,b}''\he, \quad \dgalfusU{c,d}=\frac12 e_1 \otimes a e_{12}b, \\ 
  \dgalindD{\psi(c),\psi(d)}=&\dgal{a,b}\ast e_{12}\Phi_2^{-1} + \frac12 (e_2 \otimes a e_{12}\Phi_2^{-1}b + \Phi_2^{-1}\otimes a e_{12}b)\,, \\
  \dgalfusD{\psi(c),\psi(d)} =& -\frac12 \Phi_2^{-1}\otimes a e_{12}b\,.
 \end{aligned}
\end{equation}

\textit{Case 3.4 : $d=e_{12}b$, $b\in e_2 A e_1$.} 

\begin{equation}
 \begin{aligned}
  \dgalindU{c,d}=&e_{12}\dgal{a,b}'e_1 \otimes \he\dgal{a,b}''e_1, \quad \dgalfusU{c,d}=\frac12 e_1 \otimes a e_{12}b, \\ 
  \dgalindD{\psi(c),\psi(d)}=&\dgal{a,b}e_{12}\Phi_2^{-1}\ast e_{12}\Phi_2^{-1} +\frac12 e_2 \otimes a e_{12} \Phi_2^{-1} b e_{12} \Phi_2^{-1}\\
  &-\frac12(b e_{12}\Phi_2^{-2}\otimes a e_{12} - b e_{12} \otimes a e_{12} \Phi_2^{-2} - \Phi_2^{-1} \otimes a e_{12}b e_{12}\Phi_2^{-1})\,, \\
  \dgalfusD{\psi(c),\psi(d)} =& +\frac12(b e_{12}\Phi_2^{-2}\otimes a e_{12} - b e_{12} \otimes a e_{12} \Phi_2^{-2} - \Phi_2^{-1} \otimes a e_{12}b e_{12}\Phi_2^{-1})\,.
 \end{aligned}
\end{equation}

\textit{Case 3.5 : $d=be_{21}$, $b\in \he A e_2$.} 

\begin{equation}
 \begin{aligned}
  \dgalindU{c,d}=&\he\dgal{a,b}'e_1 \otimes \he\dgal{a,b}''e_{21}, \quad \dgalfusU{c,d}=-\frac12 b e_{21}\otimes a, \\ 
  \dgalindD{\psi(c),\psi(d)}=&\dgal{a,b}\ast e_{12}\Phi_2^{-1} -\frac12(b \Phi_2^{-1} \otimes a e_{12} + b \otimes a e_{12}\Phi_2^{-1})   \,, \\
  \dgalfusD{\psi(c),\psi(d)} =& +\frac12 b \Phi_2^{-1} \otimes a e_{12}\,.
 \end{aligned}
\end{equation}

\textit{Case 3.6 : $d=be_{21}$, $b\in e_1 A e_2$.} 

\begin{equation}
 \begin{aligned}
  \dgalindU{c,d}=&e_1\dgal{a,b}'e_1 \otimes \he\dgal{a,b}''e_{21}, \quad \dgalfusU{c,d}=-\frac12 b e_{21}\otimes a, \\ 
  \dgalindD{\psi(c),\psi(d)}=&\Phi_2 e_{21}\dgal{a,b}\ast e_{12}\Phi_2^{-1}   -\frac12 \Phi_2 e_{21}b \otimes a e_{12}\Phi_2^{-1} \\
  &-\frac12(\Phi_2 e_{21}b\Phi_2^{-1} \otimes a e_{12} + \Phi_2 \otimes a e_{12}\Phi_2^{-1}e_{21}b - \Phi_2^{-1}\otimes a e_{12}\Phi_2 e_{21}b)\,, \\
  \dgalfusD{\psi(c),\psi(d)} =& +\frac12(\Phi_2 e_{21}b\Phi_2^{-1} \otimes a e_{12} + \Phi_2 \otimes a e_{12}\Phi_2^{-1}e_{21}b - \Phi_2^{-1}\otimes a e_{12}\Phi_2 e_{21}b)\,.
 \end{aligned}
\end{equation}

\textit{Case 3.7 : $d=e_{12}be_{21}$, $b\in e_2 A e_2$.} 

\begin{equation}
 \begin{aligned}
  \dgalindU{c,d}=&e_{12}\dgal{a,b}'e_1 \otimes \he\dgal{a,b}''e_{21}, \quad \dgalfusU{c,d}=\frac12(e_1 \otimes a e_{12}be_{21} - e_{12}b e_{21}\otimes a), \\ 
  \dgalindD{\psi(c),\psi(d)}=&\dgal{a,b}\ast e_{12}\Phi_2^{-1} +\frac12(e_2 \otimes a e_{12}\Phi_2^{-1}b - b \otimes a e_{12} \Phi_2^{-1})\\
  &-\frac12(b \Phi_2^{-1} \otimes a e_{12} -\Phi_2^{-1} \otimes a e_{12}b)\,, \\
  \dgalfusD{\psi(c),\psi(d)} =& +\frac12(b \Phi_2^{-1} \otimes a e_{12} -\Phi_2^{-1} \otimes a e_{12}b)\,.
 \end{aligned}
\end{equation}

\subsection{} We consider the fourth specialisation of a generator of first type, that is $c=a$ for $a\in e_1 A e_1$.

\textit{Case 4.1 : $d=b$, $b\in e_1 A e_1$.} 
\begin{equation}
 \begin{aligned}
  \dgalindU{c,d}=&e_1\dgal{a,b}'e_1 \otimes e_1\dgal{a,b}''e_1, \quad \dgalfusU{c,d}=0, \\ 
  \dgalindD{\psi(c),\psi(d)}=&\Phi_2 e_{21} \ast \Phi_2 e_{21}\dgal{a,b}e_{12}\Phi_2^{-1}\ast e_{12}\Phi_2^{-1} \\
  &-\frac12( \Phi_2^2 e_{21}a e_{12}\Phi_2^{-1} \otimes e_{21}be_{12}\Phi_2^{-1}-e_{21}a e_{12}\Phi_2^{-1} \otimes \Phi_2^2 e_{21}be_{12}\Phi_2^{-1}) \\
  &-\frac12(\Phi_2 e_{21}b e_{12}\otimes \Phi_2 e_{21}ae_{12}\Phi_2^{-2}-\Phi_2 e_{21}b e_{12}\Phi_2^{-2} \otimes \Phi_2 e_{21}ae_{12}) \\
  &- \frac12(\Phi_2 e_{21}be_{12}\Phi_2^{-1}e_{21}ae_{12}\Phi_2^{-1}\otimes \Phi_2 - \Phi_2 e_{21}be_{12}\Phi_2e_{21}ae_{12}\Phi_2^{-1}\otimes \Phi_2^{-1})   \\
  &-\frac12 (\Phi_2^{-1}\otimes \Phi_2 e_{21}ae_{12}\Phi_2e_{21}be_{12}\Phi_2^{-1} - \Phi_2 \otimes \Phi_2 e_{21}ae_{12}\Phi_2^{-1}e_{21}be_{12}\Phi_2^{-1})\,, \\
  \dgalfusD{\psi(c),\psi(d)} =&+\frac12( \Phi_2^2 e_{21}a e_{12}\Phi_2^{-1} \otimes e_{21}be_{12}\Phi_2^{-1}-e_{21}a e_{12}\Phi_2^{-1} \otimes \Phi_2^2 e_{21}be_{12}\Phi_2^{-1}) \\
  &+\frac12(\Phi_2 e_{21}b e_{12}\otimes \Phi_2 e_{21}ae_{12}\Phi_2^{-2}-\Phi_2 e_{21}b e_{12}\Phi_2^{-2} \otimes \Phi_2 e_{21}ae_{12}) \\
  &+ \frac12(\Phi_2 e_{21}be_{12}\Phi_2^{-1}e_{21}ae_{12}\Phi_2^{-1}\otimes \Phi_2 - \Phi_2 e_{21}be_{12}\Phi_2e_{21}ae_{12}\Phi_2^{-1}\otimes \Phi_2^{-1})   \\
  &+\frac12 (\Phi_2^{-1}\otimes \Phi_2 e_{21}ae_{12}\Phi_2e_{21}be_{12}\Phi_2^{-1} - \Phi_2 \otimes \Phi_2 e_{21}ae_{12}\Phi_2^{-1}e_{21}be_{12}\Phi_2^{-1})   \,.
 \end{aligned}
\end{equation}

\textit{Case 4.2 : $d=e_{12}b$, $b\in e_2 A \he$.} 
\begin{equation}
 \begin{aligned}
  \dgalindU{c,d}=&e_{12}\dgal{a,b}'e_1 \otimes e_1\dgal{a,b}''\he, \quad \dgalfusU{c,d}=\frac12(e_1 \otimes a e_{12}b - a \otimes e_{12}b), \\ 
  \dgalindD{\psi(c),\psi(d)}=&\Phi_2 e_{21} \ast \dgal{a,b}\ast e_{12}\Phi_2^{-1} 
  +\frac12(e_2 \otimes \Phi_2 e_{21}ae_{12}\Phi_2^{-1}b - \Phi_2 e_{21}ae_{12}\Phi_2^{-1} \otimes b)\\
  &-\frac12(e_{21}ae_{12}\Phi_2^{-1}\otimes \Phi_2 b - \Phi_2^{-1}\otimes \Phi_2 e_{21}ae_{12}b)\,, \\
  \dgalfusD{\psi(c),\psi(d)} =&+\frac12(e_{21}ae_{12}\Phi_2^{-1}\otimes \Phi_2 b - \Phi_2^{-1}\otimes \Phi_2 e_{21}ae_{12}b)   \,.
 \end{aligned}
\end{equation}

\textit{Case 4.3 : $d=e_{12}b$, $b\in e_2 A e_1$.} 
\begin{equation}
 \begin{aligned}
  \dgalindU{c,d}=&e_{12}\dgal{a,b}'e_1 \otimes e_1\dgal{a,b}''e_1, \quad \dgalfusU{c,d}=\frac12(e_1 \otimes a e_{12}b - a \otimes e_{12}b), \\ 
  \dgalindD{\psi(c),\psi(d)}=&\Phi_2 e_{21} \ast \dgal{a,b}e_{12}\Phi_2^{-1} \ast e_{12}\Phi_2^{-1} \\
  &+\frac12(e_2 \otimes \Phi_2 e_{21}ae_{12}\Phi_2^{-1}be_{12}\Phi_2^{-1} - \Phi_2 e_{21}ae_{12}\Phi_2^{-1} \otimes be_{12}\Phi_2^{-1}) \\
  &-\frac12(e_{21}ae_{12}\Phi_2^{-1}\otimes \Phi_2 b e_{12}\Phi_2^{-1} - \Phi_2^{-1} \otimes \Phi_2 e_{21}a e_{12}be_{12}\Phi_2^{-1}) \\
  &-\frac12 (b e_{12}\Phi_2 e_{21}ae_{12}\Phi_2^{-1}\otimes \Phi_2^{-1}-b e_{12}\Phi_2^{-1}e_{21}a e_{12}\Phi_2^{-1}\otimes \Phi_2) \\
  &-\frac12(b e_{12}\Phi_2^{-2} \otimes \Phi_2 e_{21}ae_{12}- be_{12}\otimes \Phi_2 e_{21}a e_{12}\Phi_2^{-2}  )\,, \\
  \dgalfusD{\psi(c),\psi(d)} =&+\frac12(e_{21}ae_{12}\Phi_2^{-1}\otimes \Phi_2 b e_{12}\Phi_2^{-1} - \Phi_2^{-1} \otimes \Phi_2 e_{21}a e_{12}be_{12}\Phi_2^{-1}) \\
  &+\frac12 (b e_{12}\Phi_2 e_{21}ae_{12}\Phi_2^{-1}\otimes \Phi_2^{-1}-b e_{12}\Phi_2^{-1}e_{21}a e_{12}\Phi_2^{-1}\otimes \Phi_2) \\
  &+\frac12(b e_{12}\Phi_2^{-2} \otimes \Phi_2 e_{21}ae_{12}- be_{12}\otimes \Phi_2 e_{21}a e_{12}\Phi_2^{-2}  )   \,.
 \end{aligned}
\end{equation}

\textit{Case 4.4 : $d=be_{21}$, $b\in \he A e_2$.} 
\begin{equation}
 \begin{aligned}
  \dgalindU{c,d}=&\he\dgal{a,b}'e_1 \otimes e_1\dgal{a,b}''e_{21}, \quad \dgalfusU{c,d}=\frac12(b e_{21}a \otimes e_1 - b e_{21}\otimes a e_1), \\ 
  \dgalindD{\psi(c),\psi(d)}=&\Phi_2 e_{21} \ast \dgal{a,b}\ast e_{12}\Phi_2^{-1} 
  +\frac12(b \Phi_2 e_{21}ae_{12} \Phi_2^{-1}\otimes e_2-b \otimes \Phi_2 e_{21}ae_{12} \Phi_2^{-1})\\
  &-\frac12(b \Phi_2^{-1}\otimes \Phi_2 e_{21}ae_{12} - b e_{21}ae_{12}\Phi_2^{-1}\otimes \Phi_2)\,, \\
  \dgalfusD{\psi(c),\psi(d)} =&+\frac12(b \Phi_2^{-1}\otimes \Phi_2 e_{21}ae_{12} - b e_{21}ae_{12}\Phi_2^{-1}\otimes \Phi_2)   \,.
 \end{aligned}
\end{equation}

\textit{Case 4.5 : $d=be_{21}$, $b\in e_1 A e_2$.} 
\begin{equation}
 \begin{aligned}
  \dgalindU{c,d}=&e_1\dgal{a,b}'e_1 \otimes e_1\dgal{a,b}''e_{21}, \quad \dgalfusU{c,d}=\frac12(b e_{21}a \otimes e_1 - b e_{21}\otimes a e_1), \\ 
  \dgalindD{\psi(c),\psi(d)}=&\Phi_2 e_{21} \ast \Phi_2 e_{21}\dgal{a,b}\ast e_{12}\Phi_2^{-1} \\
  &+\frac12(\Phi_2 e_{21} b \Phi_2 e_{21}ae_{12} \Phi_2^{-1}\otimes e_2-\Phi_2 e_{21} b \otimes \Phi_2 e_{21}ae_{12} \Phi_2^{-1})\\
  &- \frac12 (e_{21}ae_{12}\Phi_2^{-1} \otimes \Phi_2^2 e_{21}b - \Phi_2^2 e_{21}ae_{12}\Phi_2^{-1} \otimes e_{21}b) \\
  & - \frac12(\Phi_2 \otimes \Phi_2 e_{21}ae_{12}\Phi_2^{-1}e_{21}b - \Phi_2^{-1} \otimes \Phi_2 e_{21}ae_{12}\Phi_2 e_{21}b ) \\
  &-\frac12(\Phi_2 e_{21}b \Phi_2^{-1} \otimes \Phi_2 e_{21}a e_{12} - \Phi_2 e_{21}b e_{21}ae_{12}\Phi_2^{-1}\otimes \Phi_2)\,, \\
  \dgalfusD{\psi(c),\psi(d)} =&+\frac12 (e_{21}ae_{12}\Phi_2^{-1} \otimes \Phi_2^2 e_{21}b - \Phi_2^2 e_{21}ae_{12}\Phi_2^{-1} \otimes e_{21}b) \\
  & + \frac12(\Phi_2 \otimes \Phi_2 e_{21}ae_{12}\Phi_2^{-1}e_{21}b - \Phi_2^{-1} \otimes \Phi_2 e_{21}ae_{12}\Phi_2 e_{21}b ) \\
  &+\frac12(\Phi_2 e_{21}b \Phi_2^{-1} \otimes \Phi_2 e_{21}a e_{12} - \Phi_2 e_{21}b e_{21}ae_{12}\Phi_2^{-1}\otimes \Phi_2)   \,.
 \end{aligned}
\end{equation}

\textit{Case 4.6 : $d=e_{12}be_{21}$, $b\in e_2 A e_2$.} 
\begin{equation}
 \begin{aligned}
  \dgalindU{c,d}=&e_{12}\dgal{a,b}'e_1 \otimes e_1\dgal{a,b}''e_{21}\,, \\
  \dgalfusU{c,d}=&\frac12(e_{12}be_{21} a \otimes e_1 + e_1 \otimes a e_{12}be_{21} - e_{12}b e_{21}\otimes a - a \otimes e_{12}be_{21}), \\ 
  \dgalindD{\psi(c),\psi(d)}=&\Phi_2 e_{21} \ast \dgal{a,b}\ast e_{12}\Phi_2^{-1} \\
  &+\frac12(b \Phi_2 e_{21}ae_{12} \Phi_2^{-1} \otimes e_2 + e_2 \otimes \Phi_2 e_{21}ae_{12}\Phi_2^{-1} b) \\
  &-\frac12(b \otimes \Phi_2 e_{21}ae_{12}\Phi_2^{-1}  + \Phi_2 e_{21}ae_{12} \Phi_2^{-1}\otimes b) \\
  &-\frac12 (b \Phi_2^{-1} \otimes \Phi_2 e_{21}ae_{12} + e_{21}ae_{12}\Phi_2^{-1} \otimes \Phi_2b)\\ 
  &+\frac12( b e_{21}ae_{12}\Phi_2^{-1} \otimes \Phi_2 +\Phi_2^{-1} \otimes \Phi_2 e_{21}ae_{12} b) \,,\\
  \dgalfusD{\psi(c),\psi(d)} =&+\frac12 (b \Phi_2^{-1} \otimes \Phi_2 e_{21}ae_{12} + e_{21}ae_{12}\Phi_2^{-1} \otimes \Phi_2b)\\ 
  &-\frac12( b e_{21}ae_{12}\Phi_2^{-1} \otimes \Phi_2 +\Phi_2^{-1} \otimes \Phi_2 e_{21}ae_{12} b)   \,.
 \end{aligned}
\end{equation}

\subsection{} We consider the first specialisation of a generator of second type, that is $c=e_{12}a$ for $a\in e_2 A \he$. 

\textit{Case 5.1 : $d=e_{12}b$, $b\in e_2 A \he$.} 
\begin{equation}
 \begin{aligned}
  &\dgalindU{c,d}=e_{12}\dgal{a,b}'\he \otimes e_{12}\dgal{a,b}''\he, \quad \dgalfusU{c,d}=0, \\ 
  &\dgalindD{\psi(c),\psi(d)}=\dgal{a,b} \,, \quad  \dgalfusD{\psi(c),\psi(d)} =0 \,.
 \end{aligned}
\end{equation}

\textit{Case 5.2 : $d=e_{12}b$, $b\in e_2 A e_1$.} 
\begin{equation}
 \begin{aligned}
  \dgalindU{c,d}=&e_{12}\dgal{a,b}'\he \otimes e_{12}\dgal{a,b}''e_1, \quad \dgalfusU{c,d}=-\frac12 e_{12}be_{12}a \otimes e_1, \\ 
  \dgalindD{\psi(c),\psi(d)}=&\dgal{a,b}e_{12}\Phi_2^{-1}-\frac12(b e_{12}a \otimes \Phi_2^{-1}+b e_{12}\Phi_2^{-1}a\otimes e_2)  \,,\\ 
  \dgalfusD{\psi(c),\psi(d)} =&+\frac12 b e_{12}a\otimes \Phi_2^{-1} \,.
 \end{aligned}
\end{equation}

\textit{Case 5.3 : $d=be_{21}$, $b\in \he A e_2$.} 
\begin{equation}
 \begin{aligned}
  &\dgalindU{c,d}=\he \dgal{a,b}'\he \otimes e_{12}\dgal{a,b}''e_{21}, \quad \dgalfusU{c,d}=0, \\ 
  &\dgalindD{\psi(c),\psi(d)}=\dgal{a,b} \,, \quad  \dgalfusD{\psi(c),\psi(d)} =0 \,.
 \end{aligned}
\end{equation}

\textit{Case 5.4 : $d=be_{21}$, $b\in e_1 A e_2$.} 
\begin{equation}
 \begin{aligned}
  \dgalindU{c,d}=&e_1\dgal{a,b}'\he \otimes e_{12}\dgal{a,b}''e_{21}, \quad \dgalfusU{c,d}=\frac12 e_{12}a \otimes b e_{21}, \\ 
  \dgalindD{\psi(c),\psi(d)}=&\Phi_2 e_{21}\dgal{a,b}+\frac12(\Phi_2 a \otimes e_{21}b + a \otimes \Phi_2 e_{21}b)  \,,\\ 
  \dgalfusD{\psi(c),\psi(d)} =&-\frac12 \Phi_2 a \otimes e_{21}b  \,.
 \end{aligned}
\end{equation}

\textit{Case 5.5 : $d=e_{12}be_{21}$, $b\in e_2 A e_2$.} 
\begin{equation}
 \begin{aligned}
  &\dgalindU{c,d}=e_{12} \dgal{a,b}'\he \otimes e_{12}\dgal{a,b}''e_{21}, \quad \dgalfusU{c,d}=0, \\ 
  &\dgalindD{\psi(c),\psi(d)}=\dgal{a,b} \,, \quad  \dgalfusD{\psi(c),\psi(d)} =0 \,.
 \end{aligned}
\end{equation}

\subsection{} We consider the second specialisation of a generator of second type, that is $c=e_{12}a$ for $a\in e_2 A e_1$. 

\textit{Case 6.1 : $d=e_{12}b$, $b\in e_2 A e_1$.} 
\begin{equation}
 \begin{aligned}
  \dgalindU{c,d}=&e_{12}\dgal{a,b}'e_1 \otimes e_{12}\dgal{a,b}''e_1,\\ 
  \dgalfusU{c,d}=&\frac12(e_1 \otimes e_{12}a e_{12}b - e_{12}b e_{12}a \otimes e_1), \\ 
  \dgalindD{\psi(c),\psi(d)}=& \dgal{a,b}e_{12}\Phi_2^{-1}\ast e_{12}\Phi_2^{-1} \\
  &+\frac12(e_2 \otimes a e_{12}\Phi_2^{-1}b e_{12}\Phi_2^{-1} - b e_{12}\Phi_2^{-1}a e_{12}\Phi_2^{-1} \otimes e_2) \\
  &-\frac12(b e_{12}ae_{12}\Phi_2^{-1}\otimes \Phi_2^{-1} - \Phi_2^{-1} \otimes a e_{12}b e_{12}\Phi_2^{-1}) \\
  &-\frac12 (b e_{12} \Phi_2^{-2} \otimes a e_{12} - b e_{12} \otimes a e_{12} \Phi_2^{-2})\,,\\ 
  \dgalfusD{\psi(c),\psi(d)} =&+\frac12(b e_{12}ae_{12}\Phi_2^{-1}\otimes \Phi_2^{-1} - \Phi_2^{-1} \otimes a e_{12}b e_{12}\Phi_2^{-1}) \\
  &+\frac12 (b e_{12} \Phi_2^{-2} \otimes a e_{12} - b e_{12} \otimes a e_{12} \Phi_2^{-2})\,.
 \end{aligned}
\end{equation}

\textit{Case 6.2 : $d=be_{21}$, $b\in \he A e_2$.} 
\begin{equation}
 \begin{aligned}
  \dgalindU{c,d}=&\he\dgal{a,b}'e_1 \otimes e_{12}\dgal{a,b}''e_{21},\quad  \dgalfusU{c,d}=-\frac12 b e_{21} \otimes e_{12}a, \\ 
  \dgalindD{\psi(c),\psi(d)}=& \dgal{a,b}\ast e_{12}\Phi_2^{-1} -\frac12(b \otimes a e_{12}\Phi_2^{-1} + b \Phi_2^{-1} \otimes a e_{12}   )\,,\\ 
  \dgalfusD{\psi(c),\psi(d)} =&+\frac12 b \Phi_2^{-1} \otimes a e_{12}\,.
 \end{aligned}
\end{equation}

\textit{Case 6.3 : $d=be_{21}$, $b\in e_1 A e_2$.} 
\begin{equation}
 \begin{aligned}
  \dgalindU{c,d}=&e_1\dgal{a,b}'e_1 \otimes e_{12}\dgal{a,b}''e_{21},\quad  \dgalfusU{c,d}=\frac12 (e_{12}a \otimes b e_{21}-b e_{21} \otimes e_{12}a), \\ 
  \dgalindD{\psi(c),\psi(d)}=& \Phi_2 e_{21}\dgal{a,b}\ast e_{12}\Phi_2^{-1} +\frac12(a e_{12}\Phi_2^{-1}\otimes \Phi_2 e_{21}b - \Phi_2 e_{21}b \otimes a e_{12}\Phi_2^{-1}) \\
  &-\frac12 (\Phi_2 \otimes a e_{12}\Phi_2^{-1}e_{21}b - \Phi_2^{-1} \otimes a e_{12}\Phi_2 e_{21}b) \\
  &-\frac12 ( \Phi_2 e_{21}b \Phi_2^{-1} \otimes a e_{12} - \Phi_2 a e_{12} \Phi_2^{-1} \otimes e_{21}b )\,,\\ 
  \dgalfusD{\psi(c),\psi(d)} =&+\frac12 (\Phi_2 \otimes a e_{12}\Phi_2^{-1}e_{21}b - \Phi_2^{-1} \otimes a e_{12}\Phi_2 e_{21}b) \\
  &+\frac12 ( \Phi_2 e_{21}b \Phi_2^{-1} \otimes a e_{12} - \Phi_2 a e_{12} \Phi_2^{-1} \otimes e_{21}b )\,.
 \end{aligned}
\end{equation}

\textit{Case 6.4 : $d=e_{12}be_{21}$, $b\in e_2 A e_2$.} 
\begin{equation}
 \begin{aligned}
  \dgalindU{c,d}=&e_{12}\dgal{a,b}'e_1 \otimes e_{12}\dgal{a,b}''e_{21},\\ 
  \dgalfusU{c,d}=&\frac12 (e_1 \otimes e_{12}a e_{12} b e_{21} - e_{12}be_{21}\otimes e_{12}a) , \\ 
  \dgalindD{\psi(c),\psi(d)}=& \dgal{a,b}\ast e_{12}\Phi_2^{-1} +\frac12(e_2 \otimes a e_{12}\Phi_2^{-1}b - b \otimes a e_{12}\Phi_2^{-1})  \\
  &-\frac12 (b \Phi_2^{-1} \otimes a e_{12} - \Phi_2^{-1} \otimes a e_{12}b)\,,\\ 
  \dgalfusD{\psi(c),\psi(d)} =&+\frac12 (b \Phi_2^{-1} \otimes a e_{12} - \Phi_2^{-1} \otimes a e_{12}b)\,.
 \end{aligned}
\end{equation}

\subsection{} We consider the first specialisation of a generator of third type, that is $c=ae_{21}$ for $a\in \he A e_2$.

\textit{Case 7.1 : $d=be_{21}$, $b\in \he A e_2$.} 
\begin{equation}
 \begin{aligned}
  &\dgalindU{c,d}=\he\dgal{a,b}'e_{21} \otimes \he\dgal{a,b}''e_{21},\quad   \dgalfusU{c,d}=0 , \\ 
  &\dgalindD{\psi(c),\psi(d)}= \dgal{a,b}\,, \quad   \dgalfusD{\psi(c),\psi(d)} =0\,.
 \end{aligned}
\end{equation}

\textit{Case 7.2 : $d=be_{21}$, $b\in e_1 A e_2$.} 
\begin{equation}
 \begin{aligned}
  \dgalindU{c,d}=&e_1\dgal{a,b}'e_{21} \otimes \he\dgal{a,b}''e_{21},\quad   \dgalfusU{c,d}= -\frac12 e_1 \otimes a e_{21}be_{21}, \\ 
  \dgalindD{\psi(c),\psi(d)}=& \Phi_2e_{21}\dgal{a,b} -\frac12(e_2 \otimes a \Phi_2 e_{21}b + \Phi_2 \otimes a e_{21}b)\,,\\ 
  \dgalfusD{\psi(c),\psi(d)} =&+\frac12 \Phi_2 \otimes a e_{21}b\,.
 \end{aligned}
\end{equation}

\textit{Case 7.3 : $d=e_{12}be_{21}$, $b\in e_2 A e_2$.} 

\begin{equation}
 \begin{aligned}
  &\dgalindU{c,d}=e_{12}\dgal{a,b}'e_{21} \otimes \he\dgal{a,b}''e_{21},\quad   \dgalfusU{c,d}= 0, \\ 
  &\dgalindD{\psi(c),\psi(d)}= \dgal{a,b} , \quad   \dgalfusD{\psi(c),\psi(d)} =0\,.
 \end{aligned}
\end{equation}

\subsection{} We consider the second specialisation of a generator of third type, that is $c=ae_{21}$ for $a\in e_1 A e_2$. 

\textit{Case 8.1 : $d=be_{21}$, $b\in e_1 A e_2$.} 
\begin{equation}
 \begin{aligned}
  \dgalindU{c,d}=&e_1\dgal{a,b}'e_{21} \otimes e_1\dgal{a,b}''e_{21},\\
  \dgalfusU{c,d}=&\frac12 (b e_{21}ae_{21}\otimes e_1 -e_1 \otimes a e_{21}be_{21}), \\ 
  \dgalindD{\psi(c),\psi(d)}=& \Phi_2e_{21} \ast \Phi_2e_{21}\dgal{a,b} \\
  &+\frac12(\Phi_2 e_{21}b \Phi_2 e_{21}a \otimes e_2 - e_2 \otimes \Phi_2 e_{21}a \Phi_2 e_{21}b)\\
  &-\frac12 (\Phi_2 \otimes \Phi_2 e_{21}ae_{21}b - \Phi_2 e_{21}b e_{21}a \otimes \Phi_2) \\
  &-\frac12(e_{21}a \otimes \Phi_2^{2}e_{21}b - \Phi_2^2 e_{21}a \otimes e_{21}b)\,,\\ 
  \dgalfusD{\psi(c),\psi(d)} =&+\frac12 (\Phi_2 \otimes \Phi_2 e_{21}ae_{21}b - \Phi_2 e_{21}b e_{21}a \otimes \Phi_2) \\
  &+\frac12(e_{21}a \otimes \Phi_2^{2}e_{21}b - \Phi_2^2 e_{21}a \otimes e_{21}b)\,.
 \end{aligned}
\end{equation}

\textit{Case 8.2 : $d=e_{12}be_{21}$, $b\in e_2 A e_2$.} 

\begin{equation}
 \begin{aligned}
  \dgalindU{c,d}=& e_{12}\dgal{a,b}'e_{21} \otimes e_1\dgal{a,b}''e_{21},\\
  \dgalfusU{c,d}=& \frac12(e_{12}be_{21}ae_{21} \otimes e_1 - ae_{21} \otimes e_{12}be_{21}), \\ 
  \dgalindD{\psi(c),\psi(d)}=&\Phi_2e_{21} \ast \dgal{a,b} +\frac12(b \Phi_2 e_{21}a \otimes e_2 - \Phi_2 e_{21}a \otimes b )\\
  &-\frac12 ( e_{21}a \otimes \Phi_2 b - b e_{21}a \otimes \Phi_2), \\
  \dgalfusD{\psi(c),\psi(d)}=& +\frac12 ( e_{21}a \otimes \Phi_2 b - b e_{21}a \otimes \Phi_2)\,.
 \end{aligned}
\end{equation}

\subsection{} We consider  a generator of fourth type, that is $c=e_{12}ae_{21}$ for $a\in e_2 A e_2$. 

\textit{Case 9.1 : $d=e_{12}be_{21}$, $b\in e_2 A e_2$.} 
\begin{equation}
 \begin{aligned}
  &\dgalindU{c,d}= e_{12}\dgal{a,b}'e_{21} \otimes e_{12}\dgal{a,b}''e_{21},\quad   \dgalfusU{c,d}=0, \\ 
  &\dgalindD{\psi(c),\psi(d)}=\dgal{a,b}, \quad   \dgalfusD{\psi(c),\psi(d)}=0\,.
 \end{aligned}
\end{equation}


\section{Identities for the proof of Lemma \ref{L:IsoFusqHam1}} \label{App:LIsoFusqHam1}

In this appendix, we show that \eqref{Eq:IsoqHam3} holds. It suffices to verify this identity on a choice of generators of $A_1$. We will consider nine cases, which are given below with their images in $A_2$ under $\psi$. For $\he=1-e_2-e_3$, we have 
 \begin{equation*}
 \begin{aligned}
  &\psi(t)=t ,\,\,\text{if } t \in \he A \he; 
 \end{aligned}
 \end{equation*}
on generators of first type  \eqref{type2} of $A_1$, 
 \begin{equation*}
  \psi(e_{12}u)=e_{12}u,\,\,\text{if } u \in e_2 A \he; \quad 
  \psi(e_{12}e_{23}u)=e_{13}u ,\,\,\text{if } u \in e_3 A \he; 
 \end{equation*} 
on generators of second type  \eqref{type2} of $A_1$,  
 \begin{equation*}
  \psi(v e_{21})=v e_{21},\,\,\text{if } v \in \he A e_2; \quad 
  \psi(ve_{32}e_{21})=ve_{31} ,\,\,\text{if } v \in \he A e_3; 
 \end{equation*} 
on generators of third type  \eqref{type3} of $A_1$,   
  \begin{equation*}
  \begin{aligned}
   &\psi(e_{12} w e_{21})= e_{12} w e_{21},\,\,\text{if } w \in e_2 A e_2; \quad 
  \psi(e_{12} w e_{32} e_{21})=e_{12}we_{31},\,\,\text{if } w \in e_2 A e_3; \\
  & \psi(e_{12}e_{23} w  e_{21})=e_{13}we_{21},\,\,\text{if } w \in e_3 A e_2; \quad 
   \psi(e_{12}e_{23} w e_{32} e_{21})=e_{13}we_{31},\,\,\text{if } w \in e_3 A e_3; 
  \end{aligned}
 \end{equation*}
 on generators of fourth type \eqref{type4} of $A_1$. 

Thanks to the cyclic antisymmetry, we have $45$ cases where we have to check \eqref{Eq:IsoqHam3}. If both $c,d \in A_1$ are of the form $t$ for $t \in \he A \he$, $e_{12}u$ for $u \in e_2 A \he$, $v e_{21}$ for $v \in \he A e_2$, or 
$e_{12} w e_{21}$ for $w \in e_2 A e_2$, it is easy to derive that 
\begin{equation} 
\dgal{\psi(c),\psi(d)}_{2,fus}^{2\to 1}=(\psi\otimes \psi)\dgal{c,d}_{1,fus}^{2\to 1}\,, \quad 
\dgal{\psi(c),\psi(d)}_{2,fus}^{3\to 1}=0,\,\, (\psi\otimes \psi)\dgal{c,d}_{1,fus}^{3\to 2}=0,
  \end{equation}
so that \eqref{Eq:IsoqHam3} is satisfied. So we are left with $35$ cases to check. We will compute one case explicitly, and in the remaining cases we will only collect the four double brackets involved in \eqref{Eq:IsoqHam3} for the reader's convenience. 

\subsection{} We consider that $c$ is a generator of first type, that is $c=a$ for $a\in \he A \he$. 

\textit{Case 1.1 : $d=e_{12}be_{32}e_{21}$, $b \in e_2 A e_3$.}

By definition, the double bracket $\dgal{c,d}_{1,fus}^{3\to 2}$ is obtained from the fusion bracket in $A^f_{e_3\to e_2}$ by inducing it in $A_1$, so that we can write 
\begin{equation*}
 \dgal{c,d}_{1,fus}^{3\to 2}=e_{12} \dgal{a,be_{32}}_{fus}^{3\to 2} e_{21}\,,
\end{equation*}
with the double bracket on the right-hand side given by the fusion bracket from Appendix \ref{App:Dbr} with $j=3$, $i=2$. In $A^f_{e_3\to e_2}$, $a$ is a generator of first type \eqref{type1} while  $be_{32}$ is a generator of third type \eqref{type3}, so that the double bracket between them is given by \eqref{tv}, and we get 
\begin{equation*}
 \dgal{c,d}_{1,fus}^{3\to 2}=\frac12 e_{12} [be_{32}a \otimes e_2 - b e_{32} \otimes a e_2] e_{21}=0\,,
\end{equation*}
  as $ae_2=0= e_2 a$. Next, we have 
  \begin{equation*}
 \dgal{c,d}_{1,fus}^{3\to 2}=\dgal{a,e_{12} (be_{32}) e_{21}}_{fus}^{2\to 1} \,,
\end{equation*}
  with the double bracket on the right-hand side given by the fusion bracket from Appendix \ref{App:Dbr} with $j=2$, $i=1$. In $A_1$, $a$ is a generator of first type \eqref{type1} while $e_{12} (be_{32}) e_{21}$ is a generator of fourth type \eqref{type4} so that we get from \eqref{tw} 
\begin{equation*}
    \begin{aligned}
      \dgal{c,d}_{1,fus}^{3\to 2}=&
 \frac12 \left( e_{12} (be_{32}) e_{21}a \otimes e_1 + e_1 \otimes ae_{12} (be_{32}) e_{21} - e_{12} (be_{32}) e_{21} \otimes a e_1 - e_1 a \otimes e_{12} (be_{32}) e_{21}\right) \,, \\
 =& \frac12 \left( dc \otimes e_1 + e_1 \otimes cd - d \otimes c e_1 - e_1 c \otimes d\right)\,.
    \end{aligned}
\end{equation*}
We now get the right-hand side of \eqref{Eq:IsoqHam3} by applying $\psi \otimes \psi$ to these two double brackets. 

Meanwhile, we have in $A_2$ that $\psi(c)=a$ and $\psi(d)=e_{12}b e_{31}$. We note that by inducing the double bracket from $A^f_{e_2\to e_1}$ to $A_2$, 
\begin{equation*} 
\dgal{\psi(c),\psi(d)}_{2,fus}^{2\to 1}=\dgal{a,e_{12}b}_{2,fus}^{2\to 1} e_{31}\,,
  \end{equation*}
  where the double on the right-hand side given by the fusion bracket from Appendix \ref{App:Dbr} with $j=2$, $i=1$. We have that $a$ is a generator of first type \eqref{type1} while $e_{12}b$ is a generator of second type \eqref{type2}, so that using \eqref{tu} 
  \begin{equation*} 
\dgal{\psi(c),\psi(d)}_{2,fus}^{2\to 1}=\frac12 (e_1 \otimes a e_{12}b - e_1 a \otimes e_{12}b) e_{31}
=\frac12 (e_1 \otimes \psi(c)\psi(d) - e_1\psi(c) \otimes \psi(d))\,.
  \end{equation*}
Finally, we have 
\begin{equation*} 
\dgal{\psi(c),\psi(d)}_{2,fus}^{3\to 1}=\dgal{a,(e_{12}b)e_{31}}_{2,fus}^{2\to 1} \,,
  \end{equation*}
where the double on the right-hand side given by the fusion bracket from Appendix \ref{App:Dbr} with $j=3$, $i=1$. We have that $a$ is a generator of first type \eqref{type1} while $(e_{12}b)e_{31}$ is a generator of third type \eqref{type3}, so that using \eqref{tv} 
  \begin{equation*} 
\dgal{\psi(c),\psi(d)}_{2,fus}^{3\to 1}=\frac12 (  e_{12}b e_{31} a \otimes e_1 -  e_{12}b e_{31} \otimes a e_1 )
=\frac12 ( \psi(d)\psi(c) \otimes e_1 - \psi(d) \otimes \psi(c)e_1)\,.
  \end{equation*}
Gathering the four expressions obtained for the double brackets, we can see that \eqref{Eq:IsoqHam3} is satisfied. 
  
\textit{Case 1.2 : $d=e_{12}e_{23}b$, $b \in e_3 A \he$.}
\begin{equation} 
\begin{aligned}
 \dgal{c,d}_{1,fus}^{3\to 2}=0\,, \quad 
 &\dgal{c,d}_{1,fus}^{2\to 1}=\frac12 (e_1 \otimes cd - e_1 c\otimes d)\,, \\
 \dgal{\psi(c),\psi(d)}_{2,fus}^{2\to 1}=0\,, \quad 
 &\dgal{\psi(c),\psi(d)}_{2,fus}^{3\to 1}=\frac12 (e_1 \otimes \psi(c)\psi(d) - e_1 \psi(c)\otimes \psi(d))\,.
\end{aligned}
  \end{equation}

\textit{Case 1.3 : $d=be_{32}e_{21}$, $b \in \he A e_3$.}
\begin{equation} 
\begin{aligned}
 \dgal{c,d}_{1,fus}^{3\to 2}=0\,, \quad 
 &\dgal{c,d}_{1,fus}^{2\to 1}=\frac12 (dc \otimes e_1 - d\otimes c e_1)\,, \\
 \dgal{\psi(c),\psi(d)}_{2,fus}^{2\to 1}=0\,, \quad 
 &\dgal{\psi(c),\psi(d)}_{2,fus}^{3\to 1}=\frac12 (\psi(d)\psi(c) \otimes e_1 - \psi(d)\otimes \psi(c)e_1)\,.
\end{aligned}
  \end{equation}

\textit{Case 1.4 : $d=e_{12}e_{23}b e_{21}$, $b \in e_3 A e_2$.}
\begin{equation} 
\begin{aligned}
 &\dgal{c,d}_{1,fus}^{3\to 2}=0\,, \quad 
 \dgal{c,d}_{1,fus}^{2\to 1}=\frac12 (dc \otimes e_1 + e_1 \otimes cd - d\otimes c e_1 - e_1 c \otimes d)\,, \\
 &\dgal{\psi(c),\psi(d)}_{2,fus}^{2\to 1}=\frac12 (\psi(d)\psi(c) \otimes e_1 - \psi(d)\otimes \psi(c)e_1)\,, \\
 &\dgal{\psi(c),\psi(d)}_{2,fus}^{3\to 1}=\frac12 (e_1 \otimes \psi(c)\psi(d) - e_1 \psi(c)\otimes \psi(d))\,.
\end{aligned}
  \end{equation}

\textit{Case 1.5 : $d=e_{12}e_{23}be_{32}e_{21}$, $b \in e_3 A e_3$.}
\begin{equation} 
\begin{aligned}
 &\dgal{c,d}_{1,fus}^{3\to 2}=0\,, \quad 
 \dgal{c,d}_{1,fus}^{2\to 1}=\frac12 (dc \otimes e_1 + e_1 \otimes cd - d\otimes c e_1 - e_1 c \otimes d)\,, \quad 
 \dgal{\psi(c),\psi(d)}_{2,fus}^{2\to 1}=0\,, \\
 &\dgal{\psi(c),\psi(d)}_{2,fus}^{3\to 1}=\frac12 \big(\psi(d)\psi(c) \otimes e_1  + e_1 \otimes \psi(c)\psi(d) - \psi(d)\otimes \psi(c)e_1 - e_1 \psi(c)\otimes \psi(d)\big)\,.
\end{aligned}
  \end{equation}
  
\subsection{} We consider that $c$ is a generator of second type of the form $c=e_{12}a$ for $a\in e_2 A \he$. 

\textit{Case 2.1 : $d=e_{12}e_{23}b$, $b \in e_3 A \he$.}  
  \begin{equation} 
\begin{aligned}
 &\dgal{c,d}_{1,fus}^{3\to 2}=-\frac12 c\otimes d\,, \quad 
 \dgal{c,d}_{1,fus}^{2\to 1}=\frac12 ( e_1 \otimes cd - dc \otimes e_1 ) \,, \\
 &\dgal{\psi(c),\psi(d)}_{2,fus}^{2\to 1}=-\frac12 \psi(d)\psi(c) \otimes e_1\,, \quad
 \dgal{\psi(c),\psi(d)}_{2,fus}^{3\to 1}=\frac12 (e_1 \otimes \psi(c)\psi(d) -  \psi(c)\otimes \psi(d))\,.
\end{aligned}
  \end{equation}
  
\textit{Case 2.2 : $d=b e_{32}e_{21}$, $b \in \he A e_3$.}  
  \begin{equation} 
\begin{aligned}
 &\dgal{c,d}_{1,fus}^{3\to 2}=\frac12 dc\otimes e_1\,, \quad 
 \dgal{c,d}_{1,fus}^{2\to 1}=\frac12 ( c \otimes e_1 d - d \otimes c e_1 ) \,, \\
 &\dgal{\psi(c),\psi(d)}_{2,fus}^{2\to 1}=\frac12 \psi(c) \otimes e_1 \psi(d)\,, \quad
 \dgal{\psi(c),\psi(d)}_{2,fus}^{3\to 1}=\frac12 ( \psi(d)\psi(c)\otimes e_1 -  \psi(d)\otimes \psi(c) e_1)\,.
\end{aligned}
  \end{equation}  
  
\textit{Case 2.3 : $d=e_{12}b e_{32}e_{21}$, $b \in e_2 A e_3$.}  
  \begin{equation} 
\begin{aligned}
 &\dgal{c,d}_{1,fus}^{3\to 2}=\frac12 dc\otimes e_1\,, \quad 
 \dgal{c,d}_{1,fus}^{2\to 1}=\frac12 ( e_1 \otimes cd - d \otimes c e_1 ) \,, \\
 &\dgal{\psi(c),\psi(d)}_{2,fus}^{2\to 1}=\frac12 e_1 \otimes \psi(c)  \psi(d)\,, \quad
 \dgal{\psi(c),\psi(d)}_{2,fus}^{3\to 1}=\frac12 ( \psi(d)\psi(c)\otimes e_1 -  \psi(d)\otimes \psi(c) e_1)\,.
\end{aligned}
  \end{equation}  
  
\textit{Case 2.4 : $d=e_{12}e_{23}b e_{21}$, $b \in e_3 A e_2$.}  
  \begin{equation} 
\begin{aligned}
 &\dgal{c,d}_{1,fus}^{3\to 2}=-\frac12 c\otimes d\,, \quad 
 \dgal{c,d}_{1,fus}^{2\to 1}=\frac12 ( e_1 \otimes cd - d \otimes c e_1 ) \,, \\
 &\dgal{\psi(c),\psi(d)}_{2,fus}^{2\to 1}=-\frac12  \psi(d) \otimes \psi(c)e_1 \,, \quad
 \dgal{\psi(c),\psi(d)}_{2,fus}^{3\to 1}=\frac12 ( e_1 \otimes \psi(c)\psi(d) -  \psi(c)\otimes \psi(d) )\,.
\end{aligned}
  \end{equation}  
  
\textit{Case 2.5 : $d=e_{12}e_{23}b e_{32}e_{21}$, $b \in e_3 A e_3$.}  
  \begin{equation} 
\begin{aligned}
 &\dgal{c,d}_{1,fus}^{3\to 2}=\frac12 (dc \otimes e_1 - c\otimes d)\,, \quad 
 \dgal{c,d}_{1,fus}^{2\to 1}=\frac12 ( e_1 \otimes cd - d \otimes c e_1 ) \,, \quad 
 \dgal{\psi(c),\psi(d)}_{2,fus}^{2\to 1}=0\,, \\
 &\dgal{\psi(c),\psi(d)}_{2,fus}^{3\to 1}=\frac12 \big(\psi(d)\psi(c) \otimes e_1  + e_1 \otimes \psi(c)\psi(d) - \psi(d)\otimes \psi(c)e_1 -  \psi(c)\otimes \psi(d)\big)\,.
\end{aligned}
  \end{equation}

\subsection{} We consider that $c$ is a generator of third type of the form $c=a e_{21}$ for $a\in \he A e_2$. 

\textit{Case 3.1 : $d=e_{12}e_{23}b$, $b \in e_3 A \he$.}  
  \begin{equation} 
\begin{aligned}
 &\dgal{c,d}_{1,fus}^{3\to 2}=\frac12 e_1\otimes cd\,, \quad 
 \dgal{c,d}_{1,fus}^{2\to 1}=\frac12 ( de_1 \otimes c - e_1 c \otimes d ) \,, \\
 &\dgal{\psi(c),\psi(d)}_{2,fus}^{2\to 1}=\frac12 \psi(d)e_1 \otimes \psi(c),\,\,
 \dgal{\psi(c),\psi(d)}_{2,fus}^{3\to 1}=\frac12 (e_1 \otimes \psi(c)\psi(d) -  e_1\psi(c)\otimes \psi(d)).
\end{aligned}
  \end{equation}
  
\textit{Case 3.2 : $d=be_{32}e_{21}$, $b \in \he A e_3$.}  
  \begin{equation} 
\begin{aligned}
 &\dgal{c,d}_{1,fus}^{3\to 2}=-\frac12 d\otimes c\,, \quad 
 \dgal{c,d}_{1,fus}^{2\to 1}=\frac12 ( dc \otimes e_1 - e_1 \otimes cd ) \,, \\
 &\dgal{\psi(c),\psi(d)}_{2,fus}^{2\to 1}=-\frac12 e_1 \otimes \psi(c)\psi(d)\,, \,\,
 \dgal{\psi(c),\psi(d)}_{2,fus}^{3\to 1}=\frac12 (\psi(d)\psi(c) \otimes e_1 -  \psi(d)\otimes \psi(c)).
\end{aligned}
  \end{equation}
  
\textit{Case 3.3 : $d=e_{12}be_{32}e_{21}$, $b \in e_2 A e_3$.}  
  \begin{equation} 
\begin{aligned}
 &\dgal{c,d}_{1,fus}^{3\to 2}=-\frac12 d\otimes c\,, \quad 
 \dgal{c,d}_{1,fus}^{2\to 1}=\frac12 ( dc \otimes e_1 - e_1c \otimes d ) \,, \\
 &\dgal{\psi(c),\psi(d)}_{2,fus}^{2\to 1}=-\frac12 e_1\psi(c) \otimes \psi(d)\,, \,\,
 \dgal{\psi(c),\psi(d)}_{2,fus}^{3\to 1}=\frac12 (\psi(d)\psi(c) \otimes e_1 -  \psi(d)\otimes \psi(c)).
\end{aligned}
  \end{equation}
  
  \textit{Case 3.4 : $d=e_{12}e_{23}b e_{21}$, $b \in e_3 A e_2$.}  
  \begin{equation} 
\begin{aligned}
 &\dgal{c,d}_{1,fus}^{3\to 2}=\frac12 e_1 \otimes cd\,, \quad 
 \dgal{c,d}_{1,fus}^{2\to 1}=\frac12 ( dc \otimes e_1 - e_1c \otimes d ) \,, \\
 &\dgal{\psi(c),\psi(d)}_{2,fus}^{2\to 1}=\frac12 \psi(d)\psi(c) \otimes e_1\,, \,\,
 \dgal{\psi(c),\psi(d)}_{2,fus}^{3\to 1}=\frac12 (e_1 \otimes \psi(c)\psi(d) - e_1\psi(c)\otimes \psi(d)).
\end{aligned}
  \end{equation}
  
  \textit{Case 3.5 : $d=e_{12}e_{23}b e_{32}e_{21}$, $b \in e_3 A e_3$.}  
  \begin{equation} 
\begin{aligned}
 &\dgal{c,d}_{1,fus}^{3\to 2}=\frac12 (e_1 \otimes cd-d \otimes c)\,, \quad 
 \dgal{c,d}_{1,fus}^{2\to 1}=\frac12 ( dc \otimes e_1 - e_1c \otimes d ),\,\, \dgal{\psi(c),\psi(d)}_{2,fus}^{2\to 1}=0\,, \\
 &\dgal{\psi(c),\psi(d)}_{2,fus}^{3\to 1}=\frac12 \big(\psi(d)\psi(c) \otimes e_1  + e_1 \otimes \psi(c)\psi(d) - \psi(d)\otimes \psi(c) -  e_1 \psi(c)\otimes \psi(d)\big)\,.
\end{aligned}
  \end{equation}  
  
  \subsection{} We consider that $c$ is a generator of fourth type of the form $c=e_{12} a e_{21}$ for $a\in e_2 A e_2$. 

\textit{Case 4.1 : $d=e_{12}e_{23}b$, $b \in e_3 A \he$.}  
  \begin{equation} 
\begin{aligned}
 &\dgal{c,d}_{1,fus}^{3\to 2}=\frac12 (e_1\otimes cd- c\otimes d)\,, \quad 
 \dgal{c,d}_{1,fus}^{2\to 1}=\frac12 ( de_1 \otimes c - d c \otimes e_1 ) \,, \\
 &\dgal{\psi(c),\psi(d)}_{2,fus}^{2\to 1}=\frac12 \big( \psi(d)e_1 \otimes \psi(c) - \psi(d)\psi(c)\otimes e_1 \big),\\
 &\dgal{\psi(c),\psi(d)}_{2,fus}^{3\to 1}=\frac12 \big(e_1 \otimes \psi(c)\psi(d) -  \psi(c)\otimes \psi(d)\big).
\end{aligned}
  \end{equation}
  
\textit{Case 4.2 : $d=be_{32}e_{21}$, $b \in \he A e_3$.}  
  \begin{equation} 
\begin{aligned}
 &\dgal{c,d}_{1,fus}^{3\to 2}=\frac12 (dc\otimes e_1 -d\otimes c)\,, \quad 
 \dgal{c,d}_{1,fus}^{2\to 1}=\frac12 ( c \otimes e_1d - e_1 \otimes cd ) \,, \\
 &\dgal{\psi(c),\psi(d)}_{2,fus}^{2\to 1}=\frac12 \big(\psi(c)\otimes e_1\psi(d) -e_1 \otimes \psi(c)\psi(d)\big)\,, \\
& \dgal{\psi(c),\psi(d)}_{2,fus}^{3\to 1}=\frac12 \big(\psi(d)\psi(c) \otimes e_1 -  \psi(d)\otimes \psi(c)\big)\,.
\end{aligned}
  \end{equation}
  
\textit{Case 4.3 : $d=e_{12}be_{32}e_{21}$, $b \in e_2 A e_3$.}  
  \begin{equation} 
\begin{aligned}
 &\dgal{c,d}_{1,fus}^{3\to 2}=\frac12(dc\otimes e_1 - d\otimes c)\,, \quad 
 \dgal{c,d}_{1,fus}^{2\to 1}=0 \,, \\
 &\dgal{\psi(c),\psi(d)}_{2,fus}^{2\to 1}=0\,, \,\,
 \dgal{\psi(c),\psi(d)}_{2,fus}^{3\to 1}=\frac12 (\psi(d)\psi(c) \otimes e_1 -  \psi(d)\otimes \psi(c)).
\end{aligned}
  \end{equation}  
  
  \textit{Case 4.4 : $d=e_{12}e_{23}b e_{21}$, $b \in e_3 A e_2$.}  
  \begin{equation} 
\begin{aligned}
 &\dgal{c,d}_{1,fus}^{3\to 2}=\frac12 (e_1 \otimes cd - c\otimes d)\,, \quad 
 \dgal{c,d}_{1,fus}^{2\to 1}=0 \,, \\
 &\dgal{\psi(c),\psi(d)}_{2,fus}^{2\to 1}=0\,, \,\,
 \dgal{\psi(c),\psi(d)}_{2,fus}^{3\to 1}=\frac12 \big(e_1 \otimes \psi(c)\psi(d) - \psi(c)\otimes \psi(d)\big).
\end{aligned}
  \end{equation}
  
  \textit{Case 4.5 : $d=e_{12}e_{23}b e_{32}e_{21}$, $b \in e_3 A e_3$.}  
  \begin{equation} 
\begin{aligned}
 &\dgal{c,d}_{1,fus}^{3\to 2}=\frac12 (dc\otimes e_1 + e_1 \otimes cd-d \otimes c - c\otimes d)\,, \quad 
 \dgal{c,d}_{1,fus}^{2\to 1}=0,\,\, \dgal{\psi(c),\psi(d)}_{2,fus}^{2\to 1}=0\,, \\
 &\dgal{\psi(c),\psi(d)}_{2,fus}^{3\to 1}=\frac12 \big(\psi(d)\psi(c) \otimes e_1  + e_1 \otimes \psi(c)\psi(d) - \psi(d)\otimes \psi(c) -  \psi(c)\otimes \psi(d)\big)\,.
\end{aligned}
  \end{equation}

\subsection{} We consider that $c$ is a generator of second type of the form $c=e_{12}e_{23} a$ for $a\in e_3 A \he$. 

\textit{Case 5.1 : $d=e_{12}e_{23}b$, $b \in e_3 A \he$.}  
  \begin{equation} 
\begin{aligned}
 &\dgal{c,d}_{1,fus}^{3\to 2}=0\,, \quad 
 \dgal{c,d}_{1,fus}^{2\to 1}=\frac12 ( e_1 \otimes cd - d c \otimes e_1 ) \,, \\
 &\dgal{\psi(c),\psi(d)}_{2,fus}^{2\to 1}=0\,, \,\, 
 \dgal{\psi(c),\psi(d)}_{2,fus}^{3\to 1}=\frac12 \big(e_1 \otimes \psi(c)\psi(d) -  \psi(d)\psi(c)\otimes e_1 \big).
\end{aligned}
  \end{equation}

\textit{Case 5.2 : $d=be_{32}e_{21}$, $b \in \he A e_3$.}  
  \begin{equation} 
\begin{aligned}
 &\dgal{c,d}_{1,fus}^{3\to 2}=0\,, \quad 
 \dgal{c,d}_{1,fus}^{2\to 1}=\frac12 ( c \otimes e_1d - d \otimes c e_1 ) \,, \\
 &\dgal{\psi(c),\psi(d)}_{2,fus}^{2\to 1}=0\,, \,\, 
 \dgal{\psi(c),\psi(d)}_{2,fus}^{3\to 1}=\frac12 \big(\psi(c)\otimes e_1\psi(d) -  \psi(d)\otimes \psi(c)e_1 \big)\,.
\end{aligned}
  \end{equation}

\textit{Case 5.3 : $d=e_{12}be_{32}e_{21}$, $b \in e_2 A e_3$.}  
  \begin{equation} 
\begin{aligned}
 &\dgal{c,d}_{1,fus}^{3\to 2}=\frac12 c\otimes d\,, \quad 
 \dgal{c,d}_{1,fus}^{2\to 1}=\frac12 (e_1 \otimes cd - d \otimes c e_1) \,, \\
 &\dgal{\psi(c),\psi(d)}_{2,fus}^{2\to 1}=\frac12 e_1 \otimes \psi(c)\psi(d)\,, \,\,
 \dgal{\psi(c),\psi(d)}_{2,fus}^{3\to 1}=\frac12 \big(\psi(c)\otimes \psi(d) -  \psi(d)\otimes \psi(c)e_1\big).
\end{aligned}
  \end{equation}  

  \textit{Case 5.4 : $d=e_{12}e_{23}b e_{21}$, $b \in e_3 A e_2$.}  
  \begin{equation} 
\begin{aligned}
 &\dgal{c,d}_{1,fus}^{3\to 2}=-\frac12  dc\otimes e_1\,, \quad 
 \dgal{c,d}_{1,fus}^{2\to 1}=\frac12 (e_1 \otimes cd - d \otimes c e_1) \,, \\
 &\dgal{\psi(c),\psi(d)}_{2,fus}^{2\to 1}=-\frac12 \psi(d) \otimes \psi(c)e_1\,, \,\,
 \dgal{\psi(c),\psi(d)}_{2,fus}^{3\to 1}=\frac12 \big(e_1 \otimes \psi(c)\psi(d) - \psi(d)\psi(c)\otimes e_1\big).
\end{aligned}
  \end{equation}

  \textit{Case 5.5 : $d=e_{12}e_{23}b e_{32}e_{21}$, $b \in e_3 A e_3$.}  
  \begin{equation} 
\begin{aligned}
 &\dgal{c,d}_{1,fus}^{3\to 2}=0\,, \quad 
 \dgal{c,d}_{1,fus}^{2\to 1}=\frac12 (e_1 \otimes cd-d \otimes ce_1),\\
& \dgal{\psi(c),\psi(d)}_{2,fus}^{2\to 1}=0\,, \,\,
 \dgal{\psi(c),\psi(d)}_{2,fus}^{3\to 1}=\frac12 \big(e_1 \otimes \psi(c)\psi(d) - \psi(d)\otimes \psi(c) e_1 \big)\,.
\end{aligned}
  \end{equation}

\subsection{} We consider that $c$ is a generator of third type of the form $c=ae_{32}e_{21}$ for $a\in \he A e_3$. 

\textit{Case 6.1 : $d=be_{32}e_{21}$, $b \in \he A e_3$.}  
  \begin{equation} 
\begin{aligned}
 &\dgal{c,d}_{1,fus}^{3\to 2}=0\,, \quad 
 \dgal{c,d}_{1,fus}^{2\to 1}=\frac12 ( d c \otimes e_1 - e_1 \otimes cd) \,, \\
 &\dgal{\psi(c),\psi(d)}_{2,fus}^{2\to 1}=0\,, \,\, 
 \dgal{\psi(c),\psi(d)}_{2,fus}^{3\to 1}=\frac12 \big( \psi(d)\psi(c)\otimes e_1 - e_1 \otimes \psi(c)\psi(d) \big).
\end{aligned}
  \end{equation}

\textit{Case 6.2 : $d=e_{12}be_{32}e_{21}$, $b \in e_2 A e_3$.}  
  \begin{equation} 
\begin{aligned}
 &\dgal{c,d}_{1,fus}^{3\to 2}=-\frac12 e_1\otimes cd\,, \quad 
 \dgal{c,d}_{1,fus}^{2\to 1}=\frac12 (dc \otimes e_1 - e_1c \otimes d) \,, \\
 &\dgal{\psi(c),\psi(d)}_{2,fus}^{2\to 1}=-\frac12 e_1 \psi(c) \otimes \psi(d)\,, \,\,
 \dgal{\psi(c),\psi(d)}_{2,fus}^{3\to 1}=\frac12 \big(\psi(d) \psi(c)\otimes e_1 -  e_1\otimes \psi(c)\psi(d)  \big).
\end{aligned}
  \end{equation}  

  \textit{Case 6.3 : $d=e_{12}e_{23}b e_{21}$, $b \in e_3 A e_2$.}  
  \begin{equation} 
\begin{aligned}
 &\dgal{c,d}_{1,fus}^{3\to 2}=\frac12  d\otimes c\,, \quad 
 \dgal{c,d}_{1,fus}^{2\to 1}=\frac12 (dc \otimes e_1 - e_1c \otimes d) \,, \\
 &\dgal{\psi(c),\psi(d)}_{2,fus}^{2\to 1}=\frac12 \psi(d)\psi(c) \otimes e_1\,, \,\,
 \dgal{\psi(c),\psi(d)}_{2,fus}^{3\to 1}=\frac12 \big(\psi(d) \otimes \psi(c) - e_1\psi(c)\otimes \psi(d)\big).
\end{aligned}
  \end{equation}

  \textit{Case 6.4 : $d=e_{12}e_{23}b e_{32}e_{21}$, $b \in e_3 A e_3$.}  
  \begin{equation} 
\begin{aligned}
 &\dgal{c,d}_{1,fus}^{3\to 2}=0\,, \quad 
 \dgal{c,d}_{1,fus}^{2\to 1}=\frac12 (dc \otimes e_1 - e_1c \otimes d),\\
& \dgal{\psi(c),\psi(d)}_{2,fus}^{2\to 1}=0\,, \,\,
 \dgal{\psi(c),\psi(d)}_{2,fus}^{3\to 1}=\frac12 \big(\psi(d) \psi(c) \otimes e_1 - e_1\psi(c)\otimes \psi(d)\big)\,.
\end{aligned}
  \end{equation}  

\subsection{} We consider the remaining cases. The elements $c,d$ are generators of fourth type in each of them. 

\textit{Case 7.1 : $c=e_{12}ae_{32}e_{21}$, $a \in e_2 A e_3$, and $d=e_{12}be_{32}e_{21}$, $b \in e_2 A e_3$.}  
  \begin{equation} 
\begin{aligned}
 &\dgal{c,d}_{1,fus}^{3\to 2}=\frac12 (dc \otimes e_1 - e_1 \otimes cd)\,, \quad 
 \dgal{c,d}_{1,fus}^{2\to 1}=0 \,, \\
 &\dgal{\psi(c),\psi(d)}_{2,fus}^{2\to 1}=0\,, \,\,
 \dgal{\psi(c),\psi(d)}_{2,fus}^{3\to 1}=\frac12 \big(\psi(d) \psi(c)\otimes e_1 -  e_1\otimes \psi(c)\psi(d)  \big).
\end{aligned}
  \end{equation} 

\textit{Case 7.2 : $c=e_{12}ae_{32}e_{21}$, $a \in e_2 A e_3$, and $d=e_{12}e_{23}b e_{21}$, $b \in e_3 A e_2$.}  
  \begin{equation} 
\begin{aligned}
 &\dgal{c,d}_{1,fus}^{3\to 2}=\frac12 (d \otimes c - c \otimes d)\,, \quad 
 \dgal{c,d}_{1,fus}^{2\to 1}=0 \,, \\
 &\dgal{\psi(c),\psi(d)}_{2,fus}^{2\to 1}=0\,, \,\,
 \dgal{\psi(c),\psi(d)}_{2,fus}^{3\to 1}=\frac12 \big(\psi(d) \otimes \psi(c) -  \psi(c)\otimes \psi(d)  \big).
\end{aligned}
  \end{equation} 

\textit{Case 7.3 : $c=e_{12}ae_{32}e_{21}$, $a \in e_2 A e_3$, and $d=e_{12}e_{23}b e_{32}e_{21}$, $b \in e_3 A e_3$.}  
  \begin{equation} 
\begin{aligned}
 &\dgal{c,d}_{1,fus}^{3\to 2}=\frac12 (dc \otimes e_1 - c \otimes d)\,, \quad 
 \dgal{c,d}_{1,fus}^{2\to 1}=0 \,, \\
 &\dgal{\psi(c),\psi(d)}_{2,fus}^{2\to 1}=0\,, \,\,
 \dgal{\psi(c),\psi(d)}_{2,fus}^{3\to 1}=\frac12 \big(\psi(d)\psi(c) \otimes e_1 -  \psi(c)\otimes \psi(d)  \big).
\end{aligned}
  \end{equation} 

\textit{Case 7.4 : $c=e_{12}e_{23}a e_{21}$, $a \in e_3 A e_2$, and $d=e_{12}e_{23}b e_{21}$, $b \in e_3 A e_2$.}  
  \begin{equation} 
\begin{aligned}
 &\dgal{c,d}_{1,fus}^{3\to 2}=\frac12 (e_1 \otimes cd - dc \otimes e_1)\,, \quad 
 \dgal{c,d}_{1,fus}^{2\to 1}=0 \,, \\
 &\dgal{\psi(c),\psi(d)}_{2,fus}^{2\to 1}=0\,, \,\,
 \dgal{\psi(c),\psi(d)}_{2,fus}^{3\to 1}=\frac12 \big(e_1\otimes \psi(c)\psi(d)  -  \psi(d) \psi(c)\otimes e_1  \big).
\end{aligned}
  \end{equation} 

\textit{Case 7.5 : $c=e_{12}e_{23}a e_{21}$, $a \in e_3 A e_2$, and $d=e_{12}e_{23}b e_{32}e_{21}$, $b \in e_3 A e_3$.}  
  \begin{equation} 
\begin{aligned}
 &\dgal{c,d}_{1,fus}^{3\to 2}=\frac12 (e_1 \otimes cd - d \otimes c)\,, \quad 
 \dgal{c,d}_{1,fus}^{2\to 1}=0 \,, \\
 &\dgal{\psi(c),\psi(d)}_{2,fus}^{2\to 1}=0\,, \,\,
 \dgal{\psi(c),\psi(d)}_{2,fus}^{3\to 1}=\frac12 \big(e_1\otimes \psi(c)\psi(d)  -  \psi(d) \otimes \psi(c)  \big).
\end{aligned}
  \end{equation} 

\textit{Case 7.6 : $c=e_{12}e_{23}a e_{32}e_{21}$, $a \in e_3 A e_3$, and $d=e_{12}e_{23}b e_{32}e_{21}$, $b \in e_3 A e_3$.}  
  \begin{equation} 
\begin{aligned}
 &\dgal{c,d}_{1,fus}^{3\to 2}=0\,, \quad 
 \dgal{c,d}_{1,fus}^{2\to 1}=0 \,, \quad
 \dgal{\psi(c),\psi(d)}_{2,fus}^{2\to 1}=0\,, \quad
 \dgal{\psi(c),\psi(d)}_{2,fus}^{3\to 1}=0.
\end{aligned}
  \end{equation}


\end{document}